\theoremstyle{plain}
\newtheorem{xx}{xx}[section]
\newtheorem{thm}[xx]{Theorem}
\newtheorem*{thm*}{Theorem}
\newtheorem*{thmA*}{Theorem A}
\newtheorem*{thmB*}{Theorem B}
\newtheorem{prop}[xx]{Proposition}
\newtheorem{cor}[xx]{Corollary}
\newtheorem*{corA*}{Corollary A}
\newtheorem*{corB*}{Corollary B}
\newtheorem{lem}[xx]{Lemma}
\newtheorem*{lem*}{Lemma}
\theoremstyle{definition}
\newtheorem{defn}[xx]{Definition}
\newtheorem*{defn*}{Definition}
\newtheorem{ex}[xx]{Example}
\theoremstyle{remark}
\newtheorem{rem}[xx]{Remark}
\newtheorem*{rem*}{Remark}
\numberwithin{equation}{xx}
\DeclareMathOperator{\Add}{Add}
\DeclareMathOperator{\at}{at}
\DeclareMathOperator{\Char}{char}
\DeclareMathOperator{\coker}{coker}
\DeclareMathOperator{\Def}{Def}
\DeclareMathOperator{\Der}{Der}
\DeclareMathOperator{\gDer}{{}_{{\ccirc}}\hspace{-0.1em}{\Der}}
\DeclareMathOperator{\cE}{E}
\DeclareMathOperator{\End}{End}
\DeclareMathOperator{\gEnd}{{}_{{\ccirc}}\hspace{-0.1em}End}
\DeclareMathOperator{\gExal}{{}_{{\ccirc}}\hspace{-0.1em}Exal}
\DeclareMathOperator{\Ext}{Ext}
\DeclareMathOperator{\gExt}{{}_{{\ccirc}}\hspace{-0.1em}Ext} 
\DeclareMathOperator{\sgExt}{{{}_{\ast}\hspace{-0.06em}}{\Ext}}
\DeclareMathOperator{\cH}{H}
\DeclareMathOperator{\Hom}{Hom}
\DeclareMathOperator{\gHom}{{}_{{\ccirc}}\hspace{-0.1em}{\Hom}}
\DeclareMathOperator{\gRHom}{\BB{R}\hspace{-0.1em}\gHom}
\DeclareMathOperator{\id}{{id}}
\DeclareMathOperator{\im}{im}
\DeclareMathOperator{\Mod}{mod}
\DeclareMathOperator{\gmod}{{}_{{\ccirc}}\hspace{-0.1em}mod}
\DeclareMathOperator{\NL}{\mathnormal{N\hspace{-0.24em}L}}
\DeclareMathOperator{\ob}{ob}
\DeclareMathOperator{\pdim}{pdim}
\DeclareMathOperator{\pr}{pr}
\DeclareMathOperator{\Proj}{Proj}
\DeclareMathOperator{\Rad}{rad}
\DeclareMathOperator{\Sets}{\cat{Sets}}
\DeclareMathOperator{\Spec}{Spec}
\DeclareMathOperator{\Syz}{Syz}
\DeclareMathOperator{\cT}{T}
\DeclareMathOperator{\Tor}{Tor}
\newcommand{\co}{\colon}
\newcommand{\ra}{\rightarrow}
\newcommand{\la}{\leftarrow}
\newcommand{\lra}{\longrightarrow}
\newcommand{\lla}{\longleftarrow}
\newcommand{\thr}{\twoheadrightarrow}
\newcommand{\thl}{\twoheadleftarrow}
\newcommand{\hra}{\hookrightarrow}
\newcommand{\ot}{\hspace{0.06em}{\otimes}\hspace{0.06em}}
\newcommand{\op}{\hspace{0.06em}{\oplus}\hspace{0.04em}}
\newcommand{\sbeq}{\subseteq}
\newcommand{\vare}{\varepsilon}
\newcommand{\vD}{\varDelta}
\newcommand{\vG}{\varGamma}
\newcommand{\vL}{\varLambda}
\newcommand{\dprime}{\prime\hspace{-0.1em}\prime}
\newcommand{\limproj}{\varprojlim}
\newcommand{\liminj}{\varinjlim}
\newcommand{\wh}{\widehat}
\newcommand{\He}{{{}_{\vL\hspace{-0.1em}}\cat{H}_{k}}}
\newcommand{\Ar}{{{}_{\vL\hspace{-0.1em}}\cat{A}_{k}}}
\newcommand{\hot}{{\tilde{\otimes}}}
\newcommand\bdot{\ensuremath{%
  \mathchoice%
   {\mskip\thinmuskip\lower0.2ex\hbox{\scalebox{1.5}{$\cdot$}}\mskip\thinmuskip}}%
   {\mskip\thinmuskip\lower0.2ex\hbox{\scalebox{1.5}{$\cdot$}}\mskip\thinmuskip}%
   {\lower0.3ex\hbox{\scalebox{1.2}{$\cdot$}}}%
   {\lower0.3ex\hbox{\scalebox{1.2}{$\cdot$}}}%
}
\newcommand{\ccirc}{\mathbin{\mathchoice
  {\xcirc\scriptstyle}
  {\xcirc\scriptstyle}
  {\xcirc\scriptscriptstyle}
  {\xcirc\scriptscriptstyle}
}}
\newcommand{\xcirc}[1]{\vcenter{\hbox{$#1\circ$}}}
\renewcommand{\phi}{\varphi}
\renewcommand{\geq}{\geqslant}
\newcommand{\BB}[1]{\mathbb{{#1}}}
\newcommand{\fr}[1]{\mathfrak{{#1}}}
\newcommand{\cat}[1]{\mathsf{{#1}}}
\newcommand{\mc}[1]{{\mathcal{#1}}}
\newcommand{\mr}[1]{\mathrm{{#1}}}
\newcommand{\tn}{\textnormal}
\newcommand{\wbar}[1]{\mkern 3.5mu\overline{\mkern-3mu{#1}\mkern-0.3mu}\mkern -0.4mu}
\newcommand{\der}[3]{{\Der}_{#1}({#2},{#3})}
\newcommand{\gder}[3]{{\gDer}_{#1}({#2},{#3})}
\newcommand{\df}[2]{{\Def}_{\hspace{-0.08em}#2}^{#1}}
\newcommand{\hm}[4]{{\Hom}_{#2}^{#1}({#3},{#4})}
\newcommand{\ghm}[4]{{\gHom}_{#2}^{#1}({#3},{#4})}
\newcommand{\grhm}[4]{{\gRHom}_{#2}^{#1}({#3},{#4})}
\newcommand{\nd}[3]{{\End} _{#2}^{#1}({#3})}
\newcommand{\gnd}[3]{{\gEnd} _{#2}^{#1}({#3})}
\newcommand{\ol}[1]{\overline{{#1}}}
\newcommand{\tor}[4]{{\Tor}_{#2}^{#1}({#3},{#4})}
\newcommand{\ul}[1]{\underline{{#1}}}
\newcommand{\ulset}[2]{{\hspace{-0.1em}}^{#1}{\hspace{-0.2em}#2}}
\newcommand{\xla}[1]{\xleftarrow{{#1}}}
\newcommand{\xra}[1]{\xrightarrow{{#1}}}
\newcommand{\gxal}[3]{{\gExal}_{#1}({#2},{#3})}
\newcommand{\xt}[4]{{\Ext}_{#2}^{#1}({#3},{#4})}
\newcommand{\gxt}[4]{{\gExt}_{#2}^{#1}({#3},{#4})}
\newcommand{\sgxt}[4]{{\sgExt}_{#2}^{#1}({#3},{#4})}
\newcommand{\syz}[2]{{\Syz}_{#2}^{#1}}
\begin{document}
\title[Versal deformations and Cohen-Macaulay approximation]
{Versal deformations of pairs\\ and Cohen-Macaulay approximation}
\author{Runar Ile}
\address{BI Norwegian Business School}
\email{runar.ile@bi.no}
\keywords{Andr{\'e}-Quillen cohomology, obstruction theory, Artin theory, isolated singularity}
\subjclass[2010]
{Primary 14B12, 13D03; Secondary 13C60, 14B07}
\begin{abstract}
For a pair (algebra, module) with equidimensional and isolated singularity we establish the existence of a versal henselian deformation. Obstruction theory in terms of an Andr{\'e}-Quillen cohomology for pairs is a central ingredient in the Artin theory used. In particular we give a long exact sequence relating the algebra cohomology and the module cohomology with the cohomology of the pair and define a Kodaira-Spencer class for pairs. Cohen-Macaulay approximation induces maps between versal base spaces for pairs and cohomology conditions imply properties like smoothness, isomorphism and linear section.
\end{abstract}
\maketitle
\section{Introduction}\label{sec.intro}
In this article we prove a versality result for pairs (algebra, module) and give applications to parametrised Cohen-Macaulay approximation.

Let $k$ be field, $A$ a $k$-algebra and $N$ a finite $A$-module. By applying M. Artin's approximation theory \cite{art:69}, R. Elkik proved in \cite{elk:73} that there exists a (henselian) versal deformation of $A$ if $A$ is algebraic and equidimensional with isolated singularity. On the other hand, if $N$ is locally free except at the closed point Elkik's results also implies the existence of a versal deformation of $N$ where the algebra is trivially deformed, as H. von Essen showed in \cite[2.3]{ess:90}. Our main result Theorem \ref{thm.vers} says that the pair $(A, N)$ with the combined conditions has a versal deformation. 

The study of singularities in terms of the representation theory has in recent years given interesting results about the derived equivalences for flops, initiated by M. Van den Bergh's \cite{vdber:04} and developed in several directions; see M. Wemyss \cite{wem:18} for results and references.  
In \cite{gus/ile:18} we study deformation theory of pairs (rational surface singularity, MCM module) and show in particular that flops are obtained by blowing up parametrised singularities in parametrised modules. This was conjectured by C. Curto and D. Morrison to hold for the simple flops \cite{cur/mor:13}. Not only does the module add an extra layer of structure to the deformation theory of the singularity, it also seems that the pair can behave better with respect to deformations than the singularity or the module do alone.
In \cite[5.6]{gus/ile:18} we show how Theorem \ref{thm.vers} implies that there exists a versal henselian deformation of a pair (partial resolution of rational surface singularity, strict transform of MCM module). This result is used in the proofs of the Curto-Morrison conjectures.

For the deformation theory it is crucial to have a cohomology of pairs. It should contain both the algebra cohomology and the cohomology of modules as special cases. Our solution is to regard the pair $(A,N)$ as a graded ring $A\oplus N$ with $N^2=0$ and employ graded Andr{\'e}-Quillen cohomology. Section \ref{sec.coh} gives a summary of vital results based on L. Illusie's \cite{ill:71} and the Stacks Project \cite{SP}. 
In Theorem \ref{thm.lang} we establish a long exact sequence linking the $\Ext$ groups and the algebra cohomology with the cohomology groups ${}_{\ccirc\!}\mr{T}^{n}_{\vD/k}(\vD)$ of a pair $\vD=(k\ra A,N)$:
\begin{equation}\label{eq.cohomseq}
\begin{aligned}
0\ra\, & \hm{}{A}{N}{N}\ra \gder{k}{\vD}{\vD}\ra \der{k}{A}{A}\xra{\,\partial\,}\dots 
\\
\ra\, &\xt{n}{A}{N}{N}\ra {}_{\ccirc\!}\mr{T}^{n}_{\vD/k}(\vD)\ra \mr{T}^{n}_{A/k}(A)
\xra{\,\partial\,} \xt{n+1}{A}{N}{N}\ra\dots\,.
\end{aligned}
\end{equation}
From a deformation theoretic perspective we find this sequence rather suggestive. It can be viewed as a cohomological image of the sequence of deformation functors $\df{}{N}\ra\df{}{(A,N)}\ra \df{}{A}$.
The obstruction calculus of the pair is related to the obstruction calculus of the algebra and of the module through maps in \eqref{eq.cohomseq}. 
If $\vG=(S\ra\mc{A},\mc{N})$ is a deformation of the pair $\vD$ and $\pi\co S'\ra S$ is a small extension, there is an obstruction class $\ob(\pi,\vG)\in {}_{\ccirc\!}\mr{T}^{2}_{\vD/k}(\vD)\ot\ker\pi$ for deforming $\vG$ to $S'$. 
The image of $\ob(\pi,\vG)$ in $\mr{T}^{2}_{A/k}(A)\ot\ker\pi$ is the obstruction $\ob(\pi,\mc{A})$ for deforming the algebra $S\ra \mc{A}$ to $S'$. If $\ob(\pi,\mc{A})=0$ and $S'\ra\mc{A}'$ is a deformation of $S\ra \mc{A}$ there is an obstruction class $\ob(S'\ra\mc{A}',\mc{N})\in\xt{2}{A}{N}{N}\ot\ker\pi$ for deforming $\mc{N}$ to $S'$ as $\mc{A}'$-module which maps to $\ob(\pi,\vG)$. There are also compatibilities among the torsor actions. E.g. `adding' an element $\xi\in \mr{T}^{1}_{A/k}(A)\ot\ker\pi$ to $S'\ra\mc{A}'$ gives another deformation $S'\ra\mc{A}^{\dprime}$ of $S\ra \mc{A}$ and $\ob(S'\ra\mc{A}^{\dprime},\mc{N})=\ob(S'\ra\mc{A}',\mc{N})+\partial(\xi)$.
It follows from Theorem \ref{thm.lang} that the Zariski tangent space $\df{}{(A,N)}(k[\vare])$ is finite dimensional in the case both the algebra and the module have isolated singularity which is crucial for proving Theorem \ref{thm.vers}.

We define a cohomological Kodaira-Spencer class $\kappa(\vG/A/\vL)\in {}_{\ccirc\!}\mr{T}^{1}_{\vG/A}(\Omega_{A/\vL}\ot\vG)$ where $\vL\ra A\ra B$ are ring homomorphisms and $\vG=(B,N)$ is a pair. The Kodaira-Spencer class induces a Kodaira-Spencer map 
\begin{equation}
g^{\vG}\!\co \Der_{\vL}(A)\lra{}_{\ccirc\!}\mr{T}^{1}_{\vG/A}(\vG)\,.
\end{equation}
Again the long exact sequence in Theorem \ref{thm.lang} connects the various classes: The restriction maps $\kappa(\vG/A/\vL)$ to $\kappa(B/A/\vL)\in \mr{T}^{1}_{B/A}(\Omega_{A/\vL}\ot B)$. If $\kappa(B/A/\vL)=0$ and $\sigma\co B\ra P^1_{A/\vL}(B)$ is a chosen principal parts splitting there is a class $\kappa(\sigma,N)\in \xt{1}{B}{N}{\Omega_{A/\vL}\ot N}$ which maps to $\kappa(\vG/A/\vL)$. Moreover, $\kappa(\sigma,N)$ is induced by the Atiyah class \(\at_{B\!/\!\vL}({N})\linebreak[1]\in\xt{1}{B}{{N}}{\Omega_{B\!/\!\vL}\ot_{B}{N}}\) which corresponds to $\kappa(\vG/B/\vL)$. In particular the connecting map $\partial\co \hm{}{A}{\Omega_{A/k}}{A}\ra\xt{1}{A}{N}{N}$ in \eqref{eq.cohomseq} is given by $\partial(f)=(f\ot{\id})_*\at_{A/k}({N})$. See Proposition \ref{prop.at} for details. 
We expect Theorem \ref{thm.lang} to have many applications beyond the ones we give here.

As a special case we obtain the obstruction result for extensions of modules over a given extension of algebras in Corollary \ref{cor.ext} (e.g. `classical' deformation theory of modules). We also comment on some flaws in the treatment of this result in one of the standard references \cite{ill:71}; see Remark \ref{rem.ext} and Lemma \ref{lem.ext}. 

The first version of these results were given in \cite{ile:11x}\nocite{ile:19bx}. Y.-T. Siu and G. Trautmann constructed a versal deformation of a coherent analytic sheaf with compact support by complex-analytic techniques and showed that existence of a versal deformation of a pair (compact complex space, coherent analytic sheaf) follows by applying flatifying stratification; see \cite[\S 1 Thm. II]{siu/tra:81}. Since compact support is essential to this argument it cannot be applied to our situation.
L. Huang \cite{hua:95} and K. Chan and Y.-H. Suen \cite{cha/sue:16} have studied the deformation theory of pairs (compact complex manifold, holomorphic vector bundle) in terms a differential graded Lie algebra (DGLA) by complex analytic techniques and in particular showed that the Kuranishi family is versal. Chan and Suen also show compatibility of Kuranishi obstruction maps in the long-exact sequence of global cohomology for the (holomorphic) Atiyah extension analogous to \eqref{eq.cohomseq}.
Based locally on derivations of pairs, D. Iacono and M. Manetti \cite{iac/man:19} have given an algebraic DGLA defined in characteristic zero which `controls' infinitesimal deformations of pairs (smooth projective variety, coherent sheaf). 
Cf. \cite[Appendix A]{cha/sue:16}. 
\subsection*{Applications}\label{sec.appli} 
Suppose $A$ is a Cohen Macaulay ring which is algebraic (or of finite type) over $k$. 
Then $A$ has a canonical (dualising) module $\omega_A$. 
To any finite $A$-module $N$ there are short exact sequences of $A$-modules
\begin{equation}\label{eq.CMseq0}
0\ra L\lra M\lra N\ra 0\qquad\text{and}\qquad 0\ra N\lra L'\lra M'\ra 0\,.
\end{equation}
where $M$ and $M'$ are maximal Cohen-Macaulay (MCM) and $L$ and $L'$ have finite injective dimension (FID). In the local case the sequences are unique. The theory of Cohen-Macaulay approximations was founded by M. Auslander and R.-O. Buchweitz in \cite{aus/buc:89}. The theory was extended to flat families in \cite{ile:12}. 
If \(h\co S\ra \mc{A}\) is a Cohen-Macaulay map (finite type or local) and \(\mc{N}\) an \(S\)-flat finite \(\mc{A}\)-module there are short exact sequences of \(S\)-flat finite \(\mc{A}\)-modules 
\begin{equation}\label{eq.CMseq}
0\ra \mc{L}\lra \mc{M}\xra{\,\;\pi\;\,} \mc{N}\ra 0\qquad\text{and}\qquad 0\ra \mc{N}\xra{\,\;\iota\;\,} \mc{L}'\lra \mc{M}'\ra 0
\end{equation}
such that the fibres of these sequences give Cohen-Macaulay approximation sequences as in \eqref{eq.CMseq0}.
In \cite{ile:21} these sequences were used to define maps 
\begin{equation}\label{eq.sigmaX}
\sigma_X\co\df{}{(A,N)}\lra\df{}{(A,X)}\quad\text{for}\quad X=M, M', L\,\,\text{and}\,\, L'
\end{equation}
of deformation functors of pairs (algebra, module) in the case $k\ra A$ is algebraic.
The applications of the deformation theory in this article are statements about these maps, in particular in the case when the pair $(A,N)$ has an isolated singularity. The flexibility the maps offer of switching to a module with nice homological properties makes such results attractive to the deformation theory of pairs.
\begin{thmA*}[Cf. \ref{thm.defext1} and \ref{thm.defext2}]
Suppose $\df{}{(A,N)}$\textup{,} $\df{}{(A,L')}$ and $\df{}{(A,M)}$ have formally versal elements with minimal base rings $R_{N}$\textup{,} $R_{L'}$ and $R_{M}$\textup{,} respectively\textup{.} 
\begin{enumerate}[leftmargin=2.4em, label=\textup{(\roman*)}]
\item If\, \(\xt{j}{A}{N}{L'}\ra\xt{j}{A}{N}{M'}\) is surjective for \(j=0,1\) then $R_{N}\cong R_{L'}/J$ where the ideal \(J\) is generated by linear forms\textup{.}

\item If\, \(\xt{j}{A}{M}{N}\ra\xt{j}{A}{L}{N}\) is surjective for \(j=0,1\) then $R_{N}\cong R_{M}/J$ where the ideal \(J\) is generated by linear forms\textup{.}
\end{enumerate}
\end{thmA*}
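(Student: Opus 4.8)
The plan is to prove statements (i) and (ii) by a uniform strategy: realise each Cohen-Macaulay approximation sequence as a family over a versal base and then use obstruction-theoretic criteria — a version of the standard "closed immersion / linear section" criterion for maps of versal deformations — together with the long exact sequence of Theorem \ref{thm.lang} to convert the displayed surjectivity hypotheses into the two assertions. Concretely, I would first recall that the map $\sigma_{X}$ of \eqref{eq.sigmaX} is induced by pushing forward a fixed short exact sequence in \eqref{eq.CMseq}, so on a small extension $\pi\co S'\ra S$ it sends a deformation $\vG$ of $(A,N)$ together with a lift of $\mc{N}$ to the corresponding lift of $\mc{X}$ (functorially via $\pi$ or $\iota$). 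The heart of the argument is to show that $\sigma_{L'}$ and $\sigma_{M}$ are, on tangent and obstruction spaces, controlled by the relative cohomology of the morphisms $N\ra L'\ra M'$ and $M\ra L\ra N$.

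**Step 1 (tangent and obstruction spaces).** I would set up, for each of the sequences $0\ra N\ra L'\ra M'\ra 0$ and $0\ra M\ra L\ra N\ra 0$, the long exact sequence in $\Ext^{*}_{A}(N,-)$ (resp. $\Ext^{*}_{A}(-,N)$) and compare it with the long exact sequence \eqref{eq.cohomseq} for the pairs $\vD_{N}=(k\ra A,N)$, $\vD_{L'}=(k\ra A,L')$, $\vD_{M}=(k\ra A,M)$. Because the algebra part $\mr{T}^{n}_{A/k}(A)$ is the same for all three pairs, \eqref{eq.cohomseq} exhibits ${}_{\ccirc\!}\mr{T}^{n}_{\vD_{\bullet}/k}(\vD_{\bullet})$ as a two-term extension with outer terms $\Ext^{n}_{A}(N,\bullet)$ and $\mr{T}^{n}_{A/k}(A)$; chasing the resulting commutative ladder, surjectivity of $\Ext^{j}_{A}(N,L')\ra\Ext^{j}_{A}(N,M')$ for $j=0,1$ forces the map ${}_{\ccirc\!}\mr{T}^{1}_{\vD_{L'}}\ra{}_{\ccirc\!}\mr{T}^{1}_{\vD_{N}}$ induced by $\sigma_{L'}$ to be surjective and the corresponding map on ${}_{\ccirc\!}\mr{T}^{2}$'s to be injective. (For (ii) one uses $\Ext^{*}_{A}(-,N)$ applied to $M\ra L$, the connecting maps again landing in $\Ext^{*+1}_{A}(N,N)$, and the vanishing input $\Ext^{*}(L/M,N)=\Ext^{*}(M',\ldots)$ surjectivity to conclude the same formal consequences.) This is where I expect most of the bookkeeping: one must check that the maps $\sigma_{X}$ really induce on $k[\vare]$-points and on obstruction classes the expected $\Ext$-theoretic maps — this should follow from Proposition \ref{prop.at} and the naturality of the obstruction classes $\ob(\pi,\vG)$ described after \eqref{eq.cohomseq}, but it requires care to identify the precise maps.

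**Step 2 (from the smooth/closed criterion to linear sections).** Once $\sigma_{L'}$ induces a surjection on tangent spaces and an injection on obstruction spaces, a standard deformation-theoretic lemma (the pro-representable analogue of "surjective on $T^{1}$, injective on $T^{2}$ $\Rightarrow$ the induced ring map is surjective with kernel generated by linearly independent forms") shows $R_{N}\cong R_{L'}/J$ with $J$ generated by linear forms; the number of generators is $\dim {}_{\ccirc\!}\mr{T}^{1}_{\vD_{L'}} - \dim {}_{\ccirc\!}\mr{T}^{1}_{\vD_{N}}$, which by the ladder equals $\dim\coker\bigl(\Ext^{1}(N,L')\ra\Ext^{1}(N,M')\bigr)$'s complement, i.e. it is governed by the $\Ext$-data. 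I would state this criterion as a lemma (it is the formal fact underlying obstruction theory: a morphism of complete local rings inducing an isomorphism on $\mathfrak{m}/\mathfrak{m}^{2}$ modulo a subspace and injective on the obstruction space is a quotient by linearly independent forms), cite the relevant facts from Section \ref{sec.coh}, and then read (i) and (ii) off.

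**Main obstacle.** The real work — and the step I expect to be delicate — is Step 1's identification of the functorial maps: namely that $\sigma_{X}$ on the level of pairs cohomology ${}_{\ccirc\!}\mr{T}^{n}_{\vD/k}(\vD)$ is compatible, through \eqref{eq.cohomseq}, with the $\Ext$-long-exact-sequence maps coming from $0\ra N\ra L'\ra M'\ra 0$ (resp. $0\ra M\ra L\ra N\ra 0$), \emph{and} that it intertwines the obstruction classes. Granting Theorem \ref{thm.lang}, Proposition \ref{prop.at}, and the naturality of $\ob(\pi,-)$, this is a diagram chase, but assembling the two long exact sequences into a single commutative diagram that visibly yields "surjective on $T^{1}$ / injective on $T^{2}$" from the hypothesis on $\Ext^{0}$ and $\Ext^{1}$ is the crux; the finite-dimensionality needed to even speak of minimal base rings is exactly the isolated-singularity consequence of Theorem \ref{thm.lang} noted in the introduction.
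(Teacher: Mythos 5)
Your overall strategy is the same as the paper's: reduce the statement to a closed-immersion criterion for maps of (formally) versal base rings, then show that the Cohen-Macaulay approximation maps $\sigma_{L'}$ and $\sigma_{M}$ induce injective tangent maps and compatible obstruction maps, using the displayed surjectivity hypothesis together with the long exact sequence for pairs. The paper packages the closed-immersion criterion as Lemma \ref{lem.res} and the compatibilities as Corollaries \ref{cor.omap} and \ref{cor.cohmap}, and the proof of Theorem \ref{thm.defext1}/\ref{thm.defext2} simply cites these. You also correctly identify the crux: verifying that $\sigma_X$ intertwines tangent vectors and obstruction classes.

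However, there are a few genuine slips that you would need to repair. First, you misstate the criterion in Step~2: \emph{surjective} on $T^1$ and injective on $T^2$ is the smoothness criterion, not the closed-immersion one. For $\sigma_{L'}\co\df{}{(A,N)}\ra\df{}{(A,L')}$ to force $R_N\cong R_{L'}/J$, the correct hypotheses are that the induced map on tangent spaces is \emph{injective}, and the obstruction compatibility $\ob_{(A,L')}=0\Rightarrow\ob_{(A,N)}=0$ holds (guaranteed here by the injectivity of $\sigma^2_2$ and $\sigma^2_2(\ob)=\ob$); this is exactly conditions (i) and (ii) of Lemma \ref{lem.res}. Correspondingly, the direction of your tangent map is reversed: $\sigma_{L'}$ induces ${}_{\ccirc\!}\mr{T}^1_{\vD_N}\ra{}_{\ccirc\!}\mr{T}^1_{\vD_{L'}}$, not a map ${}_{\ccirc\!}\mr{T}^1_{\vD_{L'}}\ra{}_{\ccirc\!}\mr{T}^1_{\vD_N}$; if you intend the dual (cotangent) map then you should say so, and then the corresponding statement for $T^2$ would also have to be dualised to remain consistent, which you have not done.

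Second, and more substantively, your Step~1 does not name the structural ingredient that actually makes the ladder comparison work. You describe ${}_{\ccirc\!}\mr{T}^n_{\vD_\bullet}$ as an extension with outer terms $\xt{n}{A}{N}{\bullet}$ and $\mr{T}^n_{A/k}(A)$, but for $\vD_{L'}$ the module term is $\xt{n}{A}{L'}{L'}$, not $\xt{n}{A}{N}{L'}$. The bridge between the two is Proposition \ref{prop.cohmap}: the push/pull maps along $\iota$ and $\pi$ give $\eta^n_2=(\iota^*)^{-1}\iota_*$ (and $\eta^n_1=(\pi_*)^{-1}\pi^*$), where $\iota^*\co\xt{n}{A}{L'}{L'}\ra\xt{n}{A}{N}{L'}$ (resp.\ $\pi_*\co\xt{n}{A}{M}{M}\ra\xt{n}{A}{M}{N}$) is an \emph{isomorphism} for $n\geq1$ because $\xt{n}{A}{M'}{L'}=0$ (resp.\ $\xt{n}{A}{M}{L}=0$), a vanishing specific to the Cohen-Macaulay approximation and its base-change theory. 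Only then does the commutative ladder \eqref{eq.ladder}, with identical connecting maps from $\mr{T}^n_{A/k}$, yield "$\iota_*$ injective in degrees $1,2$ $\Rightarrow$ $\sigma^n_2$ injective in degrees $1,2$" by a four-lemma chase (Corollary \ref{cor.cohmap}). Without naming this vanishing your comparison of the two long exact sequences does not close up. The hypothesis in Theorem~A ($\xt{j}{A}{N}{L'}\ra\xt{j}{A}{N}{M'}$ surjective for $j=0,1$) is, via the $\Ext$-sequence of $0\ra N\ra L'\ra M'\ra 0$, equivalent to $\iota_*\co\xt{j}{A}{N}{N}\ra\xt{j}{A}{N}{L'}$ injective for $j=1,2$, which is the form needed; this reformulation is worth making explicit.
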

For isolated singularities, Theorem \ref{thm.vers} makes it possible to strengthen several results in \cite{ile:21}. 
\begin{thmB*}[Cf. \ref{thm.defgrade} and \ref{thm.defgrade2}]
Suppose \((A,N)\) has an isolated singularity\textup{.} Then\textup{:}
\begin{enumerate}[leftmargin=2.4em, label=\textup{(\roman*)}]
\item The functors \(\df{}{(A,X)}\) for \(X=N, M, M', L\,\,\text{and}\,\, L'\,\) have versal elements\textup{.}

\item If\, \(\xt{1}{A}{N}{M'}=0\) then the map \(\sigma_{L'}\co \df{}{(A,N)}\ra\df{}{(A,L')}\) is smooth\textup{.}

\item If\, \(\xt{1}{A}{L}{N}=0\) then the map \(\sigma_{M}\co \df{}{(A,N)}\ra\df{}{(A,M)}\) is smooth\textup{.}
\end{enumerate}
\end{thmB*}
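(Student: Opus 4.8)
The plan is to obtain part~(i) directly from Theorem~\ref{thm.vers}, and to prove parts~(ii) and~(iii) by exhibiting the fibres of $\sigma_{L'}$ and $\sigma_{M}$ as relative deformation functors of Quot type whose obstructions are carried by the single $\Ext$-group in the hypothesis.

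For part~(i) I would verify, for each $X\in\{N,M,M',L,L'\}$, that the pair $(A,X)$ satisfies the hypotheses of Theorem~\ref{thm.vers}. The algebra $A$ is unchanged, so it remains algebraic, equidimensional and of isolated singularity, and the only point to check is that $X$ is locally free on the punctured spectrum of $A$. Since $A$ has an isolated singularity, $A_{\fr{p}}$ is regular for every non-maximal prime $\fr{p}$, so the maximal Cohen--Macaulay modules $M$ and $M'$ become free at $\fr{p}$; as $N_{\fr{p}}$ is free by the isolated singularity hypothesis on $(A,N)$, the localisations at $\fr{p}$ of the approximation sequences \eqref{eq.CMseq0} split, so that $L_{\fr{p}}$ is a direct summand of the free module $M_{\fr{p}}$ and $L'_{\fr{p}}\cong N_{\fr{p}}\oplus M'_{\fr{p}}$ --- in every case $X_{\fr{p}}$ is free. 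Hence $(A,X)$ has an isolated singularity and Theorem~\ref{thm.vers} supplies a versal element of $\df{}{(A,X)}$.

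For parts~(ii) and~(iii) I would treat $\sigma_{L'}$ in detail, the case of $\sigma_{M}$ being formally dual. The starting point, from \cite{ile:21} and the behaviour of parametrised Cohen--Macaulay approximation in \cite{ile:12}, is the dictionary: over an Artinian base $S$, a deformation of $(A,N)$ is the same datum as a deformation $(\mc{A},\mc{L}')$ of $(A,L')$ equipped with an $S$-flat $\mc{A}$-submodule $\mc{N}\subseteq\mc{L}'$, with $S$-flat quotient, lifting $N\subseteq L'$; this uses that relative maximal Cohen--Macaulayness and relative finite injective dimension over a Cohen--Macaulay map are fibrewise conditions, so that any such submodule produces the canonical family approximation sequence \eqref{eq.CMseq}. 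Under this dictionary $\sigma_{L'}$ forgets $\mc{N}$, and hence its fibre over a fixed deformation of $(A,L')$ is the relative functor of such submodules $\mc{N}\subseteq\mc{L}'$; along a small extension this functor has tangent space $\hm{}{A}{N}{M'}$ and obstruction space $\xt{1}{A}{N}{M'}$, using $M'\cong L'/N$. Therefore, if $\xt{1}{A}{N}{M'}=0$, every relative lifting problem is unobstructed, that is, $\df{}{(A,N)}(S')\to\df{}{(A,N)}(S)\times_{\df{}{(A,L')}(S)}\df{}{(A,L')}(S')$ is surjective for all small extensions $S'\to S$ (the case $S=k$ giving surjectivity on tangent spaces as well); this is exactly smoothness of $\sigma_{L'}$, and part~(i) furnishes the versal elements that make the statement geometrically meaningful. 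For~(iii) the mirror argument uses the first sequence of \eqref{eq.CMseq0}: a deformation of $(A,N)$ is then a deformation $(\mc{A},\mc{M})$ of $(A,M)$ together with an $S$-flat quotient $\mc{M}\twoheadrightarrow\mc{N}$ with $S$-flat kernel, lifting $M\twoheadrightarrow N$; the fibre of $\sigma_{M}$ is the relative functor of such quotients, its tangent and obstruction spaces are $\hm{}{A}{L}{N}$ and $\xt{1}{A}{L}{N}$, and so $\xt{1}{A}{L}{N}=0$ forces $\sigma_{M}$ to be smooth.

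The step I expect to be the main obstacle is the identification, in parts~(ii) and~(iii), of the relative obstruction theory of $\sigma_{X}$. Two things must be secured: first, that \emph{every} submodule $\mc{N}\subseteq\mc{L}'$ (resp.\ quotient $\mc{M}\twoheadrightarrow\mc{N}$) of the stated flatness type genuinely arises from the canonical parametrised Cohen--Macaulay approximation sequence, which rests on the uniqueness and fibrewise-detection results in \cite{ile:12}; and second, that the obstruction to deforming $\mc{N}$ when the ambient $\mc{L}'$ is being deformed along with it still lies in $\xt{1}{A}{N}{M'}$, with no contribution from the deformation of $\mc{L}'$ --- this is what makes the vanishing of a single $\Ext^{1}$ suffice. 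Part~(i), by contrast, is essentially bookkeeping with localisation once Theorem~\ref{thm.vers} is in hand.
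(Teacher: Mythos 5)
Your treatment of part~(i) matches the paper's: both localise the Auslander--Buchweitz approximation diagram at a non-maximal prime $\fr{p}$, use that $A_{\fr{p}}$ is regular and $N_{\fr{p}}$ is free to deduce that $L_{\fr{p}}, M_{\fr{p}}, L'_{\fr{p}}, M'_{\fr{p}}$ are free, and then invoke Theorem~\ref{thm.vers} for each pair $(A,X)$. (The paper routes the freeness of $L_\fr{p}$ through \cite[3.7]{aus/buc:89}; your direct observation that the sequences split over $A_\fr{p}$ is equivalent and a bit shorter.)

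For parts~(ii) and~(iii) you take a genuinely different route. The paper simply combines part~(i) with the formal-smoothness statements from the companion paper, \cite[5.5]{ile:21} and \cite[5.7]{ile:21}; those in turn rest on the machinery of Section~\ref{sec.cohmap} here --- the maps $\sigma_j^n$ of graded Andr\'e--Quillen cohomology of Proposition~\ref{prop.cohmap}, their compatibility with obstructions and torsor actions in Corollary~\ref{cor.omap}, and the diagram chase in Corollary~\ref{cor.cohmap}. Your proposal instead identifies the fibre of $\sigma_{L'}$ (resp.\ $\sigma_M$) with a relative $\Quot$-type functor of flat submodules of $\mc{L}'$ (resp.\ flat quotients of $\mc{M}$), whose obstruction sits in $\xt{1}{A}{N}{M'}$ (resp.\ $\xt{1}{A}{L}{N}$); the hypothesis kills exactly that group, hence the relative lifting problem is unobstructed. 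This is a valid and more elementary route: it bypasses the graded cotangent-complex formalism entirely, at the cost of having to justify the dictionary carefully. The two points you flag yourself are precisely the right ones to secure, and both do hold, by the uniqueness of minimal relative approximations over Artin bases from \cite{ile:12} and by the standard obstruction theory for $\Quot$ over a square-zero thickening (with $\mc{A}$ and the ambient $\mc{L}'$ fixed at the level of $S'$). One point you gloss over and should make explicit: your argument, as written, only tests small extensions in $\Ar$, i.e.\ it establishes \emph{formal} smoothness. Passing to smoothness over all of $\He$ requires the versal element from part~(i) together with Proposition~\ref{prop.fvers}, exactly as in the proof of Corollary~\ref{cor.vers}; saying that part~(i) ``makes the statement geometrically meaningful'' is too vague --- it is a genuine logical step, not just an amenity.
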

The cohomological conditions in Theorems A and B are crude, but the results are very general and not empty. Several examples are given in Section \ref{sec.frame}. More can of course be said in particular cases, as in Example \ref{ex.dim1dim0} and Corollary \ref{cor.ADE}.

If we in Theorem B (ii) also know that \(\hm{}{A}{N}{M'}=0\), the map \(\sigma_{L'}\) is an isomorphism, as in the following corollary.
\begin{corA*}[Cf. \ref{thm.defgrade}]
Suppose \(A\) has an isolated singularity\textup{,} $\dim A\geq 2$ and $\dim N=0$\textup{.} Then $\sigma_{L'}\co \df{}{(A,N)}\lra \df{}{(A,L')}$ is an isomorphism\textup{.}
\end{corA*}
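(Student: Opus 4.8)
The plan is to read this off Theorem \ref{thm.defgrade} (the full form of Theorem B), whose refinement records that $\sigma_{L'}$ is not only smooth when $\xt{1}{A}{N}{M'}=0$ but an \emph{isomorphism} once in addition $\hm{}{A}{N}{M'}=0$. So the substance of the corollary is the purely homological assertion that, under the hypotheses ``$A$ has an isolated singularity, $\dim A\geq 2$, $\dim N=0$'',
\[
\hm{}{A}{N}{M'}=0\qquad\text{and}\qquad\xt{1}{A}{N}{M'}=0,
\]
together with the observation that Theorem B applies to $(A,N)$ at all. For the latter: $\dim N=0$ means $N$ is supported only at the closed point (resp. at finitely many closed points, in the finite-type case), hence is trivially locally free on the punctured spectrum; combined with the isolated singularity of $A$ this is precisely the statement that the pair $(A,N)$ has an isolated singularity, so Theorem B(i) supplies the versal elements and Theorem B(ii) is available.

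For the vanishings, use the approximation sequence $0\ra N\ra L'\ra M'\ra 0$ of \eqref{eq.CMseq0}, in which $M'$ is maximal Cohen-Macaulay and $L'$ has finite injective dimension. Since $A$ is Cohen-Macaulay, $\dpt{A}M'=\dim M'=\dim A\geq 2$. Because $\dim N=0$, the module $N$ admits a finite filtration whose subquotients are the residue field (after localizing, in the finite-type case, at the finitely many primes of $\Supp N$); d\'evissage along this filtration combined with the depth-sensitivity $\xt{i}{A}{k}{M'}=0$ for $i<\dpt{A}M'$ yields $\xt{i}{A}{N}{M'}=0$ for all $i<\dpt{A}M'$, in particular for $i=0$ and $i=1$. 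This is exactly the point at which the hypothesis $\dim A\geq 2$ is consumed: it is what pushes the vanishing past cohomological degree $0$ into degree $1$.

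It remains to feed the two vanishings into Theorem \ref{thm.defgrade}: $\xt{1}{A}{N}{M'}=0$ gives smoothness of $\sigma_{L'}$ by Theorem B(ii), and the computation of the tangent map there --- which compares the long exact sequences of Theorem \ref{thm.lang} for $(A,N)$ and for $(A,L')$, carrying the algebra terms $\der{k}{A}{A}$ and $\mr{T}^{1}_{A/k}(A)$ of \eqref{eq.cohomseq} identically and reducing the discrepancy to $\hm{}{A}{N}{M'}$ and $\xt{1}{A}{N}{M'}$ --- shows that the additional hypothesis $\hm{}{A}{N}{M'}=0$ makes that tangent map injective; since a smooth morphism of deformation functors is automatically surjective on tangent spaces, it is then bijective, and a smooth morphism inducing a bijection on tangent spaces is an isomorphism. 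The one step demanding genuine care is the bookkeeping between the local and the finite-type settings: one must check that the flat approximation sequences \eqref{eq.CMseq}, the support hypothesis on $N$, and the resulting $\Ext$-vanishings are all compatible with the localizations that reduce the statement to the local isolated-singularity case treated in Theorem \ref{thm.defgrade}. Granting that, the remainder is formal.
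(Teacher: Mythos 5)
Your proposal is correct, and it takes essentially the route the paper intends: the only substantive work beyond citing Theorem~\ref{thm.defgrade}(iii) is verifying that $(A,N)$ has an isolated singularity (immediate from $\dim N=0$, since $N$ then vanishes on the punctured spectrum) and that $\xt{j}{A}{N}{M'}=0$ for $j=0,1$, which you get from depth-sensitivity of $\Ext$ against the MCM module $M'$ combined with $N$ being of finite length; equivalently one notes $\grade(\ann N, M')=\depth M'=\dim A\geq2$. The tangent-space discussion you append is a reasonable reconstruction of why (iii) works, but it is not needed — the vanishing for both $j=0$ and $j=1$ already places you inside the stated hypotheses of (iii).
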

Note that in general $\df{}{(A,L')}\cong\df{}{(A,Q')}$ where $Q':=\hm{}{A}{\omega_A}{L'}$ has finite projective dimension; cf. \eqref{eq.LQ}. A result in \cite{ile:14} allows us to give some peculiar examples. 
\begin{corB*}[Cf. Section \ref{sec.frame}]
Suppose \(A\) has an isolated singularity\textup{,} $A/\fr{m}_A\cong k$ and 
$R\ra\mc{A}$ is a versal family for $\df{}{A}$\textup{.} 
\begin{enumerate}[leftmargin=2.4em, label=\textup{(\roman*)}]
\item If $\dim A\geq 2$ there is a finite module $Q'$ with $\pdim Q'=\dim A$ such that $\mc{A}$ is a versal base ring for $ \df{}{(A,Q')}$\textup{.} 
In particular\textup{,} if $A$ is a complete intersection then $\df{}{(A,Q')}$ is smooth\textup{.}
\item If $\dim A=2$\textup{,} $A$ is a complete intersection and $\mr{T}^1=\mr{T}^1_{A/k}(A)$\textup{,} there is finite module $Q'$ with $\pdim Q'=2$ such that $\df{}{(A,\mr{T}^1)}\cong \df{}{(A,Q')}$\textup{.}
\item If $\dim A= 1$ and $A$ is Gorenstein there is a maximal Cohen-Macaulay module $M$ such that $\mc{A}$ is a versal base ring for $\df{}{(A,M)}$\textup{.}
\end{enumerate}
\end{corB*}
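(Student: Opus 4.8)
The plan is to reduce all three parts to one principle: the deformation theory of the pair $(A,A/\fr{m}_A)$ is that of $A$ marked at its closed point, which is governed by the total space $\mc{A}$ of the given versal family of $A$. So I would first establish $\df{}{(A,A/\fr{m}_A)}\cong h_{\mc{A}}$, then transport this across Cohen--Macaulay approximation: for (i) and (ii) via Corollary~A and \eqref{eq.LQ}, and for (iii) by a different argument, since Corollary~A needs $\dim A\geq2$.

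First I would show $\df{}{(A,A/\fr{m}_A)}\cong h_{\mc{A}}$: a deformation of this pair over an Artinian local $k$-algebra $S$ is an $S$-flat deformation $\mc{A}_S$ of $A$ together with an $S$-flat $\mc{A}_S$-module $\mc{N}$ with $\mc{N}\otimes_S k\cong A/\fr{m}_A$; by Nakayama $\mc{N}$ is cyclic, and $S$-flatness with one-dimensional special fibre forces $\mc{N}\cong\mc{A}_S/I$ with $\mc{A}_S/I\cong S$, so $I$ is the kernel of a retraction $\mc{A}_S\to S$ lifting $A\to A/\fr{m}_A$. Using versality of $R\to\mc{A}$, one checks that such data up to isomorphism is classified by the local $k$-algebra homomorphisms $\mc{A}\to S$, so $\df{}{(A,A/\fr{m}_A)}\cong h_{\mc{A}}$; in particular $\mc{A}$ is a (pro-representing, hence versal) base ring, and the pair $(A,A/\fr{m}_A)$ has an isolated singularity because $A/\fr{m}_A$ is supported at the closed point. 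I expect this identification, or its essential content, to be what is drawn from \cite{ile:14}.

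For (i) and (ii), assume $\dim A\geq2$ and put $N:=A/\fr{m}_A$, resp.\ $N:=\mr{T}^{1}_{A/k}(A)$; in both cases $\dim N=0$, as $\mr{T}^{1}_{A/k}(A)$ has finite length when $A$ has an isolated singularity. Corollary~A gives $\sigma_{L'}\co\df{}{(A,N)}\xra{\sim}\df{}{(A,L')}$, and \eqref{eq.LQ} gives $\df{}{(A,L')}\cong\df{}{(A,Q')}$ with $Q':=\hm{}{A}{\omega_A}{L'}$ of finite projective dimension; composing gives (ii), and also (i) upon invoking the first step. To see $\pdim Q'=\dim A$: from $A/\fr{m}_A\hookrightarrow L'$ we get $\fr{m}_A\in\Ass_A L'=\Supp_A\omega_A\cap\Ass_A L'=\Ass_A Q'$, so $\depth Q'=0$ and hence $\pdim Q'=\depth A=\dim A$ by Auslander--Buchsbaum. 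Finally, if $A=P/(f_1,\dots,f_c)$ is a complete intersection, then $\df{}{(A,Q')}\cong h_{\mc{A}}$ is formally smooth iff $\mc{A}$ is regular; and since $\mr{T}^{1}_{A/k}(A)\thr(A/\fr{m}_A)^{c}$ (the Jacobian entries lie in $\fr{m}_A$), the defining equations of $\mc{A}=P[[t]]/(f_i+\sum_j t_j g_{ij})$ acquire $c$ independent linear terms, so $c$ of the variables can be eliminated and $\mc{A}$ is a power series ring over $k$.

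Part (iii) is the main obstacle. With $\dim A=1$ Corollary~A is unavailable, and moreover the obvious comparison maps out of $\df{}{(A,A/\fr{m}_A)}\cong h_{\mc{A}}$ are not isomorphisms: for the maximal Cohen--Macaulay approximation $0\to L\to M\to A/\fr{m}_A\to0$ the depth lemma forces $\depth L=1=\depth A$, so $L$ (of finite projective dimension) is free and \emph{nonzero}, whence $\sigma_M$ is smooth (Theorem~B(iii), since $\xt{1}{A}{L}{A/\fr{m}_A}=0$) but not \'etale on tangent spaces, so the versal base of $\df{}{(A,M)}$ is a proper quotient of $\mc{A}$ rather than $\mc{A}$ itself. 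So here I would invoke the result of \cite{ile:14} to produce directly a maximal Cohen--Macaulay module $M$ — adapted to the Gorenstein structure ($\omega_A\cong A$) — with $\df{}{(A,M)}\cong\df{}{(A,A/\fr{m}_A)}\cong h_{\mc{A}}$. Establishing that identification is the genuinely non-formal step; the only other point requiring care is the depth computation that pins $\pdim Q'$ to \emph{exactly} $\dim A$ in (i) and (ii) rather than merely to a finite value.
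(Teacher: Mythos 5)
Your treatment of (i) and (ii) is essentially the paper's route and is correct. You correctly reduce both to the combination of Proposition \ref{prop.Ak}, Corollary A ($\sigma_{L'}$ is an isomorphism when $\dim N=0$, $\dim A\geq 2$), and the equivalence \eqref{eq.LQ}. Your supplementary computation that $\pdim Q'=\dim A$ exactly -- via $\fr{m}_A\in\Ass L'$, hence $\fr{m}_A\in\Ass Q'=\Supp\omega_A\cap\Ass L'$, hence $\depth Q'=0$ and Auslander--Buchsbaum -- is correct and fills in a detail the paper treats more implicitly (e.g.\ via the explicit resolutions in \eqref{ex.dim2CMdiag}). Your Jacobian argument for smoothness of $\mc{A}$ in the c.i.\ case is a more hands-on version of Corollary \ref{cor.Ak1}, which instead quotes \cite[6.9]{ile:14}.

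Part (iii) contains a genuine error. You claim that since the MCM approximation $0\to L\to M\to k\to 0$ has $L$ free and nonzero, $\sigma_M$ is ``smooth but not \'etale on tangent spaces, so the versal base of $\df{}{(A,M)}$ is a proper quotient of $\mc{A}$,'' and you conclude the direct approach fails. Both halves of this are wrong. First, even if $\sigma_M$ were merely smooth, applying it to a versal element of $\df{}{(A,\,k)}$ over $\mc{A}$ would yield a versal (not necessarily minimal) element of $\df{}{(A,M)}$ over the \emph{same} ring $\mc{A}$ -- a smooth map of functors does not shrink the base; at worst it makes the base non-minimal. Since Corollary B(iii) only asserts ``versal base ring,'' that alone would already suffice. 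Second, and more to the point, $\sigma_M$ \emph{is} in fact bijective on tangent spaces here: with $M=\fr{m}_A^\vee$ and $L=\omega_A\cong A$, one has $M\ot_A k\cong k^{\oplus 2}$ (for $A$ singular), so $\hm{}{A}{M}{k}\to\hm{}{A}{L}{k}$ is surjective, forcing the connecting map $\hm{}{A}{L}{k}\to\xt{1}{A}{k}{k}$ to vanish, whence $\eta^1_1\colon\xt{1}{A}{k}{k}\to\xt{1}{A}{M}{M}$ is an isomorphism; see Example \ref{ex.dim1dim0}. The paper's actual proof of (iii) is therefore precisely the ``direct'' argument you discard: take $M=\fr{m}_A^\vee$, note $L=\omega_A\cong A$ free so $\xt{1}{A}{L}{k}=0$, apply Theorem~B(iii) ($=$\ref{thm.defgrade2}(ii)) to get $\sigma_M$ smooth, and push the versal element over $\mc{A}$ from Proposition \ref{prop.Ak}(i) through $\sigma_M$. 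Your substitute -- ``invoke \cite{ile:14} to produce directly a maximal Cohen--Macaulay module $M$'' -- is left entirely unspecified and does not constitute a proof.
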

Several of the results are combined to prove Corollary \ref{cor.ADE} which relates the versal base of $\df{}{(A,F)}$ to the versal family of $\df{}{A}$ where $F$ is the fundamental module of a rational double point in dimension two.

For some relevant references on Cohen-Macaulay approximation, see \cite{ile:21}.
\subsection*{Summary of the contents}\label{sec.cont} 
In Section \ref{sec.coh} cohomology of pairs is defined and a cohomology-and-base-change result for graded Andr{\'e}-Quillen cohomology is given in Proposition \ref{prop.bc}. Proposition \ref{prop.ext} gives the obstruction class, the torsor actions and the automorphisms for extensions of graded algebras while Proposition \ref{prop.omap} gives the necessary functoriality of classes and actions. In particular, lllusie's definition of the obstruction is compared to the definition in The Stacks Project \cite{SP}. Corollary \ref{cor.ext} and Lemma \ref{lem.ext} concerns the classical case of extending modules where the algebra extension is given (e.g. trivial). These results culminates with the long exact sequence in Theorem \ref{thm.lang} relating the various obstruction classes and actions. In Section \ref{sec.KS} we define the Kodaira-Spencer class and map for pairs and relates the various classes in Proposition \ref{prop.at}. 
Section \ref{sec.versal1} introduces terminology and recalls results concerning Artin's \cite{art:74},
in particular H. Flenner's condition for formal versality in terms of the category of infinitesimal extensions; see Proposition \ref{prop.ex}, and von Essen's simplification of \cite[3.3]{art:74}; see Proposition \ref{prop.fvers}.
In Section \ref{sec.versal2} we show that the general conditions for using \cite[3.3]{art:74} are fulfilled for our cofibred category of deformations of pairs and prove the versality result Theorem \ref{thm.vers}. Corollary \ref{cor.vers} is an `expected' application: the forgetful $\df{}{(A,N)}\ra \df{}{A}$ is smooth if $\xt{2}{A}{N}{N}=0$. Section \ref{sec.cohmap} contains results for the maps of cohomology obtained by Cohen-Macaulay approximation, while Section \ref{sec.appl} contains the applications to the induced maps of deformation functors. Section \ref{sec.frame} introduces deformations of framings. Proposition \ref{prop.Ak} shows that forgetting the framing is a smooth map. Examples follow from a description of the versal base for $\df{}{(A,k)}$ and from K{\"a}hler differentials. The syzygy induces a map of $\Ext$s and in Section \ref{sec.ADE} we show that it anti-commutes with the Kodaira-Spencer maps. Finally Corollary \ref{cor.ADE} compares versal bases of $\df{}{(A,F)}$ and $\df{}{(A,k)}$ if $F$ is the fundamental module of a $2$-dimensional rational double point.

All rings are commutative with $1$-element. 
\section{Cohomology of pairs and obstruction theory}\label{sec.coh}
We will need some results concerning the cohomology of pairs $(B,N)$ where $B$ is an $A$-algebra and $N$ is a $B$-module. Then \(\vG=B{\oplus}N\) is a graded \(A\)-algebra with \(B\) in degree \(0\) and \(N\) in degree \(1\). Maps of pairs \((B_{1},N_{1})\ra (B_{2},N_{2})\) correspond to maps of 
graded $A$-algebras \(\vG_1\ra \vG_2\). We generalise and consider a homogeneous morphism of graded rings \(g\co A\ra \vG\) (always \(\BB{Z}_{\geq 0}\)-graded) and a ($\BB{Z}$\,-)graded \(\vG\)-module \(J\). There are Andr{\'e}-Quillen cohomology groups
\begin{equation}
\gxt{i}{\vG}{L^{\text{gr}}_{\vG/A}}{J}=\hm{}{D(\gmod_{\vG})}{L^{\text{gr}}_{\vG/A}}{J[i]}
\end{equation}
which also is called graded algebra cohomology.
Here \(L^{\text{gr}}_{\vG/A}\) is the graded cotangent complex defined as \(\Omega_{P/A}\ot_{P}\vG\) where \(P=P^{\text{gr}}_{A}(\vG)\) is a graded simplicial degree-wise free \(A\)-algebra resolution of \(\vG\) and \(\Omega_{P/S}\) denotes the associated complex of K{\"a}hler differentials. To a \(\BB{Z}_{\geq 0}\)-graded set \(U\) there is a `graded free' \(A\)-algebra \(A[U]\) -- the polynomial ring over \(A\) with free variables \(x_{u}\) indexed by elements \(u\in U\) with \(\deg x_{u}=\deg u\). In particular there is a canonical graded \(A\)-algebra (augmentation) map \(A[\vG]\ra \vG\) defined by \(x_{\gamma}\mapsto \gamma\) with kernel \(K\). Iterating the procedure gives
\begin{equation}
P^{\text{gr}}_{A}(\vG)\co\hspace{1em}
\xymatrix@C+0pt@R-8pt@H-0pt{
\cdots\hspace{0.5em} A[A[A[\vG]]]\ar@<1.6ex>[r]\ar[r]\ar@<-1.6ex>[r] & A[A[\vG]]\ar@<0.8ex>[l]\ar@<-0.8ex>[l]\ar@<0.8ex>[r]\ar@<-0.8ex>[r] & A[\vG]\ar[l]
}
\end{equation}
which is functorial in $A\ra \vG$; see \cite[Chap. IV, 1.3]{ill:71}. 
The two term complex \(K/K^{2}\ra\Omega_{A[\vG]/A}\ot_{A[\vG]}\vG\) obtained from the augmentation map is denoted by \(\NL^{\text{gr}}_{\vG/A}\). There is a natural map of complexes \(L^{\text{gr}}_{\vG/A}\ra\NL^{\text{gr}}_{\vG/A}\) 
which induces the equivalence \(\tau^{\geq -1}L^{\text{gr}}_{\vG/A}\simeq\NL^{\text{gr}}_{\vG/A}\); cf. \cite[\href{https://stacks.math.columbia.edu/tag/08RB}{Tag 08RB}]{SP}. 
If \(A\ra B\ra \vG\) are maps of graded rings then there is a distinguished triangle of transitivity in \(D(\gmod_{\vG})\): 
\begin{equation}\label{eq.dist}
L^{\tn{gr}}_{\vG/B/A}\co\hspace{1em}
L^{\tn{gr}}_{B/A}\ot^{\BB{L}}_{B}\vG\lra L^{\tn{gr}}_{\vG/A}\lra L^{\tn{gr}}_{\vG/B}\lra L^{\tn{gr}}_{B/A}\ot^{\BB{L}}_{B}\vG[1]
\end{equation}
See \cite[Chap. IV, 2.3]{ill:71}.
Note that the \(\gxt{i}{\vG}{L^{\text{gr}}_{\vG/A}}{J}\) are \(\vG_{0}\)-modules, but in general not \(\vG\)-modules. However, 
\begin{equation}
\sgxt{i}{\vG}{L^{\text{gr}}_{\vG/A}}{J}:=\bigoplus_{n\in\BB{Z}}\gxt{i}{\vG}{L^{\text{gr}}_{\vG/A}}{J[n]}
\end{equation}
is a \(\vG\)-module. It is closely related to the global Andr{\'e}-Quillen cohomology of \(\Proj \vG\); cf. H.C. Pinkham's \cite{pin:74} and J. Kleppe's \cite{kle:79}.
 
If \(\vG\) is a finitely generated \(A\)-algebra, \(A\) is Noetherian and \(J\) is finite as \(\vG\)-module, then \(\sgxt{i}{\vG}{L^{\text{gr}}_{\vG/A}}{J}\) is finite as \(\vG\)-module and equals the ungraded $\xt{i}{\vG}{L^{\text{gr}}_{\vG/A}}{J}$. See \cite[\href{https://stacks.math.columbia.edu/tag/08PV}{Tag 08PV}]{SP}. 
\begin{lem}[{cf. \cite[\href{https://stacks.math.columbia.edu/tag/08QQ}{Tag 08QQ}]{SP}}]\label{lem.bc}
Given a commutative diagram of graded ring homomorphisms 
\begin{equation*}
\xymatrix@C+0pt@R-6pt@H-0pt{
\vG' & \vG\ar[l]
\\
A'\ar[u] & A\ar[l]\ar[u]
}
\end{equation*}
such that the induced map \(\vG\ot_{A} A'\ra\vG'\) is an isomorphism and \(\tor{A}{i}{\vG}{A'}=0\) for all \(i>0\)\textup{.} Then the natural map of complexes \(L^{\tn{gr}}_{\vG/A}\ot^{\BB{L}}_{\!A}A'\ra L^{\tn{gr}}_{\vG'/A'}\) is an isomorphism in \(D(\gmod_{\vG})\)\textup{.}
\end{lem}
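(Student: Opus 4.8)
\emph{Proof sketch.} The plan is to run the standard argument for the ungraded statement \cite[\href{https://stacks.math.columbia.edu/tag/08QQ}{Tag 08QQ}]{SP} in the graded setting, keeping track of the grading throughout. Write $P=P^{\tn{gr}}_{A}(\vG)$ for the canonical degree-wise graded-free simplicial $A$-algebra resolution, so that $L^{\tn{gr}}_{\vG/A}$ is represented by the simplicial $\vG$-module $\Omega_{P/A}\ot_{P}\vG$. Each term has the form $P_{n}=A[U_{n}]$ for a $\BB{Z}_{\geq 0}$-graded set $U_{n}$, so $\Omega_{P_{n}/A}$ is graded-free over $P_{n}$ on the symbols $dx_{u}$, and consequently $\Omega_{P/A}$ is degree-wise graded-free over $P$ while $\Omega_{P/A}\ot_{P}\vG$ is degree-wise graded-free over $\vG$. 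In particular this representative of $L^{\tn{gr}}_{\vG/A}$ is term-wise flat, so the derived tensor product $L^{\tn{gr}}_{\vG/A}\ot^{\BB{L}}_{\vG}(-)$ is computed by the ordinary tensor product $(\Omega_{P/A}\ot_{P}\vG)\ot_{\vG}(-)$.

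Next I would base change the resolution. Put $P':=P\ot_{A}A'$. Since $A[U]\ot_{A}A'=A'[U]$ for every graded set $U$, the simplicial graded $A'$-algebra $P'$ is again degree-wise graded-free, and the augmentation $P\ra\vG$ together with the isomorphism $\vG\ot_{A}A'\xra{\sim}\vG'$ induces a map $P'\ra\vG'$. The crucial point is that $P'\ra\vG'$ is still a resolution. Regarding $P$ as a simplicial object of $\gmod_{A}$, each $P_{n}$ is graded-free, hence flat over $A$, so by Dold--Kan its normalized chain complex is a flat resolution of $\pi_{0}P=\vG$ in $\gmod_{A}$; therefore $\pi_{n}(P')=\pi_{n}(P\ot_{A}A')=\tor{A}{n}{\vG}{A'}$ (graded $\Tor$ in $\gmod_{A}$). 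By hypothesis this is $\vG\ot_{A}A'\cong\vG'$ for $n=0$ and vanishes for $n>0$, so $P'\ra\vG'$ is a graded simplicial degree-wise graded-free $A'$-algebra resolution of $\vG'$. Hence $L^{\tn{gr}}_{\vG'/A'}$ is represented by $\Omega_{P'/A'}\ot_{P'}\vG'$, since the graded cotangent complex does not depend on the chosen degree-wise graded-free simplicial resolution (cf.\ \cite{ill:71}).

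It remains to identify $\Omega_{P'/A'}\ot_{P'}\vG'$ with $L^{\tn{gr}}_{\vG/A}\ot^{\BB{L}}_{A}A'$. Base change of K\"ahler differentials for graded polynomial algebras, applied degree-wise in the simplicial direction, gives $\Omega_{P'/A'}=\Omega_{P\ot_{A}A'/A'}=\Omega_{P/A}\ot_{A}A'=\Omega_{P/A}\ot_{P}P'$, whence
\[
\Omega_{P'/A'}\ot_{P'}\vG'=\Omega_{P/A}\ot_{P}\vG'=\bigl(\Omega_{P/A}\ot_{P}\vG\bigr)\ot_{\vG}\vG'\,.
\]
By the first paragraph the right-hand side represents $L^{\tn{gr}}_{\vG/A}\ot^{\BB{L}}_{\vG}\vG'$; and since the hypotheses say precisely that $\vG'=\vG\ot^{\BB{L}}_{A}A'$, this object is $L^{\tn{gr}}_{\vG/A}\ot^{\BB{L}}_{A}A'$. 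Tracing the identifications back through the functoriality map $P'\ra P^{\tn{gr}}_{A'}(\vG')$ shows that the resulting isomorphism is the natural base-change map in the statement.

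Apart from one step everything here is routine: the identities $A[U]\ot_{A}A'=A'[U]$ and $\Omega_{A[U]/A}\ot_{A}A'=\Omega_{A'[U]/A'}$, and the flatness bookkeeping that lets one replace derived by ordinary tensor products. The step carrying the actual content is the middle one, that $P\ot_{A}A'\ra\vG'$ remains a resolution; this is exactly where both hypotheses enter --- the isomorphism $\vG\ot_{A}A'\cong\vG'$ identifies $\pi_{0}$, while the vanishing $\tor{A}{i}{\vG}{A'}=0$ for $i>0$ kills the higher homotopy, and without it the base-changed complex would no longer resolve $\vG'$ and the comparison would fail. \qed
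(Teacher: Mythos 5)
Your proof is correct and is precisely the argument the paper invokes by its citation to \cite[Tag 08QQ]{SP}: the paper gives no separate proof, expecting the reader to transport the Stacks Project argument to the graded setting, and your write-up does exactly that, with the grading bookkeeping made explicit. The one place that deserves an extra sentence is the passage from $L^{\tn{gr}}_{\vG/A}\ot^{\BB{L}}_{\vG}\vG'$ to $L^{\tn{gr}}_{\vG/A}\ot^{\BB{L}}_{A}A'$: since $\vG$ is not assumed $A$-flat, the term-wise $\vG$-free representative $\Omega_{P/A}\ot_{P}\vG$ is not automatically a term-wise $A$-flat complex, so this step is not a direct base change but rests on the cancellation $M\ot^{\BB{L}}_{B}(B\ot^{\BB{L}}_{A}N)\simeq M\ot^{\BB{L}}_{A}N$ for a $B$-module $M$ (resolve $N$ by $A$-flats and $M$ by $B$-flats and compare the double complex). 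You implicitly use this; it is true and standard, but worth flagging since it is exactly the point where the weaker Tor-vanishing hypothesis, rather than flatness of $A\ra A'$, is used.
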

Suppose \(\vG\) is \(A\)-flat and \(J'\) is a graded \(\vG'\)-module. Lemma \ref{lem.bc} implies that there is an isomorphism of \(\vG'\)-modules: 
\begin{equation}\label{eq.e}
\sgxt{n}{\vG'}{L^{\tn{gr}}_{\vG'/A'}}{J'}\xra{\;\simeq\;}\sgxt{n}{\vG}{L^{\tn{gr}}_{\vG/A}}{J'}
\end{equation}
If \(I'\) is a graded \(A'\)-module, there is an exchange map (cf. \cite[Sec. 4]{ogu/ber:72} or \cite[7.2.2]{EGAIII2}):
\begin{equation}\label{eq.exc}
e^{n}_{I'}\co \sgxt{n}{\vG}{L^{\tn{gr}}_{\vG/A}}{J}\ot_{A}I'\lra\sgxt{n}{\vG}{L^{\tn{gr}}_{\vG/A}}{J\ot_{A}I'}
\end{equation}
Composing \(e_{I'}^{i}\) with the inverse of \eqref{eq.e} gives the base change map
\begin{equation}
c^{n}_{I'}\co \sgxt{n}{\vG}{L^{\tn{gr}}_{\vG/A}}{J}\ot_{A}I'\lra\sgxt{n}{\vG'}{L^{\tn{gr}}_{\vG'/A'}}{J\ot_{A}I'}
\end{equation} 
\begin{prop}\label{prop.bc}
Suppose \(A\) is a local noetherian ring \textup{(}concentrated in degree 0\textup{)} with residue field \(k\)\textup{,} \(\vG\) is a graded and flat \(A\)\textup{-}algebra and \(J\) is an \(A\)-flat\textup{,} finite and graded \(\vG\)-module\textup{.} Assume that the following finiteness condition holds for all \(i\)\textup{:} 
\begin{itemize}[leftmargin=2.4em, label=\((\ast)\)]
\item \(\sgxt{i}{\vG}{L^{\tn{gr}}_{\vG/A}}{J\ot_{\!A} I}\) is a finite \(\vG\)-module for all finite \(A\)-modules \(I\)\textup{.}
\end{itemize}
Suppose the base change map
\begin{equation*}
c^{n}_{k}\co \sgxt{n}{\vG}{L^{\tn{gr}}_{\vG/A}}{J}\ot_{\!A} k\lra\sgxt{n}{\vG\ot k}{L^{\tn{gr}}_{\vG\ot k/k}}{J\ot_{\!A} k}
\end{equation*}
is surjective\textup{.} Then\textup{:}
\begin{enumerate}[leftmargin=2.4em, label=\textup{(\roman*)}]
\item For all maps of local rings \(A\ra A'\) and \(A'\)-modules \(I'\)\textup{,} the base change map \(c_{I'}^{n}\) is an isomorphism\textup{.}
\item The following statements are equivalent\textup{:}
\begin{enumerate}[leftmargin=1.8em, label=\textup{(\alph*)}]
\item \(c_{k}^{n-1}\) is surjective\textup{.}
\item The \(\vG\)-module \(\sgxt{n}{\vG}{L^{\tn{gr}}_{\vG/A}}{J}\) is \(A\)-flat\textup{.}
\end{enumerate} 
\end{enumerate}
\end{prop}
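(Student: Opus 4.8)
The plan is to carry Grothendieck's cohomology-and-base-change theorem \cite[\S\,7]{EGAIII2} over to graded Andr{\'e}-Quillen cohomology. Put $L:=L^{\tn{gr}}_{\vG/A}$ and, for an $A$-module $M$, $T^{n}(M):=\sgxt{n}{\vG}{L}{J\ot_{A}M}$; since $J$ is $A$-flat this is a covariant $\delta$-functor from $A$-modules to $\vG$-modules, and by \eqref{eq.exc} together with Lemma \ref{lem.bc} and \eqref{eq.e} the morphism $c^{n}_{I'}$ is the canonical comparison map $T^{n}(A)\ot_{A}I'\ra T^{n}(I')$ for this $\delta$-functor (the $A$-flatness of $\vG$ enters here, to identify cohomology computed over $A'$ with cohomology computed over $A$ with $A'$-module coefficients). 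The first step is to replace $T^{\bullet}$ by a single bounded complex. Since $\vG$ is of finite type over the Noetherian ring $A$, the graded cotangent complex $L$ is pseudo-coherent over $\vG$, hence quasi-isomorphic to a bounded-above complex $F^{\bullet}$ of finite free graded $\vG$-modules; fix $m>n+1$ and set $C^{\bullet}:=\sHom_{\vG}(\sigma_{\geq -m}F^{\bullet},J)$. Its terms are finite direct sums of copies of $J$, hence $A$-flat and $\vG$-finite, and since each term of $\sigma_{\geq -m}F^{\bullet}$ is finite free over $\vG$ one has $\sHom_{\vG}(\sigma_{\geq -m}F^{\bullet},J\ot_{A}M)=C^{\bullet}\ot_{A}M$ naturally in $M$. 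Comparing with $F^{\bullet}$, and using \cite[Tag 08PV]{SP} to pass between graded $\sgExt$ and ungraded $\Ext$, one obtains $T^{i}(M)=\cH^{i}(C^{\bullet}\ot_{A}M)$ for all $i\leq n+1$ and all $A$-modules $M$, with $c^{i}_{M}$ equal to the tautological map $\cH^{i}(C^{\bullet})\ot_{A}M\ra\cH^{i}(C^{\bullet}\ot_{A}M)$.

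Everything is now a homological statement about the bounded complex $C^{\bullet}$ of $A$-flat, $\vG$-finite modules. Writing $B^{i}=\im d^{i-1}$, $Z^{i}=\ker d^{i}$, $W^{i}=\coker d^{i-1}$ and $H^{i}=\cH^{i}(C^{\bullet})$, the short exact sequences $0\ra B^{i}\ra C^{i}\ra W^{i}\ra 0$ and $0\ra H^{i}\ra W^{i}\ra B^{i+1}\ra 0$ combined with the $A$-flatness of the $C^{i}$ give, by the standard long-exact-sequence chase, natural isomorphisms $\coker(c^{n}_{M})\cong\Tor^{A}_{1}(W^{n+1},M)$ and $\ker(c^{n}_{M})\cong\im\!\big(\Tor^{A}_{1}(B^{n+1},M)\ra H^{n}\ot_{A}M\big)$ for every $A$-module $M$. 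In particular $c^{n}_{k}$ is surjective iff $\Tor^{A}_{1}(W^{n+1},k)=0$; granting that this forces the $\vG$-finite module $W^{n+1}$ to be $A$-flat (the criterion discussed below), it follows first that $B^{n+1}$ is $A$-flat (being the kernel of a surjection $C^{n+1}\ra W^{n+1}$ of $A$-flat modules), hence that $\Tor^{A}_{1}(W^{n+1},M)=0=\Tor^{A}_{1}(B^{n+1},M)$ for all $M$, hence that $c^{n}_{M}$ is an isomorphism for all $M$ --- taking $M=I'$ gives part (i). For part (ii), under the standing vanishing $\Tor^{A}_{1}(W^{n+1},k)=0$: the map $c^{n-1}_{k}$ is surjective iff $\Tor^{A}_{1}(W^{n},k)=0$, which by the criterion is equivalent to $A$-flatness of $W^{n}$; and because $B^{n+1}$ is already $A$-flat, the sequence $0\ra H^{n}\ra W^{n}\ra B^{n+1}\ra 0$ shows $W^{n}$ is $A$-flat iff $H^{n}=T^{n}(A)=\sgxt{n}{\vG}{L}{J}$ is $A$-flat. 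This is the equivalence of (a) and (b).

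The step I expect to be the main obstacle is the criterion invoked twice above: for a $\vG$-finite module $Q$, ``$\Tor^{A}_{1}(Q,k)=0$ implies $Q$ is $A$-flat''. This is the local criterion of flatness, but its classical form requires $Q$ to be finite over the base $A$, whereas here --- and this is exactly what the finiteness hypothesis $(\ast)$ is responsible for --- the modules $W^{n+1},W^{n}$ are only known to be finite over $\vG$, and over $A$ they are usually not finite at all (already $T^{0}(A)$ need not be). One gets around this by testing $A$-flatness of $Q$ prime by prime on $\Spec\vG$: localization is exact, so $\Tor^{A}_{1}(Q_{\fr q},k)=0$ for every prime $\fr q\sbeq\vG$, and for $\fr q$ lying over $\fr m_{A}$ the homomorphism $A\ra\vG_{\fr q}$ is local with $Q_{\fr q}$ a finite $\vG_{\fr q}$-module, so the local flatness criterion applies directly and yields $A$-flatness of $Q_{\fr q}$. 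For the remaining primes one reduces to this case; the clean route --- and the only one the deformation-theoretic applications require --- is to reduce to $A$ Artinian, where $\fr m_{A}$ is nilpotent and ``$\Tor^{A}_{1}(Q,k)=0\Rightarrow Q$ $A$-flat'' holds for an arbitrary $\vG$-module $Q$ by d{\'e}vissage along composition series of the finite-length coefficient modules, the passage to general Noetherian local $A$ being the routine limit over the quotients $A/\fr m_{A}^{r}$. Once this criterion is in hand, the bookkeeping of the previous paragraph delivers (i) and (ii).
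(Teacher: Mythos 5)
Your route is the classical ``Grothendieck complex'' technique from EGA~III: represent the $\delta$-functor $T^{\bullet}(M)=\sgxt{\bullet}{\vG}{L^{\tn{gr}}_{\vG/A}}{J\ot_{A}M}$ by a bounded complex $C^{\bullet}$ of $A$-flat $\vG$-finite modules, and then reduce everything to $\Tor$-computations for $C^{\bullet}$ plus a local criterion of flatness. The paper does something quite different: it observes that $\{T^{n}\}$ together with the hypothesis $(\ast)$ is exactly the input data of Ogus--Bergman's framework \cite[5.1--2]{ogu/ber:72}, and cites that result. The Ogus--Bergman machinery (``Nakayama's lemma for half-exact functors'') is designed precisely so that one does \emph{not} have to produce a Grothendieck complex; it works directly with the $\delta$-functor under the finiteness condition $(\ast)$.

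This difference matters here, because your very first step has a gap. You assert that ``$\vG$ is of finite type over the Noetherian ring $A$, [so] the graded cotangent complex $L$ is pseudo-coherent over $\vG$,'' and you use this to produce the bounded-above complex $F^{\bullet}$ of finite free graded $\vG$-modules. But finite type is not among the hypotheses of the proposition; the hypothesis is the weaker condition $(\ast)$, which is a finiteness statement about the \emph{values} of the functor, not about $L$ itself, and does not imply pseudo-coherence of $L^{\tn{gr}}_{\vG/A}$. This is not an oversight in the statement: in the intended applications (see Lemma~\ref{lem.ft}), $\vG=\vG^{\tn{f}}\ot_{B^{\tn{f}}}B$ with $B$ a Zariski localisation or a henselisation of $B^{\tn{f}}$, so $\vG$ is typically \emph{not} of finite type over $A$, even though $(\ast)$ holds. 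So the reduction to a complex $C^{\bullet}$ is not available in the generality the proposition is stated and used in. (Secondarily, the ``local criterion of flatness for a $\vG$-finite $Q$'' that you flag as the main obstacle is itself the core content of Ogus--Bergman's Nakayama lemma; your sketch --- localise at primes over $\fr{m}_A$, then reduce to Artinian $A$ and take a ``routine limit'' --- is exactly the place where the hard work lives, and as written it is not a proof.) If one is willing to assume $\vG$ finite type over $A$ then your argument essentially reproduces the EGA~III$_2$~\S7 method and would go through; to cover the actual hypotheses one must either follow Ogus--Bergman as the paper does, or supply a genuine substitute for the representability step.
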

\begin{proof}
Put $F^{n}(I)=\sgxt{n}{\vG}{L^{\tn{gr}}_{\vG/A}}{J\ot_{\!A} I}$. Since \(J\) is \(A\)-flat \(\{F^{n}\}_{n\in\BB{Z}}\) is a \(\delta\)-functor satisfying the conditions of \cite[5.1-2]{ogu/ber:72} which gives the result.
\end{proof}
\begin{lem}\label{lem.ft}
Suppose \(A\) is a local noetherian ring \textup{(}concentrated in degree 0\textup{),} \(\vG^{\tn{f}}\) is a graded and flat \(A\)-algebra\textup{,} \(J^{\tn{f}}\) a graded and finite \(\vG^{\tn{f}}\)-module\textup{.} Let \(B^{\tn{f}}\) denote the degree \(0\)-part \(\vG^{\tn{f}}_{0}\) and suppose \(B^{\tn{f}}\ra B\) is a flat ring homomorphism\textup{.} Put \(\vG=\vG^{\tn{f}}\ot_{B^{\tn{f}}}B\) and \(J=J^{\tn{f}}\ot_{B^{\tn{f}}}B\)\textup{.} 
Assume that the following condition holds\textup{:}
\begin{itemize}[leftmargin=2.4em, label=\((\ast\ast)\)]
\item The natural map \(L_{B^{\tn{f}}/A}\ot_{B^{\tn{f}}}B\lra L_{B/A}\) is an equivalence\textup{.}
\end{itemize}
Then\textup{:} 
\begin{enumerate}[leftmargin=2.4em, label=\textup{(\roman*)}]
\item The natural maps 
\begin{equation*}
\sgxt{i}{\vG}{L^{\tn{gr}}_{\vG/A}}{J}\lra \sgxt{i}{\vG^{\tn{f}}}{L^{\tn{gr}}_{\vG^{\tn{f}}/A}}{J}\lla\sgxt{i}{\vG^{\tn{f}}}{L^{\tn{gr}}_{\vG^{\tn{f}}/A}}{J^{\tn{f}}}\ot_{B^{\tn{f}}}B
\end{equation*}  
are isomorphisms of \(\vG\)-modules for all \(i\)\textup{.} 
\item If \((\vG^{\tn{f}},J^{\tn{f}})\) satisfies the finiteness condition \((\ast)\) in \textup{Proposition \ref{prop.bc},} so does \((\vG,J)\)\textup{.} 
\end{enumerate}
In particular\textup{,} \((\ast\ast)\) and \((\ast)\) are satisfied if \(\vG^{\tn{f}}\) is of finite type as \(A\)-algebra and \(B^{\tn{f}}\ra B\) is a Zariski localisation or a henselisation\textup{.}
\end{lem}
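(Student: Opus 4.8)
The statement to be proved is Lemma \ref{lem.ft}: given a local noetherian $A$, a graded flat $A$-algebra $\vG^{\tn{f}}$ with degree-$0$ part $B^{\tn{f}}$, a flat ring map $B^{\tn{f}}\ra B$, and the base-changed data $\vG=\vG^{\tn{f}}\ot_{B^{\tn{f}}}B$, $J=J^{\tn{f}}\ot_{B^{\tn{f}}}B$, assuming $(\ast\ast)$ that $L_{B^{\tn{f}}/A}\ot_{B^{\tn{f}}}B\ra L_{B/A}$ is an equivalence, we must prove (i) the three displayed natural maps on $\sgExt$ are isomorphisms of $\vG$-modules, (ii) the finiteness condition $(\ast)$ passes from $(\vG^{\tn{f}},J^{\tn{f}})$ to $(\vG,J)$, and finally the ``in particular'' clause that $(\ast\ast)$ and $(\ast)$ hold when $\vG^{\tn{f}}$ is of finite type over $A$ and $B^{\tn{f}}\ra B$ is a Zariski localisation or a henselisation. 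My plan is to identify the two isomorphisms in (i) with two instances of a base-change result: the left-hand one with Lemma \ref{lem.bc} (or the isomorphism \eqref{eq.e}) applied to the square with vertical maps $A\ra A$ and corners $\vG^{\tn{f}},\vG$, and the right-hand one with the exchange/base-change map $c^{n}_{B}$ of \eqref{eq.exc}–and the surjectivity hypothesis of Proposition \ref{prop.bc}, in the form that flat base change on the coefficient module is an isomorphism.

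\emph{Step 1 (the transitivity reduction via $(\ast\ast)$).} The key point is that the \emph{relative} cotangent complex $L^{\tn{gr}}_{\vG/A}$ only sees the algebra structure over $A$, and that $B^{\tn{f}}\ra B$ being flat with $(\ast\ast)$ means $L^{\tn{gr}}_{\vG^{\tn{f}}/A}\ot^{\BB{L}}_{B^{\tn{f}}}B\simeq L^{\tn{gr}}_{\vG/A}$. Indeed, applying the base-change Lemma \ref{lem.bc} to the square
\begin{equation*}
\xymatrix@C+0pt@R-6pt@H-0pt{
\vG & \vG^{\tn{f}}\ar[l]
\\
B & B^{\tn{f}}\ar[l]\ar[u]
}
\end{equation*}
(so now the ``$A$'' of the lemma is $B^{\tn{f}}$, the ``$A'$'' is $B$, and flatness of $B^{\tn{f}}\ra B$ gives the $\Tor$-vanishing and the isomorphism $\vG^{\tn{f}}\ot_{B^{\tn{f}}}B\xra{\sim}\vG$) yields $L^{\tn{gr}}_{\vG^{\tn{f}}/B^{\tn{f}}}\ot^{\BB{L}}_{B^{\tn{f}}}B\simeq L^{\tn{gr}}_{\vG/B}$. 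Feeding this together with $(\ast\ast)$ into the transitivity triangle \eqref{eq.dist} for $A\ra B^{\tn{f}}\ra\vG^{\tn{f}}$ tensored over $B^{\tn{f}}$ with $B$, versus the triangle for $A\ra B\ra\vG$, the three-by-three comparison of triangles forces $L^{\tn{gr}}_{\vG^{\tn{f}}/A}\ot^{\BB{L}}_{B^{\tn{f}}}B\simeq L^{\tn{gr}}_{\vG/A}$. Since $J^{\tn{f}}$ is a $\vG^{\tn{f}}$-module and $B^{\tn{f}}\ra B$ is flat, $J=J^{\tn{f}}\ot_{B^{\tn{f}}}B$ is just $J^{\tn{f}}$ viewed with scalars extended; standard adjunction/flat-base-change for $\RHom$ then gives the right-hand isomorphism $\sgxt{i}{\vG^{\tn{f}}}{L^{\tn{gr}}_{\vG^{\tn{f}}/A}}{J^{\tn{f}}}\ot_{B^{\tn{f}}}B\xra{\sim}\sgxt{i}{\vG^{\tn{f}}}{L^{\tn{gr}}_{\vG^{\tn{f}}/A}}{J}$, and the previous equivalence of cotangent complexes gives the left-hand isomorphism $\sgxt{i}{\vG}{L^{\tn{gr}}_{\vG/A}}{J}\xra{\sim}\sgxt{i}{\vG^{\tn{f}}}{L^{\tn{gr}}_{\vG^{\tn{f}}/A}}{J}$. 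One must check the maps so produced are the \emph{natural} ones and are $\vG$-linear, but this is formal once the sources are tracked through the adjunctions.

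\emph{Step 2 (finiteness $(\ast)$ descends/ascends).} For part (ii): given a finite $A$-module $I$, we need $\sgxt{i}{\vG}{L^{\tn{gr}}_{\vG/A}}{J\ot_A I}$ finite over $\vG$. By Step 1 (applied with coefficient module $J^{\tn{f}}\ot_A I$ in place of $J^{\tn{f}}$, noting $J\ot_A I=(J^{\tn{f}}\ot_A I)\ot_{B^{\tn{f}}}B$) this module is $\sgxt{i}{\vG^{\tn{f}}}{L^{\tn{gr}}_{\vG^{\tn{f}}/A}}{J^{\tn{f}}\ot_A I}\ot_{B^{\tn{f}}}B$; by $(\ast)$ for $(\vG^{\tn{f}},J^{\tn{f}})$ the first factor is finite over $\vG^{\tn{f}}$, hence its base change is finite over $\vG^{\tn{f}}\ot_{B^{\tn{f}}}B=\vG$.

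\emph{Step 3 (the ``in particular'' clause).} Finally, if $\vG^{\tn{f}}$ is of finite type over $A$ then $B^{\tn{f}}=\vG^{\tn{f}}_0$ is a finite-type (hence noetherian) $A$-algebra; a Zariski localisation or henselisation $B^{\tn{f}}\ra B$ is flat, so the hypotheses of (i)–(ii) on flatness hold. Condition $(\ast\ast)$ — that $L_{B^{\tn{f}}/A}\ot_{B^{\tn{f}}}B\simeq L_{B/A}$ — holds because localisation and henselisation are \emph{formally étale} (indeed, $L_{B/B^{\tn{f}}}\simeq 0$ for an ind-étale, in particular a Zariski-local or henselian, extension), so the transitivity triangle for $A\ra B^{\tn{f}}\ra B$ collapses; I will cite the Stacks Project tags for vanishing of the cotangent complex of an étale (resp. ind-étale) morphism. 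For $(\ast)$: in the finite-type case one has by the remark after \eqref{eq.exc} that $\sgxt{i}{\vG^{\tn{f}}}{L^{\tn{gr}}_{\vG^{\tn{f}}/A}}{-}$ agrees with ordinary finite $\Ext$ of finite modules over the noetherian ring $\vG^{\tn{f}}$ (Tag 08PV), so finiteness is automatic; then part (ii) transports it to $(\vG,J)$.

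\emph{Expected main obstacle.} The routine parts are harmless; the one genuinely delicate point is Step 1 — verifying that the equivalence $L^{\tn{gr}}_{\vG^{\tn{f}}/A}\ot^{\BB{L}}_{B^{\tn{f}}}B\simeq L^{\tn{gr}}_{\vG/A}$ is compatible with the transitivity triangles so that the three resulting $\sgExt$ maps are the canonical ones and are $\vG$-linear (not merely $B$-linear or abstractly isomorphic). The subtlety is bookkeeping in the graded setting and the fact that $\sgExt$ is a direct sum over twists, so one must ensure the flat-base-change isomorphism is graded-degree-preserving; this is where I would be most careful, but it is ultimately formal given Lemma \ref{lem.bc}, the triangle \eqref{eq.dist}, and flatness of $B^{\tn{f}}\ra B$.
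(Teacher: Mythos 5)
Your proposal reproduces the paper's treatment of the left-hand isomorphism (comparing the two transitivity triangles $L^{\tn{gr}}_{\vG^{\tn{f}}/B^{\tn{f}}/A}\ot_{B^{\tn{f}}}B\to L^{\tn{gr}}_{\vG/B/A}$, with $(\ast\ast)$ and Lemma \ref{lem.bc} giving two of the three legs, hence the third), and Steps 2 and 3 are essentially the paper's (i)$\Rightarrow$(ii) and the Stacks-Project citations. The gap is in the right-hand isomorphism
\[
\sgxt{i}{\vG^{\tn{f}}}{L^{\tn{gr}}_{\vG^{\tn{f}}/A}}{J^{\tn{f}}}\ot_{B^{\tn{f}}}B\lra \sgxt{i}{\vG^{\tn{f}}}{L^{\tn{gr}}_{\vG^{\tn{f}}/A}}{J^{\tn{f}}\ot_{B^{\tn{f}}}B},
\]
which you dismiss as ``standard adjunction/flat-base-change for $\RHom$.'' That phrase does not supply a valid argument here: the source $L^{\tn{gr}}_{\vG^{\tn{f}}/A}$ is an unbounded-below complex of free $\vG^{\tn{f}}$-modules, not a perfect complex, and $B$ is not finitely presented over $B^{\tn{f}}$ (a Zariski localisation or henselisation never is), so the usual pseudo-coherence/perfectness hypotheses under which flat base change commutes past $\RHom$ are exactly what fails. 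The paper's substitute is Lazard's theorem: write $B=\liminj_j I_j$ with $I_j$ finite free over $B^{\tn{f}}$; for finite free $I_j$ the exchange map is trivially an isomorphism; then pass to the colimit, using that $\sgExt$ decomposes as a direct sum over twists $J^{\tn{f}}(\nu)$, that filtered colimits commute with cohomology (exactness), and that graded $\Hom$ out of $L^{\tn{gr}}_{\vG^{\tn{f}}/A}$ commutes with the filtered colimit in the second variable. That chain of commutations is precisely the content you flag as ``the one genuinely delicate point'' but never actually carry out, and it is the heart of the lemma.

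A secondary remark: you were right to worry that the three $\sgExt$ maps must be identified with the natural ones and be $\vG$-linear; but the more substantive issue is the one above. In your Step 3, deducing $(\ast\ast)$ from $L_{B/B^{\tn{f}}}\simeq 0$ (ind-étaleness) is a reasonable reading of the cited Stacks tag and matches the intent of the paper's one-line reference, and your argument for $(\ast)$ in the finite-type case via Tag 08PV plus part (ii) is exactly what the paper intends. To make the proposal complete, replace the ``standard flat base change'' appeal with the Lazard-colimit computation.
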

\begin{proof}
The map of ring maps \((A\ra B^{\tn{f}}\ra\vG^{\tn{f}})\ra(A\ra B\ra\vG)\) gives a map of distinguished triangles \(L^{\tn{gr}}_{\vG^{\tn{f}}/B^{\tn{f}}/A}\ot_{B^{\tn{f}}}B\ra L^{\tn{gr}}_{\vG/B/A}\). By Lemma \ref{lem.bc} and $(\ast\ast)$ two out of three maps are equivalences, hence also the third; \(L^{\tn{gr}}_{\vG^{\tn{f}}/A}\ot_{B^{\tn{f}}}B\simeq L^{\tn{gr}}_{\vG/A}\), which gives the first isomorphism in (i). The second isomorphism follows from Lazard's theorem \cite[\href{https://stacks.math.columbia.edu/tag/058G}{Tag 058G}]{SP} since taking inductive limits commutes with \(\Hom\) in the second factor and is exact; \cite[\href{https://stacks.math.columbia.edu/tag/00DB}{Tag 00DB}]{SP}. More precisely, suppose \(\liminj_{j}I_{j}=B\) where \(I_{j}\) are finite and free \(B^{\tn{f}}\)-modules. Then: 
\begin{equation}
\begin{aligned}
&\textstyle{\liminj_{j}}\sgxt{i}{\vG^{\tn{f}}}{L^{\tn{gr}}_{\vG^{\tn{f}}/A}}{J^{\tn{f}}}\ot_{B^{\tn{f}}}I_{j}
\cong \textstyle{\liminj_{j}}\textstyle{\bigoplus}_{\nu}\cH^{i}\ghm{\bdot}{\vG^{\tn{f}}}{L^{\tn{gr}}_{\vG^{\tn{f}}/A}}{J^{\tn{f}}(\nu)}\ot_{B^{\tn{f}}}I_{j}
\\
&\cong\textstyle{\bigoplus}_{\nu}\liminj_{j}\cH^{i}\ghm{\bdot}{\vG^{\tn{f}}}{L^{\tn{gr}}_{\vG^{\tn{f}}/A}}{J^{\tn{f}}(\nu)\ot_{B^{\tn{f}}}I_{j}}
\\
&\cong \textstyle{\bigoplus}_{\nu}\cH^{i}\ghm{\bdot}{\vG^{\tn{f}}}{L^{\tn{gr}}_{\vG^{\tn{f}}/A}}{J^{\tn{f}}(\nu)\ot_{B^{\tn{f}}}\liminj_{j}I_{j}}
\\
&\cong \sgxt{i}{\vG^{\tn{f}}}{L^{\tn{gr}}_{\vG^{\tn{f}}/A}}{J^{\tn{f}}\ot_{B^{\tn{f}}}\textstyle{\liminj_{j}I_{j}}}
\end{aligned}
\end{equation}
(ii) follows directly from (i). For the last part see \cite[\href{https://stacks.math.columbia.edu/tag/08QY}{Tag 08QY}]{SP}.
\end{proof}
\begin{defn}\label{def.ext}
An extension of a graded ring $A$ by a graded $A$-module $I$ is a surjective graded ring homomorphism \(A'\ra A\) with kernel \(I\) such that \(I^{2}=0\). Let \(\alpha\) denote the extension \(I\ra A'\ra A\). Given a graded ring homomorphism \(A\ra \vG\), a graded \(\vG\)-module \(J\) and a graded \(A\)-linear map \(\gamma\co I\ra J\). A $\gamma$-extension \(\beta\) of \(\vG\) by \(J\) above \(\alpha\) is a commutative diagram of extensions 
\begin{equation}
\begin{aligned}
\xymatrix@C+0pt@R-9pt@H-6pt{
\beta : & 0\ar[r] & J\ar[r] & \vG'\ar[r] & \vG\ar[r] & 0
\\
\alpha: & 0\ar[r] & I\ar[r]\ar[u]^{\gamma} & A'\ar[r]\ar[u] & A\ar[r]\ar[u] & 0
}
\end{aligned}
\end{equation}
where \(A'\ra\vG'\) is a graded ring homomorphism.
\end{defn}
\begin{prop}\label{prop.ext}
Consider the situation in \textup{Definition \ref{def.ext}.}
\begin{enumerate}[leftmargin=2.4em, label=\textup{(\roman*)}]
\item There exists an element \(\ob(\alpha,\vG,\gamma)\in \gxt{2}{\vG}{L^{\tn{gr}}_{\vG/A}}{J}\) such that a $\gamma$-extension \(\beta\) above \(\alpha\) exists if and only if \(\ob(\alpha,\vG,\gamma)=0\)\textup{.} 
\item If \(\ob(\alpha,\vG,\gamma)=0\) the set of isomorphism classes of $\gamma$-extensions \(\beta\) above \(\alpha\) is a torsor over \(\gxt{1}{\vG}{L^{\tn{gr}}_{\vG/A}}{J}\)\textup{.}
\item The set of automorphisms of a given $\gamma$-extension \(\beta\) above \(\alpha\) is canonically isomorphic to \(\gxt{0}{\vG}{L^{\tn{gr}}_{\vG/A}}{J}\cong\gDer_{A}(\vG,J)\)\textup{.}
\end{enumerate}
\end{prop}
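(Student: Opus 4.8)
The plan is to reduce the statement to the ungraded (classical) theory of Illusie applied to the graded polynomial resolution $P = P^{\text{gr}}_A(\vG)$ and then interpret the resulting obstruction/torsor/automorphism groups as graded $\Ext$ groups of the graded cotangent complex. First I would use the transitivity triangle \eqref{eq.dist} for $A \ra A' \ra \vG'$ together with the fact that $I^2 = 0$ and $J^2 = 0$ (so $\alpha$ and $\beta$ are square-zero extensions in the category of graded rings). Concretely, I would work with the canonical graded $A$-algebra resolution $P \ra \vG$ and lift the composite $P \ra \vG \ra \vG$ to a graded $A'$-algebra map $P \ra \vG'$ one simplicial degree at a time; this is possible degree-wise because each $P_n = A[\dots]$ is a graded polynomial $A$-algebra, hence a projective object among graded $A$-algebras, and lifting a map out of a polynomial ring along a surjection with square-zero kernel only requires choosing lifts of the variables. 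The obstruction to assembling these degree-wise lifts into a map of simplicial graded $A'$-algebras compatible with the face and degeneracy maps is a $2$-cocycle with values in $\gHom_\vG(L^{\text{gr}}_{\vG/A}, J)$, giving the class $\ob(\alpha,\vG,\gamma) \in \gxt{2}{\vG}{L^{\text{gr}}_{\vG/A}}{J}$; vanishing of this class produces an honest lift, and from a lift $P \ra \vG'$ one builds $\vG' = (\text{quotient of } P_0 \otimes \dots)$ realising the $\gamma$-extension. This is exactly the graded analogue of \cite[Chap.\ III, 1.2.3]{ill:71} (or \cite[Tag 08S3]{SP} in the ungraded case), and the only thing to check is that every step of that argument is internal to the category of $\BB{Z}_{\geq 0}$-graded rings and $\BB{Z}$-graded modules — which it is, since graded polynomial algebras are free on graded sets and the Kähler differentials $\Omega_{P/A}$ are graded.

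For (ii), once $\ob(\alpha,\vG,\gamma)=0$ fix one $\gamma$-extension $\beta_0$ with chosen lift $P \ra \vG'_0$. Any other $\gamma$-extension $\beta$ admits a lift $P \ra \vG'$, and comparing the two lifts over $\vG$ produces a graded $A$-derivation $P \ra J$, i.e.\ a graded $\vG$-module map $\Omega_{P/A}\ot_P\vG \ra J$; passing to cohomology shows the difference $\beta - \beta_0$ is well-defined in $\cH^1 = \gxt{1}{\vG}{L^{\text{gr}}_{\vG/A}}{J}$, and conversely adding a $1$-cocycle to a lift twists $\vG'$ into a new $\gamma$-extension. One checks this assignment is a simply transitive action: injectivity comes from the fact that a derivation which is a coboundary extends to an isomorphism of extensions, surjectivity from the construction just described. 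Again this is the graded version of the torsor statement in Illusie / \cite[Tag 08SN]{SP}.

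For (iii), an automorphism of $\beta$ fixing $J$ and inducing the identity on $\vG$ and lying over $\id_{A'}$ is, after a standard computation, given by adding a graded $A$-derivation $\vG' \ra J$; since $J^2=0$ this factors through $\vG$, so the group of automorphisms is $\gDer_A(\vG,J)$. The identification $\gDer_A(\vG,J) \cong \gxt{0}{\vG}{L^{\text{gr}}_{\vG/A}}{J} = \cH^0 \gHom_\vG(L^{\text{gr}}_{\vG/A}, J)$ is immediate from the definition of $L^{\text{gr}}_{\vG/A}$ as $\Omega_{P/A}\ot_P\vG$ together with the universal property of Kähler differentials and the augmentation $P_0 = A[\vG] \thr \vG$, exactly as in the $\tau^{\geq -1}$-truncation $\NL^{\text{gr}}_{\vG/A}$ recalled before Lemma \ref{lem.bc}.

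I expect the main obstacle to be purely bookkeeping: verifying carefully that the standard constructions and the well-definedness/naturality of the obstruction class survive the passage to the graded setting — in particular that the simplicial lifting argument, the cocycle identities, and the comparison of two lifts all respect the $\BB{Z}$-grading on $J$ and the $\BB{Z}_{\geq 0}$-grading on the rings. No genuinely new idea is needed beyond Illusie's and the Stacks Project's treatment; it suffices to observe that graded polynomial algebras are cofibrant (in the relevant sense) among graded $A$-algebras and that all the functors involved ($\Omega_{-/A}$, $\otimes$, $\gHom$) are compatible with gradings, so the entire classical argument transcribes verbatim.
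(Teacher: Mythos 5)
Your proposal is essentially the paper's argument: the paper simply cites Illusie [Chap.\ III, 2.1.2.3] and Stacks Project Tag 08SP for the ungraded statement and asserts that ``the proof carries over to the graded case,'' which is exactly the verification you spell out (graded polynomial resolutions, K\"ahler differentials, and $\gHom$ are all grading-compatible, so the classical cocycle/torsor/automorphism arguments transcribe verbatim). The one small slip in your sketch of (i) is the phrasing ``lift $P\ra\vG\ra\vG$ to a graded $A'$-algebra map $P\ra\vG'$'' as though $\vG'$ were already given; what the cocycle argument actually does is attempt to construct a square-zero extension of the simplicial resolution $P$ over $\alpha$ and then define $\vG'$ as a quotient, the obstruction being the $2$-cocycle measuring the failure of that construction --- but this is a wording issue, not a substantive gap.
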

If all rings and modules are concentrated in degree \(0\) the cohomology groups equals the ungraded Andr{\'e}-Quillen cohomology and Proposition \ref{prop.ext} is \cite[III 2.1.2.3]{ill:71} and \cite[\href{https://stacks.math.columbia.edu/tag/08SP}{Tag 08SP}]{SP}. The proof carries over to the graded case.
 
We recall the definition of the obstruction class in \cite[\href{https://stacks.math.columbia.edu/tag/08SM}{Tag 08SM}]{SP} nominally extended to the graded case. Note that $\alpha$ (up to equivalence of extensions) can be considered as an element in $\gxt{1}{A}{L^{\tn{gr}}_{A/\BB{Z}}}{I}$; cf. \cite[\href{https://stacks.math.columbia.edu/tag/08S8}{Tag 08S8}]{SP}.
The obstruction class \(\ob(\alpha,\vG,\gamma)\) in Proposition \ref{prop.ext} is defined as the image of \(\alpha\) along the natural maps 
\begin{equation}
\gxt{1}{A}{L^{\tn{gr}}_{A/\BB{Z}}}{I}\xra{\hspace{0.2em}\gamma_{*}\hspace{0.2em}}\gxt{1}{A}{L^{\tn{gr}}_{A/\BB{Z}}}{J}\cong{}\gxt{1}{\vG}{L^{\tn{gr}}_{A/\BB{Z}}\ot^{\BB{L}}_{A}\vG}{J}
\end{equation}
composed with the connecting \(\partial=\partial(L^{\tn{gr}}_{\vG/A/\BB{Z}})\) in the Jacobi-Zariski sequence obtained from \eqref{eq.dist}:
\begin{equation}
\dots\ra\gxt{1}{\vG}{L^{\tn{gr}}_{\vG/\BB{Z}}}{J}\ra \gxt{1}{\vG}{L^{\tn{gr}}_{A/\BB{Z}}\ot^{\BB{L}}_{A}\vG}{J}\xra{\hspace{0.2em}\partial\hspace{0.2em}}\gxt{2}{\vG}{L^{\tn{gr}}_{\vG/A}}{J}\ra\dots
\end{equation}
In \cite[III 2.2.3]{ill:71} the definition of \(\ob(\alpha,\vG,\gamma)\) is given as the image of \(\gamma\) along the natural maps
\(\ghm{}{A}{I}{J}\cong\gxt{1}{A}{L^{\tn{gr}}_{A/A'}}{J}\cong\gxt{1}{\vG}{L^{\tn{gr}}_{A/A'}\ot^{\BB{L}}\vG}{J}\); see \cite[III 1.2.8.1]{ill:71}, composed with $\partial(L^{\tn{gr}}_{\vG/A/A'})\co \gxt{1}{\vG}{L^{\tn{gr}}_{A/A'}\ot^{\BB{L}}\vG}{J}\lra \gxt{2}{\vG}{L^{\tn{gr}}_{\vG/A}}{J}$.
The two definitions agree by inspection of the following commutative diagram of natural maps
\begin{equation}\label{eq.ill}
\begin{aligned}
\xymatrix@C-12pt@R-6pt@H-0pt{
\hspace{-1.6em}\id\in\gnd{}{A}{I}\ar@<-0.48em>[d]^(0.47){\gamma_{*}}\ar[r]^(0.45){\simeq} 
&
\hspace{-0.12em}\gxt{1}{A}{L^{\tn{gr}}_{A/A'}}{I}\ar@<-2.2em>[d]^(0.45){\gamma_{*}}\ar[r] 
&
\gxt{1}{A}{L^{\tn{gr}}_{A/\BB{Z}}}{I}\ni\alpha\hspace{0.45em}\ar@<-3.1em>[d]^(0.45){\gamma_{*}} 
\\
\gamma\in\ghm{}{A}{I}{J}\ar[r]^(0.48){\simeq} 
&
\gxt{1}{A}{L^{\tn{gr}}_{A/A'}}{J}\ar@<-2.2em>[d]^(0.45){\simeq}\ar[r] &
\gxt{1}{A}{L^{\tn{gr}}_{A/\BB{Z}}}{J}\hspace{2.15em}\ar@<-3.1em>[d]^(0.45){\simeq}
\\
&
\hspace{2.16em}\gxt{1}{\vG}{L^{\tn{gr}}_{A/A'}\ot^{\BB{L}}\vG}{J}\ar@<-2.2em>[d]^{\partial(L^{\tn{gr}}_{\vG/A/A'})}\ar[r] 
&
\gxt{1}{\vG}{L^{\tn{gr}}_{A/\BB{Z}}\ot^{\BB{L}}\vG}{J}\ar@<-3.1em>[d]^{\partial(L^{\tn{gr}}_{\vG/A/\BB{Z}})}
\\
&
\hspace{-4.75em}\ob^{\tn{Illusie}}\in\gxt{2}{\vG}{L^{\tn{gr}}_{\vG/A}}{J}\ar@{=}[r] &
\gxt{2}{\vG}{L^{\tn{gr}}_{\vG/A}}{J}\ni\ob^{\tn{SP}}\hspace{-1.2em}
}
\end{aligned}
\end{equation}
where the three upper maps from the second to the third column are induced from \(L^{\tn{gr}}_{A/A'/\BB{Z}}\) while the lower square comes from the map \(L^{\tn{gr}}_{\vG/A/\BB{Z}}\ra L^{\tn{gr}}_{\vG/A/A'}\).

Assume \(A\) and \(A'\) are concentrated in degree \(0\). There is a \(\gamma'\co I\ot\vG\ra J\) induced from $\gamma$ in Definition \ref{def.ext}. If $\gamma'_1$ and $\gamma'_2$ are two isomorphisms \(I\ot \vG\cong J\), put $\theta=\gamma'_2(\gamma'_1)^{-1}$. Then \(\ob(\alpha,\vG,\gamma_2)=\theta_*\ob(\alpha,\vG,\gamma_1)\). If $\beta_1$ is a $\gamma_1$-extension of $\vG$ above $\alpha$ then $\beta_2=\theta_*\beta_1$ is a $\gamma_2$-extension. In the case \(\gamma'=\id\) we write \(\ob(\alpha,\vG)\) or \(\ob(A'\ra A,\vG)\) for \(\ob(\alpha,\vG,\gamma)\).

A $\gamma$-extension \((A'\ra\vG')\ra(A\ra\vG)\) is called a \emph{deformation} of \(A\ra\vG\) along \(A'\ra A\) if $\gamma'$ is an isomorphism. In that case the natural map \(\vG'\ot_{A'}A\ra \vG\) is an isomorphism and \(\tor{A'}{1}{\vG'}{A}=0\). If in addition \(\vG\) is \(A\)-flat then \(\vG'\) is \(A'\)-flat. 
\begin{prop}\label{prop.omap}
Given an extension \(\alpha\) as in \textup{Definition \ref{def.ext},} graded ring homomorphisms \(A\ra\vG_{1}\xra{\hspace{0.2em}\rho\hspace{0.2em}}\vG_{2}\)\textup{,} a graded \(\vG_{i}\)-module \(J_{i}\) with \(A\)\textup{-}linear graded map \(\gamma_{i}\co I\ra J_{i}\) for \(i=1,2\) and a \(\vG_{1}\)\textup{-}linear graded map \(\sigma\co J_{1}\ra J_{2}\) such that \(\gamma_{2}=\sigma\gamma_{1}\)\textup{.}
\begin{enumerate}[leftmargin=2.4em, label=\textup{(\roman*)}]
\item The natural maps 
\begin{equation*}
\gxt{2}{\vG_{1}}{L^{\tn{gr}}_{\vG_{1}/A}}{J_{1}}\xra{\hspace{0.2em}\sigma_{*}\hspace{0.2em}} \gxt{2}{\vG_{1}}{L^{\tn{gr}}_{\vG_{1}/A}}{J_{2}}\xla{\hspace{0.25em}\rho^{*}\hspace{0.15em}}\gxt{2}{\vG_{2}}{L^{\tn{gr}}_{\vG_{2}/A}}{J_{2}}
\end{equation*}
relate the obstruction classes\textup{:} 
\begin{equation*}
\sigma_{*}\ob(\alpha,\vG_{1},\gamma_{1})=\ob(\alpha,\vG_{1},\gamma_{2})=\rho^{*}\ob(\alpha,\vG_{2},\gamma_{2})
\end{equation*}
\item The torsor actions \textup{(}denoted by $+$\textup{)} are compatible with \(\rho^{*}\) and \(\sigma_{*}\)\textup{:} given a $\gamma_i$-extension \(\beta_{i}\co 0\ra J_{i}\ra \vG_{i}'\ra \vG_i\ra 0\) above \(\alpha\) and \(\xi_{i}\in\gxt{1}{\vG_{i}}{L^{\tn{gr}}_{\vG_{i}/A}}{J_{i}}\) for $i=1,2$\textup{,} then \(\sigma_{*}(\beta_{1}+\xi_{1})=\sigma_{*}\beta_{1}+\sigma_{*}\xi_{1}\) and \(\rho^{*}(\beta_{2}+\xi_{2})=\rho^{*}\beta_{2}+\rho^{*}\xi_{2}\) in \(\gxt{1}{\vG_{1}}{L^{\tn{gr}}_{\vG_{1}/\BB{Z}}}{J_{2}}\)\textup{.} If there is a commutative diagram of $\gamma_i$-extensions
\begin{equation*}
\xymatrix@C-12pt@R-9pt@H-0pt{
\beta_{2}\co & 0\ar[r] & J_{2}\ar[r] & \vG_{2}'\ar[r] & \vG_{2}\ar[r] & 0
\\
\beta_{1}\co & 0\ar[r] & J_{1}\ar[r]\ar[u]^{\sigma} & \vG_{1}'\ar[r]\ar[u] & \vG_{1}\ar[r]\ar[u]_{\rho} & 0
}
\end{equation*}
above $\alpha$ then \(\sigma_{*}\beta_{1}=\rho^{*}\beta_{2}\) in \(\gxt{1}{\vG_{1}}{L^{\tn{gr}}_{\vG_{1}/\BB{Z}}}{J_{2}}\)\textup{.}
\end{enumerate}
\end{prop}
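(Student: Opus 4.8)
The plan is to read both statements off the explicit descriptions already assembled. For the obstruction classes I will use the recipe recalled before the proposition: push $\alpha$ forward along $\gamma$, transport along the change-of-rings isomorphism $\gxt{1}{A}{L^{\tn{gr}}_{A/\BB{Z}}}{-}\cong\gxt{1}{\vG}{L^{\tn{gr}}_{A/\BB{Z}}\ot^{\BB{L}}_{A}\vG}{-}$, and apply the Jacobi--Zariski connecting map $\partial(L^{\tn{gr}}_{\vG/A/\BB{Z}})$ obtained from \eqref{eq.dist}. For the torsor actions I will use, together with Proposition \ref{prop.ext}, the Baer-type recipe for the translation $\beta\mapsto\beta+\xi$ on $\gamma$-extensions from \cite[III 2.1.2]{ill:71} and \cite{SP}. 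The two naturality inputs are: (a) the change-of-rings isomorphism and each connecting map $\partial(L^{\tn{gr}}_{\vG/A/\BB{Z}})$ are functorial in the coefficient module; and (b) the morphism of towers $(\BB{Z}\to A\to\vG_{1})\to(\BB{Z}\to A\to\vG_{2})$ induces a morphism of transitivity triangles \eqref{eq.dist}, hence, after applying $\RHom_{\vG_{1}}(-,J_{2})$ (viewing $J_{2}$ as a $\vG_{1}$-module along $\rho$), a commutative square whose verticals are the change-of-rings map induced by $\rho$ on the left and $\rho^{*}$ on the right and whose horizontals are $\partial(L^{\tn{gr}}_{\vG_{1}/A/\BB{Z}})$ and $\partial(L^{\tn{gr}}_{\vG_{2}/A/\BB{Z}})$. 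Throughout, the graded case is the evident variant of the ungraded one, exactly as for Proposition \ref{prop.ext}, so I suppress the grading below.

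For (i): from $\gamma_{2}=\sigma\gamma_{1}$ we get $(\gamma_{2})_{*}=\sigma_{*}(\gamma_{1})_{*}$ on $\gxt{1}{A}{L^{\tn{gr}}_{A/\BB{Z}}}{-}$, and by (a) the remaining steps of the defining composite commute with $\sigma_{*}$; chasing $\alpha$ then gives $\ob(\alpha,\vG_{1},\gamma_{2})=\sigma_{*}\ob(\alpha,\vG_{1},\gamma_{1})$. For the second equality, observe that the class $(\gamma_{2})_{*}\alpha\in\gxt{1}{A}{L^{\tn{gr}}_{A/\BB{Z}}}{J_{2}}$ is independent of $\vG_{i}$; transporting it into $\gxt{1}{\vG_{2}}{L^{\tn{gr}}_{A/\BB{Z}}\ot^{\BB{L}}\vG_{2}}{J_{2}}$, applying $\partial(L^{\tn{gr}}_{\vG_{2}/A/\BB{Z}})$ and then $\rho^{*}$ yields $\rho^{*}\ob(\alpha,\vG_{2},\gamma_{2})$, whereas transporting the same class into $\gxt{1}{\vG_{1}}{L^{\tn{gr}}_{A/\BB{Z}}\ot^{\BB{L}}\vG_{1}}{J_{2}}$ and applying $\partial(L^{\tn{gr}}_{\vG_{1}/A/\BB{Z}})$ yields $\ob(\alpha,\vG_{1},\gamma_{2})$; the square in (b) says these coincide.

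For (ii): here $\sigma_{*}\beta_{1}$ denotes the pushout of the short exact sequence $\beta_{1}$ along $\sigma\co J_{1}\ra J_{2}$ and $\rho^{*}\beta_{2}$ the pullback of $\beta_{2}$ along $\rho\co\vG_{1}\ra\vG_{2}$; using $\gamma_{2}=\sigma\gamma_{1}$ one checks directly that both are again $\gamma_{2}$-extensions of $\vG_{1}$ above $\alpha$, and that on the classes in $\gxt{1}{\vG_{1}}{L^{\tn{gr}}_{\vG_{1}/\BB{Z}}}{J_{2}}$ of the underlying $\BB{Z}$-algebra extensions these operations are the usual $\sigma_{*}$ and $\rho^{*}$ on $\Ext^{1}$. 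Since the translation $\beta\mapsto\beta+\xi$ of \cite{ill:71}, \cite{SP} is built from pushout and pullback of short exact sequences (Baer sum), and $\sigma_{*}$, $\rho^{*}$ are additive and commute with pushout resp. pullback along a map in the other variable, they commute with $\beta\mapsto\beta+\xi$; this gives $\sigma_{*}(\beta_{1}+\xi_{1})=\sigma_{*}\beta_{1}+\sigma_{*}\xi_{1}$ and $\rho^{*}(\beta_{2}+\xi_{2})=\rho^{*}\beta_{2}+\rho^{*}\xi_{2}$. Finally, in the presence of the displayed commutative ladder (with verticals $\sigma$, a middle map $\vG_{1}'\ra\vG_{2}'$, and $\rho$), the universal properties of the pushout and the pullback factor the middle map as $\vG_{1}'\ra\sigma_{*}\vG_{1}'\ra\vG_{2}'$ and as $\vG_{1}'\ra\rho^{*}\vG_{2}'\ra\vG_{2}'$, identifying $\sigma_{*}\beta_{1}\cong\rho^{*}\beta_{2}$ as $\gamma_{2}$-extensions of $\vG_{1}$, hence in $\gxt{1}{\vG_{1}}{L^{\tn{gr}}_{\vG_{1}/\BB{Z}}}{J_{2}}$.

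The one point requiring care is bookkeeping: keeping the several change-of-rings isomorphisms aligned and verifying that the square in (b), coming from the transitivity triangle \eqref{eq.dist}, genuinely commutes rather than merely plausibly so; and, for (ii), pinning down which pushout/pullback operations enter the translation recipe so that equivariance is formal. Given Proposition \ref{prop.ext} and the comparison diagram \eqref{eq.ill}, no conceptual difficulty remains.
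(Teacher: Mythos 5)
Your proof is correct, and your argument for part (i) is the same as the paper's: chase $\alpha$ through the defining composite and invoke (a) functoriality of the connecting maps and change-of-rings isomorphisms in the coefficient module for the $\sigma_*$-compatibility, and (b) the morphism of transitivity triangles $L^{\tn{gr}}_{\vG_{1}/A/\BB{Z}}\to L^{\tn{gr}}_{\vG_{2}/A/\BB{Z}}$ induced by $\rho$ for the $\rho^*$-compatibility (these are exactly the two pointed diagrams \eqref{eq.omap}, \eqref{eq.omap2} in the paper). For part (ii) your route is genuinely more abstract than the paper's. The paper descends all the way to explicit representatives: it realizes $\beta_i$ and $\xi_i$ as actual maps $\tilde\beta_i\co K_{\BB{Z},i}/K_{\BB{Z},i}^2\to J_i$ and $\tilde\xi_i\co K_{A,i}/K_{A,i}^2\to J_i$ out of the naive cotangent complexes for the group-algebra presentations $\BB{Z}[\vG_i]\to\vG_i$ and $A[\vG_i]\to\vG_i$, identifies the translated extension concretely as the pushout along $\tilde\beta_i+\tilde\xi_i\iota_i$ (via \cite[Tag 08S7]{SP}), and reads off both compatibilities and the final claim from these formulas. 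You instead observe that the translation is $\beta\mapsto\beta+\theta(\xi)$ in $\gxt{1}{\vG}{L^{\tn{gr}}_{\vG/\BB{Z}}}{J}$, that $\sigma_*$ and $\rho^*$ are additive group homomorphisms (Baer sum/Yoneda), and that they intertwine the transitivity map $\theta$ by the same naturalities (a), (b); the final commutative-ladder claim you get from the universal properties of pushout and pullback. Both routes work. The abstract route is shorter and more transparent conceptually; the paper's explicit route is more self-contained in the graded setting, where it makes completely concrete exactly what \cite[Tag 08S7]{SP} provides, which is precisely the point you yourself flag at the end as ``the one point requiring care.'' If you want your version to be airtight, you should supply a sentence verifying that $\rho^{*}\theta_{2}=\theta_{1}\rho^{*}$ and $\sigma_{*}\theta_{i}=\theta_{i}\sigma_{*}$ (i.e.\ that the map on $\gxt{1}{}{}{}$ induced by $L^{\tn{gr}}_{\vG/\BB{Z}}\to L^{\tn{gr}}_{\vG/A}$ is natural in the coefficient module and in $\vG$), rather than leaving it implicit in ``keeping the change-of-rings isomorphisms aligned.''
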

\begin{proof}
(i) The following pointed commutative diagram of canonical maps
\begin{equation}\label{eq.omap}
\xymatrix@C-0pt@R-6pt@H-0pt{
\hspace{-4.1em}(\gamma_2)_*\alpha\in\gxt{1}{A}{L^{\tn{gr}}_{A/\BB{Z}}}{J_{2}}\ar@<-2.2em>[d]^{\partial(L^{\tn{gr}}_{\vG_{2}/A/\BB{Z}})}\ar@{=}[r] &
\gxt{1}{A}{L^{\tn{gr}}_{A/\BB{Z}}}{J_{2}}\ar@<-2.9em>[d]^{\partial(L^{\tn{gr}}_{\vG_{1}/A/\BB{Z}})}\ni(\gamma_2)_*\alpha
\hspace{-2.6em}
\\
\hspace{-6.0em}\ob(\alpha,\vG_{2},\gamma_{2})\in\gxt{2}{\vG_2}{L^{\tn{gr}}_{\vG_{2}/A}}{J_{2}}\ar[r]^(0.5){\rho^{*}} &
\gxt{2}{\vG_1}{L^{\tn{gr}}_{\vG_{1}/A}}{J_{2}}\ni \ob(\alpha,\vG_{1},\gamma_{2})\hspace{-5.8em}
}
\end{equation}
shows that \(\rho^{*}\ob(\alpha,\vG_{2},\gamma_{2})=\ob(\alpha,\vG_{1},\gamma_{2})\). Similarly, the pointed commutative diagram of canonical maps
\begin{equation}\label{eq.omap2}
\begin{aligned}
\xymatrix@C-0pt@R-6pt@H-0pt{
(\gamma_{1})_{*}\alpha\in\gxt{1}{A}{L^{\tn{gr}}_{A/\BB{Z}}}{J_{1}}\ar@<-0.15em>[d]^{\partial(L^{\tn{gr}}_{\vG_{1}/A/\BB{Z}})}\ar[r]^(0.49){\sigma_{*}} 
& \gxt{1}{A}{L^{\tn{gr}}_{A/\BB{Z}}}{J_{2}}\ni (\gamma_{2})_{*}\alpha\ar@<-4.22em>[d]^{\partial(L^{\tn{gr}}_{\vG_{1}/A/\BB{Z}})}
\\
\hspace{-2.2em}\ob(\alpha,\vG_{1},\gamma_{1})\in\gxt{2}{\vG}{L^{\tn{gr}}_{\vG_{1}/A}}{J_{1}}\ar[r]^(0.5){\sigma_{*}} 
& \gxt{2}{B}{L^{\tn{gr}}_{\vG_{1}/A}}{J_{2}}\ni\ob(\alpha,\vG_{1},\gamma_{2})\hspace{-2.96em}
}
\end{aligned}
\end{equation}
shows that \(\sigma_{*}\ob(\alpha,\vG_{1},\gamma_{1})=\ob(\alpha,\vG_{1},\gamma_{2})\).

(ii) The commutative diagram of ring maps 
\begin{equation}
\begin{aligned}
\xymatrix@C-3pt@R-9pt@H-0pt{
\BB{Z}\ar[r]\ar@{=}[d] & A\ar[r]\ar@{=}[d] & \vG_{2}
\\
\BB{Z}\ar[r] & A\ar[r] & \vG_{1}\ar[u]
}
\end{aligned}
\end{equation} 
induces the following, partially pointed (left resp. right) commutative diagram of canonical \(A\)-linear maps:
\begin{equation}\label{eq.torsor2}
\xymatrix@C-6pt@R-6pt@H-0pt{
\hspace{0.96em}\xi_{2}\in\gxt{1}{\vG_{2}}{L^{\tn{gr}}_{\vG_{2}/A}}{J_{2}}\ar@<-1em>[d]^(0.4){\rho^{*}}\ar[r]^{\theta_{2}}
&
\gxt{1}{\vG_{2}}{L^{\tn{gr}}_{\vG_{2}/\BB{Z}}}{J_{2}}\ni\beta_{2}\ar@<-3.9em>[d]^(0.4){\rho^{*}}
\ar[r]\hspace{0.58em}
&
\gxt{1}{A}{L^{\tn{gr}}_{A/\BB{Z}}}{J_{2}}\ni\alpha\ar@<-3.1em>@{=}[d]
\\
\rho^{*}\xi_{2}\in\gxt{1}{\vG_{1}}{L^{\tn{gr}}_{\vG_{1}/A}}{J_{2}}\ar[r]^(0.48){\theta_{1}}
&
\hspace{0.36em}\gxt{1}{\vG_{1}}{L^{\tn{gr}}_{\vG_{1}/\BB{Z}}}{J_{2}}\ni\rho^{*}\beta_{2}\ar[r] 
&
\gxt{1}{A}{L^{\tn{gr}}_{A/\BB{Z}}}{J_{2}}\ni\alpha
}
\end{equation}
Let \(K_{A,i}\) and \(K_{\BB{Z},i}\) denote the kernels of the augmentation maps of the group algebras \(A[\vG_{i}]\ra A\) and \(\BB{Z}[\vG_{i}]\ra \BB{Z}\). Then \(\beta_{i}\) is represented in \(\gxt{1}{\vG_{i}}{\NL_{\vG_{i}/\BB{Z}}}{J_{i}}\) by a map \(\tilde{\beta}_{i}\co K_{\BB{Z},i}/K_{\BB{Z},i}^{2}\ra J_{i}\). Similarly, \(\xi_{i}\) is represented by a map \(\tilde{\xi}_{i}\co K_{A,i}/K_{A,i}^{2}\ra J_{i}\). The map \(\theta_{i}\) is induced by pullback along the natural map \(\iota_{i}\co \NL_{\vG_{i}/\BB{Z}}\ra \NL_{\vG_{i}/A}\). One obtains a (not necessarily commutative) diagram
\begin{equation}\label{eq.tri}
\xymatrix@C-3pt@R-9pt@H-0pt{
K_{\hspace{-0.1em}\BB{Z},i}/K_{\hspace{-0.1em}\BB{Z},i}^{2}\ar[rr]^{\iota_{i}}\ar[rd]_{\tilde{\beta}_{i}} 
&& 
K_{\hspace{-0.15em}A,i}/K_{\hspace{-0.15em}A,i}^{2}\ar[ld]^{\tilde{\xi}_{i}}
\\
& 
J_{i}
}
\end{equation}
Then \(\vG'_{i}+\xi_{i}\) is given by pushout of \(K_{\hspace{-0.1em}\BB{Z},i}\ra\BB{Z}[\vG_{i}]\) by the composition of \(K_{\hspace{-0.1em}\BB{Z},i}\ra K_{\hspace{-0.1em}\BB{Z},i}/K_{\hspace{-0.1em}\BB{Z},i}^{2}\) with \(\tilde{\beta}_{i}+\tilde{\xi}_{i}\iota_{i}\). 
See the proof of \cite[\href{https://stacks.math.columbia.edu/tag/08S7}{Tag 08S7}]{SP}.
This implies that \(\rho^{*}(\beta_{2}+\xi_{2})=\rho^{*}\beta_{2}+\rho^{*}\xi_{2}\). Similarly, \(\sigma_{*}(\beta_{1}+\xi_{1})\) is represented by \(\sigma(\tilde{\beta}_{1}+\tilde{\xi}_{1}\iota_{1})=\sigma\tilde{\beta}_{1}+\sigma\tilde{\xi}_{1}\iota_{1}\) which again represents \(\sigma_{*}\beta_{1}+\sigma_{*}\xi_{1}\). The final claim also follows from this representation since pullback of \(\beta_{2}\) along \(\rho\) is represented by pullback of \(\tilde{\beta}_{2}\) along the natural map \(K_{\hspace{-0.1em}\BB{Z},1}/K_{\hspace{-0.1em}\BB{Z},1}^{2}\ra K_{\hspace{-0.1em}\BB{Z},2}/K_{\hspace{-0.1em}\BB{Z},2}^{2}\) while pullback of \(\xi_{2}\) along \(\rho\) is represented by pullback of \(\tilde{\xi}_{2}\) along the natural map \(K_{\hspace{-0.1em}A,1}/K_{\hspace{-0.1em}A,1}^{2}\ra K_{\hspace{-0.1em}A,2}/K_{\hspace{-0.1em}A,2}^{2}\). 
\end{proof}
In the following we will only be interested in graded rings and modules concentrated in degree \(0\) and \(1\).
\begin{cor}\label{cor.ext}
Fix an extension of \textup{(}ungraded\textup{)} rings \(\beta_{0}\co 0\ra J_{0}\ra B'\ra B\ra 0\)\textup{,} two \(B\)-modules \(J_{1}\) and \(N\)\textup{,} and a \(B\)\textup{-}linear map \(\phi\co J_{0}\ot N\ra J_{1}\)\textup{.} 
\begin{enumerate}[leftmargin=2.4em, label=\textup{(\roman*)}]
\item There exists an element \(\ob(\beta_{0},N,\phi)\in\xt{2}{B}{N}{J_{1}}\) such that an extension of modules \(\beta_{1}\co 0\ra J_{1}\ra N'\ra N\ra 0\) over \(\beta_{0}\) with induced map \(J_{0}\ot N\ra J_{1}\) equal to \(\phi\) exists if and only if \(\ob(\beta_{0},N,\phi)=0\)\textup{.} The element \(\ob(\beta_{0},N,\phi)\) is natural in \(N\) and \(J_{1}\)\textup{.}
\item If \(\ob(\beta_{0},N,\phi)=0\) the set of isomorphism classes of extensions \(\beta_{1}\) over \(\beta_{0}\) with induced map \(\phi\) is a torsor over \(\xt{1}{B}{N}{J_{1}}\)\textup{.} The torsor action is natural in \(N\) and \(J_{1}\)\textup{.}
\item The set of automorphisms of a given extension \(\beta_{1}\) is canonically isomorphic to \(\hm{}{B}{N}{J_{1}}\)\textup{.}
\end{enumerate}
\end{cor}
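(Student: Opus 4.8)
The strategy is to recast the datum as an extension of graded rings and apply Proposition \ref{prop.ext}. Put $A:=B$ and view $\beta_{0}$ as an extension $\alpha\co 0\ra I\ra B'\ra B\ra 0$ of graded rings concentrated in degree $0$, with $I:=J_{0}$. Let $\vG:=B\op N$ be the graded $B$-algebra with $N$ in degree $1$ and $N^{2}=0$, and equip $J:=J_{0}\op J_{1}$ (placed in degrees $0$ and $1$) with the graded $\vG$-module structure in which $B$ acts componentwise, $N\cdot J_{1}=0$, and $N\ot_{B}J_{0}\ra J_{1}$ is $n\ot j_{0}\mapsto\phi(j_{0}\ot n)$; let $\gamma\co I=J_{0}\hra J$ be the inclusion of the degree-$0$ part.

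First I would record the dictionary: a graded ring of the form $C\op P$ with $P$ in degree $1$ and $P^{2}=0$ is the same datum as a $C$-module $P$, so by the five lemma in degree $0$ a $\gamma$-extension $\beta\co 0\ra J\ra \vG'\ra\vG\ra 0$ above $\alpha$ amounts to a $B'$-module $N':=\vG'_{1}$ sitting in an exact sequence $\beta_{1}\co 0\ra J_{1}\ra N'\ra N\ra 0$ over $\beta_{0}$, the $\vG$-module structure chosen on $J$ forcing the induced map $J_{0}\ot N\ra J_{1}$ of $\beta_{1}$ to be $\phi$. This correspondence respects isomorphisms and automorphisms, so Proposition \ref{prop.ext} reduces (i)--(iii) to showing that $\gder{B}{\vG}{J}\cong\hm{}{B}{N}{J_{1}}$ and $\gxt{i}{\vG}{L^{\tn{gr}}_{\vG/B}}{J}\cong\xt{i}{B}{N}{J_{1}}$ for $i=1,2$, naturally in $N$ and $J_{1}$, the obstruction $\ob(\alpha,\vG,\gamma)$ then being taken as the definition of $\ob(\beta_{0},N,\phi)$ and the torsor action of Proposition \ref{prop.ext} going over to that of $\xt{1}{B}{N}{J_{1}}$.

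The case $i=0$ is immediate: by the Leibniz rule and since $N^{2}=0$ and $N\cdot J_{1}=0$, a graded $B$-derivation $\vG=B\op N\ra J$ is exactly a $B$-linear map $N\ra J_{1}$. For $i=1,2$ I would compute $L^{\tn{gr}}_{\vG/B}$ through its internal grading. Choose a simplicial free $B$-module resolution $G_{\bdot}\ra N$; then $B\op G_{\bdot}\ra\vG$ is a weak equivalence of simplicial graded $B$-algebras, and since each $B\op G_{n}$ is a square-zero extension of $B$ by the \emph{free} module $G_{n}$, its only relations over $B$ lie in internal degree $2$, so $L^{\tn{gr}}_{(B\op G_{n})/B}$ has internal-degree-$0$ part $L_{B/B}\simeq0$ and internal-degree-$1$ part $G_{n}$ in homological degree $0$. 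Realising, the internal-degree-$0$ part of $L^{\tn{gr}}_{\vG/B}$ is acyclic and its internal-degree-$1$ part is the free $B$-resolution $G_{\bdot}$ of $N$; hence $L^{\tn{gr}}_{\vG/B}$ is represented by a bounded-above complex $C_{\bdot}$ of free graded $\vG$-modules whose free generators all have internal degree $\geq1$ and whose internal-degree-$1$ generators form $G_{\bdot}$. A degree-$0$ graded $\vG$-linear map $C_{\bdot}\ra J$ then takes values in $J_{1}$ and is determined by its restriction to the internal-degree-$1$ generators, so $\ghm{}{\vG}{C_{\bdot}}{J}\cong\hm{}{B}{G_{\bdot}}{J_{1}}$, whence $\gxt{i}{\vG}{L^{\tn{gr}}_{\vG/B}}{J}\cong\xt{i}{B}{N}{J_{1}}$ for all $i$. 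Naturality in $N$ and $J_{1}$ and the compatibility of obstruction classes and torsor actions then follow from Proposition \ref{prop.omap}, applied to the maps $\vG_{1}\ra\vG_{2}$ and $J_{1}\ra J_{2}$ induced by maps $N_{2}\ra N_{1}$ and $J_{1}\ra J_{2}$.

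The main obstacle is the internal-degree bookkeeping for the cotangent complex of $B\op N$ in the previous paragraph: one must check that only internal-degree-$1$ generators of $C_{\bdot}$ are seen by $\ghm{}{\vG}{-}{J}$, that the information-free internal-degree-$0$ part contributes nothing even though the $\vG$-module structure of $J$ couples degrees $0$ and $1$ via $\phi$, and that $\phi$ enters the obstruction only through the connecting map of the Jacobi--Zariski sequence for $\BB{Z}\ra B\ra\vG$ --- whose source $L^{\tn{gr}}_{B/\BB{Z}}\ot^{\BB{L}}_{B}\vG$ has nonzero degree-$0$ part, so that $\phi$ genuinely intervenes there. This is exactly the point at which the treatment in \cite{ill:71} is flawed; see Remark \ref{rem.ext}.
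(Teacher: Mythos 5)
Your proposal is correct and takes essentially the same route as the paper's proof: both recast the module-extension problem as a graded $\gamma$-extension of $\vG=B\oplus N$ by the graded $\vG$-module $J=J_0\oplus J_1$ (with $\vG$-action encoded by $\phi$), define $\ob(\beta_0,N,\phi)$ as the image of $\beta_0$ under the connecting map of the Jacobi--Zariski sequence for $\BB{Z}\ra B\ra\vG$, and conclude by Propositions \ref{prop.ext} and \ref{prop.omap} together with the identification $\gxt{n}{\vG}{L^{\tn{gr}}_{\vG/B}}{J}\cong\xt{n}{B}{N}{J_{1}}$. The only difference is in how that identification is justified: the paper cites \cite[IV 2.2]{ill:71} for $(L^{\tn{gr}}_{\vG/B})_{0}\simeq0$ and $(L^{\tn{gr}}_{\vG/B})_{1}\simeq$ a free resolution of $N$, while you re-derive it via a (somewhat informally worded but sound) simplicial bookkeeping argument.
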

\begin{proof}
Consider the graded ring \(\vG=B{\oplus} N\) with \(B\) in degree \(0\) and \(N\) in degree \(1\). The \(B\)-linear map \(\phi\co J_{0}\ot N\ra J_{1}\) gives the \(B\)-module \(J=J_{0}{\oplus} J_{1}\) an \(N\)-action, i.e. \(J\) becomes a graded \(\vG\)-module. Note that an extension of graded rings \(\beta\co 0\ra J\ra \vG'\ra \vG\ra 0\) over \(\beta_{0}\) in degree \(1\) is an extension of \(B'\)-modules \(\beta_{1}\) with the given induced map \(\phi\). We have a translation of the extension problem of modules to a problem of extensions of graded algebras as treated in Propositions \ref{prop.ext} and \ref{prop.omap}. Since there is a trivial extension of $\BB{Z}$-algebras $J_0\ra B{\oplus} J_0\ra B$ defining a zero-element, Proposition \ref{prop.ext} implies that (the equivalence class of) $\beta_{0}$ in a canonical way gives an element of \(\xt{1}{B}{L_{B/\BB{Z}}}{J_{0}}\). We define \(\ob(\beta_{0},N,\phi)\) as the image of \(\beta_{0}\) by the composition
\begin{equation}\label{eq.SPR}
\begin{aligned}
\xymatrix@C-0pt@R-6pt@H-0pt{
\beta_{0}\in\xt{1}{B}{L_{B/\BB{Z}}}{J_{0}}\ar[r]^(0.54){(\id,0)_{*}} &
\gxt{1}{B}{L_{B/\BB{Z}}}{J}\ar[r]^(0.44){\simeq} &
\gxt{1}{\vG}{L_{B/\BB{Z}}\ot^{\BB{L}}\vG}{J}\ar@<-7.3ex>[d]^{\partial(L^{\text{gr}}_{\vG/B/\BB{Z}})}
\\
& 
\hspace{-7.5em}\ob(\beta_{0},N,\phi)\in\xt{2}{B}{N}{J_{1}} & 
\gxt{2}{\vG}{L^{\text{gr}}_{\vG/B}}{J}\hspace{2.2em}\ar[l]_(0.59){\simeq}
}
\end{aligned}
\end{equation}
For the last isomorphism note that \((L^{\text{gr}}_{\vG/B})_{0}=0\) and \((L^{\text{gr}}_{\vG/B})_{1}\) equals the standard \(B\)-free resolution of \(N\); cf. \cite[IV 2.2]{ill:71}, which gives 
\begin{equation}\label{eq.AQext}
\gxt{n}{\vG}{L^{\text{gr}}_{\vG/B}}{J}\cong \xt{n}{B}{N}{J_{1}}
\end{equation}
for all \(n\). The rest then follows from Propositions \ref{prop.ext} and \ref{prop.omap}.
\end{proof}
\begin{rem}\label{rem.ext}
In \cite{ill:71} a main application of the graded cotangent complex is the construction of the obstruction for extensions of modules as in Corollary \ref{cor.ext}, see \cite[IV 3.1.5]{ill:71}. Unfortunately the argument seems to have some flaws. The sequence $(*)$ on page 247 (in our notation)
\begin{equation}
0\ra J_1\lra B{\oplus} N'\lra B{\oplus} N\ra 0
\end{equation}
is claimed to be an extension of graded $B'$-algebras 
which is true if and only if $N'$ is a $B$-module as $B'$-module 
and implies that the multiplication map $\phi\co J_0\ot N\ra J_1$ is zero. But in the deformation case $\phi$ is an isomorphism. In \cite[IV 3.1.2]{ill:71} it is claimed that the set of equivalence classes of graded $B'$-algebra extensions $\gxal{B'}{B{\oplus} N}{J_1}$ is isomorphic to $\xt{1}{B}{N}{J_1}$ which is wrong for the same reason. In \cite[IV 3.1.3]{ill:71} there is also claimed an isomorphism $\hm{}{B}{J_0\ot_{\!B} N}{J_1}\cong \ghm{}{\vG}{J_0\ot_{\!B} \vG}{J_1}$, but the latter is isomorphic to $\ghm{}{B}{J_0}{J_1}=0$. The claim is based on the mistaken adjointness in \cite[IV 1.1.3.1]{ill:71} (e.g. if $A=\BB{Z}[x]$ with $\deg(x)=n>0$, $E=A$ and $F=\BB{Z}$ then $\hm{}{A_0}{E_n}{F}\cong \BB{Z}$ while $\ghm{}{A}{E}{F[-n]}=0$). An extension of this `adjointness' to the derived category \cite[IV 1.2.2.1]{ill:71} is used to connect the exact sequence \cite[IV 3.1.4]{ill:71}, which is used to define the obstruction class in \cite[IV 3.1.5]{ill:71}, to previous results through \cite[IV 3.1.3]{ill:71}. In particular this involves the identification of $\xt{1}{B}{(L_{B/B'}\ot^{\BB{L}}\vG)_1}{J_1}\cong \hm{}{B}{J_0\ot N}{J_1}$ with $\gxt{1}{\vG}{L_{B/B'}\ot^{\BB{L}}\vG}{J_1}$, but the latter is $0$. However, the definition of the obstruction in \cite[IV 3.1.5]{ill:71} gives the same class as \eqref{eq.SPR}:
\end{rem}
\begin{lem}[cf. {\cite[IV 3.1.5]{ill:71}}]\label{lem.ext}
With assumptions as in \textup{Corollary \ref{cor.ext}} put $\vG=B{\oplus}N$\textup{.}
Applying $\hm{}{D(\gmod_{B})}{-}{J_1}$ to the degree $1$-part $(L^{\textup{gr}}_{\vG/B/B'})_1$ of the transitivity triangle\textup{,} see \eqref{eq.dist}\textup{,} gives an exact sequence
\begin{equation*}
0\ra\xt{1}{B}{N}{J_1}\ra\xt{1}{B'}{N}{J_1}\xra{\;u\;}\hm{}{B}{J_0\ot N}{J_1}\xra{\;\partial\;}\xt{2}{B}{N}{J_1}
\end{equation*}
where $\partial(\phi)=\ob(\beta_{0},N,\phi)$ and $u(\beta_1)=\phi$\textup{.} 
\end{lem}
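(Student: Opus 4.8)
The plan is to obtain the asserted sequence as part of the long exact $\Ext$-sequence of a distinguished triangle in $D(\gmod_B)$: namely the internal degree-$1$ component of the transitivity triangle $L^{\tn{gr}}_{\vG/B/B'}$ of \eqref{eq.dist}. Extracting a fixed internal degree is an exact operation on graded objects, so that component is a distinguished triangle
\begin{equation*}
L_{B/B'}\ot^{\BB{L}}_{B}N\lra (L^{\tn{gr}}_{\vG/B'})_{1}\lra (L^{\tn{gr}}_{\vG/B})_{1}\lra L_{B/B'}\ot^{\BB{L}}_{B}N[1]
\end{equation*}
in $D(\gmod_B)$, where we used that $L^{\tn{gr}}_{B/B'}=L_{B/B'}$ lies in internal degree $0$, so $(L_{B/B'}\ot^{\BB{L}}_{B}\vG)_{1}=L_{B/B'}\ot^{\BB{L}}_{B}N$. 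Applying $\hm{}{D(\gmod_{B})}{-}{J_1}$ and identifying the three terms will produce the sequence.

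First I would identify the terms. The right-hand term $(L^{\tn{gr}}_{\vG/B})_{1}$ is the standard $B$-free resolution of $N$ by the proof of Corollary \ref{cor.ext} (cf. \cite[IV 2.2]{ill:71}), so $\Ext^{n}_{D(\gmod_{B})}((L^{\tn{gr}}_{\vG/B})_{1},J_1)=\xt{n}{B}{N}{J_1}$. For the left-hand term, $\beta_0$ being square-zero gives $\cH_{0}(L_{B/B'})=\Omega_{B/B'}=0$ and $\cH_{1}(L_{B/B'})=J_0/J_0^{2}=J_0$, hence $L_{B/B'}\ot^{\BB{L}}_{B}N$ has $\cH_{0}=0$ and $\cH_{1}=J_0\ot_{B}N$; in the spectral sequence from homology computing $\Ext^{*}_{D(\gmod_B)}(-,J_1)$ the total degrees $0$ and $1$ are unperturbed (the candidate differentials target $\xt{\geq 2}{B}{\cH_0}{J_1}=0$), whence $\Ext^{0}_{D(\gmod_{B})}(L_{B/B'}\ot^{\BB{L}}_{B}N,J_1)=0$ and $\Ext^{1}_{D(\gmod_{B})}(L_{B/B'}\ot^{\BB{L}}_{B}N,J_1)=\hm{}{B}{J_0\ot N}{J_1}$. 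For the middle term I would establish the equivalence $(L^{\tn{gr}}_{\vG/B'})_{1}\simeq B\ot^{\BB{L}}_{B'}N$ in $D(\gmod_B)$: applying $-\ot^{\BB{L}}_{B}N$ to the canonical triangle $L_{B/B'}\lra B\ot^{\BB{L}}_{B'}B\lra B\lra L_{B/B'}[1]$ of the surjection $\beta_0$ produces a triangle $L_{B/B'}\ot^{\BB{L}}_{B}N\lra B\ot^{\BB{L}}_{B'}N\lra N\lra L_{B/B'}\ot^{\BB{L}}_{B}N[1]$, which I would match with the degree-$1$ transitivity triangle above via the unit $B\ra\vG$ and naturality of \cite[IV 2.2]{ill:71}; derived tensor--Hom adjunction then gives $\RHom_{B}(B\ot^{\BB{L}}_{B'}N,J_1)\simeq\RHom_{B'}(N,J_1)$, so $\Ext^{n}_{D(\gmod_{B})}((L^{\tn{gr}}_{\vG/B'})_{1},J_1)=\xt{n}{B'}{N}{J_1}$.

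With the identifications, the long exact sequence reads
\begin{equation*}
\dots\ra\xt{n}{B}{N}{J_1}\ra\xt{n}{B'}{N}{J_1}\xra{\,u_n\,}\Ext^{n}_{D(\gmod_{B})}(L_{B/B'}\ot^{\BB{L}}_{B}N,J_1)\xra{\,\partial_n\,}\xt{n+1}{B}{N}{J_1}\ra\dots
\end{equation*}
and substituting the degree-$0$ value ($0$) and degree-$1$ value ($\hm{}{B}{J_0\ot N}{J_1}$) of the third group yields exactly the four-term sequence, with $u=u_1$ and $\partial=\partial_1$. The map $\xt{1}{B}{N}{J_1}\ra\xt{1}{B'}{N}{J_1}$ is restriction of scalars along $B'\thr B$ (precomposition with the canonical $B\ot^{\BB{L}}_{B'}N\ra N$), and its injectivity records $\Ext^{0}_{D(\gmod_B)}(L_{B/B'}\ot^{\BB{L}}_{B}N,J_1)=0$. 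That $u(\beta_1)=\phi$ I would verify by the cocycle bookkeeping in the proof of \cite[\href{https://stacks.math.columbia.edu/tag/08S7}{Tag 08S7}]{SP} used for Proposition \ref{prop.omap}: a $B'$-module extension $\beta_1\co0\ra J_1\ra N'\ra N\ra0$ is represented on a naive cotangent complex, and pulling back along $L_{B/B'}\ot^{\BB{L}}_{B}N\ra(L^{\tn{gr}}_{\vG/B'})_{1}$ and reading off $\cH_1$ returns the $J_0$-multiplication on $N'$, i.e.\ $\phi$.

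Finally, for $\partial(\phi)=\ob(\beta_0,N,\phi)$ I would compare with the definition \eqref{eq.SPR}. The ring maps $\BB{Z}\ra B'\ra B$ induce a morphism of transitivity triangles $L^{\tn{gr}}_{\vG/B/\BB{Z}}\ra L^{\tn{gr}}_{\vG/B/B'}$ that is the identity on $L^{\tn{gr}}_{\vG/B}$; in internal degree $1$, after $\hm{}{D(\gmod_B)}{-}{J_1}$, this gives a commutative ladder relating the connecting map defining $\ob$ in \eqref{eq.SPR} to $\partial$, compatibly with the canonical $L_{B/\BB{Z}}\ra L_{B/B'}$. Under the latter the class of $\beta_0$ in $\xt{1}{B}{L_{B/\BB{Z}}}{J_0}$ is the image of $\id_{J_0}\in\xt{1}{B}{L_{B/B'}}{J_0}\cong\hm{}{B}{J_0}{J_0}$ (since $\NL_{B/B'}\simeq J_0[1]$), so chasing $\id_{J_0}$ pushed forward by $\phi$ through the ladder identifies $\partial(\phi)$ with $\ob(\beta_0,N,\phi)$; the same chase over $B'$ in place of $\BB{Z}$ identifies it with Illusie's class of \cite[IV 3.1.5]{ill:71}. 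I expect the main obstacle to be precisely the middle-term identification together with the compatible bookkeeping for $u$ and $\partial$: all of it must be set up against one coherent system of trivialisations, which is exactly where \cite{ill:71} goes astray (Remark \ref{rem.ext}) by passing through the non-adjunction $\hm{}{B}{J_0\ot N}{J_1}\cong\ghm{}{\vG}{J_0\ot\vG}{J_1}$ (the right side being $0$); working throughout with the transitivity triangles $L^{\tn{gr}}_{\vG/B/B'}$, $L^{\tn{gr}}_{\vG/B/\BB{Z}}$ and the canonical triangle of the surjection $\beta_0$ circumvents it.
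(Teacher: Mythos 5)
Your strategy coincides with the paper's: take the internal-degree-$1$ component of the transitivity triangle $L^{\tn{gr}}_{\vG/B/B'}$, apply $\hm{}{D(\gmod_B)}{-}{J_1}$, identify the three resulting $\Ext$-columns, and match $u$ and $\partial$ against the definitions by comparing the transitivity triangles over $\BB{Z}$ and over $B'$. Your identifications of the outer terms are fine: $(L^{\tn{gr}}_{\vG/B})_{1}$ as a $B$-free resolution of $N$ by \cite[IV 2.2]{ill:71}, and $\cH_0(L_{B/B'}\ot^{\BB{L}}_BN)=0$, $\cH_1(L_{B/B'}\ot^{\BB{L}}_BN)=J_0\ot_BN$, giving $\Ext^0=0$ and $\Ext^1=\hm{}{B}{J_0\ot N}{J_1}$ at the left. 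Your sketches of $u(\beta_1)=\phi$ and of $\partial(\phi)=\ob(\beta_0,N,\phi)$ also parallel the paper's diagram chase, cf.\ \eqref{eq.ill3}.

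The gap is the middle term. The asserted ``canonical triangle'' $L_{B/B'}\to B\ot^{\BB{L}}_{B'}B\to B$ is not a distinguished triangle. The fibre of the multiplication $B\ot^{\BB{L}}_{B'}B\to B$ is the derived augmentation ideal, and the natural comparison map goes from that fibre \emph{to} $L_{B/B'}$ and is an equivalence only through homological degree $1$; already when $J_0$ has rank $\geq 2$ the fibre carries higher divided/exterior-power contributions in $\cH_{\geq2}$ that do not occur in $L_{B/B'}$. Since you propose to obtain $(L^{\tn{gr}}_{\vG/B'})_{1}\simeq B\ot^{\BB{L}}_{B'}N$ by tensoring that nonexistent triangle with $N$ and matching it against the degree-$1$ transitivity triangle, the middle-term identification is not justified as written. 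The paper instead quotes \cite[IV 2.4.4]{ill:71} directly for $\gxt{1}{B}{(L^{\tn{gr}}_{\vG/B'})_1}{J_1}\cong\xt{1}{B}{N\ot^{\BB{L}}_{B'}B}{J_1}\cong\xt{1}{B'}{N}{J_1}$. To repair your version, either cite that result, or work only with the $\tau_{\leq 1}$-truncations of all three complexes, where the fibre of the multiplication and $L_{B/B'}$ do agree and where $\Ext^{0}$ and $\Ext^{1}$ are all the four-term sequence requires; with that fixed, the rest of your plan goes through.
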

\begin{proof}
First note that $\ob(\beta_{0},N,\phi)$ also can be defined as the image of $(\id,\phi)$ shown in the following pointed commutative diagram of natural maps:
\begin{equation}\label{eq.ill3}
\xymatrix@C-6pt@R-6pt@H0pt{
\id\in\nd{}{B}{J_0}\ar@<-0.14em>[d]^(0.45){\simeq}\ar[r]^(0.45){\simeq} 
&
\xt{1}{B}{L_{B/B'}}{J_0}\ar@<-2.2em>[d]^(0.45){\simeq}\ar[r] 
&
\xt{1}{B}{L_{B/\BB{Z}}}{J_0}\ni\beta_0\hspace{-0.75em}\ar@<-3.12em>[d]^(0.45){\simeq} 
\\
\hspace{-0.6em}(\id,\phi)\in\ghm{}{\vG}{J_0\ot\vG}{J}\ar[r]^(0.52){\simeq} 
&
\hspace{0em}\gxt{1}{\vG}{L_{B/B'}\ot^{\BB{L}}\vG}{J}\ar@<-2.2em>[d]^{\partial(L^{\tn{gr}}_{\vG/B/B'})}\ar[r] 
&
\gxt{1}{\vG}{L_{B/\BB{Z}}\ot^{\BB{L}}\vG}{J}\ar@<-3.12em>[d]^{\partial(L^{\tn{gr}}_{\vG/B/\BB{Z}})}
\\
\ob(\beta_0,N,\phi)\in\xt{2}{B}{N}{J_1}
&
\gxt{2}{\vG}{L^{\tn{gr}}_{\vG/B}}{J}\ar@{=}[r]\ar[l]_(0.4){\simeq} 
&
\gxt{2}{\vG}{L^{\tn{gr}}_{\vG/B}}{J}\hspace{2em}
}
\end{equation}
Just as the $\vG$-module $J$ gives a (generally non-split) extension of graded $\vG$-modules $0\ra J_1\ra J\ra J_0\ra 0$ where $J_i$ is a $B$-module as $\vG$-module, there is an extension of distinguished triangles 
\begin{equation}
0\ra (L^{\text{gr}}_{\vG/B/B'})_1\lra L^{\text{gr}}_{\vG/B/B'}\lra (L^{\text{gr}}_{\vG/B/B'})_0\ra 0
\end{equation}
where the outer terms are $B$-modules as $\vG$-modules. Applying $\hm{}{D(\gmod_{\vG})}{-}{J}$ gives maps of long exact sequences
\begin{equation}
\xt{}{B}{L_{B/B/B'}}{J_0}\lra\gxt{}{\vG}{L^{\text{gr}}_{\vG/B/B'}}{J}\lra \gxt{}{B}{(L^{\text{gr}}_{\vG/B/B'})_1}{J_1}
\end{equation}
In addition to \eqref{eq.AQext} note that 
\begin{equation}
\gxt{1}{B}{(L^{\text{gr}}_{\vG/B/B'})_1}{J_1}\cong\xt{1}{B}{N\ot^{\BB{L}}_{B'}B}{J_1}\cong \xt{1}{B'}{N}{J_1}
\end{equation}
by \cite[IV 2.4.4]{ill:71}. The exact sequence is thus obtained by the low-degree terms of $\xt{}{B}{(L^{\text{gr}}_{\vG/B/B'})_1}{J_1}$. 
The pointed commutative diagram
\begin{equation}
\begin{aligned}
\xymatrix@C-6pt@R-6pt@H0pt{
\hspace{-2em}(\id,\phi)\in\ghm{}{\vG}{J_0\ot\vG}{J}\ar[r]^{\simeq}\ar[d]
& \gxt{1}{\vG}{L_{B/B'}\ot^{\BB{L}}\vG}{J}\ar[r]^(0.54){\partial}\ar[d]
& \gxt{2}{\vG}{L^{\text{gr}}_{\vG/B}}{J}\ar[d]^(0.44){\simeq}
\\
\hspace{0.95em}\phi\in\hm{}{B}{J_0\ot N}{J_1}\ar[r]^{\simeq}
& \gxt{1}{B}{L_{B/B'}\ot^{\BB{L}}N}{J_1}\ar[r]^(0.58){\partial}
& \xt{2}{B}{N}{J_1}
}
\end{aligned}
\end{equation}
and \eqref{eq.ill3} shows that $\partial(\phi)=\ob(\beta_{0},N,\phi)$. The composition
\begin{equation}
\gxt{1}{\vG}{L^{\text{gr}}_{\vG/B'}}{J}\lra \gxt{1}{\vG}{L^{\text{gr}}_{B/B'}\ot^{\BB{L}}\vG}{J}\cong\ghm{}{\vG}{J_0\ot_B\vG}{J}
\end{equation}
maps, by the graded version of \cite[III 2.1.2.2]{ill:71}, the extension $\beta=\beta_0{\oplus}\beta_1$ to $(\id,\phi)$. Restricted to degree one this gives $u(\beta_1)=\phi$.
\end{proof}
The obstructions and the torsor actions of the pair, the algebra, and the module are related by a natural long exact sequence.
\begin{thm}\label{thm.lang}
Suppose \(A\ra B\) is a ring homomorphism and \(N\) is a \(B\)-module\textup{.} Let \(\vG=B{\oplus} N\) be the graded \(A\)\textup{-}algebra with \(B\) in degree \(0\) and \(N\) in degree \(1\) with inclusion $i\co B\ra\vG$\textup{.} Let \(J=J_{0}{\oplus}J_{1}\) be a graded \(\vG\)\textup{-}module with \(N\)\textup{-}action \(\phi\co J_{0}\ot_BN\ra J_{1}\)\textup{.} Then there is a natural long exact sequence of \(B\)-modules\textup{:}
\begin{align*}
0\ra\, & \hm{}{B}{N}{J_{1}}\xra{\,u\,} \gder{A}{\vG}{J}\xra{\,i^*\,} \der{A}{B}{J_{0}}\xra{\,\partial\,}\dots 
\\
\ra\, &\xt{n}{B}{N}{J_{1}}\xra{\,u\,} \gxt{n}{\vG}{L^{\tn{gr}}_{\vG/A}}{J}\xra{\,i^*\,} \xt{n}{B}{L_{B/A}}{J_{0}}\xra{\,\partial\,} \xt{n+1}{B}{N}{J_{1}}\ra\dots
\end{align*}
Given an extension \(\alpha\co 0\ra I\ra A'\ra A\ra 0\) and an \(A\)-linear map \(\gamma_0\co I\ra J_{0}\). Let \(\gamma\) be \(\gamma_0\)\ composed with the inclusion \(J_{0}\ra J\)\textup{.}
\begin{enumerate}[leftmargin=2.4em, label=\textup{(\roman*)}]
\item The obstruction \(\ob(\alpha,\vG,\gamma)\) maps by \(i^*\) to \(\ob(\alpha,B,\gamma_0)\)\textup{.} 
\item If \(\beta\) is a graded $\gamma$-extension above \(\alpha\) as in \textup{Definition \ref{def.ext}} the torsor actions on \(\beta\) and its degree $0$-part \(\beta_{0}\) are compatible through \(i^*\)\textup{.}
\end{enumerate}
Suppose \(\beta_{0}\co 0\ra J_{0}\ra B'\ra B\ra 0\) is a $\gamma_0$-extension above \(\alpha\)\textup{.}
\begin{enumerate}[resume*]
\item Then \(u(\ob(\beta_{0},N,\phi))=\ob(\alpha,\vG,\gamma)\)\textup{.} If \(\xi\in\xt{1}{B}{L_{B/A}}{J_{0}}\) then 
\begin{equation*}
\ob(\beta_{0}+\xi,N,\phi)=\ob(\beta_{0},N,\phi)+\partial(\xi)\,.
\end{equation*}
\item Suppose \(\beta_{1}\co 0\ra J_{1}\ra N'\ra N\ra 0\) is an extension above \(\beta_{0}\) with induced map \(J_{0}\ot_BN\ra J_{1}\) equal to \(\phi\)\textup{.} Then the torsor actions on \(\beta_{1}\) and \(\beta=\beta_0{\oplus}\beta_1\) are compatible through \(u\)\textup{.}
\end{enumerate}
\end{thm}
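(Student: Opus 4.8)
The plan is to extract everything from the Jacobi--Zariski triangle \eqref{eq.dist} attached to $A\ra B\xra{\,i\,}\vG$:
\begin{equation*}
L^{\tn{gr}}_{B/A}\ot^{\BB{L}}_{B}\vG\xra{\,a\,}L^{\tn{gr}}_{\vG/A}\xra{\,b\,}L^{\tn{gr}}_{\vG/B}\lra L^{\tn{gr}}_{B/A}\ot^{\BB{L}}_{B}\vG[1]\,,
\end{equation*}
with $a$ the natural map induced by $i$ and $b$ the transitivity map. Applying $\grhm{}{\vG}{-}{J}$ and taking cohomology gives a long exact sequence of $\vG_{0}=B$-modules with generic term $\gxt{n}{\vG}{L^{\tn{gr}}_{\vG/A}}{J}$. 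First I would identify the two outer families. Because $L^{\tn{gr}}_{B/A}$ is concentrated in graded degree $0$, the graded restriction--extension adjunction along $i$ gives $\grhm{}{\vG}{L^{\tn{gr}}_{B/A}\ot^{\BB{L}}_{B}\vG}{J}\simeq\rhm{}{B}{L_{B/A}}{J_{0}}$ (a degree-preserving $\vG$-linear map out of the induced module is precisely a $B$-linear map of the degree-$0$ part into $J_{0}$, the degree-$1$ component being forced by the $N$-action $\phi$); hence $\gxt{n}{\vG}{L^{\tn{gr}}_{B/A}\ot^{\BB{L}}_{B}\vG}{J}\cong\xt{n}{B}{L_{B/A}}{J_{0}}$, which is $\der{A}{B}{J_{0}}$ at $n=0$ and vanishes at $n=-1$ by connectivity of $L_{B/A}$. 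For the other end I invoke the computation recalled in the proof of Corollary \ref{cor.ext}: $(L^{\tn{gr}}_{\vG/B})_{0}=0$ and $(L^{\tn{gr}}_{\vG/B})_{1}$ is the standard $B$-free resolution of $N$, so \eqref{eq.AQext} gives $\gxt{n}{\vG}{L^{\tn{gr}}_{\vG/B}}{J}\cong\xt{n}{B}{N}{J_{1}}$. Setting $u=b^{*}$ and $i^{*}=a^{*}$ under these identifications produces the asserted sequence; naturality in $(A\ra B,N,J)$ comes from functoriality of the cotangent complex and of \eqref{eq.dist}.

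For (i) and (ii) I would appeal to Proposition \ref{prop.omap} with $\vG_{1}=B$, $\vG_{2}=\vG$, $\rho=i$, and $J_{1}=J_{0}$, $J_{2}=J$, $\sigma=(\id,0)\co J_{0}\ra J$, so that $\gamma=\sigma\gamma_{0}$. Here the target $\gxt{2}{B}{L^{\tn{gr}}_{B/A}}{J}$ of $\rho^{*}$ is again $\xt{2}{B}{L_{B/A}}{J_{0}}$, with $\sigma_{*}$ the identity on it, so Proposition \ref{prop.omap}(i) reads $\rho^{*}\ob(\alpha,\vG,\gamma)=\ob(\alpha,B,\gamma_{0})$; one checks $\rho^{*}$ coincides with the map $i^{*}=a^{*}$ of the sequence (both restrict along $i$; concretely $\rho^{*}$ sends an extension $\beta$ to its degree-$0$ part), which gives (i). Similarly (ii) follows from Proposition \ref{prop.omap}(ii) applied to the degree-$0$ inclusion $\beta_{0}\ra\beta$, which yields $\rho^{*}(\beta+\xi)=\rho^{*}\beta+\rho^{*}\xi$ with $\rho^{*}\beta=\beta_{0}$.

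For (iii) and (iv) the plan is to chase the definition \eqref{eq.SPR} of $\ob(\beta_{0},N,\phi)$ against the Stacks-Project definition of $\ob(\alpha,\vG,\gamma)$. By \eqref{eq.SPR}, $\ob(\beta_{0},N,\phi)$ is the image of the class of $\beta_{0}$ in $\xt{1}{B}{L_{B/\BB{Z}}}{J_{0}}$ under: push along $(\id,0)$, transport to $\gxt{1}{\vG}{L_{B/\BB{Z}}\ot^{\BB{L}}\vG}{J}$, apply $\partial(L^{\tn{gr}}_{\vG/B/\BB{Z}})$, identify via \eqref{eq.AQext}; whereas $\ob(\alpha,\vG,\gamma)$ is the image of $\alpha\in\xt{1}{A}{L_{A/\BB{Z}}}{I}$ under: push along $\gamma=(\id,0)\gamma_{0}$, transport, apply $\partial(L^{\tn{gr}}_{\vG/A/\BB{Z}})$. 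The hypothesis that $\beta_{0}$ lies above $\alpha$ via $\gamma_{0}$ means exactly that $\beta_{0}$ maps to $(\gamma_{0})_{*}\alpha$ under restriction $\xt{1}{B}{L_{B/\BB{Z}}}{J_{0}}\ra\xt{1}{A}{L_{A/\BB{Z}}}{J_{0}}$, and the octahedron of $\BB{Z}\ra A\ra B\ra\vG$ supplies the compatibility $b^{*}\circ\partial(L^{\tn{gr}}_{\vG/B/\BB{Z}})=\partial(L^{\tn{gr}}_{\vG/A/\BB{Z}})\circ(\tn{restriction})$; chasing $\beta_{0}$ through the resulting commuting square gives $u(\ob(\beta_{0},N,\phi))=\ob(\alpha,\vG,\gamma)$. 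The same octahedron identifies the composite $\xt{1}{B}{L_{B/A}}{J_{0}}\ra\xt{1}{B}{L_{B/\BB{Z}}}{J_{0}}$ followed by \eqref{eq.SPR} with the connecting map $\partial$ of the sequence; since the torsor action $\beta_{0}\mapsto\beta_{0}+\xi$ is addition of the image of $\xi$ in $\xt{1}{B}{L_{B/\BB{Z}}}{J_{0}}$ and \eqref{eq.SPR} is additive, this proves $\ob(\beta_{0}+\xi,N,\phi)=\ob(\beta_{0},N,\phi)+\partial(\xi)$. Finally (iv) follows from the explicit pushout ($\NL$-level) description of the torsor actions in the proof of Proposition \ref{prop.omap}: under $\xt{1}{B}{N}{J_{1}}\cong\gxt{1}{\vG}{L^{\tn{gr}}_{\vG/B}}{J}$ of Corollary \ref{cor.ext}, adding $\zeta$ to the degree-$1$ extension $\beta_{1}$ is the same pushout as adding $u(\zeta)=b^{*}(\zeta)$ to $\beta=\beta_{0}{\oplus}\beta_{1}$.

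The one genuine obstacle I anticipate is the bookkeeping in (iii): the obstruction classes are packaged through Jacobi--Zariski sequences relative to $\BB{Z}$ rather than $A$, so one has to pass carefully between the $\BB{Z}$- and $A$-relative pictures and verify the two octahedral compatibilities invoked above (the connecting maps commuting with $b^{*}$ and with restriction). Everything else is formal manipulation of \eqref{eq.dist}, the degreewise structure of $\vG=B{\oplus}N$ and $J=J_{0}{\oplus}J_{1}$, and citations of Corollary \ref{cor.ext} and Proposition \ref{prop.omap}.
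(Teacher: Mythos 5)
Your proof is correct and follows essentially the same route as the paper's: derive the long exact sequence from the transitivity triangle \eqref{eq.dist} via the identifications \(\gxt{n}{\vG}{L^{\tn{gr}}_{\vG/B}}{J}\cong\xt{n}{B}{N}{J_1}\) and \(\gxt{n}{\vG}{L^{\tn{gr}}_{B/A}\ot^{\BB{L}}\vG}{J}\cong\xt{n}{B}{L_{B/A}}{J_0}\), invoke Propositions \ref{prop.ext}--\ref{prop.omap} for (i), (ii), (iv), and for (iii) chase \eqref{eq.SPR} against the Stacks-Project definition of \(\ob(\alpha,\vG,\gamma)\) through the compatibilities of connecting maps for \(\BB{Z}\ra A\ra B\ra\vG\). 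The only differences are cosmetic: you phrase "lying above \(\alpha\) via \(\gamma_0\)" directly as restriction of the class of \(\beta_0\) to \((\gamma_0)_*\alpha\) rather than citing the Stacks Project tag, and you make the octahedral compatibility explicit where the paper records it silently in diagrams \eqref{eq.mapobs} and \eqref{eq.theta}.
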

\begin{proof}
The long exact sequence follows from $\hm{}{D(\gmod_{\vG})}{L^{\tn{gr}}_{\vG/B/A}}{J}$\textup{,} see \textup{\eqref{eq.dist},}  
since \(\gxt{n}{\vG}{L^{\text{gr}}_{\vG/B}}{J}\cong\xt{n}{B}{N}{J_{1}}\) by \textup{\eqref{eq.AQext}} and \(\gxt{n}{\vG}{L^{\tn{gr}}_{B/A}\ot^{\BB{L}}\vG}{J}\linebreak[1]\cong \xt{n}{B}{L_{B/A}}{J_{0}}\). Then (i), (ii) and (iv) are special cases of Propositions \ref{prop.ext} and \ref{prop.omap}.

(iii) The following commutative diagram of canonical maps
\begin{equation}\label{eq.mapobs}
\begin{aligned}
\xymatrix@C-24pt@R-6pt@H-0pt{
&\beta_{0}\in\xt{1}{B}{L_{B/\BB{Z}}}{J_{0}}\hspace{1.5em}
\ar@<-2.08em>[d]^(0.45){(\id,0)_{*}}
&
\xt{1}{A}{L_{A/\BB{Z}}}{I}\ni\alpha\hspace{3.7em}
\ar@<-4.94em>[d]^(0.45){\gamma_{*}}  
\\
&\gxt{1}{B}{L_{B/\BB{Z}}}{J}\hspace{0.05em}\ar@<-2.08em>[d]^{\partial(L^{\tn{gr}}_{\vG/B/\BB{Z}})}\ar[r]
&
\gxt{1}{A}{L_{A/\BB{Z}}}{J}\hspace{5.85em}\ar@<-4.94em>[d]^{\partial(L^{\tn{gr}}_{\vG/A/\BB{Z}})} 
\\
\hspace{-0.3em}\ob(\beta_{0},N,\phi)\in\xt{2}{B}{N}{J_{1}}\ar[r]^(0.58){\simeq}
&
\gxt{2}{\vG}{L^{\tn{gr}}_{\vG/B}}{J}\ar[r]
&
\gxt{2}{\vG}{L^{\tn{gr}}_{\vG/A}}{J}\ni\ob(\alpha,\vG,\gamma)
}
\end{aligned}
\end{equation}
is pointed since \((\beta_{0},0)\) maps to \(((\gamma_0)_{*}\alpha,0)\) by \cite[\href{https://stacks.math.columbia.edu/tag/08S8}{Tag 08S8}]{SP}, i.e. \(u(\ob(\beta_{0},N,\phi))=\ob(\alpha,\vG,\gamma)\).

The commutative diagram of ring maps 
\begin{equation}\label{eq.mapobs2}
\begin{aligned}
\xymatrix@C-3pt@R-8pt@H-0pt{
\BB{Z}\ar[r]\ar[d] & B\ar[r]\ar@{=}[d] & \vG\ar@{=}[d]
\\
A\ar[r] & B\ar[r] & \vG
}
\end{aligned}
\end{equation} 
induces the following commutative diagram of \(B\)-modules:
\begin{equation}\label{eq.theta}
\begin{aligned}
\xymatrix@C-3pt@R-6pt@H-0pt{
\xt{1}{B}{L_{B/A}}{J_{0}}\ar@<-2.48em>[d]^{\partial(L^{\tn{gr}}_{\vG/B/A})}\ar[r]^(0.48){\theta}
&
\xt{1}{B}{L_{B/\BB{Z}}}{J_{0}}\hspace{-0.74em}\ar@<-2.06em>[d]^{\partial(L^{\tn{gr}}_{\vG/B/\BB{Z}})}
\\
\hspace{-0.8em}\gxt{2}{\vG}{L^{\tn{gr}}_{\vG/B}}{J}\ar@{=}[r]
&
\gxt{2}{\vG}{L^{\tn{gr}}_{\vG/B}}{J}
}
\end{aligned}
\end{equation}
Suppose \(\xi\in \xt{1}{B}{L_{B/A}}{J_{0}}\). Let \(\beta_{\xi}\co 0\ra J_{0}\ra B'_{\xi}\ra B\ra 0\) denote the $\gamma_0$-extension obtained from the torsor action of \(\xi\) on \(\beta_{0}\). Then \(\beta_{\xi}-\beta_{0}=\theta(\xi)\) by Proposition \ref{prop.omap}. The commutativity of \eqref{eq.theta} implies the second statement of (iii).
\end{proof}
\section{The Kodaira-Spencer classes and maps}\label{sec.KS}
The long exact sequence in Theorem \ref{thm.lang} relates the Kodaira-Spencer class and map of a pair (algebra, module) to the corresponding notions for the algebra and for the module. 
The following is a graded version of \cite[II 2.1.5.7]{ill:71}.
\begin{defn}\label{defn.KS}
Let \(\vL\ra A\) and \(A\ra \vG\) be graded ring homomorphisms with \(\vL\) and \(A\) concentrated in degree \(0\). Put $B=\vG_0$, the degree $0$ part. The map \(L^{\text{gr}}_{\vG/A}\ra L_{A/\!\vL}\ot_{A}\vG[1]\) in $L^{\tn{gr}}_{\vG/A/\vL}$, see \eqref{eq.dist}, is called the \emph{Kodaira-Spencer class} of \(\vL\ra A\ra\vG\).
\end{defn}
Composing the Kodaira-Spencer class map with the natural augmentation map 
$
L_{A/\!vL}\ot_{A}\vG[1]\lra\Omega_{A/\vL}\ot_{A}\vG[1]
$
induces the \emph{cohomological Kodaira-Spencer class} \(\kappa(\vG/A/\vL)\in
\gxt{1}{\vG}{L^{\tn{gr}}_{\vG/A}}{\Omega_{A/\!\vL}\ot_{A}\vG}\), also given as follows. Let \({P^1}=P^1_{A/\!\vL}\) denote \(A\ot_{\vL}A/I^{2}\) where \(I=\ker\{\mu\co A\ot_{\vL}A\ra A\}\). The ring maps \(j_{1}\) and \(j_{2}\) from \(A\) to \({P^1}\) defined by \(j_{1}\co s\mapsto s\ot 1\) and \(j_{2}\co s\mapsto 1\ot s\) gives the universal derivation \(d_{A/\!\vL}\co A\ra \Omega_{A/\!\vL}=I/I^{2}\) induced by \(j_{2}-j_{1}\). Let \(\gamma_{A/\!\vL}\co \Omega_{A/\!\vL}\ra P^1_{A/\!\vL}\) denote the inclusion. The \emph{principal parts of \(\vG\)} is \(P^1_{A/\!\vL}(\vG)=P^1\ot_{A}\vG\) which is an \(A\)-algebra via \(j_{1}\ot 1_{\vG}\co A\ra {P^1}\ot_{A}\vG\). The \(A\)-algebra extension
\begin{equation}\label{eq.pp}
\kappa(\vG/A/\vL)\co \quad0\ra \Omega_{A/\!\vL}\ot_{A}\vG\xra{\gamma_{A/\!\vL}\ot{\id_{\vG}}} P^1_{A/\!\vL}\ot_{A}\vG\xra{\bar{\mu}\ot{\id_{\vG}}} \vG \ra 0
\end{equation}
represents the Kodaira-Spencer class; see \cite[III 1.2.6]{ill:71}. Since \({P^1}\ot_{A}\vG\) has a natural \({P^1}\)-algebra structure, \eqref{eq.pp} is also a (graded) algebra ${\id}\ot 1\co\Omega_{A/\!\vL}\ra \Omega_{A/\!\vL}\ot_{A}\vG$-extension of \(\vG\) above \(\Omega_{A/\!\vL}\ra {P^1}\ra A\); see Definition \ref{def.ext}. Let \(\vG\ot_{A}{P^1}\) denote the \(A\)-algebra defined via the \(j_{1}\) tensor product. The extension \(\vG\ot_{A}{P^1}\ra \vG\) is trivial (split by \({\id_{\vG}}\ot 1_{{P^1}} \)) and the `difference' of the \({P^1}\)-algebras \({P^1}\ot_{A}\vG\) and \(\vG\ot_{A}{P^1}\) is an element in \(\gxt{1}{\vG}{L^{\tn{gr}}_{\vG/A}}{\Omega_{A/\!\vL}\ot_{A}\vG}\) by Proposition \ref{prop.ext} (ii), which equals \(\kappa(\vG/A/\vL)\); see \cite[III 2.1.5]{ill:71}. 
Moreover, the difference \(1_{{P^1}}\ot_{j_{2}}\!\id_{\vG}-\id_{\vG}\!\ot _{j_{1}}\!1_{{P^1}}\) induces \(d_{A/\!\vL}\ot 1_{B}\) 
which is mapped to \(\kappa(\vG/A/\vL)\) by \(\partial\) in the Jacobi-Zariski sequence of \(\vL\ra A\ra \vG\) obtained from \eqref{eq.dist}:
\begin{equation}\label{eq.let}
\gder{\vL}{\vG}{\Omega_{A/\!\vL}\ot_{A}\vG}\xra{\,i^*\,}\der{\vL}{A}{\Omega_{A/\!\vL}\ot_{A}B}\xra{\;\partial\;}\gxt{1}{\vG}{L^{\tn{gr}}_{\vG/A}}{\Omega_{A/\!\vL}\ot_{A}\vG};
\end{equation} 
see \cite[III 1.2.6.5 and 1.2.7]{ill:71}. Hence \(\kappa(\vG/A/\vL)=0\) if and only if there is a (graded) derivation \(\mc{D}\) in \(\gder{\vL}{\vG}{\Omega_{A/\!\vL}\ot_{A}\vG}\) which maps to \(d_{A/\!\vL}\ot 1_{B}\). Then 
\begin{equation}
\sigma:=j_{2}\ot{\id_{\vG}}-(\gamma_{A/\!\vL}\ot{\id_{\vG}})\mc{D}\co \vG\ra {P^1}\ot_{A}\vG
\end{equation}
is an \(A\)-algebra map (since \(\sigma_{\vert A}=j_{1}\ot 1_{B}\)) which splits \({P^1}\ot_{A}\vG\ra\vG\). Conversely, such a splitting \(\sigma\) induces via \(j_{2}\ot{\id_{\vG}}-\,\sigma\) a derivation mapping to \(d_{A/\!\vL}\ot 1_{B}\).

The \emph{Kodaira-Spencer map of \(\vL\ra A\ra\vG\)} 
\begin{equation}\label{eq.KSmap}
g^{\vG}\!\co \Der_{\vL}(A)\lra\gxt{1}{\vG}{L^{\tn{gr}}_{\vG/A}}{\vG}
\end{equation}
is defined by \(D\mapsto (f^D\!\ot{\id}_{\vG})_*\kappa(\vG/A/\vL)\) where \(f^{D}\!\co \Omega_{A/\!\vL}\ra A\) corresponds to \(D\). Pushout of \eqref{eq.pp} by \(f^{D}\!\ot{\id_{\vG}}\) equals the corresponding algebra $(A\ra\vG)$-extension of \(\vG\) above \(A[\vare]\ra A\) given by \(g^{\vG}(D)\); see Proposition \ref{prop.omap} (ii). 
Note that the connecting map $\partial\co \der{\vL}{A}{J_0}\ra\gxt{1}{\vG}{L^{\tn{gr}}_{\vG/A}}{J}$ for any $\vG$-module $J=J_0{\oplus} J_1$ by naturality has the same description: $\partial(D)=(f^D\!\ot{\id}
_{\vG})_*\kappa(\vG/A/\vL)$.

In the special case \(\vG=B{\oplus}{N}\) where \({N}\) is a \(B\)-module,  
\(\gxt{1}{\vG}{L^{\tn{gr}}_{\vG/B}}{\Omega_{B/\!\vL}\ot_{B}\vG}\cong \xt{1}{B}{{N}}{\Omega_{B\!/\!\vL}\ot_{B}{N}}\) and the Kodaira-Spencer class \(\kappa(\vG/B/\vL)\) corresponds to the (cohomological) \emph{Atiyah class} \(\at_{B\!/\!\vL}({N})\in\xt{1}{B}{{N}}{\Omega_{B\!/\!\vL}\ot_{B}{N}}\); cf.\ \cite[IV 2.3.6-7]{ill:71}. 
In particular, $\at_{B\!/\!\vL}({N})=\partial(\vG/B/\vL)(d_{B/\!\vL})$. 
The class is represented by the short exact sequence of \(B\)-modules
\begin{equation}\label{eq.at}
\at_{B\!/\!\vL}({N})\co \quad 0\ra\Omega_{B\!/\!\vL}\ot_{B}{N}\lra P^1_{B\!/\!\vL}\ot_{B}{N}\lra{N}\ra 0\,.
\end{equation}
\begin{prop}\label{prop.at}
Consider the sequence of \(A\ra B\ra \vG\) in \textup{Theorem \ref{thm.lang}} with $\vG=B{\oplus} N$\textup{,} $J_0=\Omega_{A/\!\vL}\ot_{A}B$ and $J_1=\Omega_{A/\!\vL}\ot_{A}N$\textup{:}
\begin{equation*}
\dots \xra{\,\partial\,}\xt{1}{B}{{N}}{\Omega_{A/\!\vL}\ot_{A}{N}}\xra{\,u\,}\gxt{1}{\vG}{L^{\tn{gr}}_{\vG/A}}{\Omega_{A/\!\vL}\ot_{A}\vG}\xra{i^{*}}
\xt{1}{B}{L_{B/A}}{\Omega_{A/\!\vL}\ot_{A}B}
\end{equation*}
\begin{enumerate}[leftmargin=2.4em, label=\textup{(\roman*)}]
\item The map \(i^{*}\) takes the Kodaira-Spencer class \(\kappa(\vG/A/\vL)\) to \(\kappa(B/A/\vL)\)\textup{.}
\item Assume \(\kappa(B/A/\vL)=0\) and choose an \(A\)-algebra splitting \(\sigma\co  B\ra P^1_{A/\!\vL}\ot_{A}B\)\textup{.} There is an element 
\begin{equation*}
\kappa(\sigma,{N})\in\xt{1}{B}{{N}}{\Omega_{A/\!\vL}\ot_{A}{N}},
\end{equation*}
appropriately natural in \((B/A/\vL,\sigma,{N})\)\textup{,} which \(u\) maps to \(\kappa(\vG/A/\vL)\)\textup{.} Moreover\textup{,} \(\kappa(\sigma,{N})\) is represented by the degree \(1\) part of \eqref{eq.pp} 
\begin{equation*}
\kappa(\sigma,{N})\co\quad0\ra \Omega_{A/\!\vL}\ot_{A}{N}\lra P^1_{A/\!\vL}\ot_{A}{N}\lra{N}\ra 0
\end{equation*}
considered as a short exact sequence of \(B\)-modules via \(\sigma\)\textup{.}
\item Let \(D(\sigma)\) in \(\Der_{\vL}(B,\Omega_{A/\!\vL}\ot_{A}B)\) be the derivation induced by \(\sigma\) and \(f^{D(\sigma)}\) in \(\hm{}{B}{\Omega_{B/\!\vL}}{\Omega_{A/\!\vL}\ot_{A}B}\) the corresponding homomorphism\textup{.} Then 
\begin{equation*}
\kappa(\sigma,{N})=(f^{D(\sigma)}\ot{\id_{N}})_{*}\at_{B/\!\vL}({N}).
\end{equation*}
For each \(Y\in\Der_{\vL}(A)\) let \(X_{\sigma}(Y)\) denote \((f^{Y}\!\ot{\id_{B}})_{*}D(\sigma)\in\Der_{\vL}(B)\)\textup{.} Then
\begin{equation*}
(f^{Y}\!\ot{\id_{N}})_{*}\kappa(\sigma,{N})=(f^{X_{\sigma}(Y)}\ot{\id_{N}})_{*}\at_{B\!/\!\vL}({N})\:\:\text{in}\:\: \xt{1}{B}{{N}}{{N}}.
\end{equation*}
\end{enumerate}
\end{prop}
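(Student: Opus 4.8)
The plan is to assemble Proposition \ref{prop.at} from the functoriality results already established, treating the three parts in order and using the identification of Kodaira--Spencer classes with Atiyah classes.

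\textbf{Part (i).} First I would observe that the Kodaira--Spencer class $\kappa(\vG/A/\vL)$ is, by Definition \ref{defn.KS}, the image of $d_{A/\vL}\ot 1_B$ under the connecting map $\partial(L^{\tn{gr}}_{\vG/A/\vL})$, and similarly $\kappa(B/A/\vL)$ is the image of $d_{A/\vL}$ (equivalently $d_{A/\vL}\ot 1_B$ since $B=\vG_0$) under $\partial(L_{B/A/\vL})$. The map $i^*$ comes from $i\co B\ra\vG$, and the commutativity of connecting maps with $i^*$ — already used throughout Theorem \ref{thm.lang} and its proof — gives $i^*\partial(L^{\tn{gr}}_{\vG/A/\vL})=\partial(L_{B/A/\vL})$ after the standard identifications $\gxt{n}{\vG}{L^{\tn{gr}}_{B/A}\ot^{\BB{L}}\vG}{J}\cong\xt{n}{B}{L_{B/A}}{J_0}$. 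Alternatively, and perhaps more transparently, I would argue at the level of the representing extension \eqref{eq.pp}: restricting the graded $A$-algebra extension $0\ra\Omega_{A/\vL}\ot_A\vG\ra P^1_{A/\vL}\ot_A\vG\ra\vG\ra 0$ to degree $0$ recovers the Atiyah-type extension for $B$, which represents $\kappa(B/A/\vL)$; this is exactly what $i^*$ does on the level of classes. So (i) is essentially a naturality statement.

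\textbf{Part (ii).} Here I would use the long exact sequence: since $i^*\kappa(\vG/A/\vL)=\kappa(B/A/\vL)=0$, exactness produces some $\kappa(\sigma,N)\in\xt{1}{B}{N}{\Omega_{A/\vL}\ot_A N}\cong\gxt{1}{\vG}{L^{\tn{gr}}_{\vG/B}}{J}$ with $u(\kappa(\sigma,N))=\kappa(\vG/A/\vL)$. The content is to pin down a specific such element using the chosen splitting $\sigma$ and to exhibit the representing sequence. The vanishing of $\kappa(B/A/\vL)$ is, as discussed just before the proposition, equivalent to the existence of an $A$-algebra splitting $\sigma\co B\ra P^1_{A/\vL}\ot_A B$. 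Given $\sigma$, I would base-change it along $B\ra\vG$ to get $\sigma\ot 1\co\vG\ra P^1_{A/\vL}\ot_A\vG$... but wait, that would split the whole extension \eqref{eq.pp}, contradicting $\kappa(\vG/A/\vL)\neq 0$ in general; the point is that $\sigma$ only splits in degree $0$, so what one really does is use $\sigma$ to regard the degree $1$ part $0\ra\Omega_{A/\vL}\ot_A N\ra P^1_{A/\vL}\ot_A N\ra N\ra 0$ of \eqref{eq.pp} as a sequence of $B$-modules (the middle term being a $P^1$-module, hence a $B$-module via $\sigma$), and this $B$-module extension is the asserted representative of $\kappa(\sigma,N)$. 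To verify $u(\kappa(\sigma,N))=\kappa(\vG/A/\vL)$ I would trace through the description of $u$ (coming from $\gxt{\bdot}{\vG}{L^{\tn{gr}}_{\vG/B}}{J}\ra\gxt{\bdot}{\vG}{L^{\tn{gr}}_{\vG/A}}{J}$ in the transitivity triangle for $\vL\ra A\ra B\ra\vG$, or rather $A\ra B\ra\vG$) and check that pushing the degree-$1$ extension forward along the inclusion into \eqref{eq.pp} recovers $\kappa(\vG/A/\vL)$; the graded version of \cite[III 2.1.2.2]{ill:71}, already invoked in the proof of Lemma \ref{lem.ext}, should do this. Naturality in $(B/A/\vL,\sigma,N)$ follows from naturality of all the constructions involved.

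\textbf{Part (iii).} The derivation $D(\sigma)\co B\ra\Omega_{A/\vL}\ot_A B$ is by definition $j_2\ot 1-\gamma_{A/\vL}\circ(\text{corresponding piece of }\sigma)$ — the ``difference'' construction explained in the paragraph around \eqref{eq.let}, applied to $B$ rather than $\vG$; then $f^{D(\sigma)}\co\Omega_{B/\vL}\ra\Omega_{A/\vL}\ot_A B$ is its adjoint. The claim $\kappa(\sigma,N)=(f^{D(\sigma)}\ot\id_N)_*\at_{B/\vL}(N)$ is a matter of identifying two $B$-module extensions of $N$ by $\Omega_{A/\vL}\ot_A N$: the one from part (ii) and the pushout of \eqref{eq.at} along $f^{D(\sigma)}\ot\id_N$. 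I would prove this by the same ``difference of splittings'' calculus used for the Kodaira--Spencer class itself: the extension $P^1_{A/\vL}\ot_A N$ viewed via $\sigma$ differs from the trivial one by precisely the cocycle $D(\sigma)$, and pushing out $\at_{B/\vL}(N)$ — which is \eqref{eq.at} — along $f^{D(\sigma)}$ produces the same cocycle. Concretely I would compare the two $P^1_{B/\vL}$- (resp.\ $P^1_{A/\vL}$-) algebra structures and use that $\sigma$ factors the relevant maps; Proposition \ref{prop.omap}(ii) guarantees pushout of the representing extension computes the pushforward of the class. For the last formula, I would apply $(f^Y\ot\id_N)_*$ to the just-proved identity and use functoriality: $(f^Y\ot\id_N)_*(f^{D(\sigma)}\ot\id_N)_*=(f^Y\circ(f^{D(\sigma)}\text{-part}))_*$, and by definition $X_\sigma(Y)=(f^Y\ot\id_B)_*D(\sigma)$ so that $f^{X_\sigma(Y)}$ is exactly this composite; hence $(f^Y\ot\id_N)_*\kappa(\sigma,N)=(f^{X_\sigma(Y)}\ot\id_N)_*\at_{B/\vL}(N)$ in $\xt{1}{B}{N}{N}$, noting $\Omega_{A/\vL}\ot_A N\ra N$ under $f^Y$.

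\textbf{Main obstacle.} The routine parts are the diagram chases for naturality; the genuinely delicate point is part (ii) — correctly handling the fact that the splitting $\sigma$ exists only in degree $0$, so that it does \emph{not} split \eqref{eq.pp} globally, and carefully identifying the degree-$1$ piece (a sequence of modules over $P^1_{A/\vL}$, hence over $B$ via $\sigma$) as a well-defined class in $\xt{1}{B}{N}{\Omega_{A/\vL}\ot_A N}$ that maps correctly under $u$. Keeping track of which $B$-module structure (the ``standard'' one or the twisted one via $j_2$ or $\sigma$) is in play at each stage, and verifying the graded version of \cite[III 2.1.2.2]{ill:71} genuinely delivers the claimed image under $u$, is where the care is needed.
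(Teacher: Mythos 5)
Your proposal is correct and follows the same route as the paper's proof, so I will mainly note where the emphasis differs.

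For (i), your second argument --- restricting the graded algebra extension \eqref{eq.pp} to degree $0$ --- is exactly the paper's one-line proof; the alternative via commutativity of connecting maps is also fine. For (ii), your key observation is the right one: $\sigma$ does not split \eqref{eq.pp} globally, but it does endow the degree-$1$ sequence (a priori a sequence of $P^1_{A/\!\vL}\ot_{A}B$-modules) with a $B$-module structure, which is how $\kappa(\sigma,N)$ is defined. The paper takes $u(\kappa(\sigma,N))=\kappa(\vG/A/\vL)$ as essentially immediate; the conceptual reason is that $\sigma$ makes $P^1_{A/\!\vL}\ot_{A}\vG$ into a graded $B$-algebra extension of $\vG$ whose underlying $A$-algebra structure (since $\sigma|_A=j_1\ot 1$) is the one defining $\kappa(\vG/A/\vL)$, and $u$ is just the forgetful map from $B$-algebra extensions to $A$-algebra extensions. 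Your plan to run this through the graded version of Illusie III 2.1.2.2 works but is heavier than needed.

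For (iii), the paper does something slightly more concrete than your cocycle-level comparison: it exhibits the explicit $B$-algebra map $h^{\sigma}\co P^1_{B/\!\vL}\ra P^1_{A/\!\vL}\ot_{A}B$, $t_1\ot t_2\mapsto \sigma(t_1)\cdot(1_{P^1}\ot t_2)$, and shows via the one-line computation $h^\sigma\gamma_{B/\!\vL}d_{B/\!\vL}(t)=1\ot t-\sigma(t)=(\gamma_{A/\!\vL}\ot 1)D(\sigma)(t)$ that $(f^{D(\sigma)}\ot\id_N,\,h^\sigma\ot\id_N,\,\id_N)$ is a map of extensions from $\at_{B/\!\vL}(N)$ to $\kappa(\sigma,N)$. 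Your ``difference of splittings''/cocycle argument reaches the same conclusion, but it would be cleaner, and would match the paper, to produce $h^\sigma$ outright rather than invoking Proposition \ref{prop.omap}(ii). Your deduction of the last displayed formula in (iii) from the first, using $f^{X_\sigma(Y)}=(f^Y\ot\id_B)\circ f^{D(\sigma)}$, is exactly the paper's.
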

\begin{proof}
(i) The degree zero part of \eqref{eq.pp} gives the image \(i^{*}\kappa(\vG/A/\vL)\) represented by the algebra extension
\begin{equation}
\kappa(B/A/\vL)\co \quad 0\ra\Omega_{A/\!\vL}\ot_{A} B\lra P^1_{A/\!\vL}\ot_{A}B\lra B\ra 0
\end{equation}

(ii) The degree one part of \eqref{eq.pp} is \emph{a priori} a short exact sequence of \(P^1_{A/\!\vL}\ot_{A}B\)-modules.
The splitting \(\sigma\) makes it to a short exact sequence of \(B\)-modules which defines \(\kappa(\sigma,{N})\). The naturality follows from the naturality of \(\kappa(\vG/A/\vL)\).  
 
(iii) There is a \(B\)-algebra homomorphism \(h^{\sigma}\co P^1_{B/\!\vL}\ra P^1_{A/\!\vL}\ot_{A}B\) defined by \(h^{\sigma}(t_{1}\ot t_{2})=\sigma(t_{1})\cdot1_{{P^1}}\ot t_{2}\) which we claim makes the following diagram commutative:
\begin{equation}
\begin{aligned}
\xymatrix@C+18pt@R-6pt@H-30pt{
\at_{B\!/\!\vL}({N})\co\!\!\!\!\!\!\!\!\!\!\!\!\!\!\!\!\!\!\!\!  & 0\lra\Omega_{B\!/\!\vL}\ot_{B}{N}\ar[r]^(0.55){\gamma_{B/\!\vL}\ot{\id_{N}}}\ar@<3.6ex>[d]_(0.46){f^{D(\sigma)}\ot{\id_{N}}} & P^1_{B\!/\!\vL}\ot_{B}{N}\ar[r]\ar@<0.5ex>[d]_(0.46){h^{\sigma}\ot{\id_{N}}} &{N}\lra 0\ar@<-3.1ex>@{=}[d] \\
\kappa(\sigma,{N})\co\!\!\!\!\!\!\!\!\!\!\!\!\!\!\!\!\!\!\!\!\!\!\!\!  & 0\lra \Omega_{A/\!\vL}\ot_{A}{N}\ar[r]_(0.55){\gamma_{A/\!\vL}\ot{\id_{N}}} & P^1_{A/\!\vL}\ot_{A}{N}\ar[r] &{N}\lra 0
}
\end{aligned}
\end{equation}
For \(t\) in \(B\) we have by the definitions 
\begin{equation}
h^{\sigma}\gamma_{B/\!\vL}d_{B/\!\vL}(t)=h^{\sigma}(1_{B}\ot t-t\ot 1_{B})=1_{{P^1}}\ot t-\sigma(t)=(\gamma_{A/\!\vL}\ot 1_{B})D(\sigma)(t)
\end{equation}
and the claim follows. Then the pushout of \(\at_{B/\!\vL}({N})\)  by \(f^{D(\sigma)}\ot{\id_{N}}\) gives \(\kappa(\sigma,{N})\). Since \((f^{Y}\ot{\id_{N}})\circ(f^{D(\sigma)}\ot{\id_{N}})\) equals \(f^{X_{\sigma}(Y)}\ot{\id_{N}}\) the second part of (iii) follows from the first. 
\end{proof}
We name \(\kappa(\sigma,{N})=\kappa(B/A/\vL,\sigma,{N})\) the \emph{Kodaira-Spencer class of \((B/A/\vL,\sigma,{N})\)}.
Define the \emph{Kodaira-Spencer map of \((B/A/\vL,\sigma,{N})\)}
\begin{equation}\label{eq.modKS}
g^{(\sigma,\,{N})}\co \Der_{\vL}(A)\lra\xt{1}{B}{{N}}{{N}}
\end{equation}
by \(g^{(\sigma,\,{N})}(D):=(f^{D}\ot{\id})_{*}\kappa(\sigma,{N})\).

In the case \(B=A\ot_{\vL}B_0\) there is a canonical \(A\)-algebra splitting 
\begin{equation}
A\ot_{\vL}B_0\ra P^1_{A/\!\vL}\ot_{A}A\ot_{\vL}B_0\cong P^1_{A/\!\vL}\ot_{\vL}B_0\quad\tn{given by}\quad s\ot t\mapsto j_{1}(s)\ot t\,,
\end{equation} 
\(\kappa(B/A/\vL)=0\) and we get a canonical Kodaira-Spencer class $\kappa(j_1\ot{\id},{N})$. In particular, if $B=A$ (and $B_0=\vL$) then $\kappa(j_1\ot{\id},{N})=\at_{A/\vL}(N)$. 
\begin{rem}
By \eqref{eq.let} with $\vG=B$, \(\kappa(B/A/\vL)=0\) if and only if the canonical map \(\Omega_{A/\!\vL}\ot_{A}B\ra\Omega_{B/\!\vL}\) is split injective. By \cite[{\bf{0}} 20.5.7]{EGAIV01} the latter is equivalent to \(B\) being a \emph{formally smooth \(A\)-algebra relative to \(\vL\)} \textup{(}with discrete topology\textup{)}; cf.\ \cite[{\bf{0}} 19.9.1]{EGAIV01}, or, equivalently: All \(\vL\)-split \(A\)-algebra extensions \(0\ra M\ra B'\ra B\ra0\) with \(M^{2}=0\) are trivial; cf.\ \cite[{\bf{0}} 19.9.8.1]{EGAIV01}. 
\end{rem}
\begin{rem}
The long exact sequence in Theorem \ref{thm.lang} for \(\vL\ra B\ra\vG\) with \(\vG=B{\oplus}{N}\) and \(J=J_{0}{\oplus}J_{1}\) 
implies the following characterisation. An element \(\mc{D}\in\gder{\vL}{\vG}{J}\) is given by its degree \(0\) restriction \(D:=i^{*}(\mc{D})\in\der{\vL}{B}{J_{0}}\) and its degree \(1\) restriction \(\nabla_{\!D}:=\mc{D}_{\vert{N}}\in\hm{}{\vL}{{N}}{J_{1}}\) which should satisfy the following Leibniz rule: For all \(t\) in \(B\) and \(n\) in \({N}\) 
\begin{equation}\label{eq.Leib}
\nabla_{\!D}(tn)=t\nabla_{\!D}(n)+D(t)n\,.
\end{equation}
Since \(\kappa(\vG/A/\vL)=\partial(d_{A/\!\vL}\ot 1_{B})\) in \eqref{eq.let}, \(\kappa(\vG/A/\vL)=0\) if and only if there exists a \(D\in\Der_{\vL}(B,\Omega_{A/\!\vL}\ot B)\) which restricts to \(d_{A/\!\vL}\ot 1_{B}\) and a \(\nabla_{\!D} \in\hm{}{\vL}{{N}}{\Omega_{A/\!\vL}\ot {N}}\) satisfying \eqref{eq.Leib}. As a well known special case (\(A=B\)) we get \(\at_{B\!/\!\vL}({N})=0\) if and only if there exists a \(\nabla\in \hm{}{\vL}{{N}}{\Omega_{B\!/\!\vL}\ot {N}}\) satisfying \eqref{eq.Leib} with \(D=d_{B/\!\vL}\in\Der_{\vL}(B,\Omega_{B\!/\!\vL})\) (i.e.\ \(\nabla\) is a `classical' connection), or equivalently, there is a (graded) derivation \(\mc{D}\in \gder{\vL}{\vG}{\Omega_{B\!/\!\vL}\ot_{B}\vG}\) restricting to \(d_{B/\!\vL}\).

The existence of various splittings like connections (covariant derivations) $\nabla_{\!(-)}$ of the restriction $i^*$ has been studied in particular cases of singularities, e.g. E. Eriksen and T.S. Gustavsen \cite{eri/gus:06,eri/gus:08},  Gustavsen and Ile \cite{gus/ile:08,gus/ile:10} and (by \cite{gus/ile:08}) also C.P.M. Kahn \cite{kah:88} and K. Behnke \cite{beh:89}.  See also L. K{\"a}llstr{\"o}m \cite{kal:05} for related general results.
\end{rem}
\section{Conditions for versality}\label{sec.versal1}
In order to use (consequences of) Artin's Approximation Theorem \cite{art:69} as extended by D.\ Popescu \cite{pop:86,pop:90} we fix an excellent ring \(\vL\) (cf. \cite[7.8.2]{EGAIV2}), a field $k$ and a finite ring homomorphism \(\vL\ra k\) with kernel denoted \(\fr{m}_{\vL}\). Put \(k_{0}=\vL/\fr{m}_{\vL}\). Define $\He$ to be the category of surjective maps of $\vL$-algebras $S\ra k$ where $S$ is a noetherian, henselian, local ring. A morphism is a map of $\vL$-algebras $S_1\ra S_2$ which is local and commutes with the given maps to $k$. A map \(h\co S\ra \mc{A}\) of local henselian rings is \emph{algebraic} if $h$ factors as $S\ra \mc{A}^{\tn{ft}}\ra \mc{A}$ where the first map is of finite type and the second is the henselisation in a maximal ideal. Let \(\Ar\) denote the full subcategory of artin rings in \(\He\). 

Let $\cat{F}\ra\He$ be a cofibred category\footnote{A fibred category mimics pull-backs. We  work with rings instead of (affine) schemes. A cofibred category is a functor \(p:\cat{F}\ra\cat{C}\) such that the functor of opposite categories \(p^{\text{op}}:\cat{F}^{\text{op}}\ra\cat{C}^{\text{op}}\) is a fibred category as defined in A.\ Vistoli's \cite{vis:05}. See also \cite[\href{https://stacks.math.columbia.edu/tag/06GA}{Tag 06GA}]{SP}.}. For all maps $f\co R\ra S$ in $\He$ and any object $b$ in $\cat{F}(R)$ we fix a base change $f_*b$ in $\cat{F}(S)$, also denoted $b_S$. For an object $a$ in $\cat{F}(S)$, let $\cat{F}_a\ra \He/S$ denote the induced cofibred category of maps $a'\ra a$ in $\cat{F}$. Suppose $R\ra S$ and $S'\ra S$ are maps in $\He$ with the latter being an infinitesimal extension (i.e. a surjection with nilpotent kernel). Then $S'{\times}_SR$ is in $\He$; \cite[Chap. I, \S2.2]{ray:70}. Assume that the composition $R\ra S_{\tn{red}}$ is surjective. We will call such data a \emph{test situation}. There is a natural \emph{test map}
\begin{equation}\label{eq.test}
\cat{F}_a(S'\times_SR)\lra \cat{F}_a(S')\times\cat{F}_a(R)\,.
\end{equation}
Let $F=\wbar{\,\cat{F}}$ and $F_a=\wbar{\,\cat{F}}_a$ denote the associated functors from $\He$ to $\Sets$.
\begin{defn}\label{def.S1}
For a cofibred category $\cat{F}\ra\He$ there are conditions:
\begin{itemize}
\item[(S1')] The test map \eqref{eq.test} is an equivalence for all test situations.
\item[(S1a)] The test map for $F$ is surjective for all test situations.
\item[(S1b)] If in addition $S=S_{\tn{red}}$, $b$ is an object in $\cat{F}(R)$ mapping to $a$ and $I$ is a finite $S$-module, then the natural map $F_b(R{\oplus}I)\ra F_a(S{\oplus}I)$ is bijective.
\item[($\liminj$)] $\cat{F}\ra\He$ is \emph{locally of finite presentation} if for all filtering direct limits $\liminj S_i$ in $\He$ the natural map $\liminj\cat{F}(S_i)\ra \cat{F}(\liminj S_i)$ is an equivalence of categories.
\item[(iso)] Assume $S$ is algebraic and $I\sbeq S$ is an ideal. Put $S_n=S/I^{n+1}$ and $S_I\hspace{-0.18em}\hat{}=\limproj S_n$. If $a$, $b$ are objects in $\cat{F}(S_I\hspace{-0.18em}\hat{}\,)$ and $(\theta_n\co a_n\cong b_n)$ is a system of compatible isomorphisms of base changes to the $S_n$, then there is an isomorphism $a\cong b$ compatible with $\theta_0$.
\end{itemize}
\end{defn}
If (S1') then (S1) ($=$ (S1a) and (S1b)). If (S1) then the map in (S1b) is $R$-linear. In that case there is a further condition:
\begin{defn}\label{def.S2}
\quad
\begin{itemize}
\item[(S2)] $F_a(S{\oplus}I)$ is a finite $S$-module for any $S=S_{\tn{red}}$ in $\He$, $a$ in $\cat{F}(S)$ and finite $S$-module $I$.
\end{itemize}
\end{defn} 
The condition \emph{$\cat{F}(k)$ is equivalent to a one-object, one-morphism category} will in the following  be assumed for all cofibred categories $\cat{F}$ over $\He$ or $\Ar$.  

Assume that $\cat{F}$ and $\cat{G}$ are cofibred categories over $\He$ which are locally of finite presentation. A map \(\phi\co \cat{F}\ra \cat{G}\) is \emph{smooth} (formally smooth) if, for all surjections \(f\co S'\ra S\) in \(\He\) (respectively in \(\Ar\)), the natural map  
\begin{equation}\label{eq.vers}
(f_*,\phi(S'))\co F(S')\ra F(S)\times_{G(S)}G(S')
\end{equation}
is surjective. Put $h_R=\hm{}{\He}{R}{-}$. Let $v$ be an object in \(\cat{F}(R)\) and let $c_v\co h_R\ra F$ denote the corresponding Yoneda map. 
If \(R\) is algebraic as \(\vL\)-algebra and $c_v$ is smooth (an isomorphism) then $v$ is \emph{versal} (respectively universal). Moreover, $v$ (or a formal object $v=(v_{n})$ in $\limproj \cat{F}(R/\fr{m}_{R}^{n+1})$) is \emph{formally versal} if $c_v$ restricted to $\Ar$ is formally smooth.

For an object $a$ in $\cat{F}(S)$ define $\cat{Ex}_{\cat{F}}(a,I)$ as the category of infinitesimal extensions of $(S,a)$ by the finite $S$-module $I$. An object $(R,b)\ra (S,a)$ in $\cat{Ex}_{\cat{F}}(a)$ is an extension $I\ra R\xra{\,r\,} S$ with $r$ in $\He$ (in particular $I^2=0$), an object $b$ in $\cat{F}(R)$ and a map $b\ra a$ in $\cat{F}$ inducing an isomorphism $b_S\cong a$. A morphism is a map of such pairs above $(S,a)$ which restricts to $\id_I$. Let $\mr{Ex}_{\cat{F}}(a,I)$ denote the set of isomorphism classes in $\cat{Ex}_{\cat{F}}(a,I)$. Note that $\mr{Ex}_{\cat{F}}(a,I)$ always contains (the isomorphism class of) the trivial extension, denoted $0$. The following is Flenner's \cite[3.2 and Bemerkung p. 457]{fle:81} in our context.
\begin{prop}\label{prop.ex}
Given a cofibred category $\cat{F}\ra\He$ satisfying \textup{(S1), (S2)} and \textup{($\liminj$)} in \textup{Definitions \ref{def.S1}} and \textup{\ref{def.S2}.} Assume $a$ is an object in $\cat{F}(S)$\textup{.} The following are equivalent conditions\textup{.}
\begin{enumerate}[leftmargin=2.4em, label=\textup{(\roman*)}]
\item $a$ is formally versal\textup{.}
\item $\mr{Ex}_{\cat{F}}(a,k)=\{0\}$\textup{.}
\item $\mr{Ex}_{\cat{F}}(a,I)=\{0\}$ for all finite $S$-modules $I$\textup{.}
\end{enumerate}
\end{prop}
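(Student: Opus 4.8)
The plan is to prove the equivalence by a cycle of implications (iii) $\Rightarrow$ (i) $\Rightarrow$ (ii) $\Rightarrow$ (iii), where the last implication is the substantial one. The implication (iii) $\Rightarrow$ (i) is essentially the unwinding of definitions: given a small extension $S'\ra S$ in $\Ar$ and an object $a'$ in $\cat{F}(S')$ restricting to $a$, together with a lifting problem posed by a map $h_R\ra h_{S'}$ over $h_S$, one must produce a compatible morphism $R\ra S'$ and an identification of base changes. I would first reduce to the case of a small extension with kernel $I$ a one-dimensional $k$-vector space, by the usual dévissage along a chain of small extensions. Then the obstruction to solving the lifting problem is precisely the class of an infinitesimal extension of $(R,v)$ by a finite $R$-module built from $I$; since $\mr{Ex}_{\cat{F}}(v,\cdot)$ vanishes for all finite modules by (iii), the obstruction is zero and the lift exists. (Here one uses that $\cat{F}(k)$ is trivial so that the base point is rigid.)

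The implication (i) $\Rightarrow$ (ii) is the easiest: if $a$ is formally versal and $I\ra R\xra{r} S$ represents a class in $\mr{Ex}_{\cat{F}}(a,k)$ with $b$ in $\cat{F}(R)$, then formal smoothness of $c_a$ applied to the surjection $r\co R\ra S$ (with $I=k$) produces a section, i.e. a map $S\ra R$ in $\He$ splitting $r$ and carrying $a$ to $b$; this exhibits the extension as trivial, so $\mr{Ex}_{\cat{F}}(a,k)=\{0\}$. One small point to check is that $r$ is genuinely a morphism in $\Ar$ (or reduces to one after passing to $S/\fr{m}_S^{n+1}$), which is immediate since $\ker r = k$ is nilpotent.

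The implication (ii) $\Rightarrow$ (iii) is the heart of the argument and where I expect the real work. The strategy is to show that $\mr{Ex}_{\cat{F}}(a,-)$ is, up to the hypotheses (S1) and (S2), a half-exact additive functor on finite $S$-modules that commutes with filtered colimits, hence — being a subquotient-type functor whose values are the obstruction groups — is determined by its value on the residue field $k$. More precisely: (S1) gives the Baer-sum structure making $\mr{Ex}_{\cat{F}}(a,I)$ an $S$-module functorial in $I$, with the natural map $\mr{Ex}_{\cat{F}}(a,I)\ra\mr{Ex}_{\cat{F}}(a,I')$ for $I\ra I'$; a short exact sequence $0\ra I'\ra I\ra I''\ra 0$ yields an exact sequence of extension sets, essentially because an extension of $(S,a)$ by $I$ restricting trivially on $I''$ is pushed from one by $I'$. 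Using ($\liminj$) the functor commutes with filtered colimits, and (S2) together with (S1b) bounds the fibres. Then an induction on length: for $I$ of finite length, filter by submodules with quotients $k$ and use the exact sequence together with (ii) to kill everything; for general finite $I$ over the (possibly non-artinian) $S$, write $I$ as a filtered colimit of finite-length modules after completing, use ($\liminj$) and the (iso)-type comparison with $\limproj S_n$, and conclude by a Mittag-Leffler / Artin-Rees argument that the vanishing over each $S_n$ forces vanishing over $S$. The main obstacle is making this last passage from artinian quotients to the honest finite module $I$ precise — controlling the inverse limit and invoking exactly the finiteness from (S2) — which is precisely the technical core of Flenner's argument; I would follow \cite[3.2]{fle:81} closely there, adapting it to the henselian cofibred-category setting set up above.
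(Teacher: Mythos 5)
Your cycle $\textup{(iii)}\Rightarrow\textup{(i)}\Rightarrow\textup{(ii)}\Rightarrow\textup{(iii)}$ is a reasonable skeleton, and your mechanism for $\textup{(iii)}\Rightarrow\textup{(i)}$ --- given a small surjection $\pi\co A'\ra A$ in $\Ar$ and a lifting problem for $c_a$, form $S\times_A A'$, use (S1) to lift to an object of $\cat{Ex}_{\cat{F}}(a,\ker\pi)$, and split by the vanishing --- is the right one (in fact it only needs $\ker\pi\cong k$, i.e.\ only (ii), once you reduce to small surjections). Note that the paper does not supply a proof of this proposition at all; it quotes Flenner \cite[3.2]{fle:81} directly, so your instinct to follow Flenner is correct. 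But your sketch has two genuine gaps.

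The first is in $\textup{(i)}\Rightarrow\textup{(ii)}$, which you call the easiest and close with ``immediate since $\ker r=k$ is nilpotent.'' That reasoning is wrong. An object $(R,b)\ra(S,a)$ of $\cat{Ex}_{\cat{F}}(a,k)$ has $R,S\in\He$, and the proposition places no artinian hypothesis on $S$. Nilpotence of $\ker r$ does not make $R$ or $S$ artinian, so $r$ is \emph{not} a morphism in $\Ar$, and formal versality --- which by the paper's definition is smoothness of $c_a$ restricted to $\Ar$ only --- cannot be applied directly to $r$. To run this implication you must pass to the truncations $R/\fr{m}_R^{n+1}\ra S/\fr{m}_S^{n+1}$ (for $n\gg 0$, when the induced kernel is still $k$), apply formal versality there, and then argue that a compatible system of sections exists and descends from the $\fr{m}$-adic completion back to the henselian $R$. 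That descent is a substantive step, not definition-unwinding, and as written your proof does not supply it.

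The second gap is in $\textup{(ii)}\Rightarrow\textup{(iii)}$. You appeal to ``the (iso)-type comparison with $\limproj S_n$,'' but (iso) is \emph{not} among the hypotheses of the proposition; only (S1), (S2) and ($\liminj$) are assumed. More seriously, your plan to reach a general finite $S$-module $I$ by expressing it as a filtered colimit of finite-length modules and invoking ($\liminj$) cannot work: a finitely generated module over a non-artinian local $S$ is not a filtered colimit of its finite-length submodules (for $I=S$ the only finite-length submodule is $0$). D\'evissage plus the half-exactness of $\mr{Ex}_{\cat{F}}(a,-)$ coming from (S1) does carry you from $I=k$ to all $I$ of finite \emph{length}, but the jump to arbitrary finite $I$ requires a different input --- this is precisely the content of Flenner's argument, and it is not supplied by the Mittag--Leffler/Artin--Rees handwave in your last paragraph.
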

Proposition \ref{prop.ex} implies the following simplification of \cite[3.3]{art:74}; see the proof of von Essen's \cite[1.3]{ess:90}.
\begin{prop}\label{prop.fvers}
Assume the cofibred category $\cat{F}\ra\He$ satisfies \textup{(S1), (S2)} for algebraic objects\textup{,} as well as \textup{($\liminj$)} and \textup{(iso)} in \textup{Definitions \ref{def.S1}} and \textup{\ref{def.S2}.} Suppose $v$ is an object in $\cat{F}(R)$ with $R$ algebraic as $\vL$-algebra\textup{.} If $v$ is formally versal\textup{,} then $v$ is versal\textup{.}
\end{prop}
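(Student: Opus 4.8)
The statement is von Essen's simplification of Artin's algebraization criterion \cite[3.3]{art:74}: for a cofibred category $\cat{F}\ra\He$ satisfying (S1), (S2) for algebraic objects, together with ($\liminj$) and (iso), a formally versal algebraic object $v\in\cat{F}(R)$ is actually versal. The plan is to follow the proof of \cite[1.3]{ess:90}, which reorganizes Artin's argument. The goal is to upgrade formal smoothness of $c_v\co h_R\ra F$ over $\Ar$ to smoothness of $c_v$ over all of $\He$. Concretely, given a surjection $f\co S'\ra S$ in $\He$ and a commuting square consisting of $x\in F(S')$ and a point $h_R(S)\ni\psi$ with $c_v(\psi)=x_S$, one must produce a lift $\psi'\co R\ra S'$ with $c_v(\psi')=x$ and $\psi'$ reducing to $\psi$.

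\medskip\noindent\textbf{Step 1: reduce to the complete/adic case via ($\liminj$).} Since $\cat{F}$ is locally of finite presentation and all rings in sight are noetherian henselian local, one first shows it suffices to prove the lifting property when $S'$ is complete (or, more precisely, after passing to the appropriate adic completion of the relevant finite-type model of $S'$). Writing $S'$ as a filtering colimit of its finite-type $\vL$-subalgebras (or using that algebraic objects are defined over finite-type rings), condition ($\liminj$) lets one descend a lift found over a completion back to a lift over $S'$ itself, up to replacing $S'$ by an étale/henselian neighborhood. This is where ``algebraic'' in the hypotheses on $R$ and in (S1), (S2) is used: the whole argument takes place over finite-type models, and Popescu's theorem (Artin approximation in the excellent setting) is invoked to spread out.

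\medskip\noindent\textbf{Step 2: build a formal lift by successive infinitesimal extensions.} Over the artinian quotients $S_n' = S'/\fr{m}_{S'}^{n+1}$ one constructs compatible maps $\psi_n'\co R\ra S_n'$ lifting $\psi$ and with $c_v(\psi_n') = x_{S_n'}$, by induction on $n$. The inductive step is a small extension $S_{n+1}'\ra S_n'$ with kernel a finite $S'/\fr m_{S'}$-module; formal versality of $v$ (i.e. formal smoothness of $c_v$ restricted to $\Ar$) is exactly what produces $\psi_{n+1}'$ from $\psi_n'$ once one checks the obstruction to lifting $x$ itself vanishes — but $x$ lives over $S'$, hence over $S_{n+1}'$, so there is no obstruction. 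Passing to the limit gives a formal object, i.e. a map $\wh\psi\co R\ra \wh{S'}$ to the completion (equivalently a compatible system), with $c_v(\wh\psi)$ and $x_{\wh{S'}}$ agreeing at every finite level.

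\medskip\noindent\textbf{Step 3: promote ``agreeing at every finite level'' to an honest equality using (iso), then approximate.} The formal map $\wh\psi$ gives an object $c_v(\wh\psi)\in \cat{F}(\wh{S'})$ which is formally isomorphic to $x_{\wh{S'}}$ — i.e. there are compatible isomorphisms of the two at each $S_n'$. Condition (iso) (applied with $S$ algebraic, $I=\fr m_{S'}$) upgrades this to an actual isomorphism $c_v(\wh\psi)\cong x_{\wh{S'}}$ in $\cat{F}(\wh{S'})$. Now one has, over the \emph{complete} ring $\wh{S'}$, a genuine solution to the lifting problem. Finally one applies Artin/Popescu approximation: the data $(\psi', \text{isomorphism})$ is a solution of a system of equations with coefficients in the finite-type model, a formal solution exists ($\wh\psi$), hence an algebraic solution exists over a henselization, and ($\liminj$) brings it back over $S'$ (or the required neighborhood). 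This yields the desired $\psi'\co R\ra S'$ and shows $c_v$ is smooth, i.e. $v$ is versal.

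\medskip\noindent\textbf{Main obstacle.} The genuinely delicate point is Step 3 — matching up the formal object one builds by obstruction theory with the given object $x$ over $\wh{S'}$, and in particular knowing that the \emph{system} of isomorphisms $\theta_n$ exists to feed into (iso). Formal versality only gives surjectivity of the relevant maps of extension sets, not a coherent choice of isomorphisms, so one must be careful (as von Essen is) to carry along the isomorphism data throughout the inductive construction in Step 2, rather than just the maps $\psi_n'$; equivalently, one works in the cofibred category $\cat{F}_{x}$ (or $\cat{F}_v$) of objects over $x$, for which formal versality of $v$ translates into a statement about $\mr{Ex}$-sets that Proposition \ref{prop.ex} controls. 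The other bookkeeping-heavy point is making the ``spreading out'' of Steps 1 and 3 precise — ensuring every object and isomorphism is defined over a common finite-type $\vL$-subalgebra so that Popescu's theorem applies — but this is routine given that $R$ is algebraic and (S1), (S2) are assumed for algebraic objects.
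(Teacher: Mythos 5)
Your outline has the right shape (formal lift, then (iso), then Artin approximation) and you correctly flag the coherent-isomorphism problem as the delicate point. But there are two substantive gaps, and both come down to the fact that Proposition \ref{prop.ex} never actually does any work in your argument, even though the paper states that this is precisely what ``implies the following simplification'' of Artin's criterion.

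In Step~2, lifting along the artinian quotients $S'_n = S'/\fr m_{S'}^{n+1}$ produces a formal map $\hat{\psi}'\co R\ra\hat{S'}$ which lifts $\hat{\psi}$, not $\psi$: at each stage $\psi'_n$ is only constrained to reduce to $\psi$ modulo $\fr m_S^{n+1}$, so after Artin approximation the composite $f\circ\psi'$ agrees with $\psi$ only to some finite order, and you do not obtain a genuine lift of $\psi$. The standard fix is to lift along the $J$-adic quotients $S'/J^{n+1}\ra S'/J^n$ with $J=\ker f$, which hold $S$ fixed throughout. But these rings are not artinian, the square-zero kernels $J^n/J^{n+1}$ are finite modules rather than finite-length $k$-modules, and formal smoothness of $c_v$ over $\Ar$ alone says nothing. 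This is exactly why Proposition \ref{prop.ex} is the crux: it converts formal versality of $v$ into the vanishing $\mr{Ex}_{\cat{F}}(v,I)=\{0\}$ for all finite $R$-modules $I$, and it is this vanishing, fed through the (S1$'$) test map applied to $(S'/J^{n+1})\times_{S'/J^n}R\ra R$, that produces each lift. Your justification --- ``there is no obstruction because $x$ lives over $S'$'' --- is not an argument that the relevant extension is trivial; that triviality is what has to be proved, and it is what Proposition \ref{prop.ex} supplies.

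In Step~3, (iso) takes a compatible tower $(\theta_n)$ as \emph{input} and returns an isomorphism over the completion; it does not manufacture the tower. Saying that $c_v(\hat{\psi}')$ is ``formally isomorphic'' to $x_{\hat{S'}}$ because the two agree as isomorphism classes at each level conflates equality in the set-valued functor $F$ with a coherent system of morphisms in $\cat{F}$. As you observe at the end, the right way around this is to carry the isomorphism data through the whole induction, i.e.\ to work in $\cat{F}_x$; but then the inductive step is again about trivializing an extension, controlled by Proposition \ref{prop.ex} and (S1$'$), not by raw formal smoothness over $\Ar$. In short, the argument you defer in your final paragraph \emph{is} the argument, and the three steps as written do not carry it out.
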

\begin{defn}\label{def.obtheory}
A cofibred category $\cat{F}\ra\Ar$ has an \emph{obstruction theory} if there is a \(k\)-linear functor \(\cH_{\cat{F}}^{2}\co\Mod_{k}\ra\Mod_{k}\) and for each small surjection \(\pi\co R\ra S\) in \(\Ar\) (i.e. with kernel \(I\) such that \(\fr{m}_{R}{\cdot} I=0\)) and each object $a$ in \(\cat{F}(S)\) there is an element \(\mr{o}(\pi,a)\in \cH_{\cat{F}}^{2}(I)\) which is zero if and only if there exists a \(b\in \cat{F}(R)\) with \(\pi_*b\cong a\). The obstruction should be functorial with respect to such lifting situations.
Cf. \cite[2.6]{art:74}. 
\end{defn}
Assume $k_0\ra k$ is a separable field extension and $\cat{F}\ra \Ar$ is a cofibred category with associated functor $F\co \Ar\ra \Sets$. Then a formally versal object in $\cat{F}$ with base $R^{\cat{F}}$ in $\He$ is \emph{minimal} if the induced map $\hm{}{{\He}}{R^{\cat{F}}}{k[\vare]}\ra F(k[\vare])=:t_{\cat{F}\hspace{-0.1em}/\hspace{-0.1em}\vL}$ is a bijection (cf. \cite[\href{https://stacks.math.columbia.edu/tag/06IL}{Tag 06IL}]{SP}). The following result is essentially \cite[6.5]{ile:21}.
\begin{lem}\label{lem.res}
Suppose $k_0\ra k$ is a separable field extension and \(\phi\co \cat{F}\ra \cat{G}\) is a map of cofibred categories over $\Ar$ which have minimal formally versal formal objects with base rings \(R^{\cat{F}}\) and \(R^{\cat{G}}\) in $\He$ which both are algebraic over $\vL$ \textup{(}or complete\textup{).} Put $V=\ker\{t_{\cat{G}\hspace{-0.04em}/\hspace{-0.1em}\vL}^{*}\ra t_{\cat{F}\hspace{-0.1em}/\hspace{-0.1em}\vL}^{*}\}\)\textup{.} Assume\textup{:}
\begin{enumerate}[leftmargin=2.4em, label=\textup{(\roman*)}]
\item The map \(t_{\cat{F}\hspace{-0.1em}/\hspace{-0.1em}\vL}\ra t_{\cat{G}\hspace{-0.04em}/\hspace{-0.1em}\vL}\) is injective\textup{.}
\item There are obstruction theories for \(\cat{F}\) and \(\cat{G}\) such that \(\mr{o}_{\cat{G}}(\pi,\phi_{S}(a))=0\) implies \(\mr{o}_{\cat{F}}(\pi,a)=0\) for any small surjection \(\pi\co R\ra S\) in $\Ar$ and object $a$ in \(\cat{F}(S)\)\textup{.}
\end{enumerate}
Then every \(f\co R^{\cat{G}}\ra R^{\cat{F}}\) in \(\He\) lifting \(\phi\) is surjective and the ideal \(\ker\hspace{-0.1em}f\) is generated by any lifting of any \(k\)-basis for \(V\)\textup{.} In particular\textup{,} \(\ker\hspace{-0.1em} f\) is generated by \textup{`}linear forms\textup{'} modulo \(\im\fr{m}_{\vL}{\cdot} R^{\cat{G}}\)\textup{.}
\end{lem}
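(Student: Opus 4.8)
The plan is to run the argument of \cite[3.3]{art:74} in the refined form of \cite[6.5]{ile:21}: hypothesis (ii) transports vanishing of obstructions from $\cat{G}$ to $\cat{F}$, formal versality of the $\cat{F}$-object produces splittings, and (i) controls the first-order behaviour. I would start with the first-order picture. Since $k_{0}\ra k$ is separable, minimality identifies $t_{\cat{F}/\vL}=F(k[\vare])$ with $\hm{}{{\He}}{R^{\cat{F}}}{k[\vare]}=\Der_{\vL}(R^{\cat{F}},k)$, so its $k$-dual $t^{*}_{\cat{F}/\vL}$ is the relative cotangent space $\fr{m}_{R^{\cat{F}}}/(\fr{m}_{R^{\cat{F}}}^{2}+\fr{m}_{\vL}R^{\cat{F}})$, and likewise for $\cat{G}$. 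A lift $f$ of $\phi$ is a $\He$-morphism with $f_{*}v^{\cat{G}}\cong\phi(v^{\cat{F}})$, and the map it induces on relative cotangent spaces is dual to the tangent map $t_{\cat{F}/\vL}\ra t_{\cat{G}/\vL}$ of $\phi$; by (i) the latter is injective, so $f$ is surjective on relative cotangent spaces with kernel $V$. A Nakayama argument — absorbing $\fr{m}_{\vL}R^{\cat{F}}$ into the ideal generated by $f(\fr{m}_{R^{\cat{G}}})$ and using that $\vL\ra R^{\cat{F}}$ factors through $f$ while $R^{\cat{G}}\ra k$ is onto — upgrades this to surjectivity of $f$ itself, so $R^{\cat{F}}\cong R^{\cat{G}}/\ker f$. (When $R^{\cat{F}},R^{\cat{G}}$ are merely algebraic one runs this on the completions and descends: $\wh{f}$ surjective implies $f$ surjective by Artin approximation, $R^{\cat{G}}$ being excellent henselian.)

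\emph{Reduction.} As $f$ is onto, $\ker f$ surjects onto $V$; pick $g_{1},\dots,g_{n}\in\ker f$ whose classes form a basis of $V$, put $\bar R=R^{\cat{G}}/(g_{1},\dots,g_{n})$, and let $\bar f\co\bar R\thr R^{\cat{F}}$ be the induced surjection, which is an isomorphism on relative cotangent spaces. Since $\bar R$ is noetherian local, $\bigcap_{n}\fr{m}_{\bar R}^{\,n+1}=0$, so it suffices to prove $\ker\bar f\sbeq\fr{m}_{\bar R}^{\,n+1}$ for every $n\geq 0$: this forces $\ker\bar f=0$, hence $\ker f=(g_{1},\dots,g_{n})$, and the assertion for an arbitrary lifting of an arbitrary $k$-basis of $V$ follows by Nakayama applied to $\ker f/\fr{m}_{R^{\cat{G}}}\ker f$.

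\emph{The induction.} Assume $\ker\bar f\sbeq\fr{m}_{\bar R}^{\,n+1}$. Put $S'=\bar R/\fr{m}_{\bar R}^{\,n+2}$ and $S=\bar R/(\ker\bar f+\fr{m}_{\bar R}^{\,n+2})$; using $R^{\cat{F}}=\bar R/\ker\bar f$ and $\bar f(\fr{m}_{\bar R}^{\,n+2})=\fr{m}_{R^{\cat{F}}}^{\,n+2}$ one identifies $S\cong R^{\cat{F}}/\fr{m}_{R^{\cat{F}}}^{\,n+2}$, so that $\pi\co S'\thr S$ is a small surjection in $\Ar$ whose kernel $K$ has square zero and $\fr{m}_{S}K=0$. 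Let $v_{S}\in\cat{F}(S)$ be the restriction of $v^{\cat{F}}$. Then $\phi_{S}(v_{S})\cong(g_{S})_{*}v^{\cat{G}}$ where $g_{S}\co R^{\cat{G}}\ra\bar R\xra{\,\bar f\,}R^{\cat{F}}\thr S$ factors through $R^{\cat{G}}\ra\bar R\thr\bar R/\fr{m}_{\bar R}^{\,n+2}=S'$; hence $\phi_{S}(v_{S})$ lifts to $S'$, so $\mr{o}_{\cat{G}}(\pi,\phi_{S}(v_{S}))=0$, and (ii) gives $\mr{o}_{\cat{F}}(\pi,v_{S})=0$. Thus $v_{S}$ lifts to some $w\in\cat{F}(S')$. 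Applying formal versality of $v^{\cat{F}}$ to $\pi$ and the compatible pair $(R^{\cat{F}}\thr S,\,w)$ produces a $\He$-morphism $R^{\cat{F}}\ra S'$ over $R^{\cat{F}}\thr S$; being local with $\fr{m}_{S'}^{\,n+2}=0$, it factors through a ring section $S\ra S'$ of $\pi$. So $\pi$ splits, $S'\cong S\ltimes K$ as $\vL$-algebras, and since $\pi$ (the $\fr{m}^{\,n+2}$-truncation of $\bar f$) is an isomorphism on relative cotangent spaces, comparing those of $S$ and $S\ltimes K$ yields $t^{*}_{\cat{F}/\vL}\cong t^{*}_{\cat{F}/\vL}\op K$, whence $K=0$, i.e.\ $\ker\bar f\sbeq\fr{m}_{\bar R}^{\,n+2}$.

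\emph{Main obstacle.} I expect the delicate points to be, in the inductive step, the exact identifications $S\cong R^{\cat{F}}/\fr{m}_{R^{\cat{F}}}^{\,n+2}$ and $\pi=$ the truncation of $\bar f$, together with checking that the map $R^{\cat{F}}\ra S'$ delivered by formal versality is a genuine \emph{ring} section of $\pi$ (which uses only locality of $\He$-morphisms and nilpotence in $S'$, but must be spelled out); and, in the algebraic case, the reduction of surjectivity of $f$ to the complete case. The closing cotangent comparison is where it matters that $V$, hence the $g_{i}$, are measured relative to $\vL$ — which is precisely why the conclusion only claims generation by linear forms modulo $\im\fr{m}_{\vL}{\cdot}R^{\cat{G}}$.
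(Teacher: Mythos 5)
The paper offers no proof of Lemma \ref{lem.res} itself, deferring to the citation \cite[6.5]{ile:21}; your argument is the standard Schlessinger--Artin--Flenner reduction that the cited result carries out, and it is essentially correct. The core of the proof (kill $V$ to get $\bar f\co\bar R\ra R^{\cat F}$ inducing an isomorphism on relative cotangent spaces; show inductively that $\ker\bar f\sbeq\fr m_{\bar R}^{\,n+1}$ by producing, via (ii), a lift of $v_S$ to $S'$, then via formal versality a section of the small surjection $\pi\co\bar R/\fr m^{n+2}\thr R^{\cat F}/\fr m^{n+2}$, and finally comparing relative cotangent spaces to force $K=0$) is precisely right, and your observation that the induction runs for any lifting of any basis of $V$ disposes of the uniqueness claim as well. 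Two small points deserve slightly more care than your sketch gives them: (a) the upgrade from ``surjective on relative cotangent'' to surjectivity of $f$ in the \emph{henselian} (non-complete) case is not really ``Artin approximation'' -- the cleanest route is to note that $\ker f$ is a finite ideal, pass to completions where surjectivity does follow from $\fr m_{R^{\cat G}}R^{\cat F}=\fr m_{R^{\cat F}}$ plus complete Nakayama, and then descend both kernel and cokernel through the faithfully flat completion map; and (b) the final cotangent comparison should be phrased as: $\pi$ is an isomorphism on relative cotangent spaces (inherited from $\bar f$ since $n+2\geq 2$), while a ring section would give $t^*_{S'/\vL}\cong t^*_{S/\vL}\oplus K$ (using $\fr m_{S'}K=0$), which forces $K=0$; your wording is compressed but this is what you mean. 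Neither point affects the validity of the argument.
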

\section{Versal deformations of pairs}\label{sec.versal2}
We prove existence of a versal deformation of a pair (algebra, module) with equidimensional and isolated singularity by establishing the necessary conditions in \cite[3.2]{art:74}.

Let $A$ be an algebraic \(k\)-algebra and \(N\) a finite \(A\)-module. Then a \emph{deformation} of the pair \((A,N)\) to an object $\vL\ra S\ra k$ in $\He$ is a commutative diagram of $\vL$-algebra homomorphisms 
\begin{equation}\label{eq.dA}
\begin{aligned}
\xymatrix@C-3pt@R-6pt@H-0pt{
S\ar[r]\ar[d] & \mc{A}\ar[d] 
\\
k\ar[r] & A
}
\end{aligned}
\end{equation}  
and a map of \(\mc{A}\)-modules \(\mc{N}\ra N\) such that:
\begin{enumerate}[leftmargin=2.4em, label=\textup{(\roman*)}]
\item The map \(S\ra \mc{A}\) is flat, local and algebraic.
\item The map \(\mc{A}\ra A\) induces an isomorphism \(\mc{A}\ot_{S}k\cong A\).
\item \(\mc{N}\) is an \(S\)-flat and finite \(\mc{A}\)-module.
\item The map \(\mc{N}\ra N\) induces an isomorphism \(\mc{N}\ot_{S}k\cong N\).
\end{enumerate}
For brevity we will say that the pair \((\mc{A},\mc{N})\) is a deformation of \((A,N)\) to \(S\). Let \((\mc{A}_{i},\mc{N}_{i})\) be a deformation of \((A,N)\) to \(S_{i}\) for \(i=1,2\).
Then a map between the two deformations \((g,f,\alpha)\co(S_{1}\ra\mc{A}_{1},\mc{N}_{1})\ra (S_{2}\ra\mc{A}_{2},\mc{N}_{2})\) is a map \(g\co S_{1}\ra S_{2}\) in \(\He\), a map of \(S_{1}\)-algebras \(f\co\mc{A}_{1}\ra \mc{A}_{2}\) and a map of \(\mc{A}_{1}\)-modules \(\alpha\co\mc{N}_{1}\ra\mc{N}_{2}\) such that the maps commute with the maps to \((k\ra A,N)\). If \(\mc{A}_{2}'=\mc{A}_{1}\hot_{S_{1}}S_{2}\) denotes the henselisation of \(\mc{A}_{1}\ot_{S_{1}}S_{2}\) then \(\mc{A}_{2}'\) is an algebraic and flat \(S_{2}\)-algebra and \((\mc{A}_{2}',\mc{N}_{1}\ot_{\mc{A}_{1}}\mc{A}_{2}')\) is a deformation of \((A,N)\) to \(S_{2}\). If the induced map \(({\id},f',\alpha')\co(S_2\ra\mc{A}'_{2},\mc{N}_{1}\ot_{\mc{A}_{1}}\mc{A}_{2}')\ra (S_2\ra \mc{A}_{2},\mc{N}_{2})\) is an isomorphism (i.e. in each factor) then $(g,f,\alpha)$ is a cocartesian map. We obtain a cofibred category $\cat{Def}_{(A,N)}\ra \He$ and a (forgetful) map of cofibred categories $\cat{Def}_{(A,N)}\ra \cat{Def}_{A}$.
It induces a map of associated functors of isomorphism classes $\df{}{(A,N)}\ra \df{}{A}$. See also \cite[Sec. 2]{ile:21}.
\begin{lem}\label{lem.S1}
Suppose $A$ is an algebraic $k$\textup{-}algebra and $N$ is a finite $A$-module\textup{.} Then 
\begin{enumerate}[leftmargin=2.4em, label=\textup{(\roman*)}]
\item $\cat{Def}_{(A,N)}$ satisfies \textup{($\liminj$).}
\item $\cat{Def}_{(A,N)}$ satisfies \textup{(S1').}
\end{enumerate}
\end{lem}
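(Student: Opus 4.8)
The plan is to verify both properties by reducing the data that defines a deformation of $(A,N)$ to finite-type level and invoking the standard limit and glueing results for flat algebras and flat modules.

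For (i) I would begin with a deformation $(\mc{A},\mc{N})$ of $(A,N)$ to $S=\liminj_{i}S_{i}$ and write $\mc{A}$ as the henselisation at a maximal ideal $\fr{m}$ of a finite-type $S$-algebra $\mc{B}$, together with a finite presentation of a finite-type $\mc{B}$-model of $\mc{N}$. By the standard limit theorems for finitely presented algebras and modules, all of this -- the algebra $\mc{B}$, the ideal $\fr{m}$, the $S$-flatness of $\mc{B}$ in a neighbourhood of $\fr{m}$ (a constructible, hence finitely presented, condition), the module, its $S$-flatness, and the isomorphisms with $A$ and $N$ after $\ot_{S}k$ -- descends to some index $i_{0}$ and becomes compatible for $i\geq i_{0}$. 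The one additional point is that taking the henselisation of a finite-type algebra at a maximal ideal commutes with filtering direct limits of the base ring, so the henselised deformation over $S_{i_{0}}$ base-changes back to $(\mc{A},\mc{N})$; this gives essential surjectivity. Full faithfulness follows because a morphism of deformations is given by maps of finitely presented algebras and modules, and such $\Hom$-sets commute with filtering direct limits.

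For (ii), given a test situation -- maps $R\ra S$ and $S'\ra S$ in $\He$ with $S'\ra S$ an infinitesimal extension, $R\ra S_{\tn{red}}$ surjective, and an object $a=(\mc{A},\mc{N})$ over $S$ -- I would put $T=S'\times_{S}R\in\He$ and, for deformations $(\mc{A}',\mc{N}')$ over $S'$ and $(\mc{A}'',\mc{N}'')$ over $R$ each mapping to $a$ and inducing isomorphisms after base change to $S$, form $\mc{A}_{T}=\mc{A}'\times_{\mc{A}}\mc{A}''$ and $\mc{N}_{T}=\mc{N}'\times_{\mc{N}}\mc{N}''$. Since $S'\ra S$ -- equivalently $T\ra R$ -- is a surjection with nilpotent kernel, $\mc{A}'\ra\mc{A}$ and $\mc{N}'\ra\mc{N}$ are surjective, so there are short exact sequences $0\ra\mc{A}_{T}\ra\mc{A}'\op\mc{A}''\ra\mc{A}\ra 0$ and $0\ra\mc{N}_{T}\ra\mc{N}'\op\mc{N}''\ra\mc{N}\ra 0$, and Milnor-type patching for flat modules over a fibre-product ring with a surjective leg shows that $\mc{A}_{T}$ is $T$-flat and of finite type, that $\mc{N}_{T}$ is $T$-flat and finite over $\mc{A}_{T}$, and that the base changes of $(\mc{A}_{T},\mc{N}_{T})$ to $S'$ and to $R$ recover the given deformations. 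I then have to carry the remaining structure through: that $\mc{A}_{T}$ is henselian and local (a fibre product of henselian local rings along local maps one of which is surjective is again of this kind) and algebraic over $T$ -- here one works with compatible finite-type models and uses that $T\ra R$ is a surjection with nilpotent kernel, so that $T$ and $R$ have the same {\'e}tale neighbourhoods and $\mc{A}_{T}$ is the henselisation of $\mc{B}'\times_{\mc{B}}\mc{B}''$ -- and that $\mc{N}_{T}\ot_{T}k\cong N$ and $\mc{A}_{T}\ot_{T}k\cong A$, which is immediate on factoring $T\ra k$ through $S'$. Thus $(\mc{A}_{T},\mc{N}_{T})$ is a deformation of $(A,N)$ to $T$, and the assignment $\bigl((\mc{A}',\mc{N}'),(\mc{A}'',\mc{N}'')\bigr)\mapsto(\mc{A}_{T},\mc{N}_{T})$ is a quasi-inverse of the test map \eqref{eq.test}: a deformation over $T$ is recovered from its base changes to $S'$ and $R$ by the above short exact sequences, and morphisms over $T$ correspond to compatible pairs of morphisms by the universal property of the fibre product of rings and of modules. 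Hence \eqref{eq.test} is an equivalence, which is \textup{(S1')}.

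I expect the main obstacle to be the glueing step in (ii): establishing flatness of $\mc{A}'\times_{\mc{A}}\mc{A}''$ and $\mc{N}'\times_{\mc{N}}\mc{N}''$ over $T=S'\times_{S}R$ together with the base-change identifications, and then carrying through the fibre product the auxiliary properties of being finite, local, henselian and algebraic while keeping control of the fibre over $k$. The limit arguments in (i) and the full faithfulness statements in both parts are routine by comparison.
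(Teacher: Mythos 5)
Your proof is correct and takes essentially the same route as the paper: part (i) via the standard limit theorems for finitely presented algebras/modules (the paper cites \cite[Tags 00R6, 02JO]{SP}), and part (ii) by forming the fibre products $\mc{A}'\times_{\mc{A}}\mc{A}''$ and $\mc{N}'\times_{\mc{N}}\mc{N}''$ and invoking the Schlessinger/Rim glueing results (the paper cites \cite[3.4]{sch:68} and \cite[4.2]{rim:72} for exactly the flatness and base-change facts you spell out). You have merely expanded what the paper leaves to the references.
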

\begin{proof}
(i) Since an object $(S\ra\mc{A},\mc{N})$ in $\cat{Def}_{(A,N)}(S)$ has $\mc{A}$ algebraic over $S$ and $\mc{N}$ finite over $\mc{A}$ this is a standard argument (a finite number of coefficients in a finite number of equations and \cite[\href{https://stacks.math.columbia.edu/tag/00R6}{Tag 00R6}]{SP}). Cf. \cite[\href{https://stacks.math.columbia.edu/tag/02JO}{Tag 02JO}]{SP}.

(ii) With notation as before \eqref{eq.test}, suppose $a'=(\mc{A}',\mc{N}')$ and  $a^{\dprime}=(\mc{A}^{\dprime},\mc{N}^{\dprime})$ are deformations of $(A,N)$ to $S'$ and $R$ respectively such that the induced deformations $a'_S$ and $a^{\dprime}_S$ are isomorphic to a deformation $a=(\mc{A},\mc{N})$. Then $a'{\times}_aa^{\dprime}:=(\mc{A}'{\times}_{\hspace{-0.12em}\mc{A}}\mc{A}^{\dprime},\mc{N}'{\times}_{\hspace{-0.08em}\mc{N}\hspace{0,1em}}\mc{N}^{\dprime})$ is a deformation of $(A,N)$ to $S'{\times}_{\hspace{-0.08em}S}R$ inducing $a'$ and $a^{\dprime}$; see \cite[3.4]{sch:68} and (the proof of) \cite[4.2]{rim:72}. On the other hand, if $b=(\mc{B},\mc{M})$ is a deformation of $(A,N)$ to $S'{\times}_{\hspace{-0.08em}S}R$ with $b_{S'}\cong a'$ and $b_{R}\cong a^{\dprime}$ then $b$ is naturally isomorphic to $a'{\times}_aa^{\dprime}$. The test map \eqref{eq.test} is full and faithful by the definition of fibre products.
\end{proof}
\begin{lem}\label{lem.AQhens}
Suppose \(S\) is local henselian ring with \(S/\fr{m}_{S}\cong k\) and \(h^{\tn{ft}}\co S\ra \mc{A}^{\tn{ft}}\) is a finite type and flat ring homomorphism\textup{.} 
Suppose \(\vG^{\tn{ft}}=\mc{A}^{\tn{ft}}{\oplus} \mc{N}^{\tn{ft}}\) is a graded \(S\)\textup{-}algebra with an $S$\textup{-}flat and finite \(\mc{A}^{\tn{ft}}\)\textup{-}module \(\mc{N}^{\tn{ft}}\) in degree \(1\)\textup{.} Let \(\mc{A}\) denote the henselisation of \(\mc{A}^{\tn{ft}}\) in a maximal ideal \(\fr{m}\)\textup{.} Put \(\vG=\mc{A}\ot_{\mc{A}^{\tn{ft}}}\vG^{\tn{ft}}\)\textup{.}
Let \(J^{\tn{ft}}=J_0^{\tn{ft}}{\oplus}\hspace{0.12em}\hspace{-0.06em}J_1^{\tn{ft}}\) be a graded \(\vG^{\tn{ft}}\)\textup{-}module with the finite \(\mc{A}^{\tn{ft}}\)\textup{-}module \(J_i^{\tn{ft}}\) in degree \(i\)\textup{.}

For all $n$ there are natural \(\mc{A}^{\tn{ft}}_{\fr{m}}\)\textup{-}linear maps of graded Andr{\'e}-Quillen cohomology
\begin{equation*}
(*)\qquad\gxt{n}{\vG^{\tn{ft}}}{L^{\tn{gr}}_{\vG^{\tn{ft}}\!/\!S}}{J^{\tn{ft}}}_{\fr{m}}
\lra
\gxt{n}{\vG}{L^{\tn{gr}}_{\vG\!/\!S}}{\mc{A}\ot_{\mc{A}^{\tn{ft}}} J^{\tn{ft}}}\,.
\end{equation*}
Put $A^{\tn{ft}}{\oplus}N^{\tn{ft}}=\vG^{\tn{ft}}\ot_Sk$ and \(\fr{m}_{0}=\fr{m}A^{\tn{ft}}\) and assume that \(V=\Spec A^{\tn{ft}}{\smallsetminus}\{\fr{m}_{0}\}\) is smooth over \(k\) and \(N^{\tn{ft}}\) restricted to \(V\) is locally free\textup{.} Then $\gxt{n}{\vG}{L^{\tn{gr}}_{\vG\!/\!S}}{\mc{A}\ot_{\mc{A}^{\tn{ft}}} J^{\tn{ft}}}$ is finite as \(S\)\textup{-}module and $(*)$ is an isomorphism for $n>0$\textup{.}
\end{lem}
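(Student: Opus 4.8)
The plan is to produce $(*)$ by functoriality, recognise it as a henselisation base-change map, and then exploit the isolated-singularity hypothesis via Zariski's main theorem over the henselian local base $S$. Throughout write $T^{n}=\gxt{n}{\vG^{\tn{ft}}}{L^{\tn{gr}}_{\vG^{\tn{ft}}\!/\!S}}{J^{\tn{ft}}}$ — a finite $\mc{A}^{\tn{ft}}$\textup{-}module by the remarks preceding Lemma \ref{lem.bc} — and $J=\mc{A}\ot_{\mc{A}^{\tn{ft}}}J^{\tn{ft}}$.

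First I would construct $(*)$. The graded $S$\textup{-}algebra map $\vG^{\tn{ft}}\ra\vG$ gives a natural map $L^{\tn{gr}}_{\vG^{\tn{ft}}\!/\!S}\ot^{\BB{L}}_{\vG^{\tn{ft}}}\vG\ra L^{\tn{gr}}_{\vG\!/\!S}$, which together with $J^{\tn{ft}}\ra J$ induces an $\mc{A}^{\tn{ft}}$\textup{-}linear map $T^{n}\ra\gxt{n}{\vG}{L^{\tn{gr}}_{\vG\!/\!S}}{J}$; as the target is an $\mc{A}^{\tn{ft}}_{\fr{m}}$\textup{-}module this factors through $T^{n}_{\fr{m}}$, defining $(*)$. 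Since $\mc{A}^{\tn{ft}}\ra\mc{A}$ is a localisation followed by a henselisation it satisfies $(\ast\ast)$, and $(\ast)$ holds for $(\vG,J)$ by Lemma \ref{lem.ft}; that lemma (applied in each graded twist, its proof being twist\textup{-}by\textup{-}twist) then identifies $(*)$ with the canonical map $T^{n}_{\fr{m}}\ra T^{n}_{\fr{m}}\ot_{\mc{A}^{\tn{ft}}_{\fr{m}}}\mc{A}$ followed by a base-change isomorphism $T^{n}_{\fr{m}}\ot_{\mc{A}^{\tn{ft}}_{\fr{m}}}\mc{A}\xra{\;\simeq\;}\gxt{n}{\vG}{L^{\tn{gr}}_{\vG\!/\!S}}{J}$. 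This settles the existence of the maps $(*)$ for all $n$.

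Next I would locate the higher $T^{n}$. Applying the long exact sequence of Theorem \ref{thm.lang} to $S\ra\mc{A}^{\tn{ft}}\ra\vG^{\tn{ft}}$ puts $T^{n}$ between $\xt{n}{\mc{A}^{\tn{ft}}}{\mc{N}^{\tn{ft}}}{J^{\tn{ft}}_{1}}$ and $\xt{n}{\mc{A}^{\tn{ft}}}{L_{\mc{A}^{\tn{ft}}\!/\!S}}{J^{\tn{ft}}_{0}}$. On the open locus $U\sbeq\Spec\mc{A}^{\tn{ft}}$ where $\mc{A}^{\tn{ft}}$ is $S$\textup{-}smooth and $\mc{N}^{\tn{ft}}$ is locally free, both of these vanish for $n>0$ (there $\mc{N}^{\tn{ft}}$ is projective and $L_{\mc{A}^{\tn{ft}}\!/\!S}\simeq\Omega_{\mc{A}^{\tn{ft}}\!/\!S}$ is projective), so $T^{n}|_{U}=0$, i.e. $\Supp T^{n}\sbeq Z:=\Spec\mc{A}^{\tn{ft}}{\smallsetminus}U$ for $n>0$. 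By $S$\textup{-}flatness of $\mc{A}^{\tn{ft}}$, smoothness of $V$ over $k$ makes $\mc{A}^{\tn{ft}}$ $S$\textup{-}smooth along $V$, and by the local criterion for flatness $S$\textup{-}flatness of $\mc{N}^{\tn{ft}}$ together with local freeness of $N^{\tn{ft}}|_{V}$ makes $\mc{N}^{\tn{ft}}$ locally free along $V$; hence $V\sbeq U$, and therefore $Z\cap\Spec A^{\tn{ft}}=\{\fr{m}_{0}\}$.

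The crux is the passage to $\fr{m}$. Let $J_{Z}$ be the radical ideal of $Z$; then $C:=\mc{A}^{\tn{ft}}_{\fr{m}}/J_{Z}\mc{A}^{\tn{ft}}_{\fr{m}}$ is a local ring essentially of finite type over the henselian local ring $S$ with $C\ot_{S}k$ finite over $k$ (since $Z\cap\Spec A^{\tn{ft}}=\{\fr{m}_{0}\}$ is $0$\textup{-}dimensional), so by Zariski's main theorem over a henselian local base \cite[\href{https://stacks.math.columbia.edu/tag/04GG}{Tag 04GG}]{SP} $C$ is finite over $S$. Consequently each $\mc{A}^{\tn{ft}}_{\fr{m}}/J_{Z}^{\,\ell}$ is finite over $S$ (filter by $J_{Z}^{\,i}/J_{Z}^{\,i+1}$) and, being local with henselian reduction $C$, is itself henselian, hence equals $\mc{A}/J_{Z}^{\,\ell}\mc{A}$. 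For $n>0$ the finite module $T^{n}_{\fr{m}}$ is supported in $V(J_{Z})$, so $J_{Z}^{\,\ell}T^{n}_{\fr{m}}=0$ for some $\ell$; then $T^{n}_{\fr{m}}$ is a module over $\mc{A}^{\tn{ft}}_{\fr{m}}/J_{Z}^{\,\ell}$, hence finite over $S$, and $T^{n}_{\fr{m}}\ot_{\mc{A}^{\tn{ft}}_{\fr{m}}}\mc{A}=T^{n}_{\fr{m}}\ot_{\mc{A}^{\tn{ft}}_{\fr{m}}/J_{Z}^{\,\ell}}(\mc{A}/J_{Z}^{\,\ell}\mc{A})=T^{n}_{\fr{m}}$. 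Combined with the identification of $(*)$ from the second paragraph this shows that $(*)$ is an isomorphism and that $\gxt{n}{\vG}{L^{\tn{gr}}_{\vG\!/\!S}}{J}$ is finite over $S$ for $n>0$. The hard part is precisely this last step: $Z$ need not be finite over $S$, but the isolated-singularity condition forces $Z$ to be quasi-finite over $S$ at $\fr{m}$, which is exactly what keeps $T^{n}_{\fr{m}}$ finite over $S$ and unchanged under henselisation.
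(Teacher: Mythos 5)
Your proof is correct and follows essentially the same route as the paper's: construct $(*)$ as the localisation map followed by the flat base-change isomorphism of Lemma~\ref{lem.ft}, reduce to the singular locus, observe that the latter is quasi-finite hence (via \cite[\href{https://stacks.math.columbia.edu/tag/04GG}{Tag 04GG}]{SP}, using henselianity of $S$) finite over $S$, and deduce that the cohomology for $n>0$ is finite over $S$ and unchanged by henselisation. The only organisational difference is that you argue directly that $\Supp T^n_{\fr{m}}\sbeq V(J_Z)$ for a single radical ideal $J_Z$ and then conclude in one stroke, whereas the paper treats the two outer terms $\xt{n}{\mc{A}^{\tn{ft}}}{L_{\mc{A}^{\tn{ft}}/S}}{J_0^{\tn{ft}}}$ and $\xt{n}{\mc{A}^{\tn{ft}}}{\mc{N}^{\tn{ft}}}{J_1^{\tn{ft}}}$ of the long exact sequence of Theorem~\ref{thm.lang} separately (with ideals $\fr{a}$, $\fr{a}'$) and finishes with a five-lemma; the substance is the same, and your write-up of the step ``$\mc{A}^{\tn{ft}}_{\fr{m}}/J_Z^{\ell}$ is henselian, hence $T^n_{\fr{m}}\ot_{\mc{A}^{\tn{ft}}_{\fr{m}}}\mc{A}=T^n_{\fr{m}}$'' is if anything a bit more explicit than the paper's.
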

\begin{proof}
Put $J=\mc{A}\ot_{\mc{A}^{\tn{ft}}} J^{\tn{ft}}$.
The map $(*)$ is given as the composition  
\begin{equation}\label{eq.id1}
\gxt{n}{\vG^{\tn{ft}}}{L^{\tn{gr}}_{\vG^{\tn{ft}}\!/\!S}}{J^{\tn{ft}}}_{\fr{m}}
\xra{{\id}\ot 1}
\gxt{n}{\vG^{\tn{ft}}}{L^{\tn{gr}}_{\vG^{\tn{ft}}\!/\!S}}{J^{\tn{ft}}}_{\fr{m}}\ot_{\hspace{-0.09em}\mc{A}^{\tn{ft}}_{\fr{m}}}\mc{A}
\cong
\xt{n}{\vG}{L_{\!\mc{A}\hspace{-0.06em}/\!S}}{J}
\end{equation}
where the last map is the flat base-change map in Lemma \ref{lem.ft}.

Put $A\op N=\vG\ot_Sk$.
Let \(\fr{a}\sbeq \mc{A}^{\tn{ft}}\) denote an ideal defining the non-smooth locus of \(h^{\tn{ft}}\); cf. \cite[\href{https://stacks.math.columbia.edu/tag/07C4}{Tag 07C4}]{SP}. Then \(\fr{a}_{0}=\fr{a}A^{\tn{ft}}\) defines the non-smooth locus of \(k\ra A^{\tn{ft}}\). Put \(\wbar{\mc{A}}^{\tn{ft}}=\mc{A}^{\tn{ft}}/\fr{a}\). Since \(A^{\tn{ft}}/\fr{a}_{0}\) has finite length by assumption, \(S\ra \wbar{\mc{A}}^{\tn{ft}}\) is quasi-finite at \(\fr{m}\) by \cite[\href{https://stacks.math.columbia.edu/tag/00PK}{Tag 00PK}]{SP}.
Moreover, since \(S\) is henselian there is a Zariski neighbourhood \(U\) of \(\fr{m}\) in \(\Spec \mc{A}^{\tn{ft}}\) such that the non-smooth locus \(U\cap V(\fr{a})\) is finite over \(S\); cf. \cite[\href{https://stacks.math.columbia.edu/tag/04GG}{Tag 04GG}]{SP}. Suppose $n>0$. Then the support of \(\cH^{n}=\xt{n}{\mc{A}^{\tn{ft}}}{L_{\!\mc{A}^{\tn{ft}}\!/S}}{J_0^{\tn{ft}}}\) in \(U\) is contained in \(U\cap V(\fr{a})\) by \cite[III 3.1.2]{ill:71} and Lemma \ref{lem.ft}. Since the localisation \(\cH^{n}_{\fr{m}}\) equals \(\cH^{n}\) restricted to \(U\), it follows that \(\cH^{n}_{\fr{m}}\) is finite as \(S\)-module. Since \(\wbar{\mc{A}}^{\tn{ft}}_{\fr{m}}\) is henselian by \cite[Chap. I, \S2.2]{ray:70}, ${\id}\ot 1\co \cH^{n}_{\fr{m}} \ra \cH^{n}_{\fr{m}}\ot_{\hspace{-0.09em}\mc{A}^{\tn{ft}}_{\fr{m}}}\mc{A}$ is an isomorphism.

Let \(\fr{a}'\sbeq \mc{A}^{\tn{ft}}\) denote an ideal defining the locus where \(\mc{N}^{\tn{ft}}\) is not locally free. Then \(\fr{a}'_{0}=\fr{a}'\!A^{\tn{ft}}\) defines the singular locus of \(N\). Again there is a Zariski neighbourhood \(U'\) of \(\fr{m}\) such that \(\cE^{n}=\xt{n}{\mc{A}^{\tn{ft}}}{\mc{N}^{\tn{ft}}}{J_1^{\tn{ft}}}\) restricted to \(U'\) equals \(\cE^{n}_{\fr{m}}\) and is finite as \(S\)-module. Then ${\id}\ot 1\co \cE^{n}_{\fr{m}} \ra \cE^{n}_{\fr{m}}\ot_{\hspace{-0.09em}\mc{A}^{\tn{ft}}_{\fr{m}}}\mc{A}$ is an isomorphism.
By the natural long exact sequence in Theorem \ref{thm.lang} and the \(5\)-lemma, ${\id}\ot 1$ in \eqref{eq.id1} is an isomorphism.
\end{proof}
Suppose \(A\) is a \(k\)-algebra and \(N\) a finite \(A\)-module such that the pair is algebraic, i.e. there is a finite type $k$-algebra $A^\tn{ft}$ with a maximal ideal $\fr{m}_0$ and a finite $A^\tn{ft}$-module $N^{\tn{ft}}$ such that $A\cong(A^\tn{ft})^{\tn{h}}_{\fr{m}_0}$ and $N\cong A\ot_{A^\tn{ft}} N^{\tn{ft}}$.
Then \(A\) is an \emph{isolated singularity over \(k\)} if \(A^{\tn{ft}}\) can be taken to be smooth over \(k\) at all points in \(\Spec A^{\tn{ft}} \smallsetminus\{\fr{m}_{0}\}\). The pair \((A,N)\) has an \emph{isolated singularity over \(k\)} if, in addition, \(N^{\tn{ft}}_{\fr{p}}\) is a free \(A^{\tn{ft}}_{\fr{p}}\)-module for all points $\fr{p}\neq \fr{m}_{0}$ in a Zariski neighbourhood of $\fr{m}_{0}$. 
\begin{lem}\label{lem.S2}
Suppose the pair $(A,N)$ has an isolated singularity\textup{.} Then
\begin{enumerate}[leftmargin=2.4em, label=\textup{(\roman*)}] 
\item $\cat{Def}_{(A,N)}$ satisfies \textup{(S2).}
\end{enumerate}
\begin{enumerate}[resume*]
\item If $\Spec A$ in addition is equidimensional\textup{,} then $\cat{Def}_{(A,N)}$ satisfies \textup{(iso).}
\end{enumerate}
\end{lem}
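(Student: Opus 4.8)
The plan is to prove the two parts separately: \textup{(i)} is a finiteness statement for the first graded Andr\'e--Quillen cohomology of the pair, which follows from Lemma~\ref{lem.AQhens}, while \textup{(ii)} is an algebraization (approximation) argument of the kind used by Elkik. For \textup{(i)}, fix $S=S_{\tn{red}}$ in $\He$, an object $a=(S\ra\mc{A},\mc{N})$ of $\cat{Def}_{(A,N)}(S)$ and a finite $S$-module $I$. By Lemma~\ref{lem.S1}\,(ii) the cofibred category satisfies \textup{(S1)}, so $F_a(S\op I)$ carries an $S$-module structure, and the first task is to identify it. Set $\vG=\mc{A}\op\mc{N}$, the associated graded $S$-algebra, and $J=(\mc{A}\ot_S I)\op(\mc{N}\ot_S I)$, the graded $\vG$-module $\vG\ot_S I$. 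Deformations of $a$ along the split square-zero extension $I\ra S\op I\ra S$ (as in Definition~\ref{def.ext}) are exactly the flat deformations of $\vG$ by $J$ above that extension, which by Proposition~\ref{prop.ext}\,(i)--(ii) form a torsor over $\gxt{1}{\vG}{L^{\tn{gr}}_{\vG/S}}{J}$, non-empty because the extension splits; taking the split deformation as base point identifies $F_a(S\op I)$ with $\gxt{1}{\vG}{L^{\tn{gr}}_{\vG/S}}{J}$ compatibly with the $S$-module structures. Since $\mc{A}$ is algebraic over $S$, I would then choose a finite type flat presentation $S\ra\mc{A}^{\tn{ft}}$ with $\mc{A}$ the henselisation of $\mc{A}^{\tn{ft}}$ at a maximal ideal $\fr{m}$, and a finite $\mc{A}^{\tn{ft}}$-module $\mc{N}^{\tn{ft}}$ with $\mc{N}\cong\mc{A}\ot_{\mc{A}^{\tn{ft}}}\mc{N}^{\tn{ft}}$; after shrinking $\Spec\mc{A}^{\tn{ft}}$ around $\fr{m}$, the isolated-singularity hypothesis on $(A,N)$ lets us assume the closed fibre $A^{\tn{ft}}=\mc{A}^{\tn{ft}}\ot_S k$ is smooth over $k$ away from $\fr{m}_0=\fr{m}A^{\tn{ft}}$ and $\mc{N}^{\tn{ft}}\ot_S k$ is locally free away from $\fr{m}_0$. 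Putting $J^{\tn{ft}}=(\mc{A}^{\tn{ft}}\ot_S I)\op(\mc{N}^{\tn{ft}}\ot_S I)$, Lemma~\ref{lem.AQhens} with $n=1$ gives that $\gxt{1}{\vG}{L^{\tn{gr}}_{\vG/S}}{J}$ is a finite $S$-module, which is \textup{(S2)}. The only step that is not formal is the identification of $F_a(S\op I)$ with the cohomology group, which is standard given Proposition~\ref{prop.ext}.

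For \textup{(ii)}, assume in addition that $\Spec A$ is equidimensional, and verify \textup{(iso)}. So let $S$ be algebraic, $I\sbeq S$ an ideal, $S_n=S/I^{n+1}$, $\hat S=\limproj S_n$, let $a=(\hat S\ra\mc{A},\mc{N})$ and $b=(\hat S\ra\mc{B},\mc{M})$ be objects of $\cat{Def}_{(A,N)}(\hat S)$, and let $(\theta_n\co a_n\cong b_n)$ be a compatible system of isomorphisms over the $S_n$. Using $\theta_0$ I would reduce to the case $a_0=b_0$, $\theta_0=\id$. Choose finite type flat models $\mc{A}^{\tn{ft}},\mc{B}^{\tn{ft}}$ over $\hat S$ and finite modules $\mc{N}^{\tn{ft}},\mc{M}^{\tn{ft}}$ as in \textup{(i)}, shrunk so that their closed fibres have the isolated-singularity property. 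The isomorphisms of deformations $a_T\cong b_T$ compatible with the chosen data over $A$ and $N$, for $T$ an $\hat S$-algebra, form a functor $\cat{Isom}$ which is locally of finite presentation by a routine limit argument, and the given compatible system $(\theta_n)$ is precisely a formal element of $\limproj_n\cat{Isom}(S_n)$, i.e.\ a formal isomorphism along $I$. The content of \textup{(iso)} is that this formal isomorphism is effective: it comes from an element of $\cat{Isom}(\hat S)$ restricting to $\theta_0$.

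Effectivity is where the hypotheses do their work. The equidimensional isolated-singularity assumption ensures that $\mc{A}$ and $\mc{B}$ are flat over $\hat S$ with equidimensional fibres and are smooth over $\hat S$ away from the closed point, with $\mc{N}$, $\mc{M}$ locally free there, so that when $\cat{Isom}$ is cut out (via the finite type models) by polynomial equations in a space of coefficients, the formal solution $(\theta_n)$ reduces modulo $I$ to a point of the smooth locus of those equations. This is exactly the situation to which Elkik's approximation theorem~\cite{elk:73} applies over the excellent, $I$-adically complete ring $\hat S$ (excellent because $S$ is algebraic over the excellent ring $\vL$): the formal solution lifts to a solution over $\hat S$ congruent to it modulo $I$, yielding the required isomorphism $a\cong b$ compatible with $\theta_0$. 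I expect the main obstacle to be the bookkeeping that makes this precise --- arranging the equations so that Elkik's Jacobian condition is met away from the closed point (this is where \emph{isolated singularity} and \emph{$\Spec A$ equidimensional} enter, the latter controlling the fibre dimensions of the models) and verifying the finite-presentation and effectivity hypotheses; the finiteness supplied by Lemma~\ref{lem.AQhens} ensures that the tangent and obstruction modules of $\cat{Isom}$ are finite, which is what makes the approximation effective.
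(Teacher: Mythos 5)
Part (i) is correct and essentially identical to the paper's argument: identify $F_a(S\oplus I)$ with $\gxt{1}{\vG}{L^{\tn{gr}}_{\vG/S}}{\vG\ot_S I}$ via Proposition~\ref{prop.ext}, then invoke Lemma~\ref{lem.AQhens} for finiteness.

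For part (ii), your sketch heads in the same general direction as the paper (Elkik's approximation theorem) but has genuine gaps that you in fact flag yourself when you write that the ``bookkeeping that makes this precise'' is the main obstacle. The paper does not introduce an abstract $\cat{Isom}$ functor and appeal to a general effectivity principle for it; instead it works very concretely. It fixes finite-type flat representatives $\ulset{i}{\mc{A}}^{\tn{ft}}$, forms the $I$-adic completions, uses the formal isomorphism $\hat\theta$ to construct an explicit algebra map $f\co(\ulset{2}{\mc{A}}^{\tn{ft}})\hat{\;}\ot_{\wbar S}\,\ulset{1}{\mc{A}}^{\tn{ft}}\ra(\ulset{2}{\mc{A}}^{\tn{ft}})\hat{\;}$, and applies Elkik's Th\'eor\`eme~2~bis (after localising to put the equations in the required Jacobian form) to produce an algebra map $g$ approximating $f$ to order one; this yields $\tilde\theta^0\co\ulset{1}{\mc{A}}\ra\ulset{2}{\mc{A}}$ agreeing with $\theta_1^0$ modulo $I^2$, which is then checked to be an isomorphism. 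Your proposal leaves all of that to ``verify the finite-presentation and effectivity hypotheses.''

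The most concrete omission is the module. You need an isomorphism of the \emph{pairs} $\ulset{1}{a}\cong\ulset{2}{a}$, not just of the algebras, and your sketch says nothing about how to extend $\tilde\theta^0$ to $\mc{N}$. The paper handles this in a separate step, extending the algebra isomorphism to the modules by ``linear approximation'' (citing \cite[6.1]{ile:12}); this is not a formal consequence of the algebra case, since after twisting by $\tilde\theta^0$ one still has to approximate a formal isomorphism of $\wbar S$-flat finite modules over $\ulset{2}{\mc{A}}$. Also note that your proposed initial reduction to $a_0=b_0$, $\theta_0=\id$ is not what the paper does, and the reduction itself needs justification (you must show you can rigidify the objects without losing flatness or the isolated-singularity hypothesis); the paper simply carries $\theta_1$ along. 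Finally, the role of equidimensionality is asserted but not located: in the paper it is an input to Elkik's lemma being used, and your sentence ``controlling the fibre dimensions of the models'' does not explain how it enters the Jacobian condition.
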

\begin{proof}
(i) For any $S=S_{\tn{red}}$ in $\He$, $a=(\mc{A},\mc{N})$ in $\cat{Def}_{(A,N)}(S)$ and finite $S$-module $I$, put $\vG=\mc{A}{\oplus}\mc{N}$ and $J=\vG\ot_S I$. Then $\df{}{(\mc{A},\mc{N})}(S{\oplus}I)\cong \gxt{1}{\vG}{L^{\tn{gr}}_{\vG/S}}{J}$ by Proposition \ref{prop.ext}. Then Lemma \ref{lem.AQhens} gives the result.

(ii) With notation as in Definition \ref{def.S1}, put $\wbar{S}=S_I\hspace{-0.18em}\hat{}\;$. Let \(\ulset{i}{\,a}=(\,\ulset{i}{\mc{A}},\ulset{i}{\mc{N}})\) for \(i=1,2\) be two objects in \(\cat{Def}_{(A,N)}(\wbar{S})\) and let \(\{\theta_{n}\co \ulset{1}{\,a_{n}}\cong\ulset{2}{\,a_{n}}\}\) be a tower of isomorphisms between the \(S_{n}\)-truncations. Let $\ulset{i}{\mc{A}}^{\tn{ft}}$ be an \(\wbar{S}\)-flat finite type  representative of\, $\ulset{i}{\mc{A}}$ such that \(\ulset{i}{\mc{A}}^{\tn{ft}}\ot_{\!\wbar{S}}k\) has a single non-smooth closed point. We proceed as in the proof of \cite[Lemme p. 600]{elk:73}. Put $\ulset{i}I=I\!\cdot\ulset{i}{\mc{A}}^{\tn{ft}}$. The composition
\begin{equation}
\ulset{1}{\mc{A}}^{\tn{ft}}\lra (\,\ulset{1}{\mc{A}}^{\tn{ft}})_{\ulset{1}I}\hspace{-0.38em}^{\wh{\,}}\hspace{0.38em}\xra{\hat{\hspace{0.4em}\theta\hspace{0.4em}}}(\,\ulset{2}{\mc{A}}^{\tn{ft}})_{\ulset{2}I}\hspace{-0.38em}^{\wh{\,}}\hspace{0.38em}
\end{equation}
gives the $(\,\ulset{2}{\mc{A}}^{\tn{ft}})_{\ulset{2}I}\hspace{-0.38em}^{\wh{\,}}\hspace{0.38em}$-algebra map $f\co(\,\ulset{2}{\mc{A}}^{\tn{ft}})_{\ulset{2}I}\hspace{-0.38em}^{\wh{\,}}\hspace{0.38em}\ot_{\!\wbar{S}}\,\ulset{1}{\mc{A}}^{\tn{ft}}\ra (\,\ulset{2}{\mc{A}}^{\tn{ft}})_{\ulset{2}I}\hspace{-0.38em}^{\wh{\,}}\hspace{0.38em}$. After possibly inverting some element in $\ulset{1}{\mc{A}}^{\tn{ft}}$ we may assume that the conditions in \cite[Th{\'e}or{\`e}me 2 bis]{elk:73} are satisfied (cf.\ \cite[Part 2, Thm. 3.1]{art:76}). We get an $\ulset{2}{\mc{A}}$-algebra map $g\co \ulset{2}{\mc{A}}\ot_{\!\wbar{S}\,}\ulset{1}{\mc{A}}^{\tn{ft}}\ra \ulset{2}{\mc{A}}$ approximating $f$ to order $1$. Precomposing with $1\ot{\id}\co\ulset{1}{\mc{A}}^{\tn{ft}}\ra  \ulset{2}{\mc{A}}\ot_{\!\wbar{S}\,}\ulset{1}{\mc{A}}^{\tn{ft}}$ gives an $\wbar{S}$-algebra map which factors through the henselisation $\ulset{1}{\mc{A}}^{\tn{ft}}\ra\ulset{1}{\mc{A}}$ giving an $\wbar{S}$-algebra map $\tilde{\theta}^0\co \ulset{1}{\mc{A}}\ra \ulset{2}{\mc{A}}$ which modulo $I^2$ equals the degree $0$-part $\theta_1^0\co\ulset{1}{\mc{A}}_1\ra\ulset{2}{\mc{A}}_1$ of $\theta_1$. Then $\tilde{\theta}^0$ is an isomorphism (e.g. similarily to the proof of \cite[6.3]{ile:21}).
By `linear approximation', cf. \cite[6.1]{ile:12}, $\tilde{\theta}^0$ is extended to an isomorphism of the pairs \(\ulset{1}\,a\cong\ulset{2}\,a\) which lifts \(\theta_{1}\). 
\end{proof}
The following theorem is deduced from arguments of Elkik and von Essen, but with certain technical twists. In von Essen's \cite{ess:90} the algebra is only deforming trivially and is not a local henselian ring. To apply Elkik's \cite{elk:73} we work with a finite type version of the groupoid. Let $(A^{\tn{ft}},N^{\tn{ft}})$ be a pair where $A^{\tn{ft}}$ is finite type $k$-algebra and $N^{\tn{ft}}$ is a finite $A^{\tn{ft}}$-module such that $(A^{\tn{ft}},N^{\tn{ft}})$ has an isolated singularity. There is a cofibred category $\cat{Def}^{\tn{ft}}_{(A^{\tn{ft}},N^{\tn{ft}})}\ra\He$ where an object is a flat and finite type pair $a=(S\ra \mc{A}^{\tn{ft}},\mc{N}^{\tn{ft}})$ mapping to $a_0=(k\ra A^{\tn{ft}},N^{\tn{ft}})$ such that $a\ot_Sk\cong a_0$ and the singular locus of $a$ is finite over $S$. Base change is by ordinary tensor product.
\begin{thm}\label{thm.vers}
Suppose the pair \((A,N)\) has an isolated singularity over the field \(k\) with \(A\) equidimensional\textup{.}
Then \(\cat{Def}_{(A,N)}\) has a versal object\textup{.}
\end{thm}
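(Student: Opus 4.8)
The plan is to verify the standing hypotheses of von Essen's version of Artin's criterion, Proposition \ref{prop.fvers}, for the cofibred category $\cat{Def}_{(A,N)}\ra\He$, to construct a formally versal formal object, to algebraize it over a base ring algebraic over $\vL$ by Elkik's method, and finally to invoke Proposition \ref{prop.fvers}. The conditions are already in hand: by Lemma \ref{lem.S1} the category $\cat{Def}_{(A,N)}$ satisfies $(\liminj)$ and (S1'), hence (S1); and since $(A,N)$ has an isolated singularity with $A$ equidimensional, Lemma \ref{lem.S2} supplies (S2) for algebraic objects and (iso).

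First I would construct a formally versal \emph{formal} object. Restricted to $\Ar$, condition (S1') makes the associated functor $\df{}{(A,N)}$ satisfy Schlessinger's gluing conditions, and by Theorem \ref{thm.lang} the tangent space $\df{}{(A,N)}(k[\vare])\cong{}_{\ccirc\!}\mr{T}^{1}_{\vD/k}(\vD)$ is finite-dimensional over $k$, since the long exact sequence there squeezes it between $\xt{1}{A}{N}{N}$ and $\mr{T}^{1}_{A/k}(A)$, both finite-dimensional because $N$ and $A$ have isolated singularities. Translating the pair into the graded $k$-algebra $A\op N$ (Corollary \ref{cor.ext}), Proposition \ref{prop.ext} together with base change (Proposition \ref{prop.bc}) moreover equips the restriction of $\df{}{(A,N)}$ to $\Ar$ with an obstruction theory (Definition \ref{def.obtheory}), with obstruction module $I\mapsto{}_{\ccirc\!}\mr{T}^{2}_{\vD/k}(\vD)\ot_{k}I$. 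Hence there is a formally versal formal object $v=(v_{n})$ in $\limproj\cat{Def}_{(A,N)}(\hat{R}/\fr{m}_{\hat{R}}^{n+1})$ over a complete noetherian local $\vL$-algebra $\hat{R}$.

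Next I would algebraize, following Elkik \cite{elk:73} and von Essen \cite{ess:90}. Fix a finite-type model $(A^{\tn{ft}},N^{\tn{ft}})$ of $(A,N)$ with a maximal ideal $\fr{m}_{0}$ and isolated singularity there, and work inside the finite-type groupoid $\cat{Def}^{\tn{ft}}_{(A^{\tn{ft}},N^{\tn{ft}})}\ra\He$, whose objects have singular locus finite over the base; henselising at the point over $\fr{m}_{0}$ sends objects of $\cat{Def}^{\tn{ft}}_{(A^{\tn{ft}},N^{\tn{ft}})}$ to objects of $\cat{Def}_{(A,N)}$, every object of the latter arising this way by the definition of algebraic, so that a versal object upstairs yields a versal object downstairs. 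The formal object $v$ is represented by a compatible tower of flat finite-type pairs over the $\hat{R}/\fr{m}_{\hat{R}}^{n+1}$; after possibly inverting a suitable element of the model so that the smoothness- and finite-singular-locus conditions hold, Elkik's approximation theorem \cite[Th{\'e}or{\`e}me 2 bis]{elk:73} (cf.\ \cite[Part 2, Thm. 3.1]{art:76}), applied as in the proof of Lemma \ref{lem.S2}(ii), yields an algebraic $\vL$-algebra $R$ and an object $v^{\tn{alg}}\in\cat{Def}^{\tn{ft}}_{(A^{\tn{ft}},N^{\tn{ft}})}(R)$ agreeing with $v$ to high enough order that it is again formally versal (Proposition \ref{prop.ex}). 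Henselising puts $v^{\tn{alg}}$ in $\cat{Def}_{(A,N)}(R)$ with $R$ algebraic over $\vL$, and Proposition \ref{prop.fvers} concludes that $v^{\tn{alg}}$ is versal.

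The hard part is the algebraization. One must set up the finite-type groupoid so that Elkik's theorem genuinely applies — which forces inverting an element of the model and uses the control of the singular locus over the base provided by Lemmas \ref{lem.AQhens} and \ref{lem.S2}(ii) — and one must algebraize the module together with the local henselian algebra and then descend everything compatibly to the situation of \eqref{eq.dA}. This is precisely the technical twist relative to von Essen's \cite{ess:90}, where the algebra deforms only trivially and is not local henselian, and to Elkik's \cite{elk:73}, where no module is present.
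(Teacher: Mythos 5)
Your outline gets the scaffolding right — verifying $(\liminj)$, (S1'), (S2) and (iso), obtaining a formally versal formal object, and invoking Proposition \ref{prop.fvers} at the end — but it is missing the effectivization step, which is the technical heart of the proof and, as you yourself note, the key difficulty beyond von Essen and Elkik.

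The issue is that Artin's theorem \cite[3.2]{art:74} (with the Conrad--de~Jong extension) takes as a separate \emph{hypothesis} that a formally versal formal tower over a complete base $\hat{S}$ is \emph{effective}, i.e.\ arises from an actual object of $\cat{Def}^{\tn{ft}}_{(A^{\tn{ft}},N^{\tn{ft}})}(\hat{S})$; only then does the approximation machinery descend it to a ring $R$ algebraic over $\vL$. Your proposal collapses these two stages into one and tries to do both by citing Elkik's Th\'eor\`eme~2~bis. That theorem is an approximation statement for henselian pairs; in this paper it is used only for the (iso) condition (Lemma \ref{lem.S2}(ii)), and it cannot effectivize a formal tower. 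The effectivization argument in fact requires two distinct tools. For the algebra part one needs Elkik's Th\'eor\`eme~7, which produces a flat finite-type $S\ra\mc{A}^{\tn{ft}}$ over the complete ring $S=\hat{S}$ inducing the formal family $\{\mc{A}^{\tn{ft}}_n\}$. For the module part one then passes to the henselisation $\mc{A}'$ of $\mc{A}^{\tn{ft}}$ along a suitable ideal, runs an Artin--Rees argument to identify $\limproj\mc{A}'/(J\cap\fr{n})^s$ with $\mc{A}^*=\limproj\mc{A}^{\tn{ft}}_n$, and only then does the hypothesis of Elkik's Th\'eor\`eme~3 hold for the henselian pair $(\mc{A}', J\cap\fr{n})$, yielding an actual finite $\mc{A}'$-module $\mc{N}'$ inducing $\mc{N}^*=\limproj\mc{N}^{\tn{ft}}_n$. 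None of this preparation appears in your sketch, and no substitute mechanism is offered.

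A second, smaller gap: once the effectivized pair is henselised at the point over $\fr{m}_A$, one must still show that the resulting object of $\cat{Def}_{(A,N)}(\hat{S})$ is formally versal. The formal versality established by Schlessinger lives in the finite-type groupoid $\cat{Def}^{\tn{ft}}_{(A^{\tn{ft}},N^{\tn{ft}})}$, and one needs the henselisation map $\phi\co\cat{Def}^{\tn{ft}}_{(A^{\tn{ft}},N^{\tn{ft}})}\ra\cat{Def}_{(A,N)}$ to be formally smooth; this uses Lemma \ref{lem.AQhens} and the obstruction/torsor calculus of Propositions \ref{prop.ext}, \ref{prop.omap}. Your appeal to Proposition \ref{prop.ex} at that point is not the right reference: it characterizes formal versality via $\mr{Ex}_{\cat{F}}$, it does not assert that agreement to high enough order with a formally versal object preserves formal versality.
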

\begin{proof}
We apply \cite[3.2]{art:74} with the extension to arbitrary excellent coefficient rings given by \cite{con/jon:02} to show the existence of a formally versal object in \(\cat{Def}_{(A,N)}\). By Lemmas \ref{lem.S1} and \ref{lem.S2}, $\cat{Def}_{(A,N)}$ satisfies ($\liminj$), (S1'), (S2) and (iso.).

For effectivity, note that the restriction $\cat{Def}^{\tn{ft}}_{(A^{\tn{ft}},N^{\tn{ft}})}\ra\Ar$ satisfies (S1') and (S2). 
By \cite[2.11]{sch:68} there exists a formally versal formal object \(((\mc{A}_{n}^{\tn{ft}},\mc{N}^{\tn{ft}}_{n}))\) in \(\limproj\cat{Def}^{\tn{ft}}_{(A^{\tn{ft}},N^{\tn{ft}})}(S_{n})\) where \(S_{n}=S/\fr{m}_{S}^{n+1}\) for some \(S=\hat{S}\) in \(\He\). 
By \cite[Th\'{e}or\`{e}m 7, p.\ 595]{elk:73} (cf.\ \cite[Part 2, Thm. 5.1]{art:76}) there exists an object \(S\ra \mc{A}^{\tn{ft}}\) in \(\cat{Def}^{\tn{ft}}_{A^{\tn{ft}}}(S)\) which induces \(\{\mc{A}_{n}^{\tn{ft}}\}\). 
Put $\mc{A}^{*}=\limproj\mc{A}^{\tn{ft}}_n$.
Then \(\mc{N}^{*}:=\limproj\mc{N}_{n}^{\tn{ft}}\) is an \(S\)-flat finite \(\mc{A}^{*}\)-module; \cite[\href{https://stacks.math.columbia.edu/tag/0912}{Tag 0912}, \href{https://stacks.math.columbia.edu/tag/031D}{Tag 031D}]{SP}.  

Let $f\co\Spec\mc{A}^*\ra \Spec S$ denote the induced map and  
$U^*$ the set of points $x\in\Spec \mc{A}^*$ such that $\mc{N}^*\!\ot_{\!S}k(f(x))$ is locally free at $x$. Let $J^*$ denote the ideal defining $\Spec\mc{A}^*\smallsetminus U^*$ with reduced structure. 
Put $\wbar{\mc{A}}^*=\mc{A}^*/J^*$, \(J^{\tn{ft}}=\ker(\mc{A}^{\tn{ft}}\ra \wbar{\mc{A}}^*)\), $\wbar{\mc{A}}^{\tn{ft}}=\mc{A}^{\tn{ft}}/J^{\tn{ft}}$, and $\wbar{A}^{\tn{ft}}=A^{\tn{ft}}/(J^*{\cdot}A^{\tn{ft}})$. Since $\wbar{A}^{\tn{ft}}$ has finite length, $\wbar{\mc{A}}^*$ is finite as $S$-module; \cite[\href{https://stacks.math.columbia.edu/tag/031D}{Tag 031D}]{SP}. Moreover, the inclusion $\wbar{\mc{A}}^{\tn{ft}}\ra \wbar{\mc{A}}^*$ is an isomorphism by Nakayama's lemma. Let \(\mc{A}'\) denote the henselisation of $\mc{A}^{\tn{ft}}$ in $J^{\tn{ft}}\cap\,\fr{m}_S{\cdot} \mc{A}^{\tn{ft}}\!$. Put $J=J^{\tn{ft}}{\cdot} \mc{A}'$. Then $\mc{A}'/J\cong \wbar{\mc{A}}^{\tn{ft}}\cong \wbar{\mc{A}}^*$ which is complete with respect to $\fr{n}=\fr{m}_{S}\mc{A}'$. It follows that $\mc{A}'/J^s\oplus\mc{A}'/\fr{n}^s$ and its submodule $\mc{C}_s:=\mc{A}'/(J^s\cap\fr{n}^s)$ is $\fr{n}$-complete for all $s$. By Artin-Rees there is a $t\geq s$ such that $J^{t}\cap\fr{n}^{t}\sbeq (J{\cdot}\,\fr{n})^s{\cdot}\,\fr{n}^{t-s}\sbeq (J\cap\fr{n})^s$. Since the image of $\fr{n}$ is contained in $\Rad\mc{C}_t$ and $\mc{C}_t$ surjects onto $\mc{A}'/(J\cap\fr{n})^s$ it follows that the latter is $\fr{n}$-complete for all $s$. This implies that the natural map $\limproj \mc{A}'/(J\cap\fr{n})^s\ra \mc{A}^{*}$ has an inverse. 
Since \(\mc{N}^{*}\) is locally free on the complement of \(V((J\cap\fr{n})\mc{A}^{*})\), the conditions in \cite[Th\'{e}or\`{e}m 3]{elk:73} hold for the henselian pair $(\mc{A}',J\cap\fr{n})$. We obtain a finite \(\mc{A}'\)-module \(\mc{N}'\) which induces \(\mc{N}^{*}\). Moreover, \(\mc{N}'\) is $S$-flat by \cite[\href{https://stacks.math.columbia.edu/tag/0523}{Tag 0523}]{SP}. Let $(\mc{A},\mc{N})$ denote the henselisation of the pair $(\mc{A}',\mc{N}')$ in the maximal ideal $(\mc{A}'\ra A)^{-1}(\fr{m}_A)$.

We claim that the henselisation map \(\phi\co\cat{Def}^{\tn{ft}}_{(A^{\tn{ft}},N^{\tn{ft}})}\ra\cat{Def}_{(A,N)}\) is formally smooth. It follows that the object \((\mc{A},\mc{N})\) in \(\cat{Def}_{(A,N)}(S)\) is formally versal. For the claim, put \(\vG^{\tn{ft}}_0=A^{\tn{ft}}{\oplus}N^{\tn{ft}}\) and \(\vG_0=A{\oplus} N\) and let \(\pi\co S^{\dprime}\ra S^{\prime}\cong S^{\dprime}\!/I\) be a small surjection in \(\Ar\) and \(\vG^{\tn{ft}}\) a deformation of \(\vG^{\tn{ft}}_{0}\) over $S^{\prime}$. The map 
\begin{equation}\label{eq.h}
\gxt{n}{\vG_0^{\tn{ft}}}{L^{\tn{gr}}_{\vG_0^{\tn{ft}}\!/k}}{\vG_0^{\tn{ft}}}\ot_{k}I
\lra
\gxt{n}{\vG_0}{L^{\tn{gr}}_{\vG_0\!/k}}{\vG_0}\ot_{k}I
\end{equation}
is an isomorphism for $n=1,2$ by Lemma \ref{lem.AQhens}. We have that \eqref{eq.h} for $n=2$ takes the obstruction \(\ob(\pi,\vG^{\tn{ft}})\in\gxt{2}{\vG_0^{\tn{ft}}}{L^{\tn{gr}}_{\vG_0^{\tn{ft}}\!/k}}{\vG_0^{\tn{ft}}}\ot_{k}I$ for lifting \(\vG^{\tn{ft}}\) along \(\pi\) to the corresponding obstruction $\ob(\pi,\vG)$ for the henselised pair, see Proposition \ref{prop.omap}. Similarly the torsor action commutes with the map \eqref{eq.h} for $n=1$. Formal smoothness of $\phi$ follows by the standard obstruction argument; cf. the proof of Corollary \ref{cor.vers}.

By \cite[3.2]{art:74} and \cite[1.5]{con/jon:02} there is an algebraic \(\vL\)-algebra \(R\) in $\He$ and a formally versal object \((\mc{A}^{\tn{v}},\mc{N}^{\tn{v}})\) in \(\cat{Def}_{(A,N)}(R)\). By Proposition \ref{prop.fvers} we conclude that \((\mc{A}^{\tn{v}},\mc{N}^{\tn{v}})\) is versal. 
\end{proof}
Fix a flat and algebraic ring homomorphism $\vL\ra \mc{A}$. 
Assume $\mc{A}\ot_{\vL}k\cong A$. Let $q\co \mc{A}\ra A$ denote the induced map. Note that $\id\co \He\ra \He$ is a cofibred category. Then $\vL\ra \mc{A}$ and $q$ defines a map of cofibred categories $\He\ra \cat{Def}_A$ where the object $\vL\ra S\ra k$ maps to  $q\hot(S{\ra} k)\co\mc{A}_S=\mc{A}\hot_{\vL}S\ra A$. For a finite $A$-module $N$, define $\cat{Def}^{\mc{A}}_N$ as the ($2$-category) fibre product $\He\times_{\cat{Def}_A}\cat{Def}_{(A,N)}$. An object in $\cat{Def}^{\mc{A}}_N$ is an object $b=(S\ra\mc{B}\ra A,\,\mc{N}\ra N)$ in $\cat{Def}_{(A,N)}$ such that $\mc{B}\ra A$ is isomorphic to $\mc{A}_S\ra A$ in $\cat{Def}_A$. Then $b$ is called an $\mc{A}$-deformation of $N$ to $S$. Maps of deformations are defined as above Lemma \ref{lem.S1}. Then $\cat{Def}^{\mc{A}}_N$ is a cofibred subcategory of $\cat{Def}_{(A,N)}$ locally of finite presentation satisfying (S1'). Denote the associated deformation functor \(\df{{\mc{A}}}{N}\). A special (`classical') case is given by \(\vL=k\) and \(q=\id\co {\mc{A}}=A\). See also \cite[Sec. 2]{ile:21}.

An argument similar to the proof of Theorem \ref{thm.vers} gives the following (cf. \cite[2.4]{ess:90}).
\begin{prop}\label{prop.vers}
Assume $N$ is locally free on the complement of the closed point in $\Spec A$\textup{.} Then \(\cat{Def}^{\mc{A}}_{N}\) has a versal object\textup{.}
\end{prop}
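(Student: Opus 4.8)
The plan is to transcribe the proof of Theorem~\ref{thm.vers}, replacing $\cat{Def}_{(A,N)}$ by $\cat{Def}^{\mc{A}}_{N}$ and $\cat{Def}^{\tn{ft}}_{(A^{\tn{ft}},N^{\tn{ft}})}$ by the evident finite-type analogue $\cat{Def}^{\mc{A}^{\tn{ft}}}_{N^{\tn{ft}}}$, the category of deformations of the finite module $N^{\tn{ft}}$ over the fixed family $\mc{A}^{\tn{ft}}\ot_{\vL}(-)$, so that throughout only the module is deformed while the algebra stays rigidly $\mc{A}\hot_{\vL}S$. Conditions ($\liminj$) and (S1') for $\cat{Def}^{\mc{A}}_{N}$ are already recorded in the paragraph preceding the statement, and $\cat{Def}^{\mc{A}^{\tn{ft}}}_{N^{\tn{ft}}}\ra\Ar$ satisfies (S1') and (S2) by the same arguments; so the real content is (S2) and (iso) for $\cat{Def}^{\mc{A}}_{N}$ together with the effectivity step, after which Proposition~\ref{prop.fvers} yields the conclusion.

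For the obstruction theory and for (S2) I would use Corollary~\ref{cor.ext} directly rather than Lemma~\ref{lem.AQhens}. For an object $a=(S\ra\mc{A}_{S}\ra A,\ \mc{N}\ra N)$ of $\cat{Def}^{\mc{A}}_{N}$, the split square-zero extension $\mc{A}_{S\op I}\ra\mc{A}_{S}$ with $\phi$ the canonical isomorphism shows (using $\vL$-flatness of $\mc{A}$) that $F_{a}(S\op I)\cong\xt{1}{\mc{A}_{S}}{\mc{N}}{\mc{N}\ot_{S}I}$, and that for a small surjection $\pi\co S''\ra S'$ in $\Ar$ with kernel $I$ the obstruction to lifting an $\mc{A}_{S'}$-deformation $\mc{N}'$ to an $\mc{A}_{S''}$-module lies in $\xt{2}{\mc{A}_{S'}}{\mc{N}'}{\mc{N}'\ot_{S'}I}\cong\xt{2}{A}{N}{N}\ot_{k}I$, functorially in the lifting situation; this gives an obstruction theory in the sense of Definition~\ref{def.obtheory}. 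The finiteness demanded by (S2) is precisely the finiteness of the term $\cE^{n}=\xt{n}{\mc{A}^{\tn{ft}}}{\mc{N}^{\tn{ft}}}{-}$ isolated in the proof of Lemma~\ref{lem.AQhens}: since $\mc{N}\ot_{S}k\cong N$ is locally free on $\Spec A\smallsetminus\{\fr{m}_{A}\}$, the non-locally-free locus of $\mc{N}$ over $\mc{A}_{S}$ meets the closed fibre only in $\{\fr{m}_{A}\}$, so --- $S$ being henselian --- it is finite over $S$ on a Zariski neighbourhood, whence $\xt{n}{\mc{A}_{S}}{\mc{N}}{\mc{N}\ot_{S}I}$ is a finite $S$-module for $n>0$ and, for $S=k$, $\xt{1}{A}{N}{N}$ and $\xt{2}{A}{N}{N}$ have finite length. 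For (iso) I would observe that, with the algebra held fixed, the argument of Lemma~\ref{lem.S2}(ii) degenerates to lifting the given mod-$I^{2}$ isomorphism $\theta_{1}$ of modules over the $I$-adic completion $\wh{S}$ of Definition~\ref{def.S1} to an honest isomorphism; this is the `linear approximation' of \cite[6.1]{ile:12}, legitimate because each module in play is locally free off a closed subset finite over $\wh{S}$.

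For effectivity I would run the middle part of the proof of Theorem~\ref{thm.vers}, which is simpler here because the algebra need not be effectivised. By \cite[2.11]{sch:68} there is a formally versal formal object $(\mc{N}^{\tn{ft}}_{n})$ in $\limproj\cat{Def}^{\mc{A}^{\tn{ft}}}_{N^{\tn{ft}}}(S_{n})$ for some $S=\wh{S}$ in $\He$; set $\mc{N}^{*}=\limproj\mc{N}^{\tn{ft}}_{n}$, an $S$-flat finite module over the completion of the finite-type family. The non-locally-free locus bookkeeping (Artin--Rees together with $\fr{n}$-completeness) of the proof of Theorem~\ref{thm.vers} goes through verbatim and, via \cite[Th\'{e}or\`{e}me 3]{elk:73} applied to the resulting henselian pair $(\mc{A}',J\cap\fr{n})$, produces an $S$-flat finite $\mc{A}'$-module $\mc{N}'$ inducing $\mc{N}^{*}$ (flat by \cite[\href{https://stacks.math.columbia.edu/tag/0523}{Tag 0523}]{SP}); henselising gives an object $(\mc{A}_{S},\mc{N})$ of $\cat{Def}^{\mc{A}}_{N}(S)$. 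As in Theorem~\ref{thm.vers} I would then check that the henselisation functor $\cat{Def}^{\mc{A}^{\tn{ft}}}_{N^{\tn{ft}}}\ra\cat{Def}^{\mc{A}}_{N}$ is formally smooth --- the groups $\xt{1}{A}{N}{N}$ and $\xt{2}{A}{N}{N}$ are unchanged by henselisation (the $\cE^{n}$ computation of Lemma~\ref{lem.AQhens}) and the obstruction and torsor action transport by Corollary~\ref{cor.ext}, so the standard obstruction argument gives smoothness --- whence $(\mc{A}_{S},\mc{N})$ is formally versal in $\cat{Def}^{\mc{A}}_{N}$. Finally \cite[3.2]{art:74} together with \cite[1.5]{con/jon:02} produces an algebraic $\vL$-algebra $R$ in $\He$ carrying a formally versal object of $\cat{Def}^{\mc{A}}_{N}$, and Proposition~\ref{prop.fvers} makes it versal. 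The only genuinely new ingredient is the $S$-finiteness of $\xt{n}{\mc{A}_{S}}{\mc{N}}{\mc{N}\ot_{S}I}$ --- the module analogue of Lemma~\ref{lem.AQhens} under only the punctured-spectrum hypothesis --- and I expect this (with the parallel approximation fact behind (iso)) to be the sole real obstacle, everything else being the proof of Theorem~\ref{thm.vers} with the algebra held fixed.
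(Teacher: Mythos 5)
Your proposal is correct and takes essentially the same approach as the paper, which dismisses this Proposition with the one-line remark ``An argument similar to the proof of Theorem \ref{thm.vers} gives the following (cf. \cite[2.4]{ess:90}).'' You have correctly identified where the argument simplifies when the algebra is held fixed --- (S2) reduces to the $\cE^n$-finiteness part of the proof of Lemma \ref{lem.AQhens}, (iso) degenerates to a module approximation via \cite[6.1]{ile:12}, the obstruction theory comes from Corollary \ref{cor.ext} rather than the full graded Proposition \ref{prop.ext}, and effectivity needs only Elkik's Th\'eor\`eme 3 since the algebra is already given --- which is precisely what the paper's pointer to Theorem \ref{thm.vers} and to von Essen intends.
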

\begin{cor}\label{cor.vers}
If $\xt{2}{A}{N}{N}=0$ then the forgetful map $\phi\co \cat{Def}_{(A,N)}\ra \cat{Def}_{A}$ is formally smooth\textup{.} If in addition the pair \((A,N)\) has an isolated singularity over the field \(k\) with \(A\) equidimensional\textup{,} then $\phi$ is smooth\textup{.}
\end{cor}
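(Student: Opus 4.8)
\textit{Proof plan.} The plan is to deduce formal smoothness from the module-extension obstruction of Corollary~\ref{cor.ext} together with the vanishing $\xt{2}{A}{N}{N}=0$, and then to upgrade formal smoothness to smoothness by feeding in the versal objects produced by Theorem~\ref{thm.vers} and by Elkik \cite{elk:73}.

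For the first assertion I would reduce to a small surjection $\pi\co S'\to S$ in $\Ar$ with kernel $I$ (a finite $k$-vector space), since a general surjection in $\Ar$ factors into such. Given $[\mc A,\mc N]\in\df{}{(A,N)}(S)$ together with a lift $[\mc A']\in\df{}{A}(S')$ of its image in $\df{}{A}(S)$, fix an isomorphism $\mc A'\ot_{S'}S\cong\mc A$; then $\beta_0\co 0\to J_0\to\mc A'\to\mc A\to 0$ is a square-zero extension of rings with $J_0=I\mc A'\cong A\ot_k I$. Applying Corollary~\ref{cor.ext} with $B=\mc A$, the extension $\beta_0$, the module $\mc N$, $J_1=J_0\ot_{\mc A}\mc N$ and $\phi=\id$ yields an obstruction $\ob(\beta_0,\mc N,\id)\in\xt{2}{\mc A}{\mc N}{\mc N\ot_S I}$ whose vanishing is equivalent to the existence of an $\mc A'$-module $\mc N'$ in a module extension $0\to J_1\to\mc N'\to\mc N\to 0$ over $\beta_0$ with induced map $\id$; that last condition is exactly the criterion for lifting a flat module along a small extension, so $\mc N'$ is $S'$-flat and $(\mc A',\mc N')$ lifts $(\mc A,\mc N)$ over $S'$. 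It then remains to see that the obstruction group vanishes: since $I$ is a $k$-vector space, $\mc N\ot_S I\cong N\ot_k I$, and since $\mc A$ and $\mc N$ are $S$-flat a finite free $\mc A$-resolution $P_\bullet$ of $\mc N$ reduces modulo $\fr m_S$ to a finite free $A$-resolution of $N$, whence $\hm{}{\mc A}{P_\bullet}{N}\cong\hm{}{A}{P_\bullet\ot_{\mc A}A}{N}$ and so $\xt{2}{\mc A}{\mc N}{\mc N\ot_S I}\cong\xt{2}{A}{N}{N}\ot_k I=0$. Thus $\phi$ is formally smooth. (Equivalently, one may run the same bookkeeping through the long exact sequence of Theorem~\ref{thm.lang}, whose $u$-term in degree $2$ is this vanishing group.)

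For the second assertion I would add the hypotheses that $(A,N)$ has an isolated singularity over $k$ and that $A$ is equidimensional. Then $\cat{Def}_{(A,N)}$ has a versal object $v$ over an algebraic $\vL$-algebra $R$ by Theorem~\ref{thm.vers}, and $\cat{Def}_A$ has a minimal versal object $u$ over an algebraic $R'$ by Elkik's theorem \cite{elk:73} (whose hypotheses on $A$ are exactly the ones assumed). Versality of $u$ provides a morphism $f\co R'\to R$ in $\He$ with $f^{*}u\cong\phi(v)$. Using the formal smoothness just proved, the smoothness of $c_v$, and the minimality of $u$ --- which makes $\hm{}{\He}{R'}{k[\vare]}\to\df{}{A}(k[\vare])$ bijective, so that two liftings $R'\to B'$ of a fixed map $R'\to B$ inducing isomorphic deformations of $A$ must coincide --- I would verify that the $\vL$-algebra map $f$ is formally smooth; being algebraic, it is then smooth, and (as $\He$ consists of henselian rings) the induced $h_R\to h_{R'}$ is a smooth map of functors. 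Finally, since $c_v$ is smooth and every element of $\df{}{(A,N)}(T)$ comes from a morphism $R\to T$, smoothness of $\phi\circ c_v=c_u\circ f^{*}$ --- a composition of smooth maps --- forces smoothness of $\phi$.

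I expect the main obstacle to be the second assertion: bridging formal smoothness (lifting against infinitesimal extensions in $\Ar$) and smoothness (lifting against arbitrary surjections in $\He$). That step genuinely consumes the Artin-theoretic input, namely the versal objects of Theorem~\ref{thm.vers} and of Elkik and the minimality bookkeeping for the induced map $f$ of versal base rings. The first assertion, by contrast, is the routine obstruction argument once Corollary~\ref{cor.ext} and the flat base-change identification $\xt{2}{\mc A}{\mc N}{\mc N\ot_S I}\cong\xt{2}{A}{N}{N}\ot_k I$ are in hand.
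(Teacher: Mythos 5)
Your argument for the first assertion is correct and takes a slightly different route than the paper's: you fix the algebra lift $\mc A'$ at the outset and apply the module obstruction of Corollary~\ref{cor.ext} directly, observing $\xt{2}{\mc A}{\mc N}{\mc N\ot_S I}\cong\xt{2}{A}{N}{N}\ot_kI=0$ by flat base change. The paper instead first kills the pair obstruction $\ob(\pi,\vG)\in{}_{\ccirc\!}\mr{T}^2_{\vG_0/k}(\vG_0)\ot I$ using injectivity of $i^*$ in degree $2$ in the long exact sequence of Theorem~\ref{thm.lang} (which is exactly your vanishing hypothesis), producing some lift $\vG^{\dprime}$, and then corrects the algebra part by a torsor action via surjectivity of $i^*$ in degree $1$. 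Both arguments are valid and rely on the same cohomological content; yours is perhaps a hair more direct, while the paper's keeps the whole computation inside the pair cohomology.

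For the second assertion you have a genuine gap. You claim that minimality of $u$ (bijectivity of $\hm{}{\He}{R'}{k[\vare]}\to\df{}{A}(k[\vare])$) implies that two liftings $R'\to B'$ of a fixed $R'\to B$ inducing isomorphic deformations must coincide, and you rely on this to establish formal smoothness of $f\co R'\to R$. That implication is false: minimality is a statement about the Zariski tangent space only, and it does not give uniqueness of classifying maps — that would require pro-representability (universality), which $\df{}{A}$ need not satisfy. Two such liftings can differ by a nontrivial $\vL$-derivation $R'\to I$ while still pulling $u$ back to isomorphic deformations, for instance when the torsor action has kernel. Moreover the whole detour through Elkik's versal object $u$ and the map $f$ is unnecessary: once you know $\phi$ is formally smooth and $c_v$ is smooth (hence formally smooth), the composite $\phi\circ c_v=c_{\phi(v)}$ is formally smooth, i.e. $\phi(v)$ is a formally versal object of $\cat{Def}_A$ over the algebraic ring $R$. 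Proposition~\ref{prop.fvers} then upgrades this to versality, so $c_{\phi(v)}$ is smooth. Smoothness of $\phi$ now follows by the usual bookkeeping: given a surjection $S'\to S$ and $(b,c')\in F(S)\times_{G(S)}G(S')$, surjectivity of $c_v$ gives $g\co R\to S$ with $g_*v\cong b$, and smoothness of $c_{\phi(v)}$ gives a lift $g'\co R\to S'$ of $g$ with $g'_*\phi(v)\cong c'$; then $g'_*v\in F(S')$ lifts both $b$ and $c'$. This is the route the paper takes, and it avoids both Elkik's theorem (for $\cat{Def}_A$) and the questionable smoothness of $f$.
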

\begin{proof}
Results in Section \ref{sec.coh} imply that $\phi$ is formally smooth:
Consider a deformation $(S\ra\mc{A},\mc{N})$ in $\cat{Def}_{(A,N)}$. Put \(\vG=\mc{A}{\oplus}\mc{N}\) and \(\vG_{0}=A{\oplus} N\). Suppose $I$ is an $S$-module with $\fr{m}_S\cdot I=0$. For all \(n\) there are isomorphisms
\begin{equation}\label{eq.defgrade}
\begin{aligned}
\gxt{n}{\vG}{L^{\tn{gr}}_{\vG/S}}{\vG\ot_{S} I}
&\cong
\gxt{n}{\vG}{L^{\tn{gr}}_{\vG/S}}{\vG\ot_{S}k\ot_k I}
\cong
\gxt{n}{\vG}{L^{\tn{gr}}_{\vG/S}}{\vG_{0}}\ot_k I
\\
&\cong
\gxt{n}{\vG_{0}}{L^{\tn{gr}}_{\vG_{0}/k}}{\vG_{0}}\ot_{k} I\quad\tn{by \eqref{eq.e}}.
\end{aligned}
\end{equation}
Given a map of deformations $(S'\ra\mc{A}')\ra (S\ra\mc{A})$ in $\cat{Def}_{A}$ where $\pi\co S'\ra S$ is a surjection in $\Ar$ with kernel $I$. We may assume $\fr{m}_{S'}{\cdot} I=0$. By Theorem \ref{thm.lang} (i), the obstruction $\ob(\pi,\vG)$ maps to $\ob(\pi,\mc{A})=0$ by the restriction 
\begin{equation}
i^*\ot{\id}\co \gxt{2}{\vG_0}{L^{\tn{gr}}_{\vG_0{/}\vL}}{\vG_0}\ot_kI\lra\xt{2}{A}{L_{A/\vL}}{A}\ot_kI
\end{equation}
which is injective by assumption. By Proposition \ref{prop.ext} there is a deformation $S'\ra\vG^{\dprime}=\mc{A}^{\dprime}{\oplus}\mc{N}^{\dprime}$ inducing $S\ra\vG$. The difference of the deformations $\mc{A}'$ and $\mc{A}^{\dprime}$ is an element $\xi_0\in \xt{1}{A}{L_{A/k}}{\vG_{0}}\ot_{k} I$. By assumption and Theorem \ref{thm.lang} there is a $\xi\in \gxt{1}{\vG_{0}}{L^{\tn{gr}}_{\vG_{0}/k}}{\vG_{0}}\ot_{k}I$ restricting to $\xi_0$. The torsor action of $\xi$ on $S'\ra\vG^{\dprime}$ gives a deformation $S'\ra\vG'$ inducing both $S\ra\vG$ and $S'\ra\mc{A}'$. By the additional assumption Theorem \ref{thm.vers} gives a versal element $a=(R\ra\mc{A}^{\tn{v}},\mc{N}^{\tn{v}})$. Since $\phi$ is formally smooth $\phi(a)=(R\ra\mc{A}^{\tn{v}})$ is formally versal in $\cat{Def}_{A}$. By Proposition \ref{prop.fvers}, $\phi(a)$ is versal and then $\phi$ is smooth.
\end{proof}
\section{Cohomology of Cohen-Macaulay approximations}\label{sec.cohmap}
We would like to apply Lemma \ref{lem.res} to the Cohen-Macaulay approximation maps for deformations $\sigma_X\co \df{}{(A,N)}\ra \df{}{(A,X)}$ defined in \cite[Sec. 3]{ile:21} for $X=M,L'$, see Lemma \ref{lem.defmap}.
By the proof of Corollary \ref{cor.vers}, the $\df{}{(A,X)}$ have obstruction theories (Definition \ref{def.obtheory}).
We construct natural maps $\sigma^n_j$ between the Andr\'e-Quillen cohomology groups of the pairs. Injectivity of a map of $\Ext^n$s for $n=1,2$ imply the conditions in Lemma \ref{lem.res} since the torsor action commutes with the $\sigma^1_j$ and $\sigma^2_j(\tn{obstruction})=\tn{obstruction}$. The conclusions are given in Theorems \ref{thm.defext1} and \ref{thm.defext2}.
\begin{prop}\label{prop.cohmap}
Let \(h\co S\ra \mc{A}\) be a finite type or algebraic Cohen\textup{-}Macaulay map and let \(\mc{N}\) be an \(S\)\textup{-}flat finite \(\mc{A}\)-module\textup{.} 
Consider the relative Cohen-Macaulay approximation sequences in \eqref{eq.CMseq}\textup{.}
Let \(\mc{X}_j\) denote \(\mc{N}\)\textup{,} \(\mc{M}\) and \(\mc{L}'\) for \(j=0,1,2\) respectively\textup{,} and put \(\vG_j=\mc{A}\oplus\mc{X}_j\)\textup{.} Let \(I\) be any \(S\)\textup{-}module\textup{.}
Then there are natural maps of distinguished triangles obtained from transitivity triangles \eqref{eq.dist}\textup{:}
\begin{equation*}
\xymatrix@C-3pt@R-6pt@H-0pt{
\grhm{}{\vG_0}{L^{\tn{gr}}_{\vG_0/\mc{A}/S}}{\vG_0\ot I}\ar@<-3.45em>[d]^(0.44){(\id,\pi)^{*}}
&
& \grhm{}{\vG_0}{L^{\tn{gr}}_{\vG_0/\mc{A}/S}}{\vG_0\ot I}\ar@<-3.45em>[d]^{(\id,\iota)_{*}}
\\
\grhm{}{\vG_1}{L^{\tn{gr}}_{\vG_1/\mc{A}/S}}{\vG_0\ot I}
& \tn{and}
& \grhm{}{\vG_0}{L^{\tn{gr}}_{\vG_0/\mc{A}/S}}{\vG_2\ot I}
\\
\grhm{}{\vG_1}{L^{\tn{gr}}_{\vG_1/\mc{A}/S}}{\vG_1\ot I}\ar@<3.45em>[u]_(0.44){(\id,\pi)_{*}}
&
& \grhm{}{\vG_2}{L^{\tn{gr}}_{\vG_2/\mc{A}/S}}{\vG_2\ot I}\ar@<3.45em>[u]_(0.44){(\id,\iota)^{*}}
}
\end{equation*}
The induced maps of graded cohomology 
\begin{align*}
(\id,\pi)_{*} & \co \gxt{n}{\vG_1}{L^{\tn{gr}}_{\vG_1/S}}{\vG_1\ot I}\lra
\gxt{n}{\vG_1}{L^{\tn{gr}}_{\vG_1/S}}{\vG_0\ot I}\quad\tn{and} 
\\
(\id,\iota)^{*} & \co\gxt{n}{\vG_2}{L^{\tn{gr}}_{\vG_2/S}}{\vG_2\ot I}\lra
\gxt{n}{\vG_0}{L^{\tn{gr}}_{\vG_0/S}}{\vG_2\ot I}
\end{align*}
are isomorphisms for \(n>0\) and surjections for \(n=0\)\textup{.}
\end{prop}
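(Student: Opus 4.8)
The plan is to reduce both claims, via the long exact sequence of Theorem~\ref{thm.lang}, to the vanishing of certain $\Ext$-groups of $\mc{A}$-modules coming from Auslander--Buchweitz orthogonality for the relative approximation sequences \eqref{eq.CMseq}. First I would construct the maps. Write $\rho_{\pi}\co\vG_{1}\thr\vG_{0}$ for the graded ring surjection $(a,m)\mapsto(a,\pi(m))$, whose kernel is $\mc{L}$ placed in degree $1$, and $\rho_{\iota}\co\vG_{0}\hra\vG_{2}$ for the graded injection induced by $\iota$, with cokernel $\mc{M}'$ in degree $1$. Since \eqref{eq.dist} is functorial in the triple $S\ra\mc{A}\ra\vG_{j}$, the maps $\rho_{\pi}$ and $\rho_{\iota}$ give morphisms of distinguished triangles $L^{\tn{gr}}_{\vG_{1}/\mc{A}/S}\ot^{\BB{L}}_{\vG_{1}}\vG_{0}\ra L^{\tn{gr}}_{\vG_{0}/\mc{A}/S}$ in $D(\gmod_{\vG_{0}})$ and $L^{\tn{gr}}_{\vG_{0}/\mc{A}/S}\ot^{\BB{L}}_{\vG_{0}}\vG_{2}\ra L^{\tn{gr}}_{\vG_{2}/\mc{A}/S}$ in $D(\gmod_{\vG_{2}})$. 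Applying $\grhm{}{\vG_{0}}{-}{\vG_{0}\ot I}$, resp.\ $\grhm{}{\vG_{2}}{-}{\vG_{2}\ot I}$, together with the base change adjunction $\RHom_{\vG'}(X\ot^{\BB{L}}_{\vG}\vG',J)\cong\RHom_{\vG}(X,J)$ for any $\vG'$-module $J$, produces the morphisms of triangles $(\id,\pi)^{*}$ and $(\id,\iota)^{*}$, while $(\id,\pi)_{*}$ and $(\id,\iota)_{*}$ are $\grhm{}{\vG_{1}}{L^{\tn{gr}}_{\vG_{1}/\mc{A}/S}}{-}$, resp.\ $\grhm{}{\vG_{0}}{L^{\tn{gr}}_{\vG_{0}/\mc{A}/S}}{-}$, applied to the degree-$1$ maps $\pi\ot\id_{I}$ and $\iota\ot\id_{I}$ (extended by the identity in degree $0$). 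Naturality in $I$, $h$ and the approximation data is visible from the construction; this settles the first assertion.

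The homological input I would establish next is that for any $S$-module $I'$ one has $\xt{i}{\mc{A}}{\mc{M}}{\mc{L}\ot_{S}I'}=\xt{i}{\mc{A}}{\mc{M}'}{\mc{L}'\ot_{S}I'}=0$ for $i>0$. By the properties of the relative Cohen--Macaulay approximations \eqref{eq.CMseq} from \cite{ile:12}, $\xt{i}{\mc{A}}{\mc{M}}{\mc{L}}=\xt{i}{\mc{A}}{\mc{M}'}{\mc{L}'}=0$ for $i>0$. Choosing an $\mc{A}$-free resolution $P_{\bdot}\ra\mc{M}$ with each $P_{n}$ finite free, the complex $\hm{}{\mc{A}}{P_{\bdot}}{\mc{L}}$ is concentrated in nonnegative degrees, consists of finite $S$-flat $\mc{A}$-modules (as $\mc{L}$ is finite and $S$-flat), and has $\cH^{0}=\hm{}{\mc{A}}{\mc{M}}{\mc{L}}$ and $\cH^{>0}=0$; being bounded below and degreewise $S$-flat it is K-flat over $S$, so $\hm{}{\mc{A}}{P_{\bdot}}{\mc{L}}\ot_{S}I'\simeq\hm{}{\mc{A}}{\mc{M}}{\mc{L}}\ot^{\BB{L}}_{S}I'$ in $D(S)$. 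Taking cohomology and using $\hm{}{\mc{A}}{P_{n}}{\mc{L}\ot_{S}I'}\cong\hm{}{\mc{A}}{P_{n}}{\mc{L}}\ot_{S}I'$ yields $\xt{i}{\mc{A}}{\mc{M}}{\mc{L}\ot_{S}I'}\cong\tor{S}{-i}{\hm{}{\mc{A}}{\mc{M}}{\mc{L}}}{I'}=0$ for $i>0$; the same argument handles $\mc{M}',\mc{L}'$.

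Granting this, I would finish as follows. For $(\id,\pi)_{*}$: since $\mc{N}$ is $S$-flat, $0\ra\mc{L}\ot_{S}I\ra\vG_{1}\ot I\ra\vG_{0}\ot I\ra0$ is a short exact sequence of graded $\vG_{1}$-modules with $\mc{L}\ot_{S}I$ concentrated in degree $1$, and Theorem~\ref{thm.lang} for $S\ra\mc{A}\ra\vG_{1}$ with this coefficient module (whose degree-$0$ part vanishes) identifies $\gxt{n}{\vG_{1}}{L^{\tn{gr}}_{\vG_{1}/S}}{\mc{L}\ot_{S}I}\cong\xt{n}{\mc{A}}{\mc{M}}{\mc{L}\ot_{S}I}$, which is $0$ for $n\geq1$; the long exact sequence obtained from $\grhm{}{\vG_{1}}{L^{\tn{gr}}_{\vG_{1}/S}}{-}$ then shows $(\id,\pi)_{*}$ is an isomorphism for $n>0$ and a surjection for $n=0$. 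For $(\id,\iota)^{*}$: taking $\cH^{n}$ of the morphism of triangles above gives a morphism of the long exact sequences of Theorem~\ref{thm.lang} for $S\ra\mc{A}\ra\vG_{2}$ and for $S\ra\mc{A}\ra\vG_{0}$, both with coefficients $\vG_{2}\ot I$ (the latter restricted along $\rho_{\iota}$); it is the identity on the terms $\xt{n}{\mc{A}}{L_{\mc{A}/S}}{\mc{A}\ot_{S}I}$, restriction along $\iota$ on the terms $\xt{n}{\mc{A}}{\mc{L}'}{\mc{L}'\ot_{S}I}\ra\xt{n}{\mc{A}}{\mc{N}}{\mc{L}'\ot_{S}I}$, and $(\id,\iota)^{*}$ on the middle terms. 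Since $\mc{M}'$ is $S$-flat, $0\ra\mc{N}\ot_{S}I\ra\mc{L}'\ot_{S}I\ra\mc{M}'\ot_{S}I\ra0$ is exact, so $\hm{}{\mc{A}}{-}{\mc{L}'\ot_{S}I}$ together with $\xt{n}{\mc{A}}{\mc{M}'}{\mc{L}'\ot_{S}I}=0$ for $n\geq1$ shows restriction along $\iota$ is an isomorphism for $n\geq1$ and surjective for $n=0$; the five lemma (and its one-sided variant at $n=0$) then gives the assertion for $(\id,\iota)^{*}$.

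The main obstacle is the vanishing of the second paragraph: it rests on the relative Auslander--Buchweitz orthogonality $\xt{>0}{\mc{A}}{\mc{M}}{\mc{L}}=\xt{>0}{\mc{A}}{\mc{M}'}{\mc{L}'}=0$ (from \cite{ile:12}), after which the K-flatness step propagates it along $\ot_{S}I'$ for an arbitrary $S$-module $I'$; everything else is bookkeeping with the transitivity triangle \eqref{eq.dist}, the identifications \eqref{eq.AQext}, and Theorem~\ref{thm.lang}.
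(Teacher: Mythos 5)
Your overall strategy is sound and runs parallel to the paper's: construct the maps from the graded ring morphisms $\vG_1\thr\vG_0$ and $\vG_0\hra\vG_2$ via functoriality of the transitivity triangle, use \eqref{eq.AQext} and Theorem~\ref{thm.lang} to identify the relevant cohomology with $\Ext$-groups, and reduce the (iso/surjection) claims to a vanishing of the form $\xt{n}{\mc{A}}{\mc{M}}{\mc{L}\ot_S I}=0$ for $n>0$ (resp.\ $\xt{n}{\mc{A}}{\mc{M}'}{\mc{L}'\ot_S I}=0$). The use of the short exact sequence of coefficients for the $\pi$-direction versus the ladder from the map of transitivity triangles for the $\iota$-direction is a harmless cosmetic variation from the paper's uniform ladder-plus-diagram-chase.

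The gap is in your second paragraph, where you try to derive $\xt{i}{\mc{A}}{\mc{M}}{\mc{L}\ot_S I'}=0$ for $i>0$ from the vanishing $\xt{i}{\mc{A}}{\mc{M}}{\mc{L}}=0$ over $S$ alone via ``the complex $\hm{}{\mc{A}}{P_\bdot}{\mc{L}}$ is bounded below and degreewise $S$-flat, hence K-flat.'' That implication is false: a complex of flat modules sitting in cohomological degrees $\geq0$ is bounded \emph{below}, and K-flatness for complexes of flat modules is only automatic in the bounded \emph{above} direction. A concrete counterexample: for $S=k[x]/(x^2)$ take $C^\bullet=(S\xra{\;x\;}S\xra{\;x\;}S\xra{\;x\;}\cdots)$ in degrees $\geq0$; each term is free, $\cH^0=(x)\cong k$, $\cH^{>0}=0$, yet $C^\bullet\ot_S k$ has $\cH^n=k$ for all $n\geq0$, so $C^\bullet$ is not K-flat. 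Thus the quasi-isomorphism $\hm{}{\mc{A}}{P_\bdot}{\mc{L}}\ot_S I'\simeq\hm{}{\mc{A}}{\mc{M}}{\mc{L}}\ot^{\BB{L}}_S I'$ does not follow from the stated hypotheses. The required vanishing is a genuine base-change statement that uses the \emph{fibrewise} Auslander--Buchweitz orthogonality (for each $s\in\Spec S$ the fibre of $\mc{M}$ is MCM and the fibre of $\mc{L}$ has finite injective dimension, so $\xt{>0}{}{}{}=0$ on every fibre), and the paper simply cites \cite[5.1(ii)]{ile:12} for $\xt{n}{\mc{A}}{\mc{M}}{\mc{L}\ot I}=0$ directly. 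If you want a self-contained argument in the spirit of yours, the fix is to also invoke the $S$-flatness of $\hm{}{\mc{A}}{\mc{M}}{\mc{L}}$ (also part of the relative approximation theory in \cite{ile:12}): then $\hm{}{\mc{A}}{P_\bdot}{\mc{L}}$ is quasi-isomorphic to an $S$-flat module placed in degree $0$ and \emph{is} K-flat, after which your computation goes through.
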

\begin{proof}
In the derived category \(\grhm{}{\vG}{L^{\tn{gr}}_{\vG/-}}{-}\) is represented by the Yoneda complex \(\ghm{\bdot}{\vG}{L^{\tn{gr}}_{\vG/-}}{-}\) since \(L^{\tn{gr}}_{\vG/-}\) is a bounded above complex of \(\vG\)-projective modules; cf.  \cite[\href{https://stacks.math.columbia.edu/tag/0A66}{Tag 0A66}]{SP}. The distinguished triangles are obtained by applying \(\ghm{\bdot}{\vG}{-}{\vG\ot I}\) to the triangles \(L^{\tn{gr}}_{\vG_i/\mc{A}/S}\).
The map \((\id,\pi)\co \vG_1\ra\vG_0\) gives a map of triangles \(L^{\tn{gr}}_{\vG_1/\mc{A}/S}\ra L^{\tn{gr}}_{\vG_0/\mc{A}/S}\) and a map \(\vG_1\ot I\ra\vG_0\ot I\) which induce the maps in the left column.
By the identification 
\begin{equation}
\gxt{n}{\vG_1}{L^{\tn{gr}}_{\vG_1/\mc{A}}}{\vG_j\ot I}\cong \xt{n}{\mc{A}}{\mc{M}}{\mc{X}_j\ot I}
\end{equation}
in \eqref{eq.AQext}, \((\id,\pi)_{*}\) equals \(\pi_{*}\co \xt{n}{\mc{A}}{\mc{M}}{\mc{M}\ot I}\ra \xt{n}{\mc{A}}{\mc{M}}{\mc{N}\ot I}\). Since \(\vG_j\) is \(S\)-flat base change theory implies that \(\xt{n}{\mc{A}}{\mc{M}}{\mc{L}\ot I}=0\) for all \(n>0\); \cite[5.1(ii)]{ile:12}. Then \(\pi_{*}\) is an isomorphism for \(n>0\) and surjective for \(n=0\). Since 
\begin{equation}
\gxt{n}{\vG_1}{L_{\mc{A}/S}\ot^{\BB{L}}\vG_1}{\vG_j\ot I}\cong\xt{n}{\mc{A}}{L_{\mc{A}/S}}{\mc{A}\ot I}\, \tn{ for all }\, n\,,
\end{equation}
the claims about cohomology follow by diagram chase in the ladder obtained from the map \((\id,\pi)_{*}\) of triangles.  
The \((\id,\iota)\)-case is similar.
\end{proof}
Composition of \((\id,\pi)^{*}\) (respectively \((\id,\iota)_{*})\) with the inverse of the isomorphism \((\id,\pi)_{*}\) (respectively \((\id,\iota)^{*}\)) obtained from Proposition \ref{prop.cohmap} gives the natural maps $\sigma^{n}_{1}$ (respectively $\sigma^{n}_{2}$) in the commutative diagram
\begin{equation}\label{eq.ladder}
\xymatrix@C-9pt@R-6pt@H-0pt{
\cT^{n-1}\ar[r]^(0.3){\partial_0}\ar@{=}[d] 
& \xt{n}{\mc{A}}{\mc{N}}{\mc{N}\ot I}\ar[r]\ar[d]^(0.43){\eta^n_j} 
& \gxt{n}{\vG_0}{L^{\tn{gr}}_{\vG_0/S}}{\vG_0\ot I}\ar[r]\ar[d]^(0.43){\sigma^n_j}
& \cT^n\ar[r]^(0.25){\partial_0}\ar@{=}[d] 
& \xt{n+1}{\mc{A}}{\mc{N}}{\mc{N}\ot I}\ar[d]^(0.43){\eta^{n+1}_j}
\\
\cT^{n-1}\ar[r]^(0.3){\partial_j}
& \xt{n}{\mc{A}}{\mc{X}_j}{\mc{X}_j\ot I}\ar[r] 
& \gxt{n}{\vG_j}{L^{\tn{gr}}_{\vG_j/S}}{\vG_j\ot I}\ar[r] 
& \cT^n\ar[r]^(0.25){\partial_j}
& \xt{n+1}{\mc{A}}{\mc{X}_j}{\mc{X}_j\ot I}
}
\end{equation} 
of the long exact sequences in Theorem \ref{thm.lang} where the connecting maps $\partial_i=\partial(L^{\tn{gr}}_{\vG_i/\mc{A}/S})$ for $i=0,1,2$ and $\cT^n=\xt{n}{\mc{A}}{L_{\mc{A}/S}}{\mc{A}\ot I}$. The composition 
\begin{equation}\label{eq.eta1}
\xymatrix@C-3pt@R-6pt@H-0pt{
\xt{n}{\mc{A}}{\mc{N}}{\mc{N}\ot I}\ar[r]^(0.49){\pi^*}
& \xt{n}{\mc{A}}{\mc{M}}{\mc{N}\ot I}
& \xt{n}{\mc{A}}{\mc{M}}{\mc{M}\ot I}\ar[l]_(0.49){\simeq}
}
\end{equation}
is $\eta^{n}_{1}$ and similarily $\eta^n_2=(\iota^*)^{-1}\iota_*$; see \cite[5.4.1]{ile:21}.

The $\sigma^i_j$ and $\eta^i_j$ are compatible with obstructions and torsor actions. 
\begin{cor}\label{cor.omap}
Suppose \(\alpha\co 0\ra I\ra R\ra S\ra 0\) is an extension of rings\textup{;} cf\textup{. Definition \ref{def.ext}.} Assume \(j\in\{1,2\}\)\textup{.} Then 
\begin{enumerate}[leftmargin=2.4em, label=\textup{(\roman*)}]
\item \(\sigma^{2}_{j}(\ob(\alpha,\vG_0))=\ob(\alpha,\vG_j)\). 
\end{enumerate}
Let \(\beta_{0}\co 0\ra \mc{A}\ot I\ra \mc{B}\ra \mc{A}\ra 0\) be an $1\ot\id_I$-extension of \(\mc{A}\) above \(\alpha\)\textup{.} Then 
\begin{enumerate}[resume*]
\item \(\eta^{2}_{j}(\ob(\beta_{0},\mc{N}))=\ob(\beta_{0},\mc{X}_j)\)\textup{.}
\item If \(\xi\in\xt{1}{\mc{A}}{L_{\mc{A}/S}}{\mc{A}\ot I}\) then \(\eta^{2}_{j}(\ob(\beta_{0}+\xi,\mc{N}))=\ob(\beta_{0}+\xi,\mc{X}_j)\)\textup{.}
\end{enumerate}
Let \({}^{0\hspace{-0.15em}}\beta\co 0\ra\vG_0\ot I\ra 
\mc{B}\op\mc{N}^{\tn{e}}\ra\vG_0\ra 0\) denote a $1\ot\id_I$-extension of \(\vG_0\) above \(\alpha\) and let \({}^{j\hspace{-0.2em}}\beta\co 0\ra\vG_j\ot I\ra\mc{B}\op\,\mc{X}_j^{\tn{e}}\ra\vG_j\ra 0\) be the associated $1\ot\id_I$-extension induced by a relative MCM approximation $\mc{X}_1^{\tn{e}}\ra \mc{N}^{\tn{e}}$\textup{,} respectively a relative FID hull $\mc{N}^{\tn{e}}\ra\mc{X}_2^{\tn{e}}$ \textup{(}cf\textup{.} \cite[5.1, 5.7]{ile:12}\textup{).}
\begin{enumerate}[resume*]
\item The torsor action on \({}^{0\hspace{-0.15em}}\beta\) is compatible with the torsor action on \({}^{j\hspace{-0.2em}}\beta\) through \(\sigma_{j}^{1}\)\textup{.}
\item If the degree \(0\)-part $\ulset{i}{\beta_0}$ of the extensions \({}^{i\hspace{-0.2em}}\beta\) \textup{(}\(i=0,1,2\)\textup{)} is a fixed extension \(\beta_{0}\)\textup{,} then the torsor action of \(\xt{1}{\mc{A}}{\mc{N}}{\mc{N}\ot_{S}I}\) on the $(\phi=\id)$\textup{-}extension 
\({}^{0\hspace{-0.15em}}\beta_{1}\) \textup{(}in degree $1$\textup{)} above \(\beta_{0}\) is compatible through \(\eta_{j}^{1}\) with the torsor action of \(\xt{1}{\mc{A}}{\mc{X}_j}{\mc{X}_j\ot_{S}I}\) on the $\id$\textup{-}extension \({}^{j\hspace{-0.2em}}\beta_{1}\) above \(\beta_{0}\)\textup{.}
\end{enumerate}
\end{cor}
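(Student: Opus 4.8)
The plan is to reduce every assertion to the functoriality of obstruction classes and torsor actions already recorded in Propositions \ref{prop.ext}, \ref{prop.omap} and Corollary \ref{cor.ext}, by unwinding the definitions of the comparison maps. For $j=1$ write $\rho=(\id,\pi)\co\vG_1\ra\vG_0$ and for $j=2$ write $\rho=(\id,\iota)\co\vG_0\ra\vG_2$; these are exactly the graded ring homomorphisms whose induced maps $(\id,\pi)^{*},(\id,\pi)_{*}$, resp. $(\id,\iota)_{*},(\id,\iota)^{*}$, and $\pi^{*},\pi_{*}$, resp. $\iota_{*},\iota^{*}$, enter the definitions of $\sigma^{n}_{j}$ and $\eta^{n}_{j}$ via Proposition \ref{prop.cohmap} and \eqref{eq.eta1}. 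I will use repeatedly that $\sigma^{1}_{j}$ and $\eta^{1}_{j}$ are obtained by \emph{inverting} the isomorphisms $(\id,\pi)_{*}$, resp. $(\id,\iota)^{*}$, resp. $\pi_{*}$, $\iota^{*}$, of Proposition \ref{prop.cohmap}; in particular these maps are isomorphisms on the relevant $\Ext^{1}$/cohomology groups, which is what will allow the cancellations at the end. For part (i), apply Proposition \ref{prop.omap}(i) to $\rho$ with $J_{1}=\vG_1\ot I$, $J_{2}=\vG_0\ot I$ (for $j=1$; transpose source and target for $j=2$), $\gamma_{i}=1\ot\id_{I}$ and $\sigma=\rho\ot\id_{I}$, so that $\gamma_{2}=\sigma\gamma_{1}$; the proposition gives $\sigma_{*}\ob(\alpha,\vG_1)=\rho^{*}\ob(\alpha,\vG_0)$, and since $\sigma_{*}=(\id,\pi)_{*}$ is the isomorphism of Proposition \ref{prop.cohmap}, composing with its inverse yields $\sigma^{2}_{1}(\ob(\alpha,\vG_0))=\ob(\alpha,\vG_1)$, and dually for $j=2$.

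For parts (ii) and (iii) the relevant functoriality is the naturality of $\ob(\beta_{0},-,-)$ in the module and in the coefficients, from Corollary \ref{cor.ext}(i). For $j=1$, pulling $\ob(\beta_{0},\mc{N})$ back along $\pi\co\mc{M}\ra\mc{N}$ gives $\ob(\beta_{0},\mc{M},\phi')$ where $\phi'\co(\mc{A}\ot I)\ot_{\mc{A}}\mc{M}\ra\mc{N}\ot I$ is $(\id\ot\pi)$ followed by $\phi$; since $\phi'$ also equals $\pi\ot\id_{I}$ composed with the canonical isomorphism $(\mc{A}\ot I)\ot_{\mc{A}}\mc{M}\cong\mc{M}\ot I$, naturality in the coefficients gives $\pi^{*}\ob(\beta_{0},\mc{N})=\pi_{*}\ob(\beta_{0},\mc{M})$, whence $\eta^{2}_{1}(\ob(\beta_{0},\mc{N}))=\ob(\beta_{0},\mc{M})$; the case $j=2$ is identical with $\iota$ in place of $\pi$. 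Part (iii) is part (ii) applied to the $1\ot\id_{I}$-extension $\beta_{0}+\xi$ of $\mc{A}$ above $\alpha$; it is moreover consistent with Theorem \ref{thm.lang}(iii) and the commutativity $\eta^{n}_{j}\partial_{0}=\partial_{j}$ of the left-hand square of \eqref{eq.ladder}.

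For parts (iv) and (v), the relative Cohen--Macaulay approximation of \cite[5.1, 5.7]{ile:12} attaches to the deformation underlying ${}^{0}\beta$ its MCM approximation $\mc{X}^{\tn{e}}_{1}$ (resp. FID hull $\mc{X}^{\tn{e}}_{2}$), equipped with a surjection $\mc{X}^{\tn{e}}_{1}\ra\mc{N}^{\tn{e}}$ (resp. injection $\mc{N}^{\tn{e}}\ra\mc{X}^{\tn{e}}_{2}$); this yields a morphism ${}^{j}\beta\ra{}^{0}\beta$ of $1\ot\id_{I}$-extensions above $\alpha$ which in degree $0$ is the identity on the algebra extension and in degree $1$ is the fibre of that surjection, and the construction is compatible with arbitrary twists of ${}^{0}\beta$ by $\xi\in\gxt{1}{\vG_0}{L^{\tn{gr}}_{\vG_0/S}}{\vG_0\ot I}$. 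Writing ${}^{j}\beta'$ for the relative approximation of ${}^{0}\beta+\xi$, Proposition \ref{prop.omap}(ii) then gives $\sigma_{*}({}^{j}\beta')=\rho^{*}({}^{0}\beta+\xi)=\rho^{*}({}^{0}\beta)+\rho^{*}\xi=\sigma_{*}({}^{j}\beta)+\sigma_{*}(\sigma^{1}_{j}\xi)=\sigma_{*}({}^{j}\beta+\sigma^{1}_{j}\xi)$, using $(\id,\pi)_{*}\sigma^{1}_{j}=\rho^{*}$ by the definition of $\sigma^{1}_{j}$; since $\sigma_{*}=(\id,\pi)_{*}$ is the isomorphism of Proposition \ref{prop.cohmap} one concludes ${}^{j}\beta'={}^{j}\beta+\sigma^{1}_{j}\xi$, which is (iv). Part (v) is the degree-$1$ version: with $\beta_{0}$ fixed, the surjection $\mc{X}^{\tn{e}}_{j}\ra\mc{N}^{\tn{e}}$ gives a morphism ${}^{j}\beta_{1}\ra{}^{0}\beta_{1}$ of module extensions over $\beta_{0}$, and the same argument with the naturality of the torsor action from Corollary \ref{cor.ext}(ii) in place of Proposition \ref{prop.omap}(ii), together with the fact that $\pi_{*}\co\xt{1}{\mc{A}}{\mc{M}}{\mc{M}\ot I}\ra\xt{1}{\mc{A}}{\mc{M}}{\mc{N}\ot I}$ is an isomorphism, yields the claim.

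The main obstacle will be the bookkeeping in (iv): I must check that the identity $\sigma_{*}({}^{j}\beta)=\rho^{*}({}^{0}\beta)$ supplied by Proposition \ref{prop.omap}(ii)—a priori an equality of extension classes relative to $\BB{Z}$—can be transported to the torsor group $\gxt{1}{\vG_1}{L^{\tn{gr}}_{\vG_1/S}}{-}$, where the isomorphism of Proposition \ref{prop.cohmap} lives and where the cancellation is legitimate; equivalently, that the relative Cohen--Macaulay approximation of the \emph{twisted} extension ${}^{0}\beta+\xi$ genuinely carries a morphism to ${}^{0}\beta+\xi$ above $\rho$ of the required graded-degree shape. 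This rests on the $S$-flatness and base-change properties of relative Cohen--Macaulay approximations from \cite{ile:12}. Part (v) is cleaner, because the needed naturality of the torsor action in the module and coefficients is stated outright in Corollary \ref{cor.ext}(ii).
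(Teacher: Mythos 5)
Your proposal is correct and follows essentially the same route as the paper: everything is reduced to the functoriality of obstruction classes and torsor actions recorded in Propositions \ref{prop.ext}, \ref{prop.omap} and Corollary \ref{cor.ext}, combined with the definitions of $\sigma^{n}_{j}$ and $\eta^{n}_{j}$ via the isomorphisms of Proposition \ref{prop.cohmap}. The one minor divergence is in parts (ii)--(iii): the paper routes through Theorem \ref{thm.lang} and the commutativity of \eqref{eq.ladder}, whereas you invoke the naturality in Corollary \ref{cor.ext}(i) directly and then observe that (iii) is just (ii) applied to $\beta_{0}+\xi$; both are sound and rest on the same underlying functoriality.
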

\begin{proof}
Proposition \ref{prop.omap} (i) gives 
\begin{equation}
\begin{aligned}
(\id,\pi)^{*}\ob(\alpha,\vG_0)
&=\ob(\alpha,\vG_1,1_{\vG_0}\ot\id_{I})=(\id,\pi)_{*}\ob(\alpha,\vG_1)
\\
(\id,\iota)_{*}\ob(\alpha,\vG_0)
&=\ob(\alpha,\vG_0,1_{\vG_2}\ot\id_{I})=(\id,\iota)^{*}\ob(\alpha,\vG_2)
\end{aligned}
\end{equation}
which is (i) and (together with Theorem \ref{thm.lang}) also (ii). Since the connecting maps in \eqref{eq.ladder}
commutes with \(\eta^{2}_{j}\), Theorem \ref{thm.lang} implies (iii). The compatibility of torsor actions in Proposition \ref{prop.omap} gives (iv) and (v).
\end{proof}
\begin{cor}\label{cor.cohmap}
Assume $n\geq 1$\textup{.}
\begin{enumerate}[leftmargin=2.4em, label=\textup{(\roman*)}]
\item If \(\pi^{*}\co \xt{n}{\mc{A}}{\mc{N}}{\mc{N}\ot I}\ra\xt{n}{\mc{A}}{\mc{M}}{\mc{N}\ot I}\) is injective then \(\sigma^{n}_{1}\)  is injective\textup{.} If \(\pi^{*}\) is an isomorphism in degree \(n\) and injective in degree \(n+1\) then \(\sigma^{n}_{1}\) is an isomorphism\textup{.}
\item If \(\iota_{*}\co \xt{n}{\mc{A}}{\mc{N}}{\mc{N}\ot I}\ra\xt{n}{\mc{A}}{\mc{N}}{\mc{L}'\ot I}\) is injective then \(\sigma^{n}_{2}\) is injective\textup{.} If \(\iota_*\) is an isomorphism in degree \(n\) and injective in degree \(n+1\) then \(\sigma^{n}_{2}\) is an isomorphism\textup{.}
\end{enumerate}
\end{cor}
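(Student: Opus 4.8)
The plan is to apply the five lemma to the ladder of long exact sequences \eqref{eq.ladder}, using that the two $\cT^{\bullet}$-columns are identities and translating the hypotheses on $\pi^{*}$ and $\iota_{*}$ into statements about the vertical maps $\eta^{\bullet}_{j}$.

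First I would record how $\eta^{n}_{j}$ is governed by $\pi^{*}$ and $\iota_{*}$. For $j=1$, the description \eqref{eq.eta1} exhibits $\eta^{n}_{1}$ as $\pi^{*}\co\xt{n}{\mc{A}}{\mc{N}}{\mc{N}\ot I}\to\xt{n}{\mc{A}}{\mc{M}}{\mc{N}\ot I}$ followed by the inverse of the isomorphism $\xt{n}{\mc{A}}{\mc{M}}{\mc{M}\ot I}\xra{\simeq}\xt{n}{\mc{A}}{\mc{M}}{\mc{N}\ot I}$ (which holds for $n>0$, since $\xt{n}{\mc{A}}{\mc{M}}{\mc{L}\ot I}=0$ there, exactly as in the proof of Proposition \ref{prop.cohmap}). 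Hence, for $n\geq 1$, $\eta^{n}_{1}$ is injective (resp.\ an isomorphism) precisely when $\pi^{*}$ is. Dually, $\eta^{n}_{2}=(\iota^{*})^{-1}\iota_{*}$ where $\iota^{*}\co\xt{n}{\mc{A}}{\mc{L}'}{\mc{L}'\ot I}\xra{\simeq}\xt{n}{\mc{A}}{\mc{N}}{\mc{L}'\ot I}$ is the isomorphism of Proposition \ref{prop.cohmap} (valid for $n>0$), so $\eta^{n}_{2}$ is injective (resp.\ an isomorphism) precisely when $\iota_{*}$ is.

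Next I would feed this into \eqref{eq.ladder}, whose rows are the exact sequences of Theorem \ref{thm.lang}. Around $\sigma^{n}_{j}$ the diagram is a five-term window with vertical maps $(\id,\eta^{n}_{j},\sigma^{n}_{j},\id,\eta^{n+1}_{j})$ (for $n=1$ the window still has this shape, the sequence of Theorem \ref{thm.lang} continuing back to $\cT^{0}$). For the injectivity assertions, $\id$ is surjective on the left and injective on $\cT^{n}$, so $\eta^{n}_{j}$ injective (i.e.\ $\pi^{*}$, resp.\ $\iota_{*}$, injective) forces $\sigma^{n}_{j}$ injective by the five lemma. For the isomorphism assertions, the degree-$n$ hypothesis makes $\eta^{n}_{j}$ an isomorphism (in particular surjective) and the degree-$(n{+}1)$ hypothesis makes $\eta^{n+1}_{j}$ injective, so together with $\id$ surjective on $\cT^{n}$ the five lemma gives $\sigma^{n}_{j}$ surjective; combined with injectivity, $\sigma^{n}_{j}$ is an isomorphism.

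I do not anticipate a genuine obstacle: the substantive content is already in Propositions \ref{prop.cohmap} and \ref{prop.omap} and in the long exact sequence of Theorem \ref{thm.lang}, and what remains is a routine diagram chase. The only point needing a moment's care is to check that the isomorphisms used to identify $\eta^{n}_{j}$ with $\pi^{*}$ (resp.\ $\iota_{*}$) are available in both of the consecutive degrees $n$ and $n{+}1$ that the five lemma touches, which holds because the relevant $\Ext$-vanishing of \cite{ile:12} applies in all positive degrees.
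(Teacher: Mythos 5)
Your proof is correct and follows essentially the same route as the paper's: it uses the ladder \eqref{eq.ladder}, identifies $\eta^{n}_{j}$ with $\pi^{*}$ (resp.\ $\iota_{*}$) via the isomorphisms of Proposition \ref{prop.cohmap}, and then applies the five lemma. The paper simply says ``diagram chase in \eqref{eq.ladder} gives the results,'' and your write-up is a careful spelling-out of exactly that chase, correctly noting that the needed isomorphisms are available in both degrees $n$ and $n+1$.
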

\begin{proof}
(i) Consider the commutative diagram \eqref{eq.ladder} with $j=1$. Note that $\eta^n_1$ is injective (respectively an isomorphism) by \eqref{eq.eta1}. Diagram chase in \eqref{eq.ladder} gives the results. The argument for (ii) is similar.
\end{proof}
\section{Cohen-Macaulay approximation of versal deformations}\label{sec.appl}
The following notions are used throughout this section.
Let \(A\) be a Cohen-Macaulay local algebraic \(k\)-algebra and \(N\) a finite \(A\)-module.
Fix a minimal MCM approximation \(0\ra L\ra M\xra{\,\pi\,} N\ra0\) and a minimal FID hull \(0\ra N\xra{\,\iota\,} L'\ra M'\ra0\) as in \eqref{eq.CMseq0}. For each deformation \((S\ra \mc{B},\mc{N})\) of \((k\ra A,N)\) to \(S\) in $\He$ there are maps of short exact sequences:
\begin{equation}\label{eq.defapprx}
\begin{aligned}
\xymatrix@C-6pt@R-22pt@H-0pt{
0\ar[r] & \mc{L}\ar[dd]\ar[r] & \mc{M}\ar[dd]\ar[r] & \mc{N}\ar[dd]\ar[r] & 0 
&&  
0\ar[r] & \mc{N}\ar[dd]\ar[r] & \mc{L}'\ar[dd]\ar[r] & \mc{M}'\ar[dd]\ar[r] & 0
\\
&&&&& \hspace{-2em}\tn{and}\hspace{-2em} &&&&
\\
0\ar[r] & L\ar[r] & M\ar[r] & N\ar[r] & 0
&&
0\ar[r] & N\ar[r] & L'\ar[r] & M'\ar[r] & 0
}
\end{aligned}
\end{equation} 
where the upper sequences are maps of \(S\)-flat finite \(\mc{B}\)-modules and the vertical maps induce isomorphisms of short exact sequences after applying \(-\ot_{S}k\). The upper sequences are minimal and unique up to isomorphism and are called a \(\cat{MCM}_{\mc{B}/S}\)-approximation, respectively a \(\hat{\cat{D}}_{\mc{B}/S}\)-hull of \((S\ra \mc{B},\mc{N})\) in \cite{ile:12,ile:21}. 
Henselian base change of the upper sequences along a map \(S\ra S'\) gives new such sequences. We fix a choice of diagrams \eqref{eq.defapprx} for each deformation \((S\ra \mc{B},\mc{N})\). 
\begin{lem}[{\cite[3.1]{ile:21}}]\label{lem.defmap}
If \(X\) is any of the modules \(M,L',L,M'\) and \(\mc{X}\) denotes the deformation of $X$ in \eqref{eq.defapprx} then the association \((S\ra \mc{B},\mc{N})\mapsto (S\ra \mc{B},\mc{X})\) induces well\textup{-}defined maps of deformation functors
\begin{equation*}
\sigma_{X}\co\df{}{(A,N)}\lra \df{}{(A,X)}\,.
\end{equation*}
For a flat and algebraic ring homomorphism $\vL\ra \mc{A}$ the association \((S\ra \mc{A}_S,\mc{N})\mapsto (S\ra \mc{A}_S,\mc{X})\) induces well\textup{-}defined maps of deformation functors of $\mc{A}$-modules
\begin{equation*}
\sigma^{\mc{A}}_{X}\co\df{\mc{A}}{N}\lra \df{\mc{A}}{X}\,.
\end{equation*}
\end{lem}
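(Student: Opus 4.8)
The plan is to reduce the statement to the fibrewise description, uniqueness, and base-change compatibility of relative Cohen--Macaulay approximation established in \cite{ile:12}; as this is \cite[3.1]{ile:21}, the argument is essentially bookkeeping on top of those facts. Fix a deformation $(S\ra\mc{B},\mc{N})$ of $(A,N)$ to some $S$ in $\He$. Then $S\ra\mc{B}$ is flat, local and algebraic with Cohen--Macaulay closed fibre $A$, hence a Cohen--Macaulay map, so \cite{ile:12} provides the relative approximation and hull sequences \eqref{eq.CMseq}: they consist of $S$-flat finite $\mc{B}$-modules and reduce modulo $\fr{m}_S$ to a minimal MCM approximation, respectively a minimal FID hull, of $N$, and the diagrams \eqref{eq.defapprx}, including the vertical maps $\mc{X}\ra X$, realise such a choice. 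Consequently $(S\ra\mc{B},\mc{X})$ is already a deformation of $(A,X)$ to $S$ for $X\in\{M,L,L',M'\}$, and the content of the lemma is that the class of $(S\ra\mc{B},\mc{X})$ in $\df{}{(A,X)}(S)$ depends only on the class of $(S\ra\mc{B},\mc{N})$ in $\df{}{(A,N)}(S)$, and that this assignment is natural in $S$.

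For well-definedness I would use that a relative MCM approximation, respectively FID hull, of an $S$-flat finite $\mc{B}$-module is unique up to isomorphism in the local case, and that an isomorphism $\phi\co\mc{N}\xra{\,\simeq\,}\mc{N}'$ over an isomorphism $\mc{B}\xra{\,\simeq\,}\mc{B}'$ of deformations transports the sequence chosen for $\mc{N}$ to one of the required type for $\mc{N}'$, since $S$-flatness, the relative MCM/FID properties, and the closed fibre are all preserved. By uniqueness the transported sequence is isomorphic to the one chosen for $\mc{N}'$, giving an isomorphism $\mc{X}\cong\mc{X}'$ of deformations of $X$; the same uniqueness shows independence of which diagram \eqref{eq.defapprx} was selected. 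Hence $\sigma_X\co\df{}{(A,N)}(S)\ra\df{}{(A,X)}(S)$ is a well-defined map of sets. (If one wants $\sigma_X$ as a map of cofibred categories rather than merely of functors of isomorphism classes, one invokes in addition that a map of modules lifts to a map of approximation sequences, using the vanishing of the relevant relative $\Ext^1$ between the MCM term and the FID term; cf. \cite[5.1, 5.7]{ile:12}.)

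For naturality, let $g\co S\ra S'$ be a morphism in $\He$. Henselian base change along $g$ of the sequences \eqref{eq.CMseq} is again of the same type, and its closed fibre is the MCM approximation, respectively FID hull, of $N$ over $A$; by the uniqueness invoked above it is therefore isomorphic to the sequence chosen for the base-changed deformation $(S'\ra\mc{B}\hot_{S}S',\,\mc{N}\ot_{\mc{B}}(\mc{B}\hot_{S}S'))$. Comparing the $X$-terms yields $\sigma_X(g_*\mc{N})\cong g_*\sigma_X(\mc{N})$, so $\sigma_X$ is a morphism of functors $\df{}{(A,N)}\ra\df{}{(A,X)}$. The relative version is immediate: $\sigma_X$ does not change the underlying $S$-algebra $\mc{B}$, so it sends a deformation whose algebra is isomorphic to $\mc{A}_S$ over $\cat{Def}_A$ to one with the same property, and hence restricts along $\cat{Def}^{\mc{A}}_N=\He\times_{\cat{Def}_A}\cat{Def}_{(A,N)}$ to $\sigma^{\mc{A}}_X\co\df{\mc{A}}{N}\ra\df{\mc{A}}{X}$. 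The only substantive input is external to this paper --- the construction, fibrewise description, uniqueness, and base-change behaviour of the relative approximation sequences in \cite{ile:12} --- so I do not anticipate a genuine obstacle; the difficulty, such as it is, consists in keeping the chosen diagrams and base changes straight.
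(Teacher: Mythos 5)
Your argument is correct, but note that the paper does not actually prove this lemma: it is imported verbatim from \cite[3.1]{ile:21} by citation, and the facts you invoke --- existence of the relative approximation/hull sequences \eqref{eq.CMseq} with $S$-flat finite terms reducing to the minimal fibre sequences, their uniqueness up to isomorphism in the local/henselian case, and henselian base-change compatibility --- are precisely the ones the paper recalls from \cite{ile:12} in the paragraph preceding the lemma in order to justify the citation. Your reconstruction correctly identifies uniqueness of the relative approximation as the crux of well-definedness (transporting the chosen sequence along an isomorphism of deformations and comparing), handles naturality via base change plus uniqueness, and the relative $\mc{A}$-module version by noting that $\sigma_X$ leaves the underlying algebra untouched, so it passes through the fibre product defining $\cat{Def}^{\mc{A}}_N$. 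The parenthetical remark about upgrading $\sigma_X$ to a map of cofibred categories via $\Ext^1$-vanishing (\cite[5.1, 5.7]{ile:12}) is accurate and relevant elsewhere in the paper (e.g.\ Corollary~\ref{cor.omap}), but is not required for the statement as given, which only asserts a map of set-valued deformation functors.
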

For the next two results suppose $k_0\ra k$ is a separable field extension and $R_{Y}$ in $\He$ is an algebraic and minimal base ring for a formally versal element in $\df{}{(A,Y)}$ (or in \(\df{\mc{A}}{Y}\)). Versal elements exist if $(A,N)$ has an isolated singularity by Theorems \ref{thm.defgrade} and \ref{thm.defgrade2}.
\begin{thm}\label{thm.defext1}
If\, \(\iota_{*}\co\xt{j}{A}{N}{N}\ra\xt{j}{A}{N}{L'}\) is injective for \(j=1,2\) then 
\begin{equation*}
R_{N}\cong R_{L'}/J
\end{equation*}
where the ideal \(J\) is generated by elements lifting a \(k\)\textup{-}basis of $\ker(\sigma_{L'}(k[\vare])^*)$ and $\sigma_{L'}(k[\vare])^*$ is the $k$-dual of the map of relative Zariski vector spaces \textup{(}cf\textup{.} \textup{Lemma \ref{lem.res}).} 
In particular \(J\) is generated by \textup{`}linear forms\textup{'} modulo \(\im\fr{m}_{\vL}{\cdot} R_{L'}\)\,\textup{.}
\end{thm}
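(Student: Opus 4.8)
The plan is to apply Lemma \ref{lem.res} to the map of cofibred categories $\phi = \sigma_{L'}\colon \cat{Def}_{(A,N)}\to\cat{Def}_{(A,L')}$ (the functor-level version comes from Lemma \ref{lem.defmap}, the groupoid-level version is the one used in Section \ref{sec.appl}). The lemma has two hypotheses, and the strategy is to verify each from the injectivity assumption on $\iota_*$ together with the machinery of Sections \ref{sec.coh}--\ref{sec.cohmap}. Hypothesis (i) of Lemma \ref{lem.res} asks that the map on relative Zariski tangent spaces $t_{\cat{Def}_{(A,N)}/\vL}\to t_{\cat{Def}_{(A,L')}/\vL}$ be injective; hypothesis (ii) asks for a compatibility of obstruction theories, namely that vanishing of $\ob_{\cat{Def}_{(A,L')}}(\pi,\phi_S(a))$ forces $\ob_{\cat{Def}_{(A,N)}}(\pi,a)=0$. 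For existence of the minimal versal base rings one invokes Theorems \ref{thm.defgrade} and \ref{thm.defgrade2} (as remarked just before the statement), so both $R_N$ and $R_{L'}$ are algebraic over $\vL$ and the hypotheses of Lemma \ref{lem.res} on the base rings are in place.

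First I would identify the tangent and obstruction spaces. By Proposition \ref{prop.ext} (with the $\cat{Def}$-functor description as in the proof of Corollary \ref{cor.vers}), for a deformation $(S\to\mc{A},\mc{X})$ with $\vG=\mc{A}\oplus\mc{X}$ and $J=\vG\ot_S I$, the extension set $\df{}{(\mc{A},\mc{X})}(S\oplus I)$ is $\gxt{1}{\vG}{L^{\tn{gr}}_{\vG/S}}{J}$ and the obstructions live in $\gxt{2}{\vG}{L^{\tn{gr}}_{\vG/S}}{J}$; at the distinguished fibre $S=k$ these become $\gxt{i}{\vG_0}{L^{\tn{gr}}_{\vG_0/k}}{\vG_0}$ for $\vG_0=A\oplus L'$ resp. $A\oplus N$. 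So both conditions reduce to statements about the map $\sigma^i_2$ of Proposition \ref{prop.cohmap}/Corollary \ref{cor.cohmap} in degrees $i=1$ (tangent) and $i=2$ (obstruction), specialised to $S=k$, $I=k$. The key input is Corollary \ref{cor.cohmap}(ii): injectivity of $\iota_*$ on $\xt{j}{A}{N}{N}\to\xt{j}{A}{N}{L'}$ for $j=1$ gives injectivity of $\sigma^1_2$, which is exactly hypothesis (i) of Lemma \ref{lem.res}; and since we also assume $\iota_*$ injective for $j=2$ (so $\sigma^2_2$ is injective — note $\eta^2_2=(\iota^*)^{-1}\iota_*$ is injective by the diagram \eqref{eq.eta1}, and the five-lemma chase in \eqref{eq.ladder} propagates this), a vanishing obstruction upstairs pulls back to a vanishing obstruction downstairs. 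Here Corollary \ref{cor.omap}(i)--(iii) provides the crucial compatibility $\sigma^2_2(\ob(\alpha,\vG_0))=\ob(\alpha,\vG_2)$ and its torsor-action analogue, which is what lets one translate between the abstract obstruction classes $\ob_{\cat{Def}}(\pi,a)$ of Definition \ref{def.obtheory} and the Andr\'e--Quillen classes.

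Having verified both hypotheses of Lemma \ref{lem.res}, the conclusion is immediate: any $f\colon R_{L'}\to R_N$ in $\He$ lifting $\sigma_{L'}$ is surjective, and $\ker f$ is generated by any lifting of any $k$-basis of $V=\ker\{t^*_{\cat{Def}_{(A,L')}/\vL}\to t^*_{\cat{Def}_{(A,N)}/\vL}\}$, i.e. of $\ker(\sigma_{L'}(k[\vare])^*)$; in particular $\ker f$ is generated by linear forms modulo $\im\fr{m}_\vL\cdot R_{L'}$. This gives $R_N\cong R_{L'}/J$ with $J$ as described. One should also note that such a lifting $f$ exists because $\sigma_{L'}$ is a map of cofibred categories and $R_N$, $R_{L'}$ carry (formally) versal objects, so the versality property of the target yields the required morphism of base rings.

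The main obstacle I anticipate is not any single hard computation but the bookkeeping of identifying the abstract obstruction theory of $\cat{Def}_{(A,X)}$ (Definition \ref{def.obtheory}, as used implicitly via the proof of Corollary \ref{cor.vers}) with the Andr\'e--Quillen obstruction classes $\ob(\alpha,\vG_j)$ of Proposition \ref{prop.ext}, and then checking that the map $\sigma^2_2$ of Proposition \ref{prop.cohmap} is precisely the morphism $\cH^2_{\cat{Def}_{(A,N)}}\to\cH^2_{\cat{Def}_{(A,L')}}$ through which the two obstructions are compared in hypothesis (ii) of Lemma \ref{lem.res}. This is exactly the content of Corollary \ref{cor.omap}, so the real work is to cite it correctly and to make sure the base-change isomorphisms \eqref{eq.e}/\eqref{eq.defgrade} are used to reduce everything over a general Artin base $S$ to the computation at $S=k$ with coefficients in $k$ — which is where the isolated-singularity hypothesis enters (via Theorems \ref{thm.defgrade}, \ref{thm.defgrade2}) to guarantee finite-dimensionality and the existence of the minimal versal rings in the first place.
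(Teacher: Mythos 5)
Your proposal is correct and follows essentially the same route as the paper's proof: identify the Andr\'e--Quillen cohomology as the tangent/obstruction theory of $\cat{Def}_{(A,X)}$ via \eqref{eq.defgrade} and Proposition \ref{prop.ext}, convert the $\iota_*$-injectivity hypothesis into injectivity of $\sigma^1_2$ and $\sigma^2_2$ via Corollary \ref{cor.cohmap}(ii), use Corollary \ref{cor.omap} to match these maps with the comparison of abstract obstruction classes required in Lemma \ref{lem.res}(ii), and then apply Lemma \ref{lem.res}. One small point of reading: the theorem as stated does not itself assume $(A,N)$ has an isolated singularity — it assumes outright that the minimal formally versal base rings $R_N$, $R_{L'}$ exist and are algebraic over $\vL$; Theorems \ref{thm.defgrade}/\ref{thm.defgrade2} are cited only as one sufficient way to guarantee this, so the isolated-singularity hypothesis is not an actual ingredient of the argument.
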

\begin{proof}
Suppose \((S\ra \mc{A},\mc{N})\) is a deformation of \((A,N)\) and \(R\ra S\) is a small surjection in \(\He\) with kernel \(I\). Put \(\vG=\mc{A}{\oplus}\mc{N}\) and \(\vG_{0}=A{\oplus} N\). By \eqref{eq.defgrade} it follows from Proposition \ref{prop.ext} that \(\df{}{(A,N)}\) has an obstruction theory (Definitition \ref{def.obtheory}) with \(\cH^{2}(I)=\gxt{2}{\vG_{0}}{L^{\tn{gr}}_{\vG_{0}/k}}{\vG_{0}}\ot_{k}I\). Moreover, \eqref{eq.defgrade}, the conditions on \(\iota_{*}\), and Corollaries \ref{cor.omap} and \ref{cor.cohmap} imply that conditions (i) and (ii) in Lemma \ref{lem.res} are satisfied and the result follows.
\end{proof}
A similar proof gives:
\begin{thm}\label{thm.defext2}
If\, \(\pi^{*}\co \xt{j}{A}{N}{N}\ra\xt{j}{A}{M}{N}\) is injective for \(j=1,2\) then 
\begin{equation*}
R_{N}\cong R_{M}/J
\end{equation*}
where the ideal \(J\) is generated by elements lifting a \(k\)\textup{-}basis of $\ker(\sigma_{M}(k[\vare])^*)$\textup{.}
In particular \(J\) is generated by \textup{`}linear forms\textup{'} modulo \(\im\fr{m}_{\vL}{\cdot} R_{M}\)\,\textup{.}
\end{thm}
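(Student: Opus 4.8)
The plan is to run the proof of Theorem~\ref{thm.defext1} with the minimal MCM approximation in place of the minimal FID hull: I would apply Lemma~\ref{lem.res} to the map of cofibred categories $\sigma_M\co\cat{Def}_{(A,N)}\to\cat{Def}_{(A,M)}$ from Lemma~\ref{lem.defmap} instead of to $\sigma_{L'}$. By hypothesis $k_0\to k$ is separable and $R_N$, $R_M$ are algebraic, minimal base rings of formally versal formal objects in $\df{}{(A,N)}$ and $\df{}{(A,M)}$, so the structural requirements of Lemma~\ref{lem.res} are met. As in the proof of Corollary~\ref{cor.vers}, the isomorphisms \eqref{eq.defgrade} together with Proposition~\ref{prop.ext} equip $\df{}{(A,N)}$ and $\df{}{(A,M)}$ with obstruction theories in the sense of Definition~\ref{def.obtheory}, with obstruction modules $\gxt{2}{\vG_0}{L^{\tn{gr}}_{\vG_0/k}}{\vG_0}\ot_k I$ for $\vG_0=A\oplus N$ and $\gxt{2}{\vG_1}{L^{\tn{gr}}_{\vG_1/k}}{\vG_1}\ot_k I$ for $\vG_1=A\oplus M$; functoriality of these obstructions follows from Propositions~\ref{prop.ext} and~\ref{prop.omap}. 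It then remains to verify conditions~(i) and~(ii) of Lemma~\ref{lem.res}.

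First I would check condition~(i). Specialising \eqref{eq.defgrade} to $S=k$ and $I=k$ identifies the relative Zariski tangent spaces of $\df{}{(A,N)}$ and $\df{}{(A,M)}$ with $\gxt{1}{\vG_0}{L^{\tn{gr}}_{\vG_0/k}}{\vG_0}$ and $\gxt{1}{\vG_1}{L^{\tn{gr}}_{\vG_1/k}}{\vG_1}$, and under these identifications the tangent map of $\sigma_M$ becomes the cohomology map $\sigma^1_1$ of the ladder \eqref{eq.ladder}; this is the torsor compatibility of Corollary~\ref{cor.omap}(iv) applied to the small extension $0\to k\to k[\vare]\to k\to0$, together with the fact that the MCM approximation of the trivial deformation is trivial. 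By Corollary~\ref{cor.cohmap}(i), injectivity of $\pi^*$ on $\xt{1}{A}{N}{N}$ makes $\sigma^1_1$, hence the tangent map of $\sigma_M$, injective.

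Next I would verify condition~(ii). Given a small surjection $\alpha\co 0\to I\to R\xra{\pi}S\to0$ in $\Ar$ and a deformation $a=(S\to\mc{A},\mc{N})$ of $(A,N)$ over $S$ with $\vG=\mc{A}\oplus\mc{N}$, the base change \eqref{eq.defgrade} identifies the obstruction $\mr{o}(\pi,a)$ with $\ob(\alpha,\vG_0)$ and the obstruction to lifting $\sigma_M(a)$ with $\ob(\alpha,\vG_1)$, and Corollary~\ref{cor.omap}(i) gives $\sigma^2_1\bigl(\ob(\alpha,\vG_0)\bigr)=\ob(\alpha,\vG_1)$. By Corollary~\ref{cor.cohmap}(i), injectivity of $\pi^*$ on $\xt{2}{A}{N}{N}$ makes $\sigma^2_1$ injective, so $\ob(\alpha,\vG_1)=0$ forces $\ob(\alpha,\vG_0)=0$. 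Both hypotheses of Lemma~\ref{lem.res} then hold, and the lemma produces a surjection $f\co R_M\to R_N$ in $\He$ lifting $\sigma_M$ whose kernel $J$ is generated by lifts of any $k$-basis of $V=\ker(\sigma_M(k[\vare])^*)$; in particular $J$ is generated by linear forms modulo $\im\fr{m}_\vL\cdot R_M$, whence $R_N\cong R_M/J$.

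The one step that is not purely formal --- the content hidden in the phrase ``a similar proof gives'' --- is the identification of the abstract tangent and obstruction maps produced by Lemma~\ref{lem.res} with the explicit cohomology maps $\sigma^1_1$ and $\sigma^2_1$ of Section~\ref{sec.cohmap}. I expect this to be the main obstacle, but it should be routine: it amounts to tracing the canonical identifications \eqref{eq.defgrade} and the commutative ladder \eqref{eq.ladder} through the definitions of the obstruction class and the torsor action in Propositions~\ref{prop.ext} and~\ref{prop.omap}, exactly as in the proofs of Corollary~\ref{cor.vers} and Theorem~\ref{thm.defext1}, and no ingredient beyond Corollaries~\ref{cor.omap} and~\ref{cor.cohmap} is needed.
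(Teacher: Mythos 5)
Your proposal is correct and follows exactly the route the paper intends; the paper itself simply writes ``A similar proof gives'' after Theorem~\ref{thm.defext1}, and what you have written out is precisely that proof with $L'$ and $\iota_*$ replaced by $M$ and $\pi^*$, using the same ingredients (Lemma~\ref{lem.res}, the obstruction theory from Proposition~\ref{prop.ext} via \eqref{eq.defgrade}, and Corollaries~\ref{cor.omap} and~\ref{cor.cohmap} to verify conditions (i) and (ii)). You have in fact spelled out the tangent-space and obstruction identifications slightly more explicitly than the paper's proof of Theorem~\ref{thm.defext1} does.
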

In the isolated singularity case Theorem \ref{thm.vers} allows the main results in \cite{ile:21} to be strengthened.
\begin{thm}\label{thm.defgrade}
Suppose \((A,N)\) has an isolated singularity\textup{.} Then\textup{:}
\begin{enumerate}[leftmargin=2.4em, label=\textup{(\roman*)}]
\item The functors \(\df{}{(A,N)}\)\textup{,} \(\df{}{(A,L')}\) and  \(\df{}{(A,L)}\) have versal elements\textup{.}

\item If\, \(\xt{1}{A}{N}{M'}=0\) then the map \(\sigma_{L'}\co \df{}{(A,N)}\ra\df{}{(A,L')}\) is smooth\textup{.}

\item If\, \(\xt{j}{A}{N}{M'}=0\) for \(j=0,1\) then \(\sigma_{L'}\) is an isomorphism\textup{.}
\item If\, \(\xt{2}{A}{N}{M}=0\) then the map $\sigma_{L}\co \df{}{(A,N)}\lra\df{}{(A,L)}$ is smooth\textup{.}
\item If\, \(\xt{j}{A}{N}{M}=0\) for \(j=1,2\) then $\sigma_L$ is an isomorphism\textup{.}
\end{enumerate}
\end{thm}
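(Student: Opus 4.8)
\emph{The plan.} Part (i) is Theorem \ref{thm.vers} applied three times. A Cohen--Macaulay local ring is equidimensional, so the pair $(A,N)$ with its isolated singularity satisfies the hypotheses of Theorem \ref{thm.vers}, which yields a versal element of $\df{}{(A,N)}$. To treat $\df{}{(A,L')}$ and $\df{}{(A,L)}$ I would check that $(A,L')$ and $(A,L)$ again have isolated singularities: choose a finite type model $A^{\tn{ft}}$ of $A$ with maximal ideal $\fr m_0$ and finite type models over $A^{\tn{ft}}$ of $N$, $M$, $M'$, $L$, $L'$ fitting the sequences \eqref{eq.CMseq0}, after shrinking $\Spec A^{\tn{ft}}$; on the punctured neighbourhood of $\fr m_0$ the ring $A^{\tn{ft}}$ is regular, the models of $M$ and $M'$ are maximal Cohen--Macaulay hence free (Auslander--Buchsbaum), and the model of $N$ is free by hypothesis, so both sequences in \eqref{eq.CMseq0} split on that neighbourhood and the models of $L$ and $L'$ are free there too. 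Theorem \ref{thm.vers} now applies to $(A,L')$ and $(A,L)$.

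For (ii) I would run the obstruction argument of the proof of Corollary \ref{cor.vers} along the comparison maps of Section \ref{sec.cohmap}. By that proof each $\df{}{(A,X)}$ has an obstruction theory with $\cH^2(I)\cong\gxt{2}{A{\oplus}X}{L^{\tn{gr}}_{(A{\oplus}X)/k}}{A{\oplus}X}\ot_k I$, and Corollary \ref{cor.omap} says that $\sigma^2_2$ and $\eta^2_2$ carry obstructions to obstructions while $\sigma^1_2$, $\eta^1_2$ respect the torsor actions. Applying $\Hom_A(N,-)$ to the FID hull $0\to N\to L'\to M'\to 0$ and using $\xt{1}{A}{N}{M'}=0$ makes $\iota_*$ surjective in degree $1$ and injective in degree $2$; since $\eta^n_2=(\iota^*)^{-1}\iota_*$ with $\iota^*$ an isomorphism (because $\xt{n}{A}{M'}{L'}=0$ for $n>0$, maximal Cohen--Macaulay against finite injective dimension), the same holds for $\eta_2$, and a diagram chase in \eqref{eq.ladder} for $j=2$ then makes $\sigma^1_2$ surjective, while Corollary \ref{cor.cohmap}(ii) makes $\sigma^2_2$ injective. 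The lifting-and-adjusting argument of Corollary \ref{cor.vers} now gives that $\sigma_{L'}$ is formally smooth; composing a versal (hence formally versal) element of $\df{}{(A,N)}$ from part (i) with $\sigma_{L'}$ produces a formally versal element of $\df{}{(A,L')}$, which is versal by Proposition \ref{prop.fvers}, and therefore $\sigma_{L'}$ is smooth, exactly as in the proof of Corollary \ref{cor.vers}. For (iii) the additional hypothesis $\hm{}{A}{N}{M'}=0$ kills the connecting map $\Hom_A(N,M')\to\xt{1}{A}{N}{N}$, so $\iota_*$ becomes an isomorphism in degrees $0$ and $1$; by Corollary \ref{cor.cohmap}(ii) this makes $\sigma^1_2$ an isomorphism, i.e.\ $\sigma_{L'}$ is bijective on tangent spaces, and since the versal base rings $R_N$, $R_{L'}$ are minimal a formally smooth map that is bijective on tangent spaces induces an isomorphism $R_{L'}\cong R_N$, so $\sigma_{L'}$ is an isomorphism.

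Parts (iv) and (v) are the same argument carried out for $\sigma_L$ through the MCM approximation sequence $0\to L\to M\to N\to 0$ and the corresponding comparison maps (cf.\ \cite[Sec.\ 5]{ile:21}). Here the collapsing vanishing is $\xt{n}{A}{M}{L}=0$ for $n>0$, which identifies $\xt{n}{A}{L}{L}\cong\xt{n+1}{A}{N}{L}$ for $n\ge1$; combined with the connecting maps obtained from $0\to L\to M\to N\to 0$ by $\Hom_A(N,-)$, the hypothesis $\xt{2}{A}{N}{M}=0$ makes the tangent-level comparison surjective (via the connecting $\xt{1}{A}{N}{N}\to\xt{2}{A}{N}{L}$) and the obstruction-level comparison injective, so $\sigma_L$ is formally smooth and hence, as in (ii), smooth; adding $\xt{1}{A}{N}{M}=0$ turns the tangent comparison into an isomorphism, giving $\sigma_L$ an isomorphism as in (iii).

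The main obstacle I expect is bookkeeping: organizing the long exact sequences of Theorem \ref{thm.lang}, the approximation sequences, and the ladder \eqref{eq.ladder} so that the single $\Ext$-vanishing hypotheses translate precisely into surjectivity on the deformation groups and injectivity on the obstruction groups for the comparison maps, and verifying that the compatibilities of Corollaries \ref{cor.omap} and \ref{cor.cohmap} are exactly what the lifting argument of Corollary \ref{cor.vers} requires. A secondary point needing care is the passage from formal smoothness to smoothness, and from bijectivity on tangent spaces to an isomorphism of functors, which both rely on part (i) and Proposition \ref{prop.fvers}.
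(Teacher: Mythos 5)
Your proof of (i) follows the same route as the paper: show that $(A,L)$ and $(A,L')$ inherit isolated singularities so that Theorem \ref{thm.vers} applies. The paper derives local freeness of $L$ from \cite[3.7]{aus/buc:89} --- once $\pi$ splits at $\fr p$, $L_{\fr p}\in\Add\{\omega_{\fr p}\}$ and $\omega_{\fr p}\cong A_{\fr p}$ since $A_{\fr p}$ is regular --- whereas you apply Auslander--Buchsbaum to make $M_{\fr p}$ and $M'_{\fr p}$ free and then obtain freeness of $L_{\fr p}$, $L'_{\fr p}$ from the splittings. Both work. For (ii)--(v) the paper's proof is a single citation to \cite[5.5]{ile:21} together with (i); you reconstruct the cited argument. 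Your reconstruction of (ii)--(iii) is correct: the $\Ext$-vanishing becomes surjectivity (resp.\ bijectivity) of $\iota_*$, hence of $\eta^n_2$ (since $\iota^*$ is invertible by MCM-versus-FID vanishing), hence of $\sigma^n_2$ by Corollary \ref{cor.cohmap} and a five-lemma chase in \eqref{eq.ladder}; the compatibility of obstructions and torsor actions is Corollary \ref{cor.omap}; and the passage from formal smoothness to smoothness uses (i) and Proposition \ref{prop.fvers} exactly as at the end of the proof of Corollary \ref{cor.vers}. (One stylistic point: in (iii) the conclusion should be argued directly --- a smooth map of deformation functors that is bijective on tangent spaces is an isomorphism, by induction on small extensions --- rather than via an isomorphism of minimal versal base rings, which by itself does not imply the functors are isomorphic.)

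The place where your argument leaves something to be supplied is (iv)--(v). You claim the tangent- and obstruction-level comparison for $\sigma_L$ is realised by the connecting map of $\Hom_A(N,-)$ on $0\to L\to M\to N\to 0$ composed with the collapsing isomorphism $\xt{n}{A}{L}{L}\cong\xt{n+1}{A}{N}{L}$. This is a plausible guess (compare Lemma \ref{lem.atSyz}, which establishes a closely related identity for ordinary syzygies, with a sign), but the present paper never constructs the $L$-directed analogues of Proposition \ref{prop.cohmap} and Corollaries \ref{cor.omap}--\ref{cor.cohmap}: those are built only for the $\mc{M}$- and $\mc{L}'$-directions ($j=1,2$). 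To make (iv)--(v) self-contained you would have to verify that your connecting-map formula is the map on the module summand in the ladder \eqref{eq.ladder} for $\sigma_L$, commutes with the torsor actions, and carries obstructions to obstructions. That verification is the content of \cite[5.5]{ile:21} the paper leans on; your ``cf.\ \cite[Sec.\ 5]{ile:21}'' delegates to the same source, but the intermediate identifications you write down are not supplied by the paper itself.
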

\begin{proof}
(i) Since \(A\) is CM, \(\Spec A\) is equidimensional and \(\df{}{(A,N)}\) has a versal element by Theorem \ref{thm.vers}. Let \(\fr{p}\in \Spec A\), \(\fr{p}\neq\fr{m}_{A}\). Localising the Cohen-Macaulay approximation diagram \cite[p. 6]{aus/buc:89} in \(\fr{p}\) gives a commutative diagram with four short exact sequences and the same homological properties:
\begin{equation}\label{eq.CMdiag}
\xymatrix@C-3pt@R-10pt@H-6pt{
L_{\fr{p}}\ar@{=}[d]\ar@{^{(}->}[r] & M_{\fr{p}}\ar@{->>}[r]^(0.48){\pi}\ar@{}[dr]|-{\Box}\ar@{_{(}->}[d] & N_{\fr{p}}\ar@{_{(}->}[d]^(0.44){\iota}
\\
L_{\fr{p}}\ar@{^{(}->}[r] & \omega^{\oplus n}_{\fr{p}}\ar@{->>}[r]\ar@{->>}[d] & L'_{\fr{p}}\ar@{->>}[d]
\\
& M'_{\fr{p}}\ar@{=}[r] 
& M'_{\fr{p}} 
&
}
\end{equation} 
Since \(N_{\fr{p}}\cong A_{\fr{p}}^{\oplus r}\) by assumption, \(\pi\) splits and \(L_{\fr{p}}\) is contained in \(\Add\{\omega_{\fr{p}}\}\) by \cite[3.7]{aus/buc:89}. Since \(A_{\fr{p}}\) is a regular local ring, \(\omega_{\fr{p}}\cong A_{\fr{p}}\) and \(L_{\fr{p}}\) and \(M_{\fr{p}}\) are finite, free \(A_{\fr{p}}\)-modules. Then \(M'_{\fr{p}}\) and \(L'_{\fr{p}}\) are finite, free \(A_{\fr{p}}\)-modules, too. By Theorem \ref{thm.vers}, \(\df{}{(A,L')}\) and  \(\df{}{(A,L)}\) have versal elements. 
Then (ii-v) follow from (i) and \cite[5.5]{ile:21}.
\end{proof}
A similar proof (using \cite[5.7]{ile:21}) gives:
\begin{thm}\label{thm.defgrade2}
Suppose \((A,N)\) has an isolated singularity\textup{.} Then\textup{:}
\begin{enumerate}[leftmargin=2.4em, label=\textup{(\roman*)}]
\item The functors \(\df{}{(A,N)}\)\textup{,} \(\df{}{(A,M)}\) and  \(\df{}{(A,M')}\) have versal elements\textup{.}
\item If\, \(\xt{1}{A}{L}{N}=0\) then the map \(\sigma_{M}\co \df{}{(A,N)}\ra\df{}{(A,M)}\) is smooth\textup{.}
\item If\, \(\xt{j}{A}{L}{N}=0\) for \(j=0,1\) then \(\sigma_{M}\) is an isomorphism\textup{.}
\item If\, \(\xt{2}{A}{L'}{N}=0\) then the map $\sigma_{M'}\co \df{}{(A,N)}\lra\df{}{(A,M')}$ is smooth\textup{.}
\item If\, \(\xt{j}{A}{L'}{N}=0\) for \(j=1,2\) then $\sigma_{M'}$ is an isomorphism\textup{.}
\end{enumerate}
\end{thm}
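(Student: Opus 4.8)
The plan is to follow the proof of Theorem \ref{thm.defgrade} almost verbatim, replacing the modules $L'$ and $L$ and the reference \cite[5.5]{ile:21} by the modules $M$ and $M'$ and the reference \cite[5.7]{ile:21}.

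For part (i): since $A$ is Cohen--Macaulay, $\Spec A$ is equidimensional, so Theorem \ref{thm.vers} already produces a versal element of $\df{}{(A,N)}$. To obtain versal elements of $\df{}{(A,M)}$ and $\df{}{(A,M')}$ it suffices, again by Theorem \ref{thm.vers}, to check that the pairs $(A,M)$ and $(A,M')$ themselves have an isolated singularity. So I would fix $\fr{p}\in\Spec A$ with $\fr{p}\neq\fr{m}_A$ and localise the Auslander--Buchweitz diagram \eqref{eq.CMdiag} at $\fr{p}$: by the isolated singularity hypothesis $N_{\fr{p}}$ is free, so the surjection $\pi$ splits, and since $A_{\fr{p}}$ is regular we have $\omega_{\fr{p}}\cong A_{\fr{p}}$, hence $L_{\fr{p}}$ is finite free; a chase through the four short exact sequences then shows that $M_{\fr{p}}$ and $M'_{\fr{p}}$ are finite free $A_{\fr{p}}$-modules as well. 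Therefore $(A,M)$ and $(A,M')$ have isolated singularities and (i) follows. This is the exact computation already carried out in the proof of Theorem \ref{thm.defgrade}; only the conclusion drawn from the diagram changes.

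For parts (ii)--(v): once (i) guarantees the existence of (algebraic, minimal) versal elements for all the functors in sight, the smoothness and isomorphism statements are exactly \cite[5.7]{ile:21} — the analogue for $\sigma_M$ and $\sigma_{M'}$ of the result \cite[5.5]{ile:21} invoked for Theorem \ref{thm.defgrade} — applied with the vanishing of $\xt{\bullet}{A}{L}{N}$, respectively $\xt{\bullet}{A}{L'}{N}$. Thus (ii) follows from $\xt{1}{A}{L}{N}=0$, (iii) from $\xt{j}{A}{L}{N}=0$ for $j=0,1$, (iv) from $\xt{2}{A}{L'}{N}=0$, and (v) from $\xt{j}{A}{L'}{N}=0$ for $j=1,2$. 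Alternatively one could reprove (ii)--(v) in-house, paralleling Theorems \ref{thm.defext1} and \ref{thm.defext2} via Corollary \ref{cor.cohmap}, Corollary \ref{cor.omap} and an obstruction argument as in Corollary \ref{cor.vers}.

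Since the local freeness step in (i) is literally the one already done for Theorem \ref{thm.defgrade} and (ii)--(v) are a citation, I do not expect a genuine obstacle here; the only point requiring (mild) care is verifying that the hypotheses of \cite[5.7]{ile:21} — in particular the existence of the relevant versal deformations over $\He$ — are precisely what part (i) supplies, which is why (i) must be established first.
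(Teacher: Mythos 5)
Your proposal matches the paper's own proof, which is literally the one-line statement that Theorem \ref{thm.defgrade2} follows by the same argument as Theorem \ref{thm.defgrade} with \cite[5.7]{ile:21} in place of \cite[5.5]{ile:21}. Your expansion of that remark — reusing the localisation of the Auslander--Buchweitz diagram \eqref{eq.CMdiag} to establish (i), then citing \cite[5.7]{ile:21} for (ii)--(v) — is exactly what is intended.
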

By Proposition \ref{prop.vers} analogous statements to Theorems \ref{thm.defgrade} and \ref{thm.defgrade2} follows for the $\sigma^{\mc{A}}_{X}$-maps.
\begin{cor}\label{cor.defgrade}
Assume $N$ is locally free on the complement of the closed point in $\Spec A$\textup{.} 
Then the functors \(\df{\mc{A}}{N}\)\textup{,} \(\df{\mc{A}}{L'}\) and  \(\df{\mc{A}}{L}\) have versal elements\textup{.} Moreover\textup{,} the statements \textup{(ii-v)} in \textup{Theorem \ref{thm.defgrade}} hold if $\sigma_X$ is changed to $\sigma^{\mc{A}}_{X}$\textup{.}
\end{cor}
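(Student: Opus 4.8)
The plan is to reduce everything to Proposition \ref{prop.vers} and to the fixed-algebra version of the arguments already carried out for Theorems \ref{thm.defgrade} and \ref{thm.defgrade2}. First I would observe that $N$ is locally free on $\Spec A\smallsetminus\{\fr{m}_A\}$ by hypothesis, so Proposition \ref{prop.vers} directly yields a versal element for $\df{\mc{A}}{N}$. The point where care is needed is the existence of versal elements for $\df{\mc{A}}{L'}$ and $\df{\mc{A}}{L}$: unlike in the proof of Theorem \ref{thm.defgrade}(i) we do \emph{not} assume that $A$ has an isolated singularity, so $L$ and $L'$ need not be locally free away from $\fr{m}_A$ (only $N$ is). The remedy is to pass to the Foxby-dual modules $Q'=\hm{}{A}{\omega_A}{L'}$ and $Q=\hm{}{A}{\omega_A}{L}$, which have finite projective dimension and satisfy $\df{\mc{A}}{L'}\cong\df{\mc{A}}{Q'}$ and $\df{\mc{A}}{L}\cong\df{\mc{A}}{Q}$; cf.\ \eqref{eq.LQ}.

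Concretely, I would localise the Cohen-Macaulay approximation diagram \eqref{eq.CMdiag} at a prime $\fr{p}\neq\fr{m}_A$. Since $N_{\fr{p}}$ is free, the epimorphism $\pi$ splits, so $M_{\fr{p}}\cong L_{\fr{p}}{\oplus}N_{\fr{p}}$; as $M_{\fr{p}}$ is MCM over $A_{\fr{p}}$ (localisation of an MCM module over the Cohen-Macaulay ring $A$) and $L_{\fr{p}}$ has finite injective dimension over $A_{\fr{p}}$, the summand $L_{\fr{p}}$ is both MCM and FID, hence lies in $\Add\{\omega_{A_{\fr{p}}}\}$. Likewise the short exact sequence $0\ra N_{\fr{p}}\ra L'_{\fr{p}}\ra M'_{\fr{p}}\ra 0$ has MCM outer terms, so by the depth lemma $L'_{\fr{p}}$ is MCM, and being FID it too lies in $\Add\{\omega_{A_{\fr{p}}}\}$. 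Applying $\hm{}{A_{\fr{p}}}{\omega_{A_{\fr{p}}}}{-}$ and using $\hm{}{A_{\fr{p}}}{\omega_{A_{\fr{p}}}}{\omega_{A_{\fr{p}}}}\cong A_{\fr{p}}$ shows that $Q_{\fr{p}}$ and $Q'_{\fr{p}}$ are direct summands of free $A_{\fr{p}}$-modules, hence free. Thus $Q$ and $Q'$ are locally free on $\Spec A\smallsetminus\{\fr{m}_A\}$, and Proposition \ref{prop.vers} provides versal elements for $\df{\mc{A}}{Q'}\cong\df{\mc{A}}{L'}$ and $\df{\mc{A}}{Q}\cong\df{\mc{A}}{L}$, which is part (i).

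For (ii)--(v): once versal elements are known to exist, the reasoning of Theorem \ref{thm.defgrade}(ii)--(v) — that is, \cite[5.5]{ile:21} together with the Cohen-Macaulay approximation cohomology maps $\sigma^{n}_{j}$, $\eta^{n}_{j}$ of Section \ref{sec.cohmap} — goes through \emph{verbatim} for the fixed-algebra maps $\sigma^{\mc{A}}_{L'}\co\df{\mc{A}}{N}\ra\df{\mc{A}}{L'}$ and $\sigma^{\mc{A}}_{L}\co\df{\mc{A}}{N}\ra\df{\mc{A}}{L}$ of Lemma \ref{lem.defmap}: the hypotheses $\xt{j}{A}{N}{M'}=0$, respectively $\xt{j}{A}{N}{M}=0$, and the compatibility of obstructions and torsor actions with $\sigma^{1}_{j}$ and $\sigma^{2}_{j}$ (Corollaries \ref{cor.omap} and \ref{cor.cohmap}) do not involve deformations of the algebra, so Lemma \ref{lem.res} applies exactly as before, giving the smoothness statements in (ii) and (iv) and, with the additional vanishing in degree $0$, the isomorphism statements in (iii) and (v). I expect the only genuinely delicate step to be the one flagged above, namely verifying that $\df{\mc{A}}{L}$ and $\df{\mc{A}}{L'}$ lie within the scope of Proposition \ref{prop.vers} in the absence of an isolated-singularity hypothesis on $A$; this is precisely what the passage to the finite-projective-dimension modules $Q$ and $Q'$ accomplishes. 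Everything else is bookkeeping transported from Theorems \ref{thm.defgrade} and \ref{thm.defgrade2}.
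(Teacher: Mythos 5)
Your proof is correct and follows the route the paper intends, since the paper offers no explicit argument beyond invoking Proposition~\ref{prop.vers}. You correctly identify the one non-routine point: without an isolated-singularity hypothesis on $A$, the localisations $L_{\fr{p}}$ and $L'_{\fr{p}}$ for $\fr{p}\neq\fr{m}_A$ only lie in $\Add\{\omega_{A_{\fr{p}}}\}$ and need not be free, so Proposition~\ref{prop.vers} cannot be applied to $L$ or $L'$ directly but must instead be applied to the finite-projective-dimension Foxby duals $Q=\hm{}{A}{\omega_A}{L}$ and $Q'=\hm{}{A}{\omega_A}{L'}$ via the equivalence \eqref{eq.LQ} --- the same device the paper uses in Proposition~\ref{prop.Ak}(ii). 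The transfer of parts (ii)--(v) via the maps $\eta^n_j$ and the compatibilities of Corollaries~\ref{cor.omap} and~\ref{cor.cohmap} is then a routine transposition of the arguments underlying Theorems~\ref{thm.defgrade} and~\ref{thm.defgrade2}, as you say.
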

\begin{cor}\label{cor.defgrade2}
Assume $N$ is locally free on the complement of the closed point in $\Spec A$\textup{.}
Then the functors \(\df{\mc{A}}{N}\)\textup{,} \(\df{\mc{A}}{M}\) and  \(\df{\mc{A}}{M'}\) have versal elements\textup{.} Moreover\textup{,} the statements \textup{(ii-v)} in \textup{Theorem \ref{thm.defgrade2}} hold if $\sigma_X$ is changed to $\sigma^{\mc{A}}_{X}$\textup{.}
\end{cor}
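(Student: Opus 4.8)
The plan is to repeat the proof of Theorem \ref{thm.defgrade2} with the algebra $\mc{A}$ held fixed, substituting Proposition \ref{prop.vers} for each appeal to Theorem \ref{thm.vers}; the maps $\sigma^{\mc{A}}_X$ are already furnished by Lemma \ref{lem.defmap}.

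For the existence statement I would first note that it suffices to know that $N$, $M$ and $M'$ are locally free on $\Spec A\smallsetminus\{\fr{m}_A\}$, since then Proposition \ref{prop.vers} applies to each of $\df{\mc{A}}{N}$, $\df{\mc{A}}{M}$ and $\df{\mc{A}}{M'}$. For $N$ this is the hypothesis. For $M$ and $M'$ it follows from the localisation of the Auslander--Buchweitz diagram carried out in the proof of Theorem \ref{thm.defgrade}(i), see \eqref{eq.CMdiag}: for a prime $\fr{p}\neq\fr{m}_A$ the surjection $\pi$ splits because $N_{\fr{p}}$ is free, hence $L_{\fr{p}}$ is a maximal Cohen--Macaulay $A_{\fr{p}}$-module of finite injective dimension and thus lies in $\Add\{\omega_{A_{\fr{p}}}\}$, whence $M_{\fr{p}}$, $L'_{\fr{p}}$ and $M'_{\fr{p}}$ are free over $A_{\fr{p}}$ as well.

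For parts (ii)--(v) I would use that $\cat{Def}^{\mc{A}}_X$ is a cofibred subcategory of $\cat{Def}_{(A,X)}$ satisfying (S1'), and that, the algebra now being rigid, the base-change isomorphisms behind \eqref{eq.defgrade} specialise so as to give $\df{\mc{A}}{X}$ an obstruction theory in the sense of Definition \ref{def.obtheory} with obstruction space $\xt{2}{A}{X}{X}\ot_k I$, the module part of the pair cohomology. The comparison maps $\eta^n_j$ and their compatibilities with obstructions and torsor actions, Corollaries \ref{cor.omap} and \ref{cor.cohmap}, are statements about the groups $\xt{n}{A}{-}{-}$ alone and so are insensitive to fixing $\mc{A}$. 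Hence \cite[5.7]{ile:21} applies verbatim to the $\sigma^{\mc{A}}_X$, and feeding the vanishing hypotheses of Theorem \ref{thm.defgrade2}(ii)--(v) into Lemma \ref{lem.res} yields smoothness, respectively isomorphism, of $\sigma^{\mc{A}}_M$ and $\sigma^{\mc{A}}_{M'}$ (for the isomorphism statements one also records that the induced maps of relative Zariski tangent spaces are injective under these hypotheses).

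The main obstacle is bookkeeping rather than conceptual: one must verify that restricting from $\cat{Def}_{(A,X)}$ to the subcategory $\cat{Def}^{\mc{A}}_X$ leaves intact all the data demanded by Lemma \ref{lem.res} and by \cite[5.7]{ile:21} --- the obstruction theories, the injectivity of the tangent maps, and the torsor-action compatibilities. This goes through precisely because those comparison results are phrased in terms of module $\Ext$ groups over the fixed algebra $A$, rather than the graded Andr\'e--Quillen cohomology of the pairs.
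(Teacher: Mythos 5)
Your proposal follows the approach the paper implicitly intends: replace Theorem~\ref{thm.vers} by Proposition~\ref{prop.vers} and re-run the argument of Theorem~\ref{thm.defgrade2} with the algebra held rigid. The identification of the obstruction spaces with $\xt{2}{A}{X}{X}\ot_k I$, the observation that $\cat{Def}^{\mc{A}}_X$ inherits (S1') and ($\liminj$) as a cofibred subcategory, and the remark that the $\eta^n_j$-comparisons in Corollaries~\ref{cor.omap} and~\ref{cor.cohmap} are insensitive to fixing $\mc{A}$ are all correct and in the spirit of the paper's one-line appeal to Proposition~\ref{prop.vers}.

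There is, however, a gap in the existence step. You argue that $M_{\fr{p}}$, $L'_{\fr{p}}$ and $M'_{\fr{p}}$ are free over $A_{\fr{p}}$ for $\fr{p}\neq\fr{m}_A$ because $L_{\fr{p}}\in\Add\{\omega_{A_{\fr{p}}}\}$. This conclusion needs $\omega_{A_{\fr{p}}}$ itself to be free over $A_{\fr{p}}$, i.e.\ $A_{\fr{p}}$ Gorenstein. In the proof of Theorem~\ref{thm.defgrade}(i) that holds because the isolated-singularity hypothesis forces $A_{\fr{p}}$ regular; but Corollary~\ref{cor.defgrade2} drops all hypotheses on $\Sing A$ and keeps only that $N$ is locally free on the punctured spectrum. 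Without Gorenstein-ness of $A_{\fr{p}}$ you get only $M_{\fr{p}}\cong N_{\fr{p}}\oplus L_{\fr{p}}$ with $L_{\fr{p}}$ a sum of copies of $\omega_{A_{\fr{p}}}$, and then $\xt{1}{A_{\fr{p}}}{M_{\fr{p}}}{M_{\fr{p}}}$ (which contains summands $\xt{1}{A_{\fr{p}}}{\omega_{A_{\fr{p}}}}{A_{\fr{p}}}$) can be nonzero, so the non-locally-free locus of $M$ need not be contained in $\{\fr{m}_A\}$ and Proposition~\ref{prop.vers} does not apply directly to $\df{\mc{A}}{M}$ or $\df{\mc{A}}{M'}$. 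You should either record that the corollary tacitly assumes $A$ Gorenstein (or regular) on the punctured spectrum, or give an independent argument for the finiteness/local-freeness needed.
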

\begin{rem}\label{rem.base}
The statements about smoothness in the above results have the following consequences. Let $R_Y$ be the base ring of a versal element in $\df{}{(A,Y)}$.
\begin{enumerate}[leftmargin=2.4em, label=\textup{(\roman*)}]
\item There is map  $f\co R_X\ra R_N$ in $\He$ lifting $\sigma_X\co \df{}{(A,N)}\ra \df{}{(A,X)}$ to $f^*\co h_{R_N}\ra h_{R_X}$. Formal versality or formal smoothness is not enough. 

\item There is a finite-type-over-$\vL$ representative $f^{\tn{ft}}\co R^{\tn{ft}}_X\ra R^{\tn{ft}}_N$ of $f$ which is smooth. 

\item One can also assume that there are finite type and $R^{\tn{ft}}_N$-flat pairs $(R^{\tn{ft}}_N\ra\mc{A}^{\tn{ft}},\mc{N}^{\tn{ft}})$ and $(R^{\tn{ft}}_N\ra\mc{A}^{\tn{ft}},\mc{X}^{\tn{ft}})$ with relative Cohen-Macaulay approximations $\mc{M}^{\tn{ft}}\ra \mc{N}^{\tn{ft}}$ and $\mc{N}^{\tn{ft}}\ra\mc{L}'{}^{\tn{ft}}$ and inducing versal elements after henselisation. 
\end{enumerate}
\end{rem}
\section{Framings and examples}\label{sec.frame}
We define a cofibred category of deformations of framed pairs. Suppose $A$ is an algebraic $k$-algebra and \(N\) is a finite \(A\)-module. Fix an $A$-linear surjection $A^{\oplus n}\ra N$, called a framing. A deformation of the pair $(k\ra A,A^n\ra N)$ to $S$ in $\He$ is a pair of commutative diagrams
\begin{equation}\label{eq.Ak}
\begin{aligned}
\xymatrix@C-3pt@R-8pt@H-0pt{
S\ar[r]\ar[d] & \mc{A}\ar[d]_(0.4){\tau}
\\
k\ar[r] & A
}
\qquad
\xymatrix@C-3pt@R-8pt@H-0pt{
\mc{A}^{\oplus n}\ar[r]\ar[d]_(0.36){\tau^{\oplus n}} & \mc{N}\ar[d]
\\
A^{\oplus n}\ar[r] & N
}
\end{aligned}
\end{equation}
with $\mc{A}^{\oplus n}\ra \mc{N}$ $\mc{A}$-linear such that forgetting the framings gives a deformation of pairs $(\mc{A},\mc{N})\ra (A,N)$. 
A map of deformations of framed pairs is a map of deformations of pairs \((g,f,\alpha)\co (S_1\ra\mc{A}_{1},\mc{N}_{1})\ra (S_2\ra\mc{A}_{2},\mc{N}_{2})\), see below \eqref{eq.dA}, commuting with the framings: $\mc{A}_1^{\oplus n}\ra \mc{N}_{1}$ composed with $\alpha\co \mc{N}_{1}\ra \mc{N}_{2}$ equals the composition of $f^{\oplus n}\co \mc{A}_1^{\oplus n}\ra\mc{A}_2^{\oplus n}$ with $\mc{A}_2^{\oplus n}\ra \mc{N}_{2}$. The map $(g,f,\alpha)$ is cocartesian if it is cocartesian as a map of (unframed) pairs. Let $\cat{Def}_{(A,A^n\ra N)}\ra \He$ denote the resulting cofibred category. Forgetting the framing gives a map of cofibred categories $\cat{Def}_{(A,\,A^n\ra N)}\ra\cat{Def}_{(A,\,N)}$ which induces a map of the associated deformation functors $\df{}{(A,\,A^n\ra N)}\ra\df{}{(A,\,N)}$.
\begin{lem}\label{lem.Ak}
Suppose $A$ is an algebraic $k$-algebra and \(N\) is a finite \(A\)-module with a framing $A^{\oplus n}\ra N$\textup{.}
\begin{enumerate}[leftmargin=2.4em, label=\textup{(\roman*)}]
\item The forgetful map $\df{}{(A,\,A^n\ra N)}\ra\df{}{(A,\,N)}$ is smooth\textup{.}

\item If $A$ in addition is in $\He$ and the framing is the residue map $A\ra k$ then 
$
\df{}{(A,\,A\ra k)}\ra\df{}{(A,\,k)} 
$ 
is an isomorphism\textup{.}
\end{enumerate}
\end{lem}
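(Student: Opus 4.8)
The plan is to prove both statements by directly analysing the deformation functors on infinitesimal test rings and exploiting the obstruction calculus already established.

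For part (i), I would first observe that a deformation of the framed pair $(A,A^n\to N)$ over $S$ consists of an unframed deformation $(S\to\mc{A},\mc{N})$ together with a lift of the framing, i.e. an $\mc{A}$-linear map $\mc{A}^{\oplus n}\to\mc{N}$ reducing to $A^{\oplus n}\to N$ modulo $\fr{m}_S$. Given a surjection $S'\to S$ in $\He$ and an object of $\cat{Def}_{(A,A^n\to N)}(S)$ together with a lift of the underlying unframed pair to $S'$, the remaining task is to lift the framing map. This is unobstructed: since $\mc{A}'^{\oplus n}$ is a free $\mc{A}'$-module, any $\mc{A}$-linear map $\mc{A}^{\oplus n}\to\mc{N}$ lifts to an $\mc{A}'$-linear map $\mc{A}'^{\oplus n}\to\mc{N}'$ (lift the images of the generators along the surjection $\mc{N}'\to\mc{N}$, using that $\mc{N}'$ is $S'$-flat so the reduction $\mc{N}'\to\mc{N}$ is surjective). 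By the definition of smoothness for maps of cofibred categories in \eqref{eq.vers}, and noting that $\cat{Def}_{(A,A^n\to N)}$ and $\cat{Def}_{(A,N)}$ are locally of finite presentation, this establishes (i). I would also remark that, since the framing is surjective and lifts stay surjective by Nakayama, no further condition is needed.

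For part (ii), when $A\in\He$ and the framing is the residue map $A\to k$, I would show the forgetful functor is fully faithful and essentially surjective on each fibre, so induces a bijection of functors. A deformation of $(A,k)$ to $S$ is a deformation $(S\to\mc{A},\mc{N})$ where $\mc{N}$ is $S$-flat, finite over $\mc{A}$, with $\mc{N}\ot_Sk\cong k$. Since $\mc{N}\ot_Sk$ is a cyclic $A$-module annihilated by $\fr{m}_A$, Nakayama gives that $\mc{N}$ is cyclic over $\mc{A}$, so $\mc{N}\cong\mc{A}/\mc{I}$ for an ideal $\mc{I}\sbeq\mc{A}$ with $\mc{A}/(\fr{m}_S\mc{A}+\mc{I})\cong k$, i.e. $\mc{I}$ maps onto $\fr{m}_A$. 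Hence there is a \emph{unique} framing up to the evident equivalence: the composite $\mc{A}\to\mc{N}$ sending $1\mapsto$ the generator is determined up to a unit in $\mc{A}$, and any two such differ by an automorphism of $\mc{N}$, which is a unit scalar. Thus every deformation of $(A,k)$ lifts to a deformation of $(A,A\to k)$ and the lift is unique up to isomorphism; conversely a framed deformation forgets to an unframed one, and an isomorphism of underlying pairs automatically respects the framings since both framings are the canonical surjection onto the cyclic module (up to unit, absorbed into the isomorphism). Therefore $\df{}{(A,A\to k)}\to\df{}{(A,k)}$ is a bijection on every $S$, hence an isomorphism of functors.

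The main obstacle I anticipate is the bookkeeping in part (ii): one must be careful that ``the framing is essentially unique'' is phrased correctly at the level of the groupoid $\cat{Def}_{(A,A\to k)}(S)$ rather than just its set of isomorphism classes, because the claim is an isomorphism of \emph{functors} $F=\wbar{\cat{F}}$, so it suffices to work with isomorphism classes, but one still needs that the bijection is natural in $S$ — this follows because the identification $\mc{N}\cong\mc{A}/\mc{I}$ and the resulting framing are compatible with base change $S\to S'$, as tensoring a cyclic module stays cyclic. A minor point to handle is surjectivity of $\mc{N}'\to\mc{N}$ used in part (i), which follows from right-exactness of $-\ot_{S'}S$ applied to $S'\to S$. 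No deep cohomological input is needed; both parts are soft consequences of flatness, Nakayama, and the definition of smoothness.
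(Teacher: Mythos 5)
Your proof is correct and follows essentially the same route as the paper: part (i) is the observation that a framing $\mc{A}^{\oplus n}\ra\mc{N}$ lifts along any surjection $\mc{N}'\thr\mc{N}$ because the source is free, and part (ii) rests on the fact that a deformation of $k$ is cyclic, so the framing is determined up to a unit, and the unit can be absorbed into an automorphism of $\mc{N}$ (the paper writes this automorphism explicitly as $\alpha=p_2\circ p_1^{-1}$ after showing $\ker p_1=\ker p_2$). Your added remarks on Nakayama and naturality in $S$ are correct but not required by the argument.
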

\begin{proof}
(i) Suppose $R\ra S$ is a surjection in $\He$ and $b=(\mc{A},\mc{N})$ is a deformation to $R$ of $(A,N)$ such that the base change $(\mc{A}_S,\mc{N}_S)$ has a framing $a=(\mc{A}_S,p\co\mc{A}_S^n\ra \mc{N}_S)$ deforming $(A,A^n\ra N)$. Any lifting $q\co\mc{A}^n\ra \mc{N}$ of $p$ makes $(\mc{A},q)$ a deformation of $(A,A^n\ra N)$ mapping to $a$ by base change and mapping to $b$ by forgetting the framing.

(ii) Assume $(\mc{A}_1,\,p_1\co\mc{A}_1\ra\mc{N}_1)$ and $(\mc{A}_2,\,p_2\co\mc{A}_2\ra\mc{N}_2)$ are two deformations of $(A,A\ra k)$ to $S$ such that $(\mc{A}_1,\mc{N}_1)$ and $(\mc{A}_2,\mc{N}_2)$ are isomorphic deformations of $(A,\,k)$. We may assume that $\mc{A}_1=\mc{A}_2=\mc{A}$. There is a unit $u\in\mc{A}$ such that $p_1(-)=p_2(u\cdot -)$. Then $\ker p_1=\ker p_2$ and $\alpha(n):=p_2(p_1^{-1}(n))$ gives a well defined isomorphism $(\id,\alpha)\co (\mc{A},p_1)\ra (\mc{A},p_2)$.
\end{proof}
\begin{prop}\label{prop.Ak}
Suppose $A$ is an algebraic $k$-algebra in $\He$\textup{.} Assume the map $(R\ra\mc{A})\ra(k\ra A)$ is a versal element for $\df{}{A}\co \He\ra\Sets$\textup{.} Let $\mc{A}\ra \mc{A}_{\mc{A}}=\mc{A}\hot_R\mc{A}$ denote the base change of the versal element by $R\ra\mc{A}$ and consider $\mc{A}$ as cyclic $\mc{A}_{\mc{A}}$\textup{-}module by the multiplication map\textup{.} 
\begin{enumerate}[leftmargin=2.4em, label=\textup{(\roman*)}]
\item The natural map of pairs $b\co (\mc{A}\ra \mc{A}_{\mc{A}},\mc{A})\ra(k\ra A,\,k)$ is a versal element for $\df{}{(A,\,k)}$\textup{.}
\item If $L'$ is the minimal FID hull of $k$ and $\dim A\geq 2$ then the $\vL$-algebra $\mc{A}$ is a versal base for $\df{}{(A,L')}$ and for $\df{}{(A,Q')}$ where \(Q'=\hm{}{A}{\omega_{A}}{L'}\) has finite projective dimension\textup{.}
\end{enumerate}
\end{prop}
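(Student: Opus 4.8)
The plan is to prove Proposition \ref{prop.Ak} in two steps, first establishing (i) and then deducing (ii) from it together with the smoothness results already available.

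\textbf{Part (i).} First I would identify $\cat{Def}_{(A,\,k)}$ with the cofibred category $\cat{Def}_{(A,\,A\ra k)}$ of framed pairs via Lemma \ref{lem.Ak}(ii), so that it suffices to produce a versal element for the framed problem. The key observation is that a deformation of the framed pair $(k\ra A,\,A\ra k)$ to $S$ amounts to a deformation $S\ra\mc{B}$ of $A$ together with a cyclic $\mc{B}$-module $\mc{N}$ lifting $k$ and a generator, i.e. a surjection $\mc{B}\ra\mc{N}$; but a cyclic $S$-flat $\mc{B}$-module lifting the residue field $k=A/\fr m_A$ with a chosen generator is the same datum as a quotient algebra $\mc{B}\ra\mc{N}$ which is $S$-flat and reduces to $A\ra k$, hence (since $\mc{N}$ is then $S$-flat of rank $0$ over $k$, forced to be $\cong S$) the same as a section of $S\ra\mc{B}$ compatible with $A\ra k$ in the fibre. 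Concretely, giving $(\mc{B},\,\mc{B}\ra\mc{N})$ with $\mc{N}\cong S$ is the same as giving a deformation of $A$ together with a ring retraction $\mc{B}\ra S$ lifting $A\ra k$; equivalently, a deformation of $A$ together with a ``marked point'', i.e. an element of $\cat{Def}_A(S)$ plus a lift of the augmentation. This is exactly the base change of the versal element $R\ra\mc{A}$ along itself: the tautological section $\mc{A}\ra\mc{A}_{\mc{A}}=\mc{A}\hot_R\mc{A}$ provided by multiplication, which reduces to $A\ra k$ in the closed fibre. So I would show that the Yoneda map $h_{\mc{A}}\ra\df{}{(A,A\ra k)}$ attached to $b$ is an isomorphism of functors on $\He$: given a deformation of the framed pair to $S$, the underlying deformation of $A$ is classified by a unique (since versality of $R\ra\mc{A}$ need not give uniqueness — here I must be slightly careful) map; the marked section then forces a well-defined factorisation through $\mc{A}=\mc{A}\hot_R R \to S$ via the cocartesian square, and Lemma \ref{lem.Ak}(ii) kills the remaining automorphism ambiguity from the unit $u$. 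The main obstacle in this part is making the ``classifying map is unique'' step honest: versality only gives existence, so I would instead argue directly that the functor $\df{}{(A,A\ra k)}$ is \emph{represented} by $\mc{A}$ by exhibiting, for each $S$, a natural bijection $\Hom_{\He}(\mc{A},S)\xrightarrow{\sim}\df{}{(A,A\ra k)}(S)$ sending $\psi\colon\mc{A}\ra S$ to the base change of $b$ along $\psi$, with inverse sending a framed deformation $(\mc{B},\mc{B}\ra S)$ to the composite $\mc{A}\ra\mc{A}_{\mc{A}}\xrightarrow{?}\mc{B}\xrightarrow{\text{retraction}}S$ — the middle map being the base change of the versal family classifying $\mc{B}$.

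\textbf{Part (ii).} Now assume $\dim A\ge 2$. By Theorem \ref{thm.defgrade}(iii) the map $\sigma_{L'}\colon\df{}{(A,k)}\ra\df{}{(A,L')}$ is an isomorphism provided $\xt{j}{A}{k}{M'}=0$ for $j=0,1$; and this vanishing holds because $M'$ is maximal Cohen-Macaulay, so $\dpt{A}M'=\dim A\ge 2$, whence $\hm{}{A}{k}{M'}=\xt{1}{A}{k}{M'}=0$ (depth of $M'$ is at least $2$). Actually one should quote the relevant vanishing exactly as used for Corollary A: $\xt{j}{A}{N}{M'}=0$ for $j=0,1$ when $\dim N=0$ and $\dim A\ge 2$, applied with $N=k$. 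Hence $\df{}{(A,k)}\cong\df{}{(A,L')}$, and by part (i) the $\vL$-algebra $\mc{A}$ (the base of the versal element $b$) is a versal base for $\df{}{(A,L')}$. Finally, the isomorphism $\df{}{(A,L')}\cong\df{}{(A,Q')}$ where $Q'=\hm{}{A}{\omega_A}{L'}$ comes from \eqref{eq.LQ} (the functor $-\mapsto\hm{}{A}{\omega_A}{-}$ being an equivalence between FID modules and finite-projective-dimension modules, compatible with deformations), so $\mc{A}$ is also a versal base for $\df{}{(A,Q')}$; and $\pdim Q'<\infty$ because $L'$ has finite injective dimension over the Cohen-Macaulay ring $A$ and $\omega_A$ is dualising. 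I expect part (i) — in particular the representability argument and handling the unit $u$ from Lemma \ref{lem.Ak}(ii) — to be the genuine content; part (ii) is a short deduction from Theorem \ref{thm.defgrade}, the depth bound on $M'$, and \eqref{eq.LQ}.
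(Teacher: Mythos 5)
Your part (ii) matches the paper's proof: combine (i) with Theorem \ref{thm.defgrade}(iii) and pass to $Q'$ via \eqref{eq.LQ}; the $\Ext$-vanishing you supply from $\dpt{A}M'=\dim A\geq 2$ is precisely the (implicit) hypothesis check, and the $\Add\{\omega_A\}$-resolution argument for $\pdim Q'<\infty$ is the paper's as well. The real divergence is in part (i). The paper does not argue from scratch: it cites \cite[6.7]{ile:14} for the fact that the tautological framed pair $a\co(\mc{A}\ra\mc{A}_{\mc{A}},\,\mc{A}_{\mc{A}}\ra\mc{A})\ra(k\ra A,\,A\ra k)$ is versal in $\cat{Def}_{(A,\,A\ra k)}$, and then applies Lemma \ref{lem.Ak}(ii) to transport versality to $\cat{Def}_{(A,\,k)}$. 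Your attempt to reconstruct that citation has the right skeleton — a framed deformation over $S$ is the same as a deformation $S\ra\mc{B}$ of $A$ together with an $S$-algebra retraction $\mc{B}\ra S$, since $S$-flatness plus $\mc{N}\ot_Sk\cong k$ forces $\mc{N}\cong S$ (the ``rank $0$ over $k$'' should read rank $1$) — but the inverse you write down, $(\mc{B},\tau)\mapsto\tau\circ f$ for $f\co\mc{A}\ra\mc{B}$ a classifying map furnished by versality of $R\ra\mc{A}$, is not yet well-defined: versality only produces \emph{some} $f$, and you must show that $\tau\circ f$ does not depend on this choice, i.e.\ that the framing rigidifies the lifting. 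You flag this gap yourself and then leave it unresolved; it is exactly the content of the cited result and needs an honest argument (e.g.\ comparing two classifying maps via an automorphism and using triviality of automorphisms of framed deformations). Note also that the proposition claims only versality of $b$, not representability; your plan overshoots toward a representability statement that is neither proved here nor needed.
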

\begin{proof}
(i) By \cite[6.7]{ile:14} (phrased for $k_0=k$, but the proof is valid in our case too) the natural map of framed pairs 
\begin{equation}
a\co (\mc{A}\ra \mc{A}_{\mc{A}},\mc{A}_{\mc{A}}\ra\mc{A})\lra(k\ra A,\,A\ra k)
\end{equation}
is a versal element for $\df{}{(A,\,A\ra k)}$. By Lemma \ref{lem.Ak} the image $b$ of $a$ is a versal element for $\df{}{(A,\,k)}$.

(ii) The first part is (i) and Theorem \ref{thm.defgrade}. The second part follows then from the eqivalence of cofibred categories
\begin{equation}\label{eq.LQ}
\cat{Def}_{(A,L')}\xra{\;\,\simeq\;} \cat{Def}_{(A,Q')}
\end{equation}
defined by $(S\ra \mc{B},\mc{L}')\mapsto (S\ra \mc{B},\hm{}{\mc{B}}{\omega_{\mc{B}/S}}{\mc{L}'})$, see \cite[6.10(i)]{ile:12}. Applying $\hm{}{A}{\omega_{A}}{-}$ to a finite $\Add\{\omega_A\}$-resolution of $L'$ gives a finite $A$-free resolution of $Q'$.
\end{proof}
\begin{cor}\label{cor.Ak1}
Suppose $A=(k[x_1,\dots,x_m]/(f))^{\tn{h}}$ where $f=f_1,\dots,f_c$ is a regular sequence contained in $(x_1,\dots,x_m)^2$\textup{.} Assume $A$ has an isolated singularity and $k_0=k$\textup{.} Let $R\ra\mc{A}$ be a minimal versal element for $\df{}{A}$\textup{.}

Then $\mc{A}$ is the henselisation of a smooth $\vL$-algebra and is a versal base ring for $\df{}{(A,\,k)}$\textup{.} If $\dim A\geq 2$ this is also a versal base ring for $\df{}{(A,L')}$ where $L'$ is the minimal FID hull of $k$\textup{.}
\end{cor}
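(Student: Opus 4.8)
The plan is to deduce the corollary from Proposition \ref{prop.Ak}. Its hypotheses hold here: $A$ is an algebraic $k$-algebra in $\He$, $k_0=k$ is assumed, and $\df{}{A}$ has a versal element because $A$, being a complete intersection with an isolated singularity, is an equidimensional (Cohen--Macaulay) isolated singularity --- this is Elkik's theorem \cite{elk:73}, equivalently Theorem \ref{thm.vers} applied to the pair $(A,0)$. Granting this, Proposition \ref{prop.Ak}(i) says $(\mc{A}\to\mc{A}_{\mc{A}},\mc{A})\to(k\to A,k)$ is a versal element of $\df{}{(A,k)}$, so $\mc{A}$ is a versal base ring for $\df{}{(A,k)}$, and Proposition \ref{prop.Ak}(ii) says that when $\dim A\geq 2$ it is also a versal base ring for $\df{}{(A,L')}$. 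Hence the only substantial point left is: the total space $\mc{A}$ of the minimal versal deformation of $A$ is the henselisation of a smooth $\vL$-algebra.

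First I would record that $\df{}{A}$ is formally smooth over $\vL$. Writing $A=P/I$ with $P=k[x_1,\dots,x_m]$ and $I=(f_1,\dots,f_c)$, the complete intersection hypothesis makes $L_{A/k}$ the two-term Koszul complex $[\,I/I^2\to\Omega_{P/k}\otimes_P A\,]$ with $I/I^2\cong A^c$ free, so $\mr{T}^{2}_{A/k}(A)=\xt{2}{A}{L_{A/k}}{A}=0$. Since the obstruction $\ob(\pi,\mc{B})$ for deforming a flat algebra along a small surjection $\pi$ in $\Ar$ lies in $\mr{T}^{2}_{A/k}(A)\otimes\ker\pi$ (see the Introduction and the proof of Corollary \ref{cor.vers}), every such lift exists, i.e. $\df{}{A}$ is formally smooth over $\vL$. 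Thus the minimal versal base $R$ is formally smooth over $\vL$, and being algebraic it is the henselisation of a smooth $\vL$-algebra of relative dimension $\tau:=\dim_k\mr{T}^{1}_{A/k}(A)$.

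Next I would realize the minimal versal family explicitly and apply the Jacobian criterion. As deformations of the complete intersection $A$ are unobstructed, we may take for the versal family the tautological one, $\mc{A}=\bigl(\vL[t_1,\dots,t_\tau][x_1,\dots,x_m]/(F_1,\dots,F_c)\bigr)^{\tn{h}}$ (henselisation at the closed point), where $\tilde f_i\in\vL[x]$ lifts $f_i$, $F_i=\tilde f_i+\sum_{l=1}^{\tau}t_l\,g_{l,i}$, and the columns $g_l=(g_{l,1},\dots,g_{l,c})\in P^c$ reduce to a $k$-basis of $\mr{T}^{1}_{A/k}(A)=\coker\bigl(A^m\xra{\,(\partial f_i/\partial x_j)\,}A^c\bigr)$; this family is $R$-flat with special fibre $A$ (the $F_i$ reduce mod $\fr{m}_R$ to the regular sequence $f$) and is minimal versal (its Kodaira--Spencer map is an isomorphism). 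Now compute the Jacobian of $(F_1,\dots,F_c)$ in the variables $t_1,\dots,t_\tau,x_1,\dots,x_m$ at the closed point: the $x$-block is $(\partial f_i/\partial x_j)(0)=0$ because $f_i\in(x_1,\dots,x_m)^2$, and the $t$-block is the $c\times\tau$ matrix whose columns are the vectors $g_l(0)\in k^c$. The decisive observation is that, again because $f\subseteq(x_1,\dots,x_m)^2$, the entries of the Jacobian matrix $(\partial f_i/\partial x_j)$ lie in $\fr{m}_A$, so the surjection $A^c\twoheadrightarrow\mr{T}^{1}_{A/k}(A)$ factors through reduction modulo $\fr{m}_A$ and yields a surjection $\mr{T}^{1}_{A/k}(A)\twoheadrightarrow k^c$; hence the $g_l(0)$ span $k^c$ and the Jacobian of $(F_i)$ has rank $c$ at the closed point. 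So $\mc{A}$ is smooth over $\vL$ there, and being henselian local it is the henselisation of a smooth $\vL$-algebra.

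Finally, by uniqueness of minimal versal elements up to isomorphism this identification holds for the $\mc{A}$ of the statement, and Proposition \ref{prop.Ak}(i)--(ii) give the two assertions. I expect the main obstacle to be the third step: justifying the tautological form of the minimal versal family of the complete intersection $A$ --- over a general excellent $\vL$ with $k_0=k$, where one must present $A$ as a quotient of a smooth $\vL$-algebra and check $R$-flatness of $\vL[t][x]/(F)$ --- and organising the Jacobian computation so that it is exactly the hypothesis $f\subseteq(x_1,\dots,x_m)^2$ that forces the rank to equal $c$. Everything else is routine: the complete-intersection vanishing $\mr{T}^{2}_{A/k}(A)=0$, Elkik's existence theorem, and the appeal to Proposition \ref{prop.Ak}.
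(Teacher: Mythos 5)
Your proof is correct and reaches the paper's conclusions, but you spell out a step that the paper delegates to an external citation. The paper's proof is brief: after noting that $n=\dim_k\mr{T}^1_{A/k}(A)$ is finite, it invokes \cite[6.9]{ile:14} for the identification of the versal base of $\df{}{(A,A\ra k)}$ with $\vL[t_1,\dots,t_{m+n-c}]^{\tn{h}}$ --- which is both the henselisation of a smooth $\vL$-algebra and isomorphic to $\mc{A}$ (cf.\ Proposition \ref{prop.Ak}(i), itself via \cite[6.7]{ile:14}) --- and then uses Lemma \ref{lem.Ak} and Theorem \ref{thm.defgrade} to finish. Your third step reproduces the content of that \cite[6.9]{ile:14} citation: you exhibit the tautological family $(\vL[t][x]/(F))^{\tn{h}}$ as the minimal versal one (legitimate since $\mr{T}^2_{A/k}(A)=0$ for the complete intersection $A$, as $L_{A/k}$ is the two-term Koszul complex with $I/I^2$ free) and check the Jacobian criterion at the closed point. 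Your decisive observation is the right one: because $f\subseteq(x_1,\dots,x_m)^2$ the transposed Jacobian $A^m\ra A^c$ has entries in $\fr{m}_A$, so $\mr{T}^1_{A/k}(A)=\coker(A^m\ra A^c)$ surjects onto $k^c$, whence the $t$-block of the Jacobian of $(F_i)$ has rank $c$ and $\mc{A}$ is smooth over $\vL$. The concerns you flag (flatness of $\vL[t][x]/(F)$, versality over a general excellent $\vL$ with $k_0=k$) are real but routine: regular-sequence flatness plus the Kodaira--Spencer isomorphism gives formal versality, and Proposition \ref{prop.fvers} upgrades it. The paper sidesteps this by citing \cite[6.9]{ile:14} wholesale; your proposal essentially unpacks that black box.
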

\begin{proof}
By Proposition \ref{prop.ext} and Lemma \ref{lem.AQhens}, $\dim_k\df{}{A}(k[\vare])=\dim_k\xt{1}{A}{L_{A/k}}{A}$ is a finite number $n$. Then $R=\vL[t_1\,,\dots,\,t_{m+n-c}]^{\tn{h}}$ is a versal base ring for $\df{}{(A,A\ra k)}$ by \cite[6.9]{ile:14} and by Lemma \ref{lem.Ak} also for $\df{}{(A,\,k)}$. The last part is Theorem \ref{thm.defgrade}.  
\end{proof}
\begin{ex}
\label{ex.Ak1}
Assume $\dim A=2$. Applying $(-)^{\vee}=\hm{}{A}{-}{\omega_A}$ to the minimal free resolution of the residue field $\dots\xra{d_3} A^{\beta_2}\xra{d_2}A^{\beta_1}\xra{(\ul{x})} A\thr k$ gives the commutative diagram
\begin{equation}\label{ex.dim2CMdiag}
\xymatrix@C-2pt@R-8pt@H+15pt{
0\ar[r] & \omega_A\ar[r]^(0.48){(\ul{x})^{\text{tr}}}\ar@{=}[d] & \omega_A^{\oplus\beta_1}\ar[r]\ar@{=}[d] & (\syz{}{}\,\fr{m}_A)^{\vee}\ar@{}[dr]|-(0.38){\Box}\ar[r]^(0.48){\pi}\ar@{_{(}->}[d] & \xt{2}{A}{k}{\omega_A}\ar[r]\ar@{_{(}->}[d]^(0.4){\iota} & 0
\\
0\ar[r] & \omega_A\ar[r]^(0.48){(\ul{x})^{\text{tr}}} & \omega_A^{\oplus\beta_1}\ar[r]^(0.48){d_2^{\vee}} & \omega_A^{\oplus \beta_2}\ar[r]\ar@{->>}[d] & L'\ar[r]\ar@{->>}[d] 
& 0
\\
&&& (\syz{}{2}\,\fr{m}_A)^{\vee}\ar@{=}[r] 
& (\syz{}{2}\,\fr{m}_A)^{\vee}
&
}
\end{equation}
with exact rows and short exact columns.
By local duality theory $\xt{2}{A}{k}{\omega_A}\cong k$; cf. \cite[3.3.10-11]{bru/her:98}. Then \eqref{ex.dim2CMdiag} gives the Cohen-Macaulay approximation diagram \cite[p. 6]{aus/buc:89}, cf. \eqref{eq.CMdiag}, of $k$.
Then $\pdim Q=1$.  
By Corollary \ref{cor.vers} and \eqref{eq.LQ}, $\df{}{(A,L)}\cong \df{}{(A,Q)}\ra\df{}{A}$ is formally smooth.
Theorem \ref{thm.defgrade} gives $\df{}{(A,\,k)}\cong \df{}{(A,L')}$. If $R\ra\mc{A}$ is a versal element for $\df{}{A}$ then $\mc{A}$ is a versal base ring for $\df{}{(A,L')}$ by Proposition \ref{prop.Ak}. 
\end{ex}
\begin{ex}\label{ex.dim1dim0}
Assume $\dim A=1$. Applying $(-)^{\vee}=\hm{}{A}{-}{\omega_A}$ to the minimal free resolution $\dots\xra{d_2}A^{\beta_1}\xra{(\ul{x})} A\thr k$ gives the MCM approximation
\begin{equation}\label{eq.dim1dim0}
0\ra\omega_A\lra\fr{m}_A^{\vee}\xra{\;\pi\;}\xt{1}{A}{k}{\omega_A}\ra 0
\end{equation}
of $k\cong \xt{1}{A}{k}{\omega_A}$. 
Assume $A$ is Gorenstein. Then $\sigma_{M}\co \df{}{(A,\,k)}\ra \df{}{(A,\fr{m}^{\vee})}$ is smooth by Theorem \ref{thm.defgrade2}. If $R\ra\mc{A}$ is a versal element for $\df{}{A}$ then $\mc{A}$ is a versal base ring for $\df{}{(A,\fr{m}^{\vee})}$ by Proposition \ref{prop.Ak}. But we can say a little more in this case. If $A$ is not regular then $\fr{m}_A^{\vee}\ot_Ak\cong k^{\oplus 2}$; cf. \cite[6.1.iv]{ile:14}. Hence $\hm{}{A}{\fr{m}^{\vee}}{k}\ra \hm{}{A}{A}{k}$ is surjective, $\partial\co \hm{}{A}{A}{k}\ra \xt{1}{A}{k}{k}$ is zero, $\eta^1_1\co \xt{1}{A}{k}{k}\cong \xt{1}{A}{\fr{m}^{\vee}}{\fr{m}^{\vee}}$ while $\eta^2_1\co\xt{2}{A}{k}{k}\ra \xt{2}{A}{\fr{m}^{\vee}}{\fr{m}^{\vee}}$ is injective. Put ${}_{\ccirc\!}\mr{T}^{n}_{(A,X)}=\gxt{n}{A}{L^{\tn{gr}}_{A{\oplus}X/k}}{A}$. Then $\sigma^n_1\co {}_{\ccirc\!}\mr{T}^{n}_{(A,\,k)}\ra {}_{\ccirc\!}\mr{T}^{n}_{(A,\,\fr{m}^{\vee})}$ is an isomorphism for $n=1$ and injective for $n=2$ by Corollary \ref{cor.cohmap}. If $k_0\ra k$ is separable, the minimal versal base rings for $\df{}{(A,\,k)}$ and $\df{}{(A,\fr{m}^{\vee})}$ are isomorphic by Theorem \ref{thm.defext2}. This is not necessarily true for other finite length modules $N$.
\end{ex}
\begin{ex}\label{ex.T1}
Suppose $f=f_1,\dots,f_c$ is sequence in $(x_1,\dots,x_m)^2\sbeq k[x_1,\dots,x_m]$ which gives a regular sequence in $P=k[x_1,\dots,x_m]^{\tn{h}}$. Put $I=(f)P$. Assume $A=P/I$ has an isolated singularity over $k$ and $\dim A\geq 1$. Then 
the second fundamental sequence $I/I^2\ra\Omega_{P/k}\ot_PA\ra \Omega_{A/k}$ is short exact and gives a projective resolution of $\Omega_{A}=\Omega_{A/k}$. Applying $(-)^*=\hm{}{A}{-}{A}$ gives the exact sequence
\begin{equation}\label{eq.d2fund}
0\la \xt{1}{A}{\Omega_{A}}{A} \lla (I/I^2)^*\lla(\Omega_{P}\ot_PA)^*\lla \Omega_{A}^* \la 0
\end{equation}
and $\xt{1}{A}{\Omega_{A}}{A}\cong \xt{1}{A}{L_{A/k}}{A}=:\mr{T}^{1}_{\!A}$ which has finite length. 

Assume $\dim A=2$. Let $d_2$ denote the composition  $A^{\beta_2}\!\!\xra{\;\vare\;}\Omega_{A}^{*}\ra (\Omega_{P}\ot_PA)^*$ where $\vare$ is a minimal cover. Applying $(-)^*$ to the splitting of \eqref{eq.d2fund} gives the commutative diagram 
\begin{equation}\label{eq.dd2fund2}
\xymatrix@C-3pt@R-8pt@H+15pt{
0\ar[r] & I/I^2\ar[r]\ar@{=}[d] & \Omega_{P}\ot_PA\ar[r]\ar@{=}[d] & \Omega_A^{**}\ar@{}[dr]|-(0.44){\Box}\ar[r]^(0.44){\pi}\ar@{_{(}->}[d] & (\mr{T}^{1}_{\!A})^{\vee}\ar[r]\ar@{_{(}->}[d]^(0.44){\iota} & 0
\\
0\ar[r] & I/I^2\ar[r] & \Omega_{P}\ot_PA\ar[r]^(0.52){d_2^{*}} & A^{\oplus\beta_2}\ar@{->>}[d]\ar[r] & L'\ar[r]\ar@{->>}[d] & 0
\\
&&& (\Syz\Omega_{A}^{*})^*\ar@{=}[r]
& (\Syz\Omega_{A}^{*})^*
&&
}
\end{equation}
with exact rows and short exact columns inducing the Cohen-Macaulay approximation diagram; cf. \eqref{eq.CMdiag}, where $N=(\mr{T}^{1}_{\!A})^{\vee}=\xt{2}{A}{\mr{T}^{1}_{\!A}}{A}\cong\coker\{\Omega_A\hra \Omega_A^{**}\}$, $M=\Omega_A^{**}$, $L=\Omega_A$, $L'=\coker\{\Omega_A\hra A^{\beta_2}\}$ and $M'=(\Syz\Omega_{A}^{*})^*$.
Since $\mr{T}^{1}_{\!A}$ is a Cohen-Macaulay $A$-module of codimension $2$, $\xt{n}{A}{\mr{T}^{1}_{\!A}}{A}=0$ for all $n\neq 2$ and $\mr{T}^{1}_{\!A}\cong(\mr{T}^{1}_{\!A})^{\vee\vee}$; cf. \cite[3.3.10-11]{bru/her:98}. By cohomology-and-base change for $\Ext$; cf. \cite[2.4]{ile:12}, dualisation extends to an isomorphism $\df{}{(A,\mr{T}^{1}_{\!A})}\cong \df{}{(A,(\mr{T}^{1}_{\!A})^{\vee})}$. 
Composing with $\sigma_{L'}$ gives an isomorphism $\df{}{(A,\mr{T}^{1}_{\!A})}\cong \df{}{(A,L')}$ by Theorem \ref{thm.defgrade}.

If $\dim A=1$ a similar argument gives the MCM approximation
$0\ra I/I^2\ra \syz{}{}(\mr{T}^{1}_{\!A})^*\ra (\mr{T}^{1}_{\!A})^{\vee}\ra 0$ 
where $N=(\mr{T}^{1}_{\!A})^{\vee}=\xt{1}{A}{\mr{T}^{1}_{\!A}}{A}$. The FID hull is given as $0\ra (\mr{T}^{1}_{\!A})^{\vee}\ra \Omega_A\ra\Omega_{A}^{**}\ra 0$.
Composing $\df{}{(A,\mr{T}^{1}_{\!A})}\cong \df{}{(A,(\mr{T}^{1}_{\!A})^{\vee})}$ with $\sigma_M$ gives a map $\df{}{(A,\mr{T}^{1}_{\!A})}\ra \df{}{(A,\,\syz{}{}(\mr{T}^{1}_{\!A})^*\!)}$ which is smooth by Theorem \ref{thm.defgrade2}.

If $\dim A\geq 3$ then $\Omega_A\cong \Omega_{A}^{**}$.
\end{ex}
\begin{rem}\label{rem.Mar}
If $\dim A=2$, the fundamental module $F_A$ is given by the MCM approximation sequence $0\ra \omega_A\ra F_A\ra \fr{m}_A\ra 0$ corresponding to $1\in k\cong\xt{2}{A}{k}{\omega_A}\cong\xt{1}{A}{\fr{m}_A}{\omega_A}$. If $k=\BB{C}$ and $A$ is graded the map $\delta\co\Omega_A\ra \fr{m}_A$ corresponding to the Euler derivation, i.e. $\delta(\mr{d}x_i)=\deg(x_i)x_i$, extends to a surjection $\tilde{\delta}\co\Omega^{**}_A\ra \fr{m}_A$ with kernel $\omega_A$; cf. \cite[2.1]{beh:89}. Then $\Omega^{**}_A\cong F_A$. A. Martsinkovsky conjectured that $\Omega^{**}_A\cong F_A$ if and only if $A$ is graded \cite[p. 287]{mar:90}, proving it for quotient singularities and certain hypersurfaces. K. Behnke proved it for rational surface singularities with reduced fundamental cycle, for minimally elliptic singularities and for cusps in \cite{beh:89}. 

Then, in the graded c.i. case $F_A$ approximates $(\mr{T}^{1}_{\!A})^{\vee}$ as well as $\fr{m}_A$; \eqref{eq.dd2fund2}. Moreover, in the graded hypersurface case $\mr{T}^{1}_{\!A}$ is Gorenstein and $(\mr{T}^{1}_{\!A})^{\vee}\cong\mr{T}^{1}_{\!A}$. 

The conjecture was extended by Martsinkovsky to higher dimensions in \cite{mar:91}, with the `if'-case proved for hypersurfaces. Extended to $\dim A=1$, Martsinkovsky's formula gives $\syz{}{}(\mr{T}^{1}_{\!A})^*$ as the MCM approximation of $k$ in the graded case. In the c.i. case $\syz{}{}(\mr{T}^{1}_{\!A})^*$ also approximates $(\mr{T}^{1}_{\!A})^{\vee}$, see Example \ref{ex.T1}.
\end{rem}
\section{An application to rational double points}\label{sec.ADE}
Suppose $A$ is a ring and $0\ra\Syz N\xra{\,j\,} G\xra{\,p\,} N\ra 0$ a short exact sequence of $A$-modules where $G$ is projective\textup{.}  Let $\Syz$ also denote the composition $\xt{n}{A}{N}{N}\xra{\,\partial\,}\xt{n+1}{A}{ N}{\Syz N}\xla{\;\simeq\;}\xt{n}{A}{\Syz N}{\Syz N}$ for $n\geq 1$\textup{.}
\begin{lem}[{cf. \cite[2.2.8]{kal:05}}]\label{lem.atSyz}
Given a ring homomorphism $\vL\ra A$\textup{,} put $\Omega=\Omega_{A/\!\vL}$ and $\im=\im(\id_{\Omega}\!\ot j)$\textup{,} and let $q\co \Omega\ot\Syz N\linebreak[1]\ra\im$ denote the induced surjection\textup{.}  
\begin{enumerate}[leftmargin=2.4em, label=\textup{(\roman*)}]
\item  In the diagram of natural maps
\begin{equation*}
\xymatrix@C-0pt@R-10pt@H-0pt{
\at_{A\!/\!\vL}(N)\in\xt{1}{A}{N}{\Omega\ot N}\ar[r]^(0.6){\partial_*}
& \xt{2}{A}{N}{\im}
\\
-\at_{A\!/\!\vL}(\Syz N)\in\xt{1}{A}{\Syz N}{\Omega\ot \Syz N}\ar[r]^(0.67){q_*}
& \xt{1}{A}{\Syz N}{\im}\ar[u]_(0.47){\partial^*}
}
\end{equation*}
the images of the Atiyah classes have opposite signs\textup{:} 
\begin{equation*}
\partial_*(\at_{A\!/\!\vL}(N))=-(\partial^* q_*)(\at_{A\!/\!\vL}(\Syz N))\in\xt{2}{A}{N}{\im}\,.
\end{equation*}
\item Let $g^N$ denote the Kodaira-Spencer map for $\vL\ra A\ra A\oplus N$ in \textup{\eqref{eq.KSmap}.} The following diagram of natural maps is commutative\textup{:}
\begin{equation*}
\xymatrix@C+20pt@R-10pt@H-0pt{
\Der_{\vL}(A)\ar[d]_(0.44){(-1)}\ar[r]^(0.45){g^{N}}
& \xt{1}{A}{N}{N}\ar[d]^(0.45){\Syz}
\\
\Der_{\vL}(A)\ar[r]^(0.4){g^{\Syz N}}
& \xt{1}{A}{\Syz N}{\Syz N}
}
\end{equation*}
\end{enumerate}
\end{lem}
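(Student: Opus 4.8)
The plan is to prove both parts by reducing to the defining extension $\at_{A/\vL}(N)$ in \eqref{eq.at}, i.e. the sequence $0\ra \Omega\ot N\ra P^1_{A/\vL}\ot_A N\ra N\ra 0$, and tracking the connecting maps of the syzygy sequence $0\ra \Syz N\xra{j} G\xra{p} N\ra 0$ through it. For part (i), first I would tensor the Atiyah extension of $N$ with the syzygy sequence, or rather apply the functor $\Omega\ot_A(-)$ to the syzygy sequence and map it (via the principal-parts sequence, which is functorial in the module by \cite[IV 2.3.6-7]{ill:71}) to the Atiyah extensions of $G$ and of $\Syz N$. Since $G$ is projective, $\at_{A/\vL}(G)=0$ (the sequence \eqref{eq.at} with $N$ replaced by $G$ splits), so the standard "octahedral"/snake argument for a short exact sequence of modules fitted over a fixed extension class forces $\partial_*\at_{A/\vL}(N)$ and $(\partial^*q_*)\at_{A/\vL}(\Syz N)$ to be negatives of one another: the sign comes precisely from the asymmetry in the two connecting maps $\partial$ (one from $p$, one from $j$) in the commutative square relating $\xt{}{A}{N}{-}$, $\xt{}{A}{G}{-}$, $\xt{}{A}{\Syz N}{-}$ and $\xt{}{A}{-}{\Omega\ot N}$, $\xt{}{A}{-}{\Omega\ot G}$, $\xt{}{A}{-}{\Omega\ot\Syz N}$. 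Concretely I would write down the $3\times 3$ diagram of $\Ext$ groups with $\at_{A/\vL}(-)$ sitting on the diagonal and chase the element $\at_{A/\vL}(N)\in\xt{1}{A}{N}{\Omega\ot N}$ both ways to $\xt{2}{A}{N}{\im}$ through $\xt{1}{A}{\Syz N}{\Omega\ot\Syz N}$ and through $\xt{1}{A}{N}{\im}$, using that the middle term vanishes because $\at_{A/\vL}(G)=0$.

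For part (ii), I would use the description of the Kodaira–Spencer map from \eqref{eq.KSmap} and the text following it: $g^N(D)=(f^D\ot\id_N)_*\kappa(A\oplus N/A/\vL)$, and under the identification $\gxt{1}{A\oplus N}{L^{\tn{gr}}_{A\oplus N/A}}{\Omega_{A/\vL}\ot(A\oplus N)}\cong\xt{1}{A}{N}{\Omega\ot N}$ this is just $(f^D\ot\id_N)_*\at_{A/\vL}(N)$. So the claim becomes: $\Syz\bigl((f^D\ot\id_N)_*\at_{A/\vL}(N)\bigr)=-(f^D\ot\id_{\Syz N})_*\at_{A/\vL}(\Syz N)$ in $\xt{1}{A}{\Syz N}{\Syz N}$. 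Now apply $(f^D)_*=(f^D\ot\id)_*$ to the identity in part (i); the point is that $(f^D\ot\id_{\im})\circ q = (f^D\ot\id_{\Syz N})$-compatible with the surjection $q\co\Omega\ot\Syz N\ra\im$ and that pushing forward along $f^D\co\Omega\ra A$ identifies $\im$ with (a quotient mapping to) $\Syz N$ compatibly with $j$. More precisely, $(f^D\ot\id)$ carries the diagram of (i) to the corresponding diagram with $\Omega$ replaced by $A$, where $\im(\id_A\ot j)$ is just $\Syz N$ itself (since $j$ is injective and $A$ is free of rank one over itself in that slot), the map $\partial^*\co\xt{1}{A}{\Syz N}{\Syz N}\ra\xt{2}{A}{N}{\Syz N}$ becomes the connecting map appearing in the definition of $\Syz$ on $\Ext$, and $\partial_*\co\xt{1}{A}{N}{N}\ra\xt{2}{A}{N}{\Syz N}$ is likewise the other connecting map — which is exactly the composite defining $\Syz$ on $\xt{1}{A}{N}{N}$, up to the isomorphism $\xt{2}{A}{N}{\Syz N}\cong\xt{1}{A}{\Syz N}{\Syz N}$. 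Matching the two sides of the pushed-forward identity of (i) then gives the anticommutative square, with the sign $(-1)$ on the left vertical map of the diagram accounting for the minus sign.

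The main obstacle I expect is bookkeeping of signs and of the various natural identifications: checking that $\partial^*$ applied after $q_*$ in (i) really corresponds, after pushing forward along $f^D$, to the connecting-map description of $\Syz$ on $\Ext^1$, and that the two different connecting homomorphisms (the one in the target $\xt{2}{A}{N}{-}$ coming from $p$, and the one coming from $j$ and the isomorphism $\xt{n}{A}{N}{\Syz N}\xla{\simeq}\xt{n-1}{A}{\Syz N}{\Syz N}$) are normalized consistently so that the sign is genuinely $-1$ and not $+1$. A safe way to pin this down is to compute everything at the level of a fixed projective resolution $G_\bullet\ra N$ with $G_0=G$ and $\Syz N=\ker(G_0\ra N)=\im(G_1\ra G_0)$, represent $\at_{A/\vL}(N)$ by the classical Atiyah cocycle $d(\text{matrices of }d_\bullet)$, and observe that the syzygy shift of a cocycle is literally the restriction of that cocycle to $G_1$ viewed in degree $0$, whence the sign is the usual degree-$1$ shift sign in the mapping cone. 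I would relegate this sign verification to a short explicit computation and cite \cite[2.2.8]{kal:05} for the analogous statement and normalization. The functoriality needed — naturality of $\at$ in the module and compatibility of $\kappa(A\oplus N/A/\vL)$ with $\partial$ — is already recorded in Section \ref{sec.KS} and in Proposition \ref{prop.omap}, so no new structural input is required beyond these sign checks.
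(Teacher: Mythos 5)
Your treatment of (ii) is essentially the paper's: push the identity of (i) forward along $f^D$, noting that $f^D\ot\id_G$ restricted to $\im$ factors through $\Syz N$, and read off the anticommutativity against the two connecting maps defining the $\Syz$-shift on $\Ext$. For (i), however, you take a genuinely different route. The paper's proof is a direct construction: it splices $\at_{A/\vL}(N)$ with $\im\ra\Omega\ot G\ra\Omega\ot N$ at the tail to represent $\partial_*\at_{A/\vL}(N)$ as a $2$-extension, forms the pushout $E$ of $\at_{A/\vL}(\Syz N)$ along $q$ and splices with $\Syz N\ra G\ra N$ to represent $\partial^*q_*\at_{A/\vL}(\Syz N)$, and then exhibits an explicit $3\times 4$ commutative ladder of exact sequences (with a $(-1)$ on one vertical arrow) linking the two $2$-extensions. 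You instead propose the abstract argument: naturality of $\at$ applied to $j$ and $p$, vanishing of $\at_{A/\vL}(G)$ for $G$ projective, and anticommutativity of the two connecting homomorphisms of the biexact functor $\Ext^\bullet_A(-,-)$ with respect to the pair of short exact sequences $0\ra\Syz N\ra G\ra N\ra 0$ and $0\ra\im\ra\Omega\ot G\ra\Omega\ot N\ra 0$.

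One caution about your abstract route for (i): the anticommutativity $\partial_*\partial^*=-\partial^*\partial_*$ on $\hm{}{A}{\Syz N}{\Omega\ot N}$ only yields the claim once you produce a \emph{single} class $c\in\hm{}{A}{\Syz N}{\Omega\ot N}$ with $\partial^*(c)=\at_{A/\vL}(N)$ and $\partial_*(c)=q_*\at_{A/\vL}(\Syz N)$. From $\at_{A/\vL}(G)=0$ you get that each of $\at_{A/\vL}(N)$ and $q_*\at_{A/\vL}(\Syz N)$ individually lies in the image of the relevant connecting map from $\hm{}{A}{\Syz N}{\Omega\ot N}$, but a priori with different preimages $c_1,c_2$, and $\partial^*\partial_*(c_1-c_2)$ need not vanish. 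The matching is not formal; in practice it comes from fixing an $A$-linear splitting $s\co G\ra P^1_{A/\vL}\ot G$ of the (trivial) Atiyah extension of $G$ and taking $c$ to be the restriction of $(1\ot p)\circ s$ to $\Syz N$. That is precisely the information packaged in the paper's middle row (the map $e\co E\ra P^1(G)$ through which $P^1(\Syz N)\ra P^1(G)$ factors). Your fallback — computing with an explicit Atiyah cocycle on a fixed free resolution $G_\bullet$ with $G_0=G$ — sidesteps this issue cleanly and is a perfectly good alternative to the paper's ladder; it is, again, a different but legitimate route, with the sign coming out as the usual degree-shift sign, and it has the advantage of making the normalization unambiguous.
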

\begin{proof}
(i):
The Atiyah class $\at_{A\!/\!\vL}(N)$ is represented by the sequence \eqref{eq.at}. Splicing $\im\ra\Omega\ot G\ra \Omega\ot N$ with $\at_{A\!/\!\vL}(N)$ at the tail gives the exact sequence
\begin{equation}\label{eq.delKN}
\xymatrix@C+0pt@R-10pt@H-0pt{
0\ar[r]
& \im\ar[r]^(0.4){i}
& \Omega\ot G\ar[r]
& P^1_{A/\!\vL}(N)\ar[r]
& N\ar[r]
& 0
}
\end{equation}
which represents $\partial_*(\at_{A\!/\!\vL}(N))\in\xt{2}{A}{N}{\im}$. Pushout of $\at_{A\!/\!\vL}(\Syz N)$ along $\Omega\ot \Syz N\linebreak[1]\ra\im$ gives the exact sequence
\begin{equation}\label{eq.delKSyzN}
\xymatrix@C+0pt@R-10pt@H-0pt{
0\ar[r]
& \im\ar[r]^{i'}
& E\ar[r]
& \Syz N\ar[r]
& 0
}
\end{equation}
which represents the image of $\at_{A\!/\!\vL}(\Syz N)$ in $\xt{1}{A}{\Syz N}{\im}$.
Note that $P^1(\Syz N)\linebreak[1]\ra P^1(G)$ factors through the induced injection $e\co E\ra P^1(G)$. The third row in the following commutative diagram is obtained by splicing \eqref{eq.delKSyzN} with $\Syz N\ra G\ra N$ in the tail and represents the image of $\at_{A\!/\!\vL}(\Syz N)$ in $\xt{2}{A}{N}{\im}$:
\begin{equation}\label{eq.deld}
\xymatrix@C+6pt@R-6pt@H-0pt{
0\ar[r]
& \im\ar[r]^(0.4){i}
& \Omega\ot G\ar[r]
& P^1(N)\ar[r]
& N\ar@{=}[d]\ar[r]
& 0
\\
0\ar[r]
& \im\ar@{=}[d]\ar[r]^(0.32){-i{\oplus} i'}\ar[u]^{(-1)}
& \Omega\ot G\oplus E\ar[d]^(0.44){\pr_2}\ar[r]^{[\gamma\;\;e]}\ar[u]_{\pr_1}
& P^1(G)\ar[d]^(0.44){\bar{\mu}\ot {\id}}\ar[r]\ar[u]_{{\id}\ot p}
& N\ar@{=}[d]\ar[r]
& 0
\\
0\ar[r]
& \im\ar[r]^{i'}
& E\ar[r]
& G\ar[r]
& N\ar[r]
& 0
}
\end{equation}
Since the first row is \eqref{eq.delKN} and the rows are exact, (i) follows.

(ii): Let $D$ be an element in $\Der_{\vL}(A)$ and $f^D\in\hm{}{A}{\Omega}{A}$ the corresponding map. Then $f^D\ot{\id_G}\co \Omega\ot G\ra G$ restricts to a map $\ol{f^D\ot{\id}}\co \im\ra \Syz N$ which precomposed with $q$ gives $f^D\ot{\id}\co \Omega\ot \Syz N\ra \Syz N$. By (i) the following commutative diagram of natural maps is pointed 
($\at=\at_{A\!/\!\vL}$ and $\Syz =\Syz N$)
\begin{equation}
\begin{aligned}
\xymatrix@C-2pt@R-6pt@H-0pt{
\at(N)\in\xt{1}{A}{N}{\Omega\ot N}\hspace{-1.5em}\ar[dd]_(0.46){(f^D\ot{\id})_*}\ar[dr]^(0.53){\partial_*}
&& \xt{1}{A}{\Syz}{\Omega\ot N}\ni \at(\Syz)\ar[d]^(0.44){q_*}
\\
& \hspace{-5em}\partial_*(\at(N))\in\xt{2}{A}{N}{\im}\ar[d]^(0.46){(\ol{f^D\ot{\id}})_*}
& \xt{1}{A}{\Syz}{\im}\ni q_*(\at(\Syz))\hspace{-2em}\ar[d]^(0.46){(\ol{f^D\ot{\id}})_*}\ar[l]_(0.56){-\partial^*}^(0.55){\cong}
\\
g^N(D)\in\xt{1}{A}{N}{N}\ar[r]^(0.57){\partial_*}
& \xt{2}{A}{N}{\Syz}
& \xt{1}{A}{\Syz}{\Syz}\ni g^{\Syz}(D)\hspace{-2em}\ar[l]_(0.56){-\partial^*}^(0.55){\cong}
}
\end{aligned}
\end{equation}
and the bottom row gives $(-1)$ times the $\Syz$-map.
\end{proof}
\begin{cor}\label{cor.ADE}
Suppose $\vL=k_0=k=\bar{k}$ and $\Char k =0$\textup{.}
Assume $A$ is a two dimensional rational double point \textup{(}$A_n$\textup{,} $D_n$ or $E_n$\textup{)} and $F_A=F$ is the fundamental module of $A$\textup{.} Let $R_k$\textup{,} $R_{\fr{m}}$ and $R_F$ be minimal versal base rings for $\df{}{(A,\,k)}$\textup{,} $\df{}{(A,\fr{m})}$ and $\df{}{(A,F)}$\textup{.} 

Then $R_{\fr{m}}\cong R_F$ and $R_k\cong R_F/(l)\cong \mc{A}$ where $l\in\fr{m}_{R_F}{\smallsetminus}\,\fr{m}_{R_F}^2$ and $R\ra \mc{A}$ is the minimal versal element for $\df{}{A}$\textup{.} In particular\textup{,} $R_F/(l)$ is the henselisation of a smooth $k$-algebra of dimension $n+2$\textup{.}
\end{cor}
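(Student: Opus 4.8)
The plan is to prove the three stated isomorphisms in turn, leaning on the standard structure of a two\textup{-}dimensional rational double point: \(A=\bigl(k[x_{1},x_{2},x_{3}]/(f)\bigr)^{\mathrm h}\) with \(f\) weighted homogeneous in \(\fr m^{2}\), so that \(A\) is a complete intersection, Gorenstein with \(\omega_{A}\cong A\), of embedding dimension \(3\), with \(n=\dim_{k}\xt{1}{A}{L_{A/k}}{A}\) the Milnor \(=\) Tjurina number, and with an isolated singularity. By the graded hypersurface case of Remark \ref{rem.Mar} one has \(F=F_{A}\cong\Omega_{A}^{**}\), and \(0\to\omega_{A}\to F\to\fr m_{A}\to 0\) is the minimal maximal Cohen\textup{-}Macaulay approximation of \(\fr m_{A}\), with free FID part \(\omega_{A}\cong A\). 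I shall also use Example \ref{ex.Ak1} (\(\df{}{(A,k)}\cong\df{}{(A,L')}\)) and Proposition \ref{prop.Ak}(i): the base change \((\mc A\to\mc A\hot_{R}\mc A,\,\mc A)\) of the minimal versal family \(R\to\mc A\) of \(\df{}{A}\) is a versal element of \(\df{}{(A,k)}\).

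First I would show \(R_{\fr m}\cong R_{F}\). Apply the Cohen\textup{-}Macaulay approximation map \(\sigma_{M}\co\df{}{(A,\fr m_{A})}\to\df{}{(A,F)}\) attached to \(0\to\omega_{A}\to F\to\fr m_{A}\to 0\) (so \(N=\fr m_{A}\), \(M=F\), \(L=\omega_{A}\cong A\)). Freeness of \(L\) gives \(\xt{j}{A}{L}{\fr m_{A}}=\xt{j}{A}{A}{\fr m_{A}}=0\) for all \(j\geq 1\), so \(\sigma_{M}\) is smooth by Theorem \ref{thm.defgrade2}(ii), in particular surjective on \(k[\vare]\)\textup{-}points. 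The same vanishing makes \(\pi^{*}\co\xt{j}{A}{\fr m_{A}}{\fr m_{A}}\to\xt{j}{A}{F}{\fr m_{A}}\) an isomorphism for \(j\geq 2\); and a direct check from the same long exact sequence, using \(\xt{1}{A}{\fr m_{A}}{\omega_{A}}\cong\xt{2}{A}{k}{A}\cong k\) (Gorenstein duality in dimension \(2\), a module killed by \(\fr m_{A}\)), shows it is also injective for \(j=1\). By Theorem \ref{thm.defext2} this gives \(R_{\fr m}\cong R_{F}/J\) with \(J\) generated by linear forms, and surjectivity of \(\sigma_{M}\) on tangent spaces forces \(J=0\).

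Next I would show \(R_{k}\cong R_{F}/(l)\) with \(l\in\fr m_{R_{F}}{\smallsetminus}\,\fr m_{R_{F}}^{2}\), by comparing \(\df{}{(A,k)}\) with \(\df{}{(A,\fr m_{A})}\) along the minimal presentation \(0\to\fr m_{A}\to A\to k\to 0\), i.e.\ \(\fr m_{A}=\Syz k\): over a deformation of the pair \((A,k)\) one lifts the minimal surjection \(A\to k\), and its (automatically \(S\)\textup{-}flat) kernel is a deformation of \((A,\fr m_{A})\); this yields a morphism of cofibred categories over \(\df{}{A}\) whose effect on the cohomology of pairs is governed by the syzygy maps of Lemma \ref{lem.atSyz} and which is functorial for the obstruction theories of Theorem \ref{thm.lang} and Proposition \ref{prop.omap}. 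Using \(0\to\fr m_{A}\to A\to k\to 0\), the vanishings \(\xt{\geq 1}{A}{A}{-}=0\), and the Gorenstein identities \(\xt{i}{A}{k}{A}=0\) for \(i\neq 2\) and \(\xt{2}{A}{k}{A}\cong k\), one checks that \(\Syz\co\xt{j}{A}{k}{k}\to\xt{j}{A}{\fr m_{A}}{\fr m_{A}}\) is an isomorphism for \(j\geq 2\) and injective with one\textup{-}dimensional cokernel for \(j=1\), the extra dimension being the image of the generator of \(\xt{2}{A}{k}{A}\). Feeding this into Lemma \ref{lem.res}, exactly as in the proofs of Theorems \ref{thm.defext1} and \ref{thm.defext2}, gives \(R_{k}\cong R_{\fr m}/(l)\) with \(l\) a single linear form not in \(\fr m_{R_{\fr m}}^{2}\); combined with the previous step, \(R_{k}\cong R_{F}/(l)\). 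This is the main obstacle: one must set up the comparison between deformations of \((A,k)\) and of \((A,\fr m_{A}=\Syz k)\) so that it is genuinely functorial for obstruction theory (so Lemma \ref{lem.res} applies), and then carry out the \(\Ext\)\textup{-}bookkeeping that pins down \(\Syz\) as an isomorphism on \(\Ext^{\geq 2}\) with exactly one\textup{-}dimensional cokernel on \(\Ext^{1}\), which rests entirely on \(\xt{i}{A}{k}{A}=0\) for \(i\neq 2\) and \(\xt{2}{A}{k}{A}\cong k\).

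Finally, \(R_{k}\cong\mc A\) and the concluding description. By Corollary \ref{cor.Ak1} the ring \(\mc A\) is the henselisation of a smooth \(k\)\textup{-}algebra of dimension \(\edim A+n-1=n+2\), and that dimension also equals \(\dim_{k}\df{}{(A,k)}(k[\vare])\) (the minimality asserted in Corollary \ref{cor.Ak1}, via \(\df{}{(A,k)}\cong\df{}{(A,\,A\to k)}\)); hence the versal element above with base \(\mc A\) is the minimal one, so \(R_{k}\cong\mc A\). In particular \(R_{k}\) is regular of dimension \(n+2\), so from \(R_{k}\cong R_{F}/(l)\) with \(l\) a nonzero linear form the ring \(R_{F}\cong R_{\fr m}\) is regular of dimension \(n+3\) and \(l\) cuts out a regular hypersurface; and \(R_{F}/(l)\cong R_{k}\cong\mc A\) is the henselisation of a smooth \(k\)\textup{-}algebra of dimension \(n+2\), as claimed.
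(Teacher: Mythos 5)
Your plan follows the same chain of comparisons as the paper's proof, namely
\(\df{}{(A,k)}\cong\df{}{(A,A\to k)}\to\df{}{(A,\fr m)}\xra{\sigma_F}\df{}{(A,F)}\), and the framework (Lemma~\ref{lem.res}, Theorem~\ref{thm.defgrade2}, Proposition~\ref{prop.Ak}, Corollary~\ref{cor.Ak1}) is the right one. Where you and the paper diverge is in how the key dimension constraints are obtained: the paper computes \(\xt{1}{A}{\fr m_A}{\fr m_A}\cong k^{4}\cong\xt{1}{A}{F}{F}\) and, crucially, shows \({}_{\ccirc\!}\mr T^{1}_{(A,F)}\to\mr T^{1}_{A}\) is not surjective, both via the explicit minimal free resolution \(k\leftarrow A\leftarrow A^{3}\leftarrow A^{4}\leftarrow A^{4}\xleftarrow{\bar\phi}A^{4}\leftarrow\cdots\) and the matrix factorisation \((\phi,\psi)\) of the equation of the rational double point; you try to route everything through the abstract theorems without any computation in the resolution. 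This is precisely where the argument is incomplete.

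Concretely, two steps are unjustified. First, for the step \(R_{\fr m}\cong R_{F}\) via Theorem~\ref{thm.defext2} you need \(\pi^{*}\co\xt{1}{A}{\fr m_A}{\fr m_A}\to\xt{1}{A}{F}{\fr m_A}\) injective, which amounts to the connecting map \(\delta\co\hm{}{A}{A}{\fr m_A}\to\xt{1}{A}{\fr m_A}{\fr m_A}\) vanishing. You appeal to \(\xt{1}{A}{\fr m_A}{\omega_A}\cong k\), but this only gives that the pushforward of \(\im\delta\) into \(\xt{1}{A}{\fr m_A}{A}\) is zero, not that \(\delta=0\). The paper side-steps this by computing both \(\xt{1}\) groups to be \(4\)-dimensional, so that the automatic surjectivity of \(\pi^{*}\) forces it to be an isomorphism; that dimension count uses the specific Betti numbers of the RDP resolution, which you never touch.

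Second, for \(R_{k}\cong R_{\fr m}/(l)\) with \(l\) a single linear form you assert that \(\Syz\co\xt{j}{A}{k}{k}\to\xt{j}{A}{\fr m_A}{\fr m_A}\) is an isomorphism for \(j\geq 2\) and has exactly one\textup{-}dimensional cokernel for \(j=1\), and that this ``rests entirely on \(\xt{i}{A}{k}{A}=0\) for \(i\neq 2\) and \(\xt{2}{A}{k}{A}\cong k\).'' That is not enough. Identifying \(\Syz\) with the connecting map \(\partial\) of \(0\to\fr m_A\to A\to k\to 0\), the cokernel in degree \(1\) is \(\ker\bigl(\xt{2}{A}{k}{A}\to\xt{2}{A}{k}{k}\bigr)\), and the kernel of \(\Syz\) in degree \(2\) is \(\im\bigl(\xt{2}{A}{k}{A}\to\xt{2}{A}{k}{k}\bigr)\); since \(\xt{2}{A}{k}{A}\cong k\) these are complementary and only one of your two claims can hold automatically. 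To get cokernel exactly \(k\) (and \(\Syz\) iso in degree \(2\)) you must show the map \(\xt{2}{A}{k}{A}\to\xt{2}{A}{k}{k}\) is zero, i.e. \(\xt{2}{A}{k}{\fr m_A}\to\xt{2}{A}{k}{A}\) is surjective. The paper proves exactly this by the argument that any \(\rho\co A^{4}\to A\) with \(\rho\bar\phi=0\) is a combination of rows of \(\bar\psi\) and hence factors through \(\fr m_A\); nothing in your proposal replaces this. The same computation also underlies the paper's matrix factorisation argument that \({}_{\ccirc\!}\mr T^{1}_{(A,F)}\to\mr T^{1}_{A}\) fails to be surjective, which is what pins the codimension down to exactly \(1\); without a substitute you only obtain that the kernel of \(R_{F}\to R_{k}\) is generated by at most one linear form, not that it is nonzero. (The obstruction\textup{-}compatibility hypothesis of Lemma~\ref{lem.res}, which you wave towards via Lemma~\ref{lem.atSyz} and Proposition~\ref{prop.omap}, can in fact be dispatched for free because \(R_{k}\cong\mc A\) is smooth, so \(\df{}{(A,k)}\) is unobstructed and the condition is vacuous; but you do not say this.)

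In short, the structure is right and the final step (\(R_{k}\cong\mc A\) of dimension \(n+2\) via Corollary~\ref{cor.Ak1}) is correct, but the proposal omits the RDP\textup{-}specific \(\Ext\)\textup{-}computations — the explicit resolution and the matrix factorisation argument — that the paper uses to establish (a) the injectivity of \(\pi^{*}\) in degree \(1\), and (b) that the codimension of \(R_{k}\) inside \(R_{F}\) is exactly one and not zero or two.
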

\begin{proof}
Put $\fr{m}=\fr{m}_A$. There are maps of deformation functors
\begin{equation}\label{eq.kmF}
\df{}{(A,\,k)}\xra{\;\simeq\;}\df{}{(A, A\ra k)}\lra \df{}{(A,\fr{m})}\xra{\;\sigma_F\;} \df{}{(A,F)}
\end{equation}
where the isomorphism is given by Lemma \ref{lem.Ak}, the middle map is given by taking the syzygy of the framing and $\sigma_F$ is smooth by Theorem \ref{thm.defgrade2}.
There are corresponding maps $R_F\ra R_{\fr{m}}\ra R_k$. 
There are also obvious maps $\df{A}{k}\ra \df{}{(A,\,k)}\ra \df{}{A}$ and this sequence maps to the corresponding $\df{A}{\fr{m}}\ra \df{}{(A,\fr{m})}\ra \df{}{A}$ by taking the syzygy of the deformed modules in the two first. By inserting $k[\vare]$ we get a commutative diagram
\begin{equation}\label{eq.Syz}
\begin{aligned}
\xymatrix@C+20pt@R-10pt@H-0pt{
\Der_{k}(A)\ar[d]^(0.44){(-1)}\ar[r]^(0.45){g^{k}}
& \xt{1}{A}{k}{k}\ar[d]^(0.45){\Syz}\ar[r]
& {}_{\ccirc\!}\mr{T}^{1}_{(A,\,k)}\ar[d]^{\sigma}\ar[r]
& \mr{T}^{1}_{\!A}\ar@{=}[d]
\\
\Der_{k}(A)\ar[r]^(0.45){g^{\fr{m}}}
& \xt{1}{A}{\fr{m}}{\fr{m}}\ar[r]
& {}_{\ccirc\!}\mr{T}^{1}_{(A,\,\fr{m})}\ar[r]
& \mr{T}^{1}_{\!A}
}
\end{aligned}
\end{equation}
by Theorem \ref{thm.lang} and Lemma \ref{lem.atSyz}. Since $\xt{1}{A}{k}{A}=0$, $\Syz$ is injective and so is $\sigma$ by the diagram.

The minimal resolution of the residue field
\begin{equation}\label{eq.fp}
k\thl A\lla A^{\oplus 3}\lla A^{\oplus 4}\xla{\;\bar{\phi}\;}A^{\oplus 4} \xla{\;\bar{\psi}\;}A^{\oplus 4}\xla{\;\bar{\phi}\;}\dots
\end{equation}
shows that $\rho\in \hm{}{A}{A^4}{A}$ with $\rho\bar{\phi}=0$ is a linear combination of  rows in $\bar{\psi}$ which implies that $\rho$ factors through $\fr{m}_A\subset A$. I.e. the natural map $\xt{2}{A}{k}{\fr{m}_A}\ra\xt{2}{A}{k}{A}\cong k$ is surjective. Then $\xt{1}{A}{\fr{m}_A}{\fr{m}_A}\cong k^4$ since $\xt{1}{A}{k}{k}\cong (\fr{m}/\fr{m}^2)^*\linebreak[1]\cong k^3$. Since $F^{\vee}=\hm{}{A}{F}{A}\cong F$ and $\Syz F\cong F^{\vee}$ (for a simple proof, see \cite[3.6]{iya/wem:10}), we have $F\cong \Syz F$. Then
\begin{equation}
\begin{aligned}
\xt{1}{A}{F}{F}&\cong \xt{1}{A}{\Syz F}{F}\cong \xt{2}{A}{F}{F}\cong \xt{2}{A}{F}{\fr{m}_A}
\\
& \cong \xt{1}{A}{F}{k}\cong k^{\oplus 4}
\end{aligned}
\end{equation}
Since $\xt{1}{A}{\fr{m}_A}{\fr{m}_A}\ra\xt{1}{A}{F}{\fr{m}_A}\cong\xt{1}{A}{F}{F}$ is surjective, it is an isomorphism. 

For any MCM $A$-module $M$ the restriction map $\gDer_{k}(A{\oplus} M)\ra \Der_{k}(A)$ in Theorem \ref{thm.lang} has a section; cf. \cite[5.8]{gus/ile:08} and \cite[15]{eri/gus:08}, hence $\xt{1}{A}{F}{F}\ra {}_{\ccirc\!}\mr{T}^{1}_{(A,F)}$ is injective and so is $\xt{1}{A}{\fr{m}}{\fr{m}}\ra {}_{\ccirc\!}\mr{T}^{1}_{(A,\fr{m})}$ by \eqref{eq.ladder} and $\xt{1}{A}{k}{k}\ra {}_{\ccirc\!}\mr{T}^{1}_{(A,k)}$ by \eqref{eq.Syz} since $\Syz$ is injective. 
Since $\sigma_F$ is smooth, ${}_{\ccirc\!}\mr{T}^{1}_{(A,\fr{m})}\ra {}_{\ccirc\!}\mr{T}^{1}_{(A,F)}$ is surjective. By \eqref{eq.ladder}, ${}_{\ccirc\!}\mr{T}^{1}_{(A,\fr{m})}\cong {}_{\ccirc\!}\mr{T}^{1}_{(A,F)}$ and $R_F\cong R_{\fr{m}}$.
Since $\dim_k\mr{T}^{1}_{A}=n$ and $\dim_k{}_{\ccirc\!}\mr{T}^{1}_{(A,\,k)}=n+2$ by Corollary \ref{cor.Ak1}, $\dim_k{}_{\ccirc\!}\mr{T}^{1}_{(A,F)}$ is either $n+3$ or $n+4$. If $A=P/(f)$ with $P$ regular then $A'=P[\vare]/(f+\vare)$ with $\vare^2=0$ corresponds to an element in $\df{}{A}(k[\vare])$. An extension of $F$ to $A'$ would be given by a matrix factorisation $(\phi+\vare\xi_1,\,\psi+\vare\xi_2)$ of $f+\vare$; cf. \eqref{eq.fp}. Then $(\phi+\vare\xi_1)(\psi+\vare\xi_2)=f{\cdot} \id+\vare(\xi_1\psi+\phi\xi_2)$, but $\xi_1\psi+\phi\xi_2$ cannot equal $\id$ since the entries of $\phi$ and $\psi$ are contained in $\fr{m}_P$. Hence $_{\ccirc\!}\mr{T}^{1}_{(A,F)}\ra \mr{T}^{1}_{A}$ is not surjective so $\dim_k{}_{\ccirc\!}\mr{T}^{1}_{(A,F)}=n+3$. By Corollary \ref{cor.Ak1}, $R_k\cong \mc{A}$ is formally smooth and $\dim R_k=n+2$. Then $R_k\cong R_F/(l)$ with $l\in\fr{m}_{R_F}{\smallsetminus}\,\fr{m}_{R_F}^2$.
\end{proof}
\begin{rem}
In \cite[5.11]{gus/ile:18} we claim that $\Spec R_F$ consists of two components. The larger is $\Spec R_F/(l)\cong \Spec R_k$ given in Corollary \ref{cor.ADE}. The smaller has dimension $n$ while the intersection has dimension $n-1$.
\end{rem}
\providecommand{\bysame}{\leavevmode\hbox to3em{\hrulefill}\thinspace}
\providecommand{\MR}{\relax\ifhmode\unskip\space\fi MR }
\providecommand{\MRhref}[2]{%
  \href{http://www.ams.org/mathscinet-getitem?mr=#1}{#2}
}
\providecommand{\href}[2]{#2}

\end{document}